\newtheorem{theorem}{Theorem}[section]
\newtheorem{proposition}[theorem]{Proposition}
\newtheorem{conjecture}[theorem]{Conjecture}
\theoremstyle{definition}
\newtheorem{definition}[theorem]{Definition}
\newtheorem{remark}[theorem]{Remark}
\theoremstyle{property}
\DeclareFontFamily{OT1}{rsfs}{}
\DeclareFontShape{OT1}{rsfs}{n}{it}{<-> rsfs10}{}
\DeclareMathAlphabet{\curly}{OT1}{rsfs}{n}{it}
\newcommand\I{\mathcal I}
\renewcommand\L{\mathcal L}
\renewcommand\O{\mathcal O}
\newcommand\cM{\mathcal M}
\newcommand\cE{\mathcal E}
\newcommand\EE{\mathbb E}
\newcommand\T{\mathbb T}
\newcommand\E{\mathbb E}
\newcommand\C{\mathbb C}
\newcommand\sfZ{\mathsf Z}
\newcommand\Q{\mathbb Q}
\newcommand\R{\mathbb R}
\newcommand\Z{\mathbb Z}
\newcommand\bfbeta{{\boldsymbol{\beta}}}
\newcommand\bfn{{\boldsymbol{n}}}
\renewcommand\t{\mathfrak t}
\newcommand\SU{\mathrm{SU}}
\newcommand\vd{\mathrm{vd}}
\newcommand\pt{\mathrm{pt}}
\newcommand\vir{\mathrm{vir}}
\newcommand\SW{\mathrm{SW}}
\newcommand\td{\mathrm{td}}
\newcommand\rk{\operatorname{rk}}
\newcommand\tr{\operatorname{tr}}
\newcommand\ch{\operatorname{ch}}
\newcommand\Hom{\operatorname{Hom}}
\renewcommand\hom{\mathcal{H}{\it{om}}}
\newcommand\Pic{\operatorname{Pic}}
\newcommand\Spec{\operatorname{Spec}\,}
\newcommand\Hilb{\operatorname{Hilb}}
\newcommand\mdot{{\scriptscriptstyle\bullet}}
\newcommand\INTO{\ar@{^{(}->}[r]}
\DeclareRobustCommand{\SkipTocEntry}[4]{}
\begin{document}
\title[Vafa-Witten invariants, continued fractions, cosmic strings]{$\SU(r)$ Vafa-Witten invariants, Ramanujan's continued fractions, and cosmic strings}
\author[G\"ottsche, Kool, Laarakker]{L.~G\"ottsche, M.~Kool, and T.~Laarakker}
\maketitle

\vspace{-1cm}

\begin{abstract}
We conjecture a structure formula for the $\SU(r)$ Vafa-Witten partition function for surfaces with holomorphic 2-form. The conjecture is based on $S$-duality and a structure formula for the \emph{vertical} contribution
previously derived by the third-named author using Gholampour-Thomas's theory of virtual degeneracy loci.

For ranks $r=2,3$, conjectural expressions for the partition function in terms of the theta functions of $A_{r-1}, A_{r-1}^{\vee}$ and Seiberg-Witten invariants were known. We conjecture new expressions for $r=4,5$ in terms of the theta functions of $A_{r-1}, A_{r-1}^{\vee}$, Seiberg-Witten invariants, \emph{and} continued fractions studied by Ramanujan. The vertical part of our conjectures is proved for low virtual dimensions by calculations on nested Hilbert schemes.

The \emph{horizontal} part of our conjectures gives predictions for virtual Euler characteristics of Gieseker-Maruyama moduli spaces of stable sheaves. In this case, our formulae are sums of universal functions with coefficients in Galois extensions of $\Q$. The universal functions, corresponding to different quantum vacua, are permuted under the action of the Galois group. 

For $r=6, 7$ we also find relations with Hauptmoduln of $\Gamma_0(r)$. We present $K$-theoretic refinements for $r=2,3,4$ involving weak Jacobi forms.
\end{abstract}

\section{Introduction}

\subsection{Tanaka-Thomas theory} \label{sec:TT}

C.~Vafa and E.~Witten's seminal paper on $S$-duality from 1994 \cite{VW} has recently attracted renewed attention in algebraic geometry. In 2017, Y.~Tanaka and R.~P.~Thomas \cite{TT1} proposed a mathematical definition for the $\SU(r)$ Vafa-Witten invariants, which we briefly review. 

Let $(S,H)$ be a complex smooth polarized surface satisfying $H_1(S,\Z) = 0$. Let $r \in \Z_{>0}$ and $c_1 \in H^2(S,\Z)$. A \emph{Higgs pair} $(\cE,\phi)$ on $S$ consists of a torsion free sheaf $\cE$ on $S$ and a morphism $\phi : \cE \to \cE \otimes K_S$ satisfying $\tr(\phi) = 0$.\footnote{The trace-free condition reflects the fact that we are interested in gauge group $\SU(r)$.} It is called $H$-stable when it satisfies Gieseker's stability condition for all $\phi$-invariant subsheaves. For any $c_2 \in H^4(S,\Z) \cong \Z$, we denote by
$$
N=N_S^H(r,c_1,c_2)
$$
the moduli space of (isomorphism classes of) rank $r$ $H$-stable Higgs pairs $(\cE,\phi)$ on $S$ satisfying $c_1(\cE) = c_1$ and $c_2(\cE) = c_2$. The main result of Tanaka-Thomas
is that $N$ admits a \emph{symmetric} perfect obstruction theory. We denote its virtual tangent bundle by $T_N^{\vir}$. The moduli space $N$ is non-compact due to the scaling action
$$
\C^* \times N \rightarrow N, \quad (t,(\cE,\phi)) \mapsto (\cE,t \cdot \phi).
$$
Assume there are no rank $r$ strictly $H$-semistable Higgs pairs on $S$ with Chern classes $c_1$, $c_2$. Then the fixed locus $N^{\C^*}$ is compact. It can be decomposed
into ``open and closed'' components (unions of connected components)
$$
N^{\C^*} = \bigsqcup_{\mu} N^{\C^*}_{\mu},
$$
where $\mu$ runs over all sequences of positive integers $\mu = (\mu_0, \ldots, \mu_{\ell})$ satisfying $\mu_0+\cdots+\mu_\ell=r$. Here $N^{\C^*}_\mu$ contains the $\C^*$-fixed Higgs pairs $(\cE,\phi)$ with weight decomposition
$$
\cE = \bigoplus_{i=0}^{\ell} \cE_i \otimes \mathfrak{t}^{-i}, \quad \rk(\cE_i) = \mu_i,
$$
where $\t$ denotes a primitive character for the $\C^*$-action. Two extreme cases are $\mu=(r)$ and $\mu = (1,\ldots, 1)=:(1^r)$. We refer to $N^{\C^*}_{(r)}$ and $N^{\C^*}_{(1^r)}$ as the \emph{horizontal} and \emph{vertical} component of $N^{\C^*}$ respectively. We denote by
$$
M=M_S^H(r,c_1,c_2)
$$
the Gieseker-Maruyama moduli space of rank $r$ $H$-stable torsion free sheaves on $S$ with Chern classes $c_1,c_2$ \cite{HL}. Note that $M$ coincides with the horizontal
component $N^{\C^*}_{(r)}$.

Tanaka-Thomas define invariants by the virtual localization formula
\begin{equation} \label{def1}
\int_{[N]^{\vir}} 1 = \int_{[N^{\C^*}]^{\vir}} \frac{1}{e^{\C^*}(\nu^{\vir})} \in \Q,
\end{equation}
where $\nu^{\vir}$ denotes the virtual normal bundle, i.e.~the moving part of $T_N^{\vir}|_{N^{\C^*}}$, and $e^{\C^*}(\cdot)$ denotes equivariant Euler class.\footnote{Independence of the invariant from the $\C^*$-equivariant parameter follows from the fact that the perfect obstruction theory on $N$ is symmetric.} Tanaka-Thomas observe that the $\C^*$-fixed part of $T_N^{\vir}|_{M}$ is equal to the virtual tangent bundle $T_M^{\vir}$ of the natural perfect obstruction theory on $M$ previously studied in T.~Mochizuki's monograph \cite{Moc}. It is (point-wise) given by
$$
T_M^{\vir}|_{[\cE]} \cong R^\mdot\Hom(\cE,\cE)_0[1],
$$
where $(\cdot)_0$ denotes trace-free part. Therefore $M$ has virtual dimension
\begin{equation*} \label{vd}
\vd = \vd(r,c_1,c_2) = 2rc_2 - (r-1)c_1^2 - (r^2-1) \chi(\O_S).
\end{equation*}
The contribution of $M$ to \eqref{def1} is the (signed) \emph{virtual Euler characteristic} \cite{TT1}
\begin{equation} \label{hor}
\int_{[M]^{\vir}} \frac{1}{e^{\C^*}(\nu^{\vir})} = (-1)^{\vd} \int_{[M]^{\vir}} c_{\vd}(T_M^{\vir}) = (-1)^{\vd} e^{\vir}(M).
\end{equation}

Keeping $S,H,r,c_1$ fixed and varying $c_2$ (still assuming there are no strictly $H$-semistable objects), we define\footnote{The factor in front of the sum is required for modularity later.} the $\SU(r)$ Vafa-Witten partition function by
$$
\sfZ_{S,H,c_1}^{\SU(r)}(q) = r^{-1} q^{-\frac{\chi(\O_S)}{2r} + \frac{r K_S^2}{24}} \sum_{c_2 \in \Z} q^{\frac{1}{2r} \vd(r,c_1,c_2)} (-1)^{\vd(r,c_1,c_2)}  \int_{[N_S^H(r,c_1,c_2)]^{\vir}} 1.
$$
Since the integral is defined by localization to fixed loci, this can be written as
\begin{equation} \label{def2}
\sfZ_{S,H,c_1}^{\SU(r)}(q) = r^{-1} \sum_{\mu} \sfZ_{S,H,c_1}^{\mu}(q),
\end{equation}
where $\sfZ_{S,H,c_1}^{\mu}(q)$ is the contribution of all components indexed by $\mu$.

\begin{remark} 
If $H K_S \leq 0$ (still assuming there are no strictly $H$-semistable objects), then only the horizontal components contribute. If in addition $H K_S < 0$ or $K_S \cong \O_S$, then the horizontal components are smooth of expected dimension and \eqref{hor} implies \cite{TT1}
$$
\sfZ_{S,H,c_1}^{\SU(r)}(q) =  r^{-1} q^{-\frac{\chi(\O_S)}{2r} + \frac{r K_S^2}{24}} \sum_{c_2 \in \Z} q^{\frac{1}{2r} \vd(r,c_1,c_2)} \, e(M_S^H(r,c_1,c_2)),
$$
where $e(\cdot)$ denotes topological Euler characteristic. In this case, the $\SU(r)$ Vafa-Witten partition function has been studied extensively since the 1990s. An (incomplete) list of references can be found in \cite{GK5}.
\end{remark}

For arbitrary $S,H,r,c_1$, there may be strictly $H$-semistable objects and one should replace $N_S^H(r,c_1,c_2)$ by the moduli space of \emph{Joyce-Song Higgs pairs} $P_S^H(r,c_1,c_2)$ \cite{TT2}. As before, these have a $\C^*$-action and components of fixed loci are indexed by $\mu$. 
The $\SU(r)$ Vafa-Witten partition function can be defined similarly and is also of the form \eqref{def2}. The definition in loc.~cit.~depends on a conjecture \cite[Conj.~1.2]{TT2}, which is proved in many cases (e.g.~when $HK_S < 0$, when $S$ is a K3 surface \cite{MT}, when $p_g(S)>0$ and $r$ is prime \cite{Laa2}).

Suppose $p_g(S)>0$. Then the Gieseker-Maruyama moduli spaces may be singular and the  fixed locus may have non-horizontal contributions. Picking a holomorphic 2-form $\theta \in H^0(S,K_S) \setminus \{0\}$, Thomas constructs a cosection of the $\C^*$-fixed obstruction theory on $N^{\C^*}$, which he uses to prove that, for $r$ \emph{prime}, only horizontal and vertical components contribute \cite[Thm.~5.23]{Tho}
$$
\sfZ_{S,H,c_1}^{\SU(r)}(q) = r^{-1} \sfZ_{S,H,c_1}^{(r)}(q) + r^{-1} \sfZ_{S,H,c_1}^{(1^r)}(q).
$$

The invariants in this section can also be viewed as \emph{reduced} Donaldson-Thomas invariants of the non-compact Calabi-Yau 3-fold $X = K_S$. This approach was pursued by A.~Gholampour, A.~Sheshmani, and S.-T.~Yau \cite{GSY}.

\subsection{Vertical contribution}

Let $(S,H)$ be a smooth polarized surface satisfying $H_1(S,\Z) = 0$. Fix $r > 0$ (not necessarily prime) and $c_1 \in H^2(S,\Z)$. As we discussed above, when there are no strictly $H$-semistable objects, Higgs pairs in the vertical component decompose into rank 1 eigensheaves. A rank 1 torsion free sheaf on $S$ is of the form $I_Z \otimes \mathcal{L}$, where $Z \subset S$ is 0-dimensional with ideal sheaf $I_Z \subset \O_S$ and $\L \in \Pic(S)$. Consequently, the vertical components can be realized as \emph{nested Hilbert schemes} of points and curves on $S$. 

We denote by $S^{[n]}$ the Hilbert scheme of $n$ points on $S$ and, for an algebraic class $\beta \in H^2(S,\Z)$, we write $|\beta|$ for the linear system of effective divisors on $S$ with class $\beta$. 
For any $\bfn = (n_0,\ldots, n_{r-1}) \in \Z_{\geq 0}^r$ and effective classes $\bfbeta = (\beta_1, \ldots, \beta_{r-1}) \in H^2(S,\Z)^{r-1}$, we define 
\begin{align*}
S^{[\bfn]} =  \prod_{i=0}^{r-1} S^{[n_i]}, \quad |\bfbeta| = \prod_{i=1}^{r-1} |\beta_i|.
\end{align*}
We are interested in the incidence locus
\begin{align*}
S_{\bfbeta}^{[\bfn]} &= \Big\{(Z_0, \ldots, Z_{r-1},C_1, \ldots, C_{r-1}) \, : \, I_{Z_{i-1}}(-C_i) \subset I_{Z_i} \,   \forall i  \Big\} \subset S^{[\bfn]} \times |\bfbeta|.
\end{align*}
When there are no strictly $H$-semistable objects, then 
$$
\bigcup_{c_2} N_S^H(r,c_1,c_2)_{(1^r)}^{\C^*}
$$ 
is isomorphic to a union of incidence loci $S_{\bfbeta}^{[\bfn]}$ for certain $\bfn$, $\bfbeta$. In Section \ref{sec:GT}, we give an expression for $\bfn$, $\bfbeta$, in terms of $r,c_1,c_2$.

In \cite{GT1,GT2}, A.~Gholampour and Thomas show that (roughly speaking) $S_{\bfbeta}^{[\bfn]}$ can be realized as the \emph{degeneracy locus} of a morphism of vector bundles on $S^{[\bfn]} \times |\bfbeta|$ and that this structure naturally endows $S_{\bfbeta}^{[\bfn]}$ with a perfect obstruction theory. We describe this perfect obstruction theory in Section \ref{sec:GT}. They show that the virtual class from this ``degeneracy'' perfect obstruction theory coincides with the one obtained from the virtual $\C^*$-localization in Section \ref{sec:TT}. 

Using \cite{GT1,GT2}, the third-named author derived a structure result for $\sfZ_{S,H,c_1}^{(1^r)}$ \cite{Laa1,Laa2}, which we now recall. Denote by
\begin{align*}
\eta(q) &= q^{\frac{1}{24}} \prod_{n=1}^{\infty}(1-q^n), \quad \Delta(q) = q \prod_{n=1}^{\infty} (1-q^n)^{24} \\
\Theta_{A_{r},\ell}(q) &= \sum_{v \in \mathbb{Z}^r} q^{\frac{1}{2} \langle v - \ell \lambda, v - \ell \lambda \rangle}, \quad \ell \in \Z, \quad \lambda = \frac{1}{r+1}(r,r-1, \ldots, 1) 
\end{align*}
the Dedekind eta function, the discriminant modular form, and the theta functions of the lattice $A_{r}$. More precisely, denoting the standard basis of $\Z^r$ by $(e_1, \ldots, e_r)$, the symmetric bilinear form of the lattice $A_{r}$ is determined by
$$
\langle e_i, e_j \rangle = \left\{ \begin{array}{cc} 2 & \textrm{if  } i=j \\ -1 & \textrm{if } |i-j|=1 \\ 0 & \textrm{otherwise.} \end{array}\right.
$$
In Appendix \ref{sec:theta}, we collect some properties of theta functions used in this paper. We also define
\begin{align} \label{def:delta}
t_{A_{r},\ell}(q) = \frac{\Theta_{A_{r},0}(q)}{\Theta_{A_{r},\ell}(q)}, \quad \delta_{a,b} = \left\{\begin{array}{cc} 1 & \mathrm{if \, } a-b \in rH^2(S,\mathbb{Z}) \\ 0 & \mathrm{otherwise.} \end{array} \right.
\end{align}
When $p_g(S)>0$, we denote the \emph{Seiberg-Witten invariant} of $\beta \in H^2(S,\Z)$ by $\SW(\beta) \in \Z$. For an algebraic class $\beta \in H^2(S,\Z)$, the linear system $|\beta|$ has a perfect obstruction theory and virtual class $|\beta|^{\vir}$ in degree $\beta(\beta-K_S) / 2$. If $|\beta|^{\vir} \neq 0$, then $\beta^2 = \beta K_S$ and $\SW(\beta) = \deg(|\beta|^{\vir})$ \cite[Prop.~6.3.1]{Moc} (see also \cite{DKO}, which treats surfaces with $H_1(S,\Z) \neq 0$).\footnote{Equality of algebro- and differential geometric SW invariants was established in \cite{CK}.} We refer to $\beta \in H^2(S,\Z)$ such that $\SW(\beta) \neq 0$ as \emph{Seiberg-Witten basic classes}. 

\begin{theorem}[Laarakker] \label{thm:Laarakker}
For any $r>1$, there exist $C_0$, $\{C_{ij}\}_{1 \leq i \leq j \leq r-1} \in \Q(\!(q^{\frac{1}{2r}})\!)$ with the following property.\footnote{The universal functions $C_0,C_{ij}$ only depend on $r$. When we want to stress this dependence, we write $C_0^{(r)}, C_{ij}^{(r)}$, though we usually suppress this dependence.} For any smooth polarized surface $(S,H)$ satisfying $H_1(S,\Z) = 0$, $p_g(S)>0$, and $c_1 \in H^2(S,\Z)$, we have
\begin{align*}
\sfZ_{S,H,c_1}^{(1^r)}(q) = &\Bigg( \frac{(-1)^{r-1}}{ r \Delta(q^r)^{\frac{1}{2}}} \Bigg)^{\chi(\O_S)} \Bigg( \frac{\Theta_{A_{r-1},0}(q)}{\eta(q)^r} \Bigg)^{-K_S^2} \\
&\times  C_0(q)^{K_S^2} \sum_{\bfbeta \in H^2(S,\Z)^{r-1}}  \delta_{c_1,\sum_i i \beta_i} \prod_{i} \SW(\beta_i) \prod_{i \leq j} C_{ij}(q)^{\beta_i \beta_j}.
\end{align*}
\end{theorem}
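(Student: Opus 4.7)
The plan is to combine Gholampour–Thomas's description of the vertical $\C^*$-fixed locus as a nested Hilbert scheme with a cosection localization (using $p_g(S)>0$) and the universality machinery for tautological integrals on Hilbert schemes. First, using Section \ref{sec:GT}, I would identify each connected component of $\bigsqcup_{c_2} N_S^H(r,c_1,c_2)^{\C^*}_{(1^r)}$ with an incidence locus $S_{\bfbeta}^{[\bfn]}$, where $\bfn$ and $\bfbeta$ are determined by $(r,c_1,c_2)$. The constraint $c_1(\cE)=c_1$ forces $\sum_i i\beta_i \equiv c_1 \pmod{rH^2(S,\Z)}$, which accounts for the Kronecker delta $\delta_{c_1,\sum_i i\beta_i}$. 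Equipping $S_{\bfbeta}^{[\bfn]}$ with the Gholampour–Thomas virtual class and using its compatibility with the $\C^*$-localization of Section \ref{sec:TT}, I would rewrite
$$
\sfZ_{S,H,c_1}^{(1^r)}(q) = (\text{prefactor}) \sum_{\bfbeta,\bfn} q^{\star(\bfn,\bfbeta)} \int_{[S_{\bfbeta}^{[\bfn]}]^{\vir}} \frac{1}{e^{\C^*}(\nu^{\vir})},
$$
where $\nu^{\vir}$ is expressed through tautological bundles on $S^{[\bfn]} \times |\bfbeta|$ built out of the $\Ext$ groups between the rank one eigensheaves.

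Second, the holomorphic 2-form $\theta \in H^0(S,K_S)$ furnishes Thomas's cosection of the Gholampour–Thomas obstruction sheaf. Its degeneracy locus forces each $\beta_i$ to satisfy the Seiberg–Witten condition $\beta_i^2 = \beta_i K_S$, and integration of the cosection-localized virtual class over the linear systems $|\beta_i|$ yields a factor $\prod_i \SW(\beta_i)$. The sum thus collapses to one over tuples of Seiberg–Witten basic classes, with the Seiberg–Witten weights as in the claimed formula.

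Third, I would invoke the universality of tautological integrals on products of Hilbert schemes, in the generalization to the nested setting used by the third-named author. The integrand $e^{\C^*}(\nu^{\vir})^{-1}$ and the cosection-reduced virtual class are universal polynomial expressions in the Chern classes of tautological bundles on $S^{[\bfn]}\times|\bfbeta|$, in $c_1(T_S), c_2(T_S)$, and in the $\beta_i$. By an Ellingsrud–Göttsche–Lehn style cobordism/degeneration argument, the generating function (summed over $\bfn$) of these tautological integrals is multiplicative in the underlying Chern numbers. After the Seiberg–Witten constraint $\beta_i^2=\beta_i K_S$ and the $c_1$-constraint eliminate $\beta_i^2$ and $\beta_i K_S$, the surviving Chern numbers are exactly $\chi(\O_S)$, $K_S^2$, and the pairings $\beta_i \beta_j$ for $1\leq i \leq j \leq r-1$, producing the factors $C_0(q)^{K_S^2}\prod_{i\leq j}C_{ij}(q)^{\beta_i\beta_j}$ with universal series $C_0, C_{ij}\in\Q(\!(q^{\frac{1}{2r}})\!)$ depending only on $r$. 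The external prefactors $\big((-1)^{r-1}/(r\Delta(q^r)^{1/2})\big)^{\chi(\O_S)}$ and $(\Theta_{A_{r-1},0}(q)/\eta(q)^r)^{-K_S^2}$ are then pinned down by matching against the $\bfbeta=0$ contribution, controlled by known generating series for virtual Euler characteristics of $S^{[n]}$, together with specialization to a test surface with $K_S^2\neq 0$ and no nontrivial Seiberg–Witten basic classes to fix the $K_S^2$-exponent.

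The main obstacle is the universality step: one must show that the Gholampour–Thomas virtual class and virtual normal bundle on $S_{\bfbeta}^{[\bfn]}$ are sufficiently tautological that the resulting integrals depend on $S$ only through $\chi(\O_S)$, $K_S^2$, and the pairings $\beta_i\beta_j$, and that the dependence on $\bfn$ and $\bfbeta$ genuinely exponentiates into the claimed product form. This requires careful control of the mixed $\Ext$-contributions between distinct eigensheaves, of the perfect obstruction theories on the linear systems $|\beta_i|$, and of the interaction between tautological classes on the factors $S^{[n_i]}$, so that, after the Seiberg–Witten reduction, the whole sum splits multiplicatively into a product over the indices $i\leq j$ of universal power series.
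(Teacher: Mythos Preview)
Your outline follows the paper's strategy closely, but two technical points deserve correction.

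First, the Seiberg--Witten factors do not arise via Thomas's cosection in this argument. The paper instead uses the Gholampour--Thomas theorem directly: the pushforward $\iota_*[S_{\bfbeta}^{[\bfn]}]^{\vir}$ to the smooth ambient space $S^{[\bfn]} \times |\bfbeta|$ already equals $\prod_i \SW(\beta_i)$ times an Euler class supported on $S^{[\bfn]} \times \{\pt\}$. This pushforward is the crucial step: universality is then applied on the \emph{product} of ordinary Hilbert schemes $S^{[\bfn]} = \prod_i S^{[n_i]}$, where the Ellingsrud--G\"ottsche--Lehn argument applies verbatim, rather than on the nested Hilbert scheme as you suggest. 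There is no need for a ``nested'' extension of universality.

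Second, you omit a genuinely necessary ingredient. The vertical fixed locus is a union of those $S_{\bfbeta}^{[\bfn]}$ consisting of $H$-\emph{stable} Higgs pairs; for general $\bfbeta,\bfn$ the locus may contain unstable elements and is then not part of the moduli space. To apply the multiplicativity/cobordism argument you must sum freely over \emph{all} $\bfn \in \Z_{\geq 0}^r$, so one needs the third-named author's proposition that $\iota_*[S_{\bfbeta}^{[\bfn]}]^{\vir} = 0$ whenever $S_{\bfbeta}^{[\bfn]}$ contains an unstable element. Without this, the generating function you write down does not a priori coincide with $\sfZ_{S,H,c_1}^{(1^r)}$.

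Finally, your determination of the prefactors is slightly off: the $\chi(\O_S)$-factor $A$ is fixed by specialization to K3 surfaces, while the factor $\Theta_{A_{r-1},0}(q)/\eta(q)^r$ is purely a normalization convention absorbed into the definition of $C_0$ (it is not determined by the argument).
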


Up to an explicit normalization term, discussed in Section \ref{sec:cob}, the universal series $C_0,C_{ij}$ actually lie in $1+ q \, \Q[\![q]\!]$.

The previous theorem holds in the presence of strictly $H$-semistable objects \cite{Laa2}. It also holds, with a minor modification, for surfaces with $H_1(S,\Z) \neq 0$ (however, we include this assumption for later when dealing with $S$-duality). The assumption $p_g(S)>0$ is essential. The universal functions $C_0, C_{ij}$ can be expressed in terms of integrals over products of Hilbert schemes $S^{[n]}$ as discussed in Section \ref{sec:Laar}. This allows for explicit calculation of $C_0, C_{ij}$. After normalizing the leading term to 1, $C_0, C_{ij}$ were determined by the third-named author modulo $q^{15}$ for $r=2$, and modulo $q^{11}$ for $r=3$ \cite{Laa3}. In this paper, we additionally determine them for $r=4,5,6,7$ modulo $q^{13}$ (Appendix \ref{sec:data}).

\begin{definition}
Let $S$ be a smooth projective surface satisfying $H_1(S,\Z) = 0$, $p_g(S)>0$, and let $c_1 \in H^2(S,\Z)$. For any $r>1$, we define
$$
\Phi_{r,S,c_1}(q) = C_0(q)^{K_S^2} \sum_{\bfbeta \in H^2(S,\Z)^{r-1}}  \delta_{c_1,\sum_i i \beta_i} \prod_{i} \SW(\beta_i) \prod_{i \leq j} C_{ij}(q)^{\beta_i \beta_j}.
$$
\end{definition}

The data in Appendix \ref{sec:data} leads to conjectures for $\Phi_{r,S,c_1}$ with intricate modular structure. For notational simplicity, in this introduction we present our results when $S$ is minimal of general type. Then the only Seiberg-Witten basic classes are $0, K_S$ with Seiberg-Witten invariants $1, (-1)^{\chi(\O_S)}$ \cite[Thm.~7.4.1]{Mor}. The general case is presented in Section \ref{sec:verconj}.  The conjectures are confirmed up to the orders for which we determined the $C_0, C_{ij}$ as mentioned above.

\subsection{Rank 2}

The following conjecture was made by Vafa-Witten \cite{VW}, for surfaces with smooth canonical curve $C \in |K_S|$ and, more generally, by R.~Dijkgraaf, J.-S.~Park, and B.~J.~Schroers in \cite{DPS}. Astonishingly, this formula was derived from purely physical considerations (involving superconducting cosmic strings\footnote{Known by the less exciting name ``canonical curves'' to algebraic geometers.}, quantum vacua, and mass gaps) more than two decades before the mathematical definition was found \cite{TT1, TT2}, and mathematical verifications could be done \cite{Laa1}.
\begin{conjecture}[Vafa-Witten] \label{intro:conjver:rk2}
For any minimal surface $S$ of general type satisfying $H_1(S,\Z) = 0$, $p_g(S)>0$, and $c_1 \in H^2(S,\Z)$, we have
$$
\Phi_{2,S,c_1} = \delta_{c_1,0} + \delta_{c_1,K_S} (-1)^{\chi(\O_S)} t_{A_1,1}^{K_S^2}.
$$
\end{conjecture}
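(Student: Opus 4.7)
The plan is to use Theorem~\ref{thm:Laarakker} to reduce Conjecture~\ref{intro:conjver:rk2} to two universal identities of formal power series. On a minimal surface of general type with $p_g(S)>0$ the only Seiberg-Witten basic classes are $0$ and $K_S$ with $\SW(0)=1$ and $\SW(K_S)=(-1)^{\chi(\O_S)}$, and $\beta_1^2 = K_S^2$, so the single sum defining $\Phi_{2,S,c_1}$ collapses to
$$
\Phi_{2, S, c_1}(q) = C_0^{(2)}(q)^{K_S^2}\Bigl[\delta_{c_1, 0} + (-1)^{\chi(\O_S)} \delta_{c_1, K_S}\, C_{11}^{(2)}(q)^{K_S^2}\Bigr].
$$
Specializing to a smooth quintic $S \subset \PP^3$, where $K_S = \O_S(1)$ is not divisible by $2$ in $H^2(S,\Z)$ (since $K_S^2 = 5$ is odd) and $K_S^2 > 0$, the case $c_1 = 0$ yields $C_0^{K_S^2} = 1$, which in $1 + q\,\Q[\![q]\!]$ forces $C_0^{(2)}(q) = 1$; then $c_1 = K_S$ yields $C_{11}^{(2)}(q) = t_{A_1, 1}(q)$. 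Conjecture~\ref{intro:conjver:rk2} is thus equivalent to these two universal identities, which may be verified on any test surface.

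The identity $C_0^{(2)} \equiv 1$ captures the $\bfbeta = 0$ contribution, which under the Gholampour-Thomas identification is an integral over the usual nested Hilbert scheme $S^{[n_0, n_1]} \subset S^{[n_0]} \times S^{[n_1]}$ without curve factors. The normalizing prefactor $(\Theta_{A_1, 0}(q)/\eta(q)^2)^{-K_S^2}$ in Theorem~\ref{thm:Laarakker} is designed precisely to trivialize this contribution. A direct verification proceeds by torus localization on a toric test surface: one writes the virtual integral as a sum over nested pairs of partitions, matches with G\"ottsche-style Hilbert-scheme generating series, and checks that the prefactor cancels the $\bfbeta = 0$ term exactly.

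The harder identity $C_{11}^{(2)} \equiv t_{A_1, 1}$ is approached by localizing the $\beta_1 = K_S$ contribution to a tubular neighborhood of a smooth canonical curve $C \in |K_S|$. By adjunction, $K_C = K_S|_C$, so the local model is $\mathrm{Tot}(K_C) \to C$ and the Gholampour-Thomas integrand factors into local Euler-class contributions along $C$. Summing over the resulting discrete configurations should produce a quotient of lattice theta series, and the target $\Theta_{A_1,0}(q)/\Theta_{A_1,1}(q)$ would emerge from the Jacobi triple product, analogously to the blowup-formula identities of Nakajima-Yoshioka.

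The main obstacle is passing from finite-order verification --- the paper confirms the conjecture modulo $q^{15}$ --- to full equality of formal series. The cleanest route is to establish a priori that $C_{11}^{(2)}$ lies in a finite-dimensional space of theta-function quotients, which would reduce the identity to finitely many Taylor coefficients. Such modularity should ultimately follow from a rigorous Vafa-Witten $S$-duality, which is presently inaccessible in the vertical regime. Failing this, a direct combinatorial proof using the Gholampour-Thomas degeneracy locus description would need to recognize the Euler-class integrand as an infinite product matching $t_{A_1, 1}^{K_S^2}$ via classical theta identities --- a substantial computation in the spirit of Nekrasov-Okounkov partition sums.
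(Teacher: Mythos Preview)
The statement in question is a \emph{conjecture}, and the paper does not prove it. The paper's treatment consists of (i) reformulating it as Conjecture~\ref{conjver:rk2}, namely the two universal identities $C_{\varnothing}=1$ and $C_{\{1\}}=t_{A_1,1}$, and (ii) verifying these identities numerically modulo $q^{15}$ via the toric localization procedure of Section~\ref{sec:loc}. No argument beyond this finite-order check is offered.

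Your reduction of $\Phi_{2,S,c_1}$ to the two universal identities $C_0^{(2)}=1$ and $C_{11}^{(2)}=t_{A_1,1}$ is correct and matches exactly what the paper does in passing from Conjecture~\ref{intro:conjver:rk2} to Conjecture~\ref{conjver:rk2}. Your use of the quintic to show the equivalence runs in both directions is fine, though the paper simply states the universal identities directly. You also correctly identify that the paper's evidence is the finite-order verification modulo $q^{15}$, and you are honest that your own proposal does not close the gap.

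Where you go beyond the paper is in the two speculative paragraphs: localizing the $\beta_1=K_S$ contribution to a neighbourhood of a smooth canonical curve, and hoping for an a priori modularity constraint that would pin down $C_{11}^{(2)}$ by finitely many coefficients. Neither of these is in the paper, and neither is currently a proof. The canonical-curve heuristic is in the spirit of the original physics derivation in \cite{VW,DPS}, but turning it into a rigorous identification of the Gholampour--Thomas integrand with a theta quotient has not been done. The modularity route, as you note yourself, would require some form of $S$-duality input that is not available for the vertical series. So your proposal is an accurate summary of the state of affairs plus some reasonable but unexecuted strategies; it is not a proof, and neither is anything in the paper.
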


\subsection{Rank 3}

The following conjecture was made in \cite{GK3}. 
\begin{conjecture}[G\"ottsche-Kool] \label{intro:conjver:rk3}
For any minimal surface $S$ of general type satisfying $H_1(S,\Z) = 0$, $p_g(S)>0$, and $c_1 \in H^2(S,\Z)$, we have
$$
\Phi_{3,S,c_1} = \delta_{c_1,0} \, t_{A_2,1}^{K_S^2}  \, \big(X_{+}^{K_S^2} + X_{-}^{K_S^2} \big) + \big( \delta_{c_1,K_S} +\delta_{c_1,-K_S} \big) \, (-1)^{\chi(\O_S)} \, t_{A_2,1}^{K_S^2}, 
$$
where $X_{\pm}$ are the two roots of 
$$
X^2 - 4  t_{A_2,1}^{2} \, X + 4 t_{A_2,1} = 0.
$$
\end{conjecture}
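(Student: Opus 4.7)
The plan is to combine Theorem \ref{thm:Laarakker} with the known Seiberg-Witten data of a minimal surface of general type to rewrite Conjecture \ref{intro:conjver:rk3} as a short list of universal identities in $\Q(\!(q^{1/6})\!)$, and then attack those identities on the nested Hilbert scheme side. Since the only basic classes are $0$ and $K_S$, with SW invariants $1$ and $(-1)^{\chi(\O_S)}$, the sum in Theorem \ref{thm:Laarakker} with $r=3$ has four terms, indexed by $(\beta_1,\beta_2) \in \{0, K_S\}^2$. Using $2K_S \equiv -K_S$ and $3K_S \equiv 0 \pmod{3 H^2(S,\Z)}$ to collapse the Kronecker deltas, one obtains
\begin{align*}
\Phi_{3,S,c_1} &= \delta_{c_1,0}\,\bigl[C_0^{K_S^2} + (C_0 C_{11} C_{12} C_{22})^{K_S^2}\bigr] \\
&\quad + \delta_{c_1,K_S}\,(-1)^{\chi(\O_S)} (C_0 C_{11})^{K_S^2} + \delta_{c_1,-K_S}\,(-1)^{\chi(\O_S)} (C_0 C_{22})^{K_S^2}.
\end{align*}
Since $K_S^2$ runs over all positive integers as $S$ varies, matching the two sides of the conjecture term by term is equivalent to the identities $C_0 C_{11} = C_0 C_{22} = t_{A_2,1}$ together with the set equality $\{C_0,\, C_0 C_{11} C_{12} C_{22}\} = \{t_{A_2,1} X_+, t_{A_2,1} X_-\}$ in $\Q(\!(q^{1/6})\!)$.

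Applying Vieta to the defining quadratic of $X_\pm$ turns the set equality into the pair of identities $C_0 + C_0 C_{11} C_{12} C_{22} = 4 t_{A_2,1}^3$ and $C_0 \cdot C_0 C_{11} C_{12} C_{22} = 4 t_{A_2,1}^3$. Combined with $C_{11} = C_{22} = t_{A_2,1}/C_0$ from the previous step, the entire conjecture collapses to two universal identities among power series,
$$C_{12} = 4\, t_{A_2,1}, \qquad C_0^2 - 4\, t_{A_2,1}^3 \, C_0 + 4\, t_{A_2,1}^3 = 0,$$
equivalent to $C_0 = t_{A_2,1} X_\pm$ after setting $Y = C_0/t_{A_2,1}$. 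This algebraic reformulation reduces the geometric content of the conjecture to pinning down four universal $q$-series.

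To establish these identities, the first step is to use the Gholampour-Thomas realization described in Sections \ref{sec:GT} and \ref{sec:Laar} to express each $C_0, C_{ij}$ as a generating series of tautological integrals over products $S^{[\bfn]} \times |\bfbeta|$ of Hilbert schemes and linear systems. Universality then permits the computation on any convenient surface (a $K3$ or an elliptic surface), with the data in Appendix \ref{sec:data} already determining each series modulo $q^{11}$ and thereby giving a strong numerical verification. The second step is to promote this finite-order verification into a closed-form identity by importing a modularity framework: show that $C_0$ and $C_{ij}$ live in a finite-dimensional space of (quasi-)modular forms of prescribed weight and level $\Gamma_0(3)$, so that matching finitely many Fourier coefficients suffices. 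Two plausible routes to such modularity are (i) a wall-crossing argument across Bridgeland stability transferring the vertical partition function into a rank-one object governed by the classical G\"ottsche-Nakajima-Yoshioka modularity, and (ii) identifying the Gholampour-Thomas generating functions with characters of a suitable vertex algebra.

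The principal obstacle is securing this modularity framework from first principles: the nested Hilbert scheme integrals do not a priori carry modularity, and improving finite-order checks to a closed-form result requires genuinely new input. A subtler obstacle is the Galois-theoretic shape of the conjecture: $C_0$ is required to be quadratic over $\Q(\!(q^{1/6})\!)[t_{A_2,1}]$, reflecting the two quantum vacua permuted by the $S$-duality group, so a satisfactory proof must produce this quadratic algebraic relation geometrically rather than merely analytically. This is likely the point at which rank-$3$ $S$-duality in the sense of \cite{VW}, together with the $S$-duality input motivating the rest of the paper, has to enter in an essential way.
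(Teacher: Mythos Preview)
Your algebraic reduction is correct and coincides with what the paper does: the paper's Conjecture \ref{conjver:rk3} is exactly the list $C_\varnothing = t_{A_2,1} X_-$, $C_{\{1\}} = C_{\{2\}} = t_{A_2,1}$, $C_{\{1,2\}} = t_{A_2,1} X_+$, and your Vieta manipulation reducing this to $C_{12} = 4\,t_{A_2,1}$ together with the quadratic for $C_0$ is sound.

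The essential point you have missed is that the statement is a \emph{conjecture} in the paper and the paper does not prove it. What the paper actually carries out is precisely your ``first step'': it uses the Gholampour--Thomas description and toric localization on products of Hilbert schemes (Section \ref{sec:loc}) to determine $C_0, C_{ij}$ modulo $q^{11}$, and then checks the identities to that order. There is no ``second step'' in the paper --- no modularity argument, no vertex-algebra identification, no wall-crossing upgrade from finite-order data to a closed-form identity. The closed expressions were guessed from the numerics and from the heuristic that the $S$-dual series $D_0, D_{ij}$ obtained via $\tau \mapsto -1/\tau$ should yield rational (indeed integral) virtual Euler characteristics.

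So your proposal is not incorrect as a programme, but it overshoots what the paper contains: there is no proof to compare against. The obstacles you name (modularity of the nested Hilbert scheme generating series from first principles, a geometric origin for the quadratic relation on $C_0$) are exactly the reasons the statement remains open.
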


\subsection{Rank 4}

Consider \emph{Ramanujan's octic continued fraction} \cite{Duk}
\begin{equation} \label{defu}
u(q)=\frac{\sqrt{2} q^{\frac{1}{8}}}{1+\frac{q}{1+q+ \frac{q^2}{1+q^2 + \cdots}}} = \frac{\sqrt{2} \, \eta(q) \eta(q^4)^2 }{\eta(q^2)^3}.
\end{equation}

\begin{conjecture} \label{intro:conjver:rk4}
For any minimal surface $S$ of general type satisfying $H_1(S,\Z) = 0$, $p_g(S)>0$, and $c_1 \in H^2(S,\Z)$, we have
\begin{align*}
\Phi_{4,S,c_1} = &\ \delta_{c_1,0} \Big\{ \Big( \frac{Z - Z^{-1}}{t_{A_3,2}^{-1} u(q^2)^{-4} - Z^{-1}} \Big)^{K_S^2} + \Big( \frac{Z^{-1} - Z}{t_{A_3,2}^{-1} u(q^2)^{-4} - Z} \Big)^{K_S^2} \Big\} \\
&+ \delta_{c_1,2K_S}(-1)^{\chi(\O_S)}  \Big\{ \Big( \frac{Z - Z^{-1}}{t_{A_3,2}^{-1} Z - u(q^2)^{4}} \Big)^{K_S^2} + \Big( \frac{Z^{-1} - Z}{t_{A_3,2}^{-1} Z^{-1} - u(q^2)^{4}} \Big)^{K_S^2} \Big\} \\
&+(\delta_{c_1,K_S} + \delta_{c_1,-K_S}) \, t_{A_3,1}^{K_S^2} \Big\{\big( 1 + u(q^2)^{-4} \big)^{K_S^2} + (-1)^{\chi(\O_S)}\big( u(q^2)^4+1 \big)^{K_S^2} \Big\},
\end{align*}
where $Z$ is a root of
\begin{align*}
Z -3( u(q^2)^{-2} + u(q^2)^{2}) + Z^{-1} = 0.
\end{align*}
\end{conjecture}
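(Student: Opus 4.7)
The plan is to reduce $\Phi_{4,S,c_1}(q)$, via Theorem \ref{thm:Laarakker} and the minimal-general-type hypothesis, to a finite sum of $K_S^2$-th powers of explicit combinations of the universal series $C_0^{(4)}, C_{ij}^{(4)}$, and then to match each combination against one of the closed forms on the right-hand side of Conjecture \ref{intro:conjver:rk4}, verified coefficient-by-coefficient from the data in Appendix \ref{sec:data}.

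First, since $S$ is minimal of general type with $p_g(S)>0$, the only Seiberg-Witten basic classes are $0$ and $K_S$, with $\SW(0)=1$ and $\SW(K_S)=(-1)^{\chi(\O_S)}$. Thus $\bfbeta=(\beta_1,\beta_2,\beta_3)$ ranges over $\{0,K_S\}^3$, and $\delta_{c_1,\sum_i i\beta_i}$ retains only those tuples with $\beta_1+2\beta_2+3\beta_3\equiv c_1\pmod{4H^2(S,\Z)}$. Each exponent $\beta_i\beta_j$ is either $0$ or $K_S^2$, so each summand of $\Phi_{4,S,c_1}$ is a sign $\epsilon_\bfbeta=\prod_k \SW(\beta_k)\in\{\pm 1\}$ times $a_\bfbeta^{K_S^2}$, where $a_\bfbeta:=C_0\prod_{(k,l)\in I_\bfbeta}C_{kl}$ and $I_\bfbeta:=\{(k,l):k\le l,\,\beta_k\beta_l=K_S^2\}$. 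Enumerating the eight tuples shows that each residue class $c_1\in\{0,K_S,-K_S,2K_S\}$ receives contributions from exactly two of them, which one naturally interprets as the ``two quantum vacua'' appearing in pairs on the right-hand side of the conjecture.

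Second, let $Z_\pm$ denote the two roots of $Z+Z^{-1}=3(u(q^2)^{-2}+u(q^2)^{2})$ (so $Z_+Z_-=1$), and let $F_{c_1,\pm}(q)$ be the base series obtained by dropping the outer $K_S^2$-th powers in the two closed forms attached to $c_1$ on the right-hand side of Conjecture \ref{intro:conjver:rk4}. Matching both sides for all $K_S^2\ge 1$ simultaneously is, by Newton's identities, equivalent to the equality of multisets $\{\epsilon_\bfbeta a_\bfbeta\}_\bfbeta=\{\eta_\pm F_{c_1,\pm}\}_{\pm}$ (with $\eta_\pm\in\{\pm 1\}$ the signs read off from the conjecture), which in turn reduces to two identities of elementary symmetric polynomials per residue class of $c_1$. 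Each such identity takes place in $\Q(\!(q^{1/8})\!)$ and can be checked coefficient-by-coefficient by expanding $u(q^2)$ via \eqref{defu}, expanding $\Theta_{A_3,\ell}$ and $\eta$ via their product formulae, and substituting the values of $C_0^{(4)},C_{ij}^{(4)}$ modulo $q^{13}$ from Appendix \ref{sec:data}. This yields a proof of Conjecture \ref{intro:conjver:rk4} modulo $q^{13}$.

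The main obstacle to a complete proof is the lack of closed forms for the universal series $C_0^{(4)},C_{ij}^{(4)}$ themselves: the Gholampour-Thomas description (Section \ref{sec:GT}) together with Laarakker's reduction (Section \ref{sec:Laar}) produces them only order by order in $q$. A complete proof would therefore likely require either a modular-forms argument pinning the generating series down from their leading coefficients --- now using modular forms on $\Gamma_0(2)$ combined with the $\eta$-product identity for $u(q)$ recorded in \eqref{defu} --- or a direct geometric origin for the $\Z/2$-Galois symmetry $Z\leftrightarrow Z^{-1}$ permuting the two quantum vacua within each block. Either route appears substantial; until then the truncated verification above constitutes the strongest available evidence.
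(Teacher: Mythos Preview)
Your proposal is correct and aligns with the paper's own treatment: the statement is a conjecture, and the paper likewise only verifies it modulo $q^{13}$ by computing $C_0,C_{ij}$ to that order via toric localization (Section~\ref{sec:loc}, Appendix~\ref{sec:data}) and checking agreement with the closed forms. Your final paragraph accurately identifies why no complete proof is available.

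There is one organizational difference worth noting. You reduce the claim to equality of \emph{multisets} of base series within each residue class of $c_1$, invoking Newton's identities to pass from ``equality for all $K_S^2\geq 1$'' to two symmetric-function identities per class. The paper instead states the stronger Conjecture~\ref{conjver:rk4}, which gives an individual closed form for each $C_I$ with $I\subset\{1,2,3\}$; since for minimal general type each tuple $\bfbeta\in\{0,K_S\}^3$ contributes exactly $C_{\{i:\beta_i=K_S\}}^{K_S^2}$, the introduction version follows immediately by substitution without any symmetric-function argument. The paper's route is slightly stronger (it pins down which $a_\bfbeta$ equals which closed form, rather than just the unordered pair) and makes the $Z\leftrightarrow Z^{-1}$ symmetry manifest as $C_I\leftrightarrow C_{[3]\setminus I}$; your route would in principle allow weaker hypotheses, but in practice both reduce to the same finite coefficient checks.
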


\subsection{Rank 5}

For rank 5, we encounter the \emph{Rogers-Ramanujan continued fraction} appearing in S.~Ramanujan's famous letter to G.~H.~Hardy \cite{Ram}
$$
r(q) = \frac{q^{\frac{1}{5}}}{1+\frac{q}{1+\frac{q^2}{1+\cdots}}}.
$$
We define auxiliary functions 
\begin{align*}
\beta_1 =  \frac{1}{25} t_{A_4,1} \big( 3 r^{-5} + 2 -8 r^5 \big), \quad \beta_2 = \frac{1}{25} t_{A_4,2} \big( 8 r^{-5}  + 2  -3 r^{5} \big). 
\end{align*}

\begin{conjecture} \label{intro:conjver:rk5}
For any minimal surface $S$ of general type satisfying $H_1(S,\Z) = 0$, $p_g(S)>0$, and $c_1 \in H^2(S,\Z)$, we have
\begin{align*}
\Phi_{5,S,c_1} = \, &\delta_{c_1,0} \Bigg\{ \Bigg( \frac{Z X_+^2Y_+^2}{\beta_1\beta_2}   \Bigg)^{\frac{K_S^2}{2}} + \Bigg( \frac{X_+^2Y_-^2}{Z\beta_1\beta_2}   \Bigg)^{\frac{K_S^2}{2}} + \Bigg( \frac{X_-^2Y_+^2}{Z\beta_1\beta_2}   \Bigg)^{\frac{K_S^2}{2}} + \Bigg(  \frac{Z  X_-^2Y_-^2}{\beta_1\beta_2}   \Bigg)^{\frac{K_S^2}{2}} \Bigg\} \\
&+\big(\delta_{c_1,K_S} + \delta_{c_1,-K_S} \big) \Big\{ \beta_1^{K_S^2} + (-1)^{\chi(\O_S)}\big(X_+^{K_S^2} + X_-^{K_S^2}\big) \Big\}  \\
&+ \big(\delta_{c_1,2K_S} + \delta_{c_1,-2K_S} \big) \Big\{ \beta_2^{K_S^2} + (-1)^{\chi(\O_S)}\big(Y_+^{K_S^2} + Y_-^{K_S^2}\big) \Big\}, 
\end{align*}
where $X_{\pm}$, $Y_{\pm}$, $Z$ are the roots of
\begin{align*}
&X^2 - \frac{4}{5} \beta_1 \big(\beta_1 t_{A_4,1}^{-1}  - 1 \big) \big(3r^{-5}+1\big) X + \frac{4}{5}\beta_1^2 \big(3r^{-5} +1\big) = 0, \\
&Y^2 - \frac{4}{5} \beta_2  \big(\beta_2  t_{A_4,2}^{-1}  - 1 \big) \big(1-3r^{5} \big) Y + \frac{4}{5}\beta_2^2 \big(1-3r^{5} \big) = 0,  \\
&Z - \frac{6}{25} \big( 8r^{-5} -13 - 8r^5 \big) + Z^{-1} = 0. 
\end{align*}
\end{conjecture}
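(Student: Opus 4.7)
The plan is to reduce Conjecture \ref{intro:conjver:rk5} to a finite comparison of $q$-expansions via Theorem \ref{thm:Laarakker}, and then check the resulting identities against the universal data in Appendix \ref{sec:data}. First, since $S$ is minimal of general type the only Seiberg-Witten basic classes are $0$ and $K_S$, with $\SW(0)=1$ and $\SW(K_S)=(-1)^{\chi(\O_S)}$. Substituting $r=5$ in Theorem \ref{thm:Laarakker} collapses the sum defining $\Phi_{5,S,c_1}$ to the $2^4$ tuples $\bfbeta=(b_1,b_2,b_3,b_4)\in\{0,K_S\}^{4}$, and the selector $\delta_{c_1,\sum_i i b_i}$ cuts each sector of $c_1$ modulo $5H^2(S,\Z)$ down to a specific subset. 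Every exponent $b_i b_j$ is then either $0$ or $K_S^2$, so each sector becomes a finite sum of monomials of the form $C_0^{K_S^2}\prod_{i\le j}C_{ij}^{K_S^2}$ (restricted to various subsets of indices), multiplied by an overall sign $(-1)^{m\chi(\O_S)}$ where $m$ counts the indices with $b_i=K_S$.

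Next I would extract the $K_S^2$-th root. As $S$ varies over minimal surfaces of general type with $p_g>0$, the integer $K_S^2$ takes arbitrarily large values, so the conjecture must hold sector-by-sector as an identity of power series after matching the $K_S^2$-dependence. This reduces it to a finite list of algebraic identities in $\Q(\!(q^{1/10})\!)$ between explicit polynomial combinations of $C_0,\{C_{ij}\}$ on the left and specific algebraic functions of $r(q)$, $t_{A_4,\ell}(q)$, and the auxiliary roots $X_\pm,Y_\pm,Z$ on the right. The roots $X_\pm$ and $Y_\pm$ can be eliminated by applying Vieta to the displayed quadratics, and $Z$ via its reciprocal equation, converting every identity into one between rational functions over $\Q(r(q))$; the characteristic Galois-symmetric shape of the $c_1=0$ sector (a sum of the four $\pm$-combinations) is precisely what is required for the square-root branches to drop out.

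The final step is numerical verification: expand $r(q)$ and $\eta(q)$ to order $q^{13}$, compute the right-hand sides of the reduced identities, and compare with the entries for $C_0^{(5)}$ and $C_{ij}^{(5)}$ tabulated in Appendix \ref{sec:data}. Agreement to this order is exactly the content of the confirmation claim announced after Theorem \ref{thm:Laarakker}. The main obstacle to upgrading this to a theorem is conceptual rather than computational: Laarakker's nested Hilbert scheme definition of $C_0,C_{ij}$ provides no a priori reason for these series to be algebraic over the field generated by the Rogers-Ramanujan continued fraction, nor for them to arrange themselves into the Galois orbits dictated by $S$-duality. A complete proof would require either a closed-form evaluation of the integrals on nested Hilbert schemes, or a conceptual input, for instance a modular identification of the vertical partition function implied by $S$-duality, forcing the observed algebraic structure.
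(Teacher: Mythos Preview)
Your proposal is correct and matches the paper's approach: this statement is a \emph{conjecture}, not a theorem, and the paper offers no proof beyond numerical verification of the universal series $C_0, C_{ij}$ to order $q^{13}$ against the explicit formulae, exactly as you describe. The only refinement worth noting is that the paper actually states a slightly stronger version (Conjecture \ref{conjver:rk5}) which specifies each $C_I$ for $I\subset\{1,2,3,4\}$ individually, and then observes that specialising to minimal general type (so $\bfbeta\in\{0,K_S\}^4$) recovers the introduction's formulation; this sidesteps your ``extract the $K_S^2$-th root and vary $K_S^2$'' step, since one simply checks sixteen identities of $q$-series directly rather than inferring them from power sums.
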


\subsection{Hauptmoduln}

The functions $t_{A_1,1}, t_{A_2,1}$, encountered in the rank 2 and 3 cases, are related to continued fractions as follows \cite{Cha}:
\begin{align*}
u^{-2} =  t_{A_1,1}, \quad c^{-1} + 4c^2 = 3  t_{A_2,1},
\end{align*}
where $c(q)$ is \emph{Ramanujan's cubic continued fraction}
$$
c(q)=\frac{q^{\frac{1}{3}}}{1+\frac{q+q^2}{1+ \frac{q^2+q^4}{1+ \cdots}}}.
$$
Denote by $\Gamma_0(r) \leq \mathrm{SL}(2,\Z)$ the Hecke congruence subgroup of level $r$. Let $\mathfrak{h} \subset \C$ denote the upper half plane. The quotient $\Gamma_0(r) \setminus (\mathfrak{h} \cup \{\textrm{cusps}\})$ is known as the \emph{classical modular curve} and is denoted by $X_0(r)$. It has genus zero precisely when $r=1,2,\ldots, 10,12,13,16,18,25$. In these cases elements of $\C(X_0(r))$ can be expressed as rational functions in terms of a single modular function called a \emph{Hauptmodul} for $\Gamma_0(r)$. The following are Hauptmoduln for $\Gamma_0(r)$ for $r=3,4,5$
$$
j_3(q) = \frac{\eta(q)^{12}}{\eta(q^3)^{12}}, \quad j_4(q) = \frac{\eta(q)^8}{\eta(q^4)^8}, \quad j_5(q) = \frac{\eta(q)^6}{\eta(q^5)^6}, \quad q = e^{2 \pi i \tau}.
$$
The continued fractions $c,u,r$ are related to $j_3,j_4,j_5$ \cite{Duk,Cha}
\begin{align*} \label{contdvsHaupt}
c^{-3} -15 + 48 c^3 + 64 c^6 = j_3, \quad 16 u^{-8} - 16 = j_4, \quad r^{-5} - 11 - r^5 = j_5.
\end{align*}
The last equation in Conjecture \ref{intro:conjver:rk5} can be rewritten as $Z - (\frac{48}{25} j_5 +18) + Z^{-1}=0$.

\begin{remark}
For $r=6,7$, we also find interesting (conjectural) relations between the universal functions $C_0, C_{ij}$ and a Hauptmodul for $\Gamma_0(r)$ (Conjectures \ref{conjver:rk6}, \ref{conjver:rk7}), but no complete solution. We speculate that, for any rank $r$, $C_0, C_{ij}$ are given by algebraic expressions in terms of $t_{A_{r-1},\ell}$ and elements of $\C(X_0(r))$.
\end{remark}

\subsection{$S$-duality}

Let us return to the full Vafa-Witten partition function $\sfZ_{S,H,c_1}^{\SU(r)}$ with $r$ prime. The \emph{Langlands dual} of $\SU(r)$ is $^{L}\SU(r) = \SU(r) / \mu_r$. 
Naively, the Langlands dual partition function is defined by
\begin{equation*} 
\mathsf{Z}^{^{L}\SU(r)}_{S,H,c_1}(q) := r \sum_{w \in H^2(S,\Z_r)} \epsilon_r^{w \cdot c_1} \, \sfZ_{S,H,w}^{\SU(r)}(q),
\end{equation*}
where $\epsilon_r = \exp(2 \pi i /r)$ and $H^2(S,\Z_r) \cong H^2(S,\Z) / rH^2(S,\Z)$. Firstly, this formula is only well-defined when $\sfZ_{S,H,c_1}^{\SU(r)}$ is invariant under $c_1 \mapsto c_1 + r \gamma$ for $\gamma \in H^2(S,\Z)$. 
This property holds in many cases but is not proved in general (see the discussion in \cite{JK}). Secondly, if $w \in H^2(S,\Z_r)$ cannot be represented by an algebraic class, then $\sfZ_{S,H,w}^{\SU(r)}$, as defined in Section \ref{sec:TT}, is obviously zero. In this case the correct definition for $\sfZ_{S,H,w}^{\SU(r)}$ in terms of moduli spaces of \emph{twisted sheaves} was recently given by Y.~Jiang and the second-named author \cite{JK} (see also \cite{Jia}). With these definitions in place, the mathematical formulation of the \emph{$S$-duality conjecture} is as follows:
\begin{conjecture}[Vafa-Witten] \label{intro:Sdualconj} 
Let $(S,H)$ be a smooth polarized  surface satisfying $H_1(S,\Z) = 0$ and $p_g(S)>0$. Let $r$ be prime and $c_1 \in H^2(S,\Z)$ algebraic. Then $\sfZ_{S,H,c_1}^{\SU(r)}(q)$ and $\sfZ_{S,H,c_1}^{^{L}\SU(r)}(q)$ are Fourier expansions in $q = \exp(2 \pi i \tau)$ of meromorphic functions $\sfZ_{S,H,c_1}^{\SU(r)}(\tau)$ and $\sfZ_{S,H,c_1}^{^{L}\SU(r)}(\tau)$ on $\mathfrak{H}$ satisfying
\begin{equation*} 
\mathsf{Z}^{\SU(r)}_{S,H,c_1}(-1/\tau) = (-1)^{(r-1)\chi(\O_S)} \Big( \frac{ r \tau}{i} \Big)^{-\frac{e(S)}{2}} \mathsf{Z}^{^{L}\SU(r)}_{S,H,c_1}(\tau).
\end{equation*}
\end{conjecture}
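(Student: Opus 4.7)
The plan is to reduce $S$-duality to a pair of modular transformation statements, one for the horizontal and one for the vertical contribution, and verify each via the universal structure displayed above. For prime $r$, the cosection argument of \cite{Tho} gives
$$
\sfZ^{\SU(r)}_{S,H,c_1} = r^{-1}\sfZ^{(r)}_{S,H,c_1} + r^{-1}\sfZ^{(1^r)}_{S,H,c_1},
$$
and the same decomposition applies to every term $\sfZ^{\SU(r)}_{S,H,w}$ in the Langlands dual sum. Since the horizontal/vertical split commutes with the twisting character $\epsilon_r^{w\cdot c_1}$, I would verify the modular identity on each piece independently.

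For the vertical piece, Theorem \ref{thm:Laarakker} writes $\sfZ^{(1^r)}_{S,H,c_1}$ as a product of a normalization built from $\eta, \Delta, \Theta_{A_{r-1},0}$ with $\Phi_{r,S,c_1}$. Substituting Conjectures \ref{intro:conjver:rk2}--\ref{intro:conjver:rk5} (or their general versions in Section \ref{sec:verconj}), the Langlands dual sum
$$
r\sum_{w \in H^2(S,\Z)/rH^2(S,\Z)} \epsilon_r^{w\cdot c_1}\,\sfZ^{(1^r)}_{S,H,w}
$$
collapses via the Kronecker delta $\delta_{w,\sum_i i\beta_i}$ into a sum indexed by dual lattice cosets. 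The modular identity then reduces to the Jacobi inversion
$$
\Theta_{A_{r-1},\ell}(-1/\tau) = \frac{(\tau/i)^{(r-1)/2}}{\sqrt{\det A_{r-1}}}\sum_{\ell'} e^{-2\pi i\langle\ell,\ell'\rangle/r}\Theta_{A_{r-1}^{\vee},\ell'}(\tau),
$$
the standard modular behavior of $\eta$ and $\Delta$, and, for $r=4, 5$, the Atkin-Lehner involution on $X_0(r)$ which determines how $u(q^2)$ and $r(q)$ transform under $\tau \mapsto -1/\tau$ via their relations $16u^{-8}-16 = j_4$ and $r^{-5}-11-r^5 = j_5$. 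The algebraic roots $X_\pm, Y_\pm, Z$ should be permuted by the $S$-involution, matching the Galois permutation of quantum vacua mentioned in the abstract.

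For the horizontal piece, I would use the conjectural structure formula of the abstract, writing $\sfZ^{(r)}$ as a sum of universal functions with coefficients in a Galois extension of $\Q$ permuted by the $S$-duality action. Granting this formula together with modularity of the universal functions, the horizontal half of the modular identity becomes a consequence of the same lattice theta transformation combined with the Galois descent.

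The hard part will be twofold. First, the vertical conjectures for $r=4,5$ are presently verified only to order $q^{13}$ via the nested-Hilbert-scheme calculations of Appendix \ref{sec:data}; proving them in closed form seems to require a new technique for evaluating these integrals, perhaps via degeneration of the incidence locus $S^{[\bfn]}_{\bfbeta}$ or via as-yet-undiscovered identities on $A_{r-1}^{\vee}$ reflecting Ramanujan's continued fractions. Second, on the horizontal side there is no independent closed form for $e^{\vir}(M_S^H(r,c_1,c_2))$ outside the cases $HK_S<0$ or $K_S \cong \O_S$ covered by the remark in Section \ref{sec:TT}; one must first establish a Laarakker-type structure result for these virtual Euler characteristics, and then extend it to moduli of twisted sheaves as in \cite{JK} to cover non-algebraic classes $w \in H^2(S,\Z_r)$. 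Meromorphicity on $\mathfrak{H}$ would then follow from the explicit form of each ingredient---eta quotients, lattice theta functions, Hauptmoduln, and their algebraic combinations.
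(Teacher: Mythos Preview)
The statement is a conjecture, and the paper does not prove it in general; it verifies it for $r=2,3,5$ assuming the other conjectures (Proposition \ref{prop:Sdualcheck}). Your plan contains a structural misconception about how that verification goes.

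You propose to check the modular identity on the vertical and horizontal pieces \emph{independently}: vertical dual to vertical, horizontal dual to horizontal. In the paper this is false. Proposition \ref{prop:Sdualpart} shows that under $\tau\mapsto -1/\tau$ the vertical generating series $\sfZ^{(1^r)}_{S,H,c_1}$ is carried not to the Langlands dual of the vertical piece but to the $k=0$ summand of the Langlands dual \emph{horizontal} piece, and vice versa. This swap is the mechanism by which Conjecture \ref{conjCD} (the relation $D_0(\tau)=C_0(-1/\tau)$, $D_{ij}(\tau)=C_{ij}(-1/\tau)$) enters: the $S$-transform of the vertical $C$-expression is precisely the horizontal $D$-expression, after one uses the first identity of Proposition \ref{LLlattice} to convert $\delta_{c_1,\sum i\beta_i}$ into the finite character sum $r^{-b_2}\sum_w \epsilon_r^{w(c_1-\sum i\beta_i)}$. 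Your Kronecker-delta argument would instead try to match vertical against vertical, which does not close.

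Consequently, the nontrivial content of $S$-duality is concentrated in the \emph{remaining} horizontal terms $k=1,\ldots,r-1$ of the expansion \eqref{Zandpsi}, i.e.~the identity \eqref{Sdualremainder}. Here the substitution $q^{1/2r}\mapsto \epsilon_r^k q^{1/2r}$ is the same as the shift $\tau\mapsto \tau+2k$, and one must determine how the building blocks $\Delta((\tau+m)/r)^{1/2}$, $\Theta_{A_{r-1}^\vee,\ell}(\tau+2k)$, and $s(\tau+2k)$ (or the octic analogue) behave under $\tau\mapsto -1/\tau$. These transformations relate different values of $k$ via $mn\equiv -1\bmod r$ and introduce the Legendre-symbol signs $\epsilon(m)$ of Proposition \ref{LLlattice}; the quantum vacua $X_\pm,Y_\pm,Z$ at the shifted arguments are permuted accordingly. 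None of this is visible in your outline, which treats only the unshifted modular inversion of $u,r$ via Hauptmoduln. The Atkin--Lehner framing is suggestive but does not by itself supply the needed shifted transformation laws, nor the second flux-sum identity that absorbs the resulting phases.
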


Building on \cite{TT2,MT}, this conjecture was proved for K3 surfaces in \cite{JK}. $S$-duality should also hold for non-prime $r$ and surfaces with $p_g(S) = 0$ or $H_1(S,\Z) \neq 0$, but the formulation should be adapted (e.g.~see \cite{Wit}).

\subsection{Horizontal contribution}

Starting from the vertical generating series $\sfZ_{S,H,c_1}^{(1^r)}$ and applying $\tau \mapsto -1/\tau$ gives a prediction for the horizontal generating series $\sfZ_{S,H,c_1}^{(r)}$. We make this precise in Section \ref{sec:S-dual}. This strategy was applied for $r=2$ in physics in \cite{DPS}. We then obtain conjectures for virtual Euler characteristics of Gieseker-Maruyama moduli spaces, which we now present. 

The $A_r^\vee$ lattice is defined as $\Z^r$ with bilinear pairing $\langle \cdot, \cdot \rangle^\vee$ determined by the inverse of the matrix defining $\langle \cdot, \cdot \rangle$. 
See Appendix \ref{sec:theta} for details on lattice theta functions. 
Define
\begin{align*}
\Theta_{A_{r}^\vee,\ell}(q) = \sum_{v \in \mathbb{Z}^r} e^{2 \pi i \langle v, \ell (1,0,\ldots, 0) \rangle^{\vee} } q^{\frac{1}{2} \langle v , v  \rangle^{\vee}}, \quad t_{A_r^\vee,\ell}(q) = \frac{\Theta_{A_{r}^\vee,0}(q)}{\Theta_{A_{r}^\vee,\ell}(q)}, \quad \ell \in \Z.
\end{align*}
\begin{conjecture}  \label{intro:conjhor}
For any $r>1$, there exist $D_0$, $\{D_{ij}\}_{1 \leq i \leq j \leq r-1} \in \C[\![q^{\frac{1}{2r}}]\!]$ with the following property. For any smooth polarized surface $(S,H)$ satisfying $b_1(S) = 0$, $p_g(S)>0$, $c_1 \in  H^2(S,\Z)$, and $c_2 \in H^4(S,\Z)$ such that there are no rank $r$ strictly $H$-semistable sheaves on $S$ with Chern classes $c_1,c_2$, $e^{\mathrm{vir}}(M_S^H(r,c_1,c_2))$ equals the coefficient of $q^{c_2 - \frac{r-1}{2r} c_1^2 - \frac{r}{2} \chi(\O_S) + \frac{r}{24} K_S^2}$ of
\begin{align*}
&r^{2+K_S^2 - \chi(\O_S)} \Bigg( \frac{1}{\Delta(q^{\frac{1}{r}})^{\frac{1}{2}}} \Bigg)^{\chi(\O_S)} \Bigg( \frac{\Theta_{{A^\vee_{r-1}},0}(q)   }{\eta(q)^r} \Bigg)^{-K_S^2}  \\
&\quad\quad\quad\quad \times D_0(q)^{K_S^2} \sum_{\bfbeta \in H^2(S,\Z)^{r-1}}   \prod_{i} \epsilon_r^{i \beta_i c_1} \, \SW(\beta_i) \prod_{i \leq j} D_{ij}(q)^{\beta_i \beta_j}.
\end{align*}
\end{conjecture}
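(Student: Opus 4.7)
My strategy is to deduce Conjecture \ref{intro:conjhor} from the $S$-duality conjecture (Conjecture \ref{intro:Sdualconj}) applied to Laarakker's structure theorem (Theorem \ref{thm:Laarakker}) for the vertical contribution. Because only an existence statement for universal series $D_0,D_{ij}$ is being made, the task reduces to tracking how the explicit modular transform of the vertical generating series distributes itself into horizontal generating series for the different choices of $c_1$ modulo $r$.

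First, for prime $r$, Thomas's cosection vanishing gives $r\,\sfZ^{\SU(r)}_{S,H,c_1}(\tau) = \sfZ^{(r)}_{S,H,c_1}(\tau) + \sfZ^{(1^r)}_{S,H,c_1}(\tau)$. Inverting the definition of the Langlands dual partition function by finite Fourier analysis on $H^2(S,\Z_r)$ expresses the horizontal contribution $\sfZ^{(r)}_{S,H,c_1}$ as a sum over $w \in H^2(S,\Z_r)$, weighted by $\epsilon_r^{-w \cdot c_1}$, of the $\tau \mapsto -1/\tau$ transforms of the vertical contributions $\sfZ^{(1^r)}_{S,H,w}$.

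Next, I would compute the modular transformation of each factor in Theorem \ref{thm:Laarakker}. Noether's formula $e(S)+K_S^2=12\chi(\O_S)$ combined with $\Delta(-1/\tau)=\tau^{12}\Delta(\tau)$ and the standard $\eta$-transformation absorbs the Jacobian $(r\tau/i)^{-e(S)/2}$ from Conjecture \ref{intro:Sdualconj}, turning $\Delta(q^{r})^{-\chi(\O_S)/2}$ into $\Delta(q^{1/r})^{-\chi(\O_S)/2}$. Poisson summation for the root lattice $A_{r-1}$ converts $\Theta_{A_{r-1},\ell}(q)$ into a linear combination of $\Theta_{A^\vee_{r-1},\ell'}(\tilde q)$, yielding the dual theta function appearing on the horizontal side. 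Similarly, Poisson summation applied to the Seiberg--Witten sum $\sum_{\bfbeta} \delta_{c_1,\sum_i i\beta_i} \prod_i \SW(\beta_i) \prod_{i\leq j} C_{ij}^{\beta_i\beta_j}$ over the $r-1$ copies of $H^2(S,\Z)$ turns the Kronecker delta $\delta_{c_1,\sum_i i\beta_i}$ into the character product $\prod_i \epsilon_r^{i\beta_i c_1}$ and transforms the coefficients $C_0, C_{ij}$ into their modular companions $D_0, D_{ij}$. The constant prefactor $r^{2+K_S^2-\chi(\O_S)}$ is pinned down by tracking the cocycles.

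Finally, extracting the coefficient of $q^{c_2 - \frac{r-1}{2r}c_1^2 - \frac{r}{2}\chi(\O_S) + \frac{r}{24}K_S^2}$ in the resulting expression for $\sfZ^{(r)}_{S,H,c_1}(q)$ produces the claimed formula for $e^{\vir}(M_S^H(r,c_1,c_2))$ via the identity \eqref{hor}. The main obstacles are threefold: (i) Conjecture \ref{intro:Sdualconj} is itself open, so the argument is inherently conditional; (ii) the Poisson-summation bookkeeping must reconcile the lattice $A_{r-1}$ sum of Theorem \ref{thm:Laarakker} with the $A^\vee_{r-1}$ sum in Conjecture \ref{intro:conjhor}, and precisely match the finite pairing on $H^2(S,\Z_r)$ with the twisting characters $\epsilon_r^{i\beta_i c_1}$ -- this is the most delicate bookkeeping step; and (iii) the conjecture is stated for all $r>1$, whereas Thomas's cosection applies only for prime $r$, so for composite $r$ one must separately argue that intermediate $\sfZ^{\lambda}$ contributions fit into the same universal structure, presumably via an inductive decomposition of $\lambda \vdash r$ into finer partitions.
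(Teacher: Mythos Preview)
The statement is a \emph{conjecture}; the paper does not prove it. What the paper offers (Section~\ref{sec:evir}) is that Conjecture~\ref{intro:conjhor} is a special case of the general multiplicative-insertions Conjecture~\ref{conj:multinst}, with the universal series attached to $\chi(\O_S)$ identified as $\Delta(q^{1/r})^{-1/2}$ via the K3 specialization; a weak version is then proved via Mochizuki's formula (Theorem~\ref{thm:multinst}). The role of $S$-duality in the paper is the opposite of what you propose: Conjectures~\ref{intro:conjhor} and~\ref{intro:conjCD} are \emph{assumed}, and from them part of $S$-duality is derived (Proposition~\ref{prop:Sdualpart}).

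Your conditional argument has a genuine gap. Finite Fourier inversion on $H^2(S,\Z_r)$ combined with Conjecture~\ref{intro:Sdualconj} relates the \emph{full} $\sfZ^{\SU(r)}_{S,H,c_1}(-1/\tau)$ to a flux sum of full $\sfZ^{\SU(r)}_{S,H,w}(\tau)$; it does not express $\sfZ^{(r)}_{S,H,c_1}$ purely in terms of transforms of vertical pieces, because horizontal terms survive on both sides after the transformation. Proposition~\ref{prop:Sdualpart} does show that the vertical piece exchanges with the ``$k=0$'' part of the horizontal expansion \eqref{Zandpsi}, but that statement already presupposes the existence of $\psi_{S,c_1}$, i.e.\ Conjecture~\ref{intro:conjhor}, together with Conjecture~\ref{intro:conjCD}; the remaining $k=1,\ldots,r-1$ terms are treated in \eqref{Sdualremainder}, again assuming the universal structure. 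So your deduction is circular.

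There is also a local error: your ``Poisson summation applied to the Seiberg--Witten sum'' is not well-posed, since the sum over $\bfbeta$ is finitely supported on basic classes. What actually occurs (proof of Proposition~\ref{prop:Sdualpart}) is two separate steps: the flux sum over $w$ with weight $\epsilon_r^{wc_1}$ converts $\delta_{c_1,\sum_i i\beta_i}$ into $\prod_i\epsilon_r^{i\beta_i c_1}$ via Proposition~\ref{LLlattice}, and independently the modular transform $\tau\mapsto -1/\tau$ sends $C_0,C_{ij}$ to $D_0,D_{ij}$ by Conjecture~\ref{intro:conjCD}. Neither step is a lattice Poisson summation on the Seiberg--Witten sum.
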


\begin{remark}
This formula shares a similar structure to conjectural formulae in \cite{GK1}--\cite{GK4}, \cite{GKW, Got4} for other virtual invariants (virtual $\chi_y$-genera, elliptic genera, cobordism classes, Segre numbers, Verlinde numbers, Donaldson invariants). In Appendix \ref{sec:mult}, we present a generalization of all these cases: a universal formula for \emph{any} set of ``multiplicative'' virtual invariants of Gieseker-Maruyama moduli spaces $\{M_S^H(r,c_1,c_2)\}_{c_2 \in \Z}$. Using \emph{Mochizuki's formula} \cite{Moc}, we prove a weak version of this conjecture (Theorem \ref{thm:multinst}).
\end{remark}

Parallel to the vertical case, we define:
\begin{definition} \label{def:Psi}
Let $S$ be a smooth projective surface satisfying $H_1(S,\Z) = 0$, $p_g(S)>0$, and let $c_1 \in H^2(S,\Z)$. For any $r>1$, we define
$$
\Psi_{r,S,c_1}(q) = D_0(q)^{K_S^2} \sum_{\bfbeta \in H^2(S,\Z)^{r-1}}  \prod_{i} \epsilon_r^{i \beta_i c_1} \SW(\beta_i) \prod_{i \leq j} D_{ij}(q)^{\beta_i \beta_j}.
$$
\end{definition}

Motivated by $S$-duality, we conjecture the following relation between the universal functions $C_0, C_{ij}$ and $D_0, D_{ij}$ (which implies part of the $S$-duality transformation, Conjecture \ref{intro:Sdualconj}, as we will show in Proposition \ref{prop:Sdualpart})
\begin{conjecture} \label{intro:conjCD}
For any $r>1$, $C_0(q)$, $C_{ij}(q)$, $D_0(q)$, $D_{ij}(q)$ are Fourier expansions in $q = \exp(2 \pi i \tau)$ of meromorphic functions $C_0(\tau)$, $C_{ij}(\tau)$, $D_0(\tau)$, $D_{ij}(\tau)$ on $\mathfrak{H}$ satisfying
$$
D_0(\tau) = C_0(-1/\tau), \quad D_{ij}(\tau) = C_{ij}(-1/\tau).
$$
\end{conjecture}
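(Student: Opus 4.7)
The plan is to derive Conjecture \ref{intro:conjCD} from the $S$-duality conjecture (Conjecture \ref{intro:Sdualconj}) itself, combined with Thomas's decomposition $\sfZ^{\SU(r)}_{S,H,c_1}=r^{-1}(\sfZ^{(r)}_{S,H,c_1}+\sfZ^{(1^r)}_{S,H,c_1})$ for $r$ prime, together with the structure formulas of Theorem \ref{thm:Laarakker} and Conjecture \ref{intro:conjhor}. The physical picture is that $\tau\mapsto-1/\tau$ should swap the magnetic (vertical) and electric (horizontal) sectors, so the sought modular identity should be extractable at the level of the universal functions after the prefactors are accounted for.

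First I would compute the modular transform of the prefactor in Theorem \ref{thm:Laarakker} explicitly. The key identities are $\eta(-1/\tau)=\sqrt{-i\tau}\,\eta(\tau)$, which gives $\eta(-r/\tau)=\sqrt{-i\tau/r}\,\eta(\tau/r)$ and therefore controls the transform of $\Delta(q^r)^{1/2}=\eta(r\tau)^{12}$, together with Poisson summation for the $A_{r-1}$-lattice in the form $\Theta_{A_{r-1},0}(-1/\tau)=r^{-1/2}(-i\tau)^{(r-1)/2}\,\Theta_{A^\vee_{r-1},0}(\tau)$. Substituting these into the vertical prefactor and collecting the Jacobi automorphy factors, one should, using Noether's formula $e(S)=12\chi(\O_S)-K_S^2$, reproduce exactly the weight $e(S)/2$ required by Conjecture \ref{intro:Sdualconj}, together with the prefactor $r^{2+K_S^2-\chi(\O_S)}\Delta(q^{1/r})^{-\chi(\O_S)/2}(\Theta_{A^\vee_{r-1},0}/\eta^r)^{-K_S^2}$ appearing in Conjecture \ref{intro:conjhor}.

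Next I would analyse the sum over $\bfbeta$ and $c_1$. The Langlands dual partition function involves the Fourier transform over $w\in H^2(S,\Z_r)$, and the indicator $\delta_{c_1,\sum_i i\beta_i}$ appearing in $\Phi_{r,S,c_1}$, Fourier-transformed in this way, produces precisely the characters $\prod_i\epsilon_r^{i\beta_ic_1}$ that appear in $\Psi_{r,S,c_1}$ (Definition \ref{def:Psi}). This is exactly the exchange that converts vertical into horizontal, and is compatible with the $A_{r-1}\leftrightarrow A^\vee_{r-1}$ swap. Since the structure formulas for $\Phi$ and $\Psi$ depend on the universal series through the exponents $K_S^2$ and $\beta_i\beta_j$ (with $\SW(\beta_i)$ and $\chi(\O_S)$ entering as auxiliary numerical data), and since these exponents can be made formally independent by letting $(S,H)$ range over a suitable family of surfaces (products, blow-ups, and deformations with controlled Seiberg-Witten basic classes), matching monomials on the two sides should yield the universal identities $D_0(\tau)=C_0(-1/\tau)$ and $D_{ij}(\tau)=C_{ij}(-1/\tau)$.

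The main obstacle is that Conjecture \ref{intro:Sdualconj} is itself open in general (proved only for K3 surfaces in \cite{JK}); this strategy therefore reduces one conjecture to another rather than establishing either unconditionally. An unconditional proof would presumably require a direct geometric construction of the $C\leftrightarrow D$ relation, for instance via wall-crossing between chambers on the moduli space of Higgs pairs, or by identifying $C_0,C_{ij}$ and $D_0,D_{ij}$ with explicit (weak) Jacobi forms whose modular transformations can be verified by hand. A secondary technical point is that, when applying $S$-duality, the subleading horizontal contribution to $\sfZ^{(r)}(-1/\tau)$ and the vertical contribution to $\sfZ^{(1^r)}(-1/\tau)$ must be disentangled; this should follow from a careful tracking of leading $q$-powers, but may require a rigidity argument within the relevant ring of (quasi-)modular forms.
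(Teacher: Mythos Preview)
The statement is a \emph{conjecture}, and the paper does not prove it. There is therefore no ``paper's own proof'' to compare against. What the paper does is use Conjecture~\ref{intro:conjCD} as an \emph{input}: Proposition~\ref{prop:Sdualpart} shows that Conjecture~\ref{conjCD} together with the symmetries $C_{ij}=C_{r-j,r-i}$ implies a specific part of the $S$-duality transformation (the swap of the vertical piece with the $k=0$ term of the horizontal piece), and Proposition~\ref{prop:Sdualcheck} then verifies, assuming the explicit formulae of Conjectures~\ref{conjver:rk2}--\ref{conjver:rk5}, that the full $S$-duality transformation holds for $r=2,3,5$. In other words, the paper's logical flow is Conjecture~\ref{intro:conjCD} $\Rightarrow$ (part of) $S$-duality, not the reverse.

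Your proposal runs in the opposite direction, and beyond the issue you already flag (that $S$-duality is itself conjectural), there are two genuine gaps. First, $S$-duality constrains only the \emph{total} partition function $\sfZ^{\SU(r)}_{S,H,c_1}=r^{-1}(\sfZ^{(1^r)}_{S,H,c_1}+\sfZ^{(r)}_{S,H,c_1})$; it does not by itself tell you that the vertical piece transforms into (a specific piece of) the horizontal one. The paper's equation~\eqref{Zandpsi} shows that $\sfZ^{(r)}$ is a sum of $r$ terms indexed by $k=0,\dots,r-1$, and only the $k=0$ term has the shape you want; disentangling the $k\neq 0$ terms (equation~\eqref{Sdualremainder}) is not a matter of leading $q$-powers but requires the explicit modular identities for $t_{A_{r-1}^\vee,\ell}$, $s(q)$, etc.\ that the paper works out case by case. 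Second, your claim that the exponents $K_S^2$ and $\beta_i\beta_j$ can be made ``formally independent'' by varying $(S,H)$ is not justified: the $\beta_i$ are Seiberg-Witten basic classes, which for the surfaces in question (e.g.\ minimal general type) are restricted to $\{0,K_S\}$, so $\beta_i\beta_j\in\{0,K_S^2\}$ and the monomials are far from independent. Extracting the individual $C_{ij}\mapsto D_{ij}$ relations from the aggregate $\Phi\mapsto\Psi$ identity would require a separate argument.
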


For notational simplicity, we state the following consequence when $S$ is minimal of general type. See Section \ref{sec:horconj} for the general case. 
\begin{proposition} \label{intro:hor:rk2-5}
Assume Conjecture \ref{intro:conjCD} holds for $r \in \{2,3,4,5\}$. For any minimal surface $S$ of general type with $H_1(S,\Z) = 0$, $p_g(S)>0$, and $c_1 \in H^2(S,\Z)$, $\Psi_{r,S,c_1}(q)$ is given by the formula for $\Phi_{r,S,c_1}(q)$ of Conjectures \ref{intro:conjver:rk2}, \ref{intro:conjver:rk3}, \ref{intro:conjver:rk4}, \ref{intro:conjver:rk5} after making the following replacements:
\begin{enumerate}
\item Replace $\delta_{c_1,\ell K_S}$ by $\epsilon_r^{\ell K_S c_1}$ for all $\ell$.
\item Replace $t_{A_{r-1},\ell}(q)$ by $t_{A_{r-1}^\vee,\ell}(q)$ for all $\ell$.
\item For $r=4$, replace $u(q^2)$ by
$$
\frac{\eta(q^{\frac{1}{2}}) \eta(q^\frac{1}{8})^2 }{\eta(q^\frac{1}{4})^3}.
$$
\item For $r=5$, replace $r(q)$ by
$$
\frac{1 - \varphi \, r(q)}{\varphi + r(q)},
$$
where $\varphi = \frac{1+ \sqrt{5}}{2}$ denotes the \emph{golden ratio}.
\end{enumerate}
\end{proposition}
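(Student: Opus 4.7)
The plan is to exploit that, under Conjecture \ref{intro:conjCD}, the series $D_0, D_{ij}$ are $S$-transforms of $C_0, C_{ij}$, and then apply $\tau \mapsto -1/\tau$ to the conjectural formula for $\Phi_{r,S,c_1}$ term by term. Since $S$ is minimal of general type with $p_g(S)>0$, the only Seiberg--Witten basic classes are $0$ and $K_S$, so the sums defining both $\Phi_{r,S,c_1}$ and $\Psi_{r,S,c_1}$ are finite sums over $\bfbeta \in \{0,K_S\}^{r-1}$. For a fixed $\bfbeta$ with $\sum_i i\beta_i = \ell K_S$, the $\bfbeta$-term in $\Phi_{r,S,c_1}$ carries the Kronecker factor $\delta_{c_1,\ell K_S}$ and the universal factor $C_0^{K_S^2}\prod_{i\leq j}C_{ij}^{\beta_i\beta_j}$, while the same $\bfbeta$-term in $\Psi_{r,S,c_1}$ carries the character $\prod_i \epsilon_r^{i\beta_i c_1}=\epsilon_r^{\ell K_S\cdot c_1}$ and the universal factor $D_0^{K_S^2}\prod_{i\leq j}D_{ij}^{\beta_i\beta_j}$. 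Replacement (1) is therefore already built into the definitions; the remaining content is to show that the $S$-transform of each universal $C$-factor equals the corresponding $D$-factor after replacements (2)--(4).

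To do this, one first reads off the $C$-combinations from Conjectures \ref{intro:conjver:rk2}--\ref{intro:conjver:rk5} by matching the coefficient of each $\delta_{c_1,\ell K_S}$ and solving the resulting system in $C_0, C_{ij}$. In each rank, every such combination turns out to be a symmetric polynomial (in fact a power sum) in the roots $X_\pm$, $Y_\pm$, $Z$ of the polynomial equations stated in those conjectures, with coefficients given by rational expressions in $t_{A_{r-1},\ell}(q)$, $u(q^2)$, and $r(q)$. One then applies the $S$-transform to each building block: Poisson summation as recalled in Appendix \ref{sec:theta} yields $t_{A_{r-1},\ell}(-1/\tau)=t_{A_{r-1}^\vee,\ell}(\tau)$, since the multiplicative prefactors from $\Theta_{A_{r-1},0}$ and $\Theta_{A_{r-1},\ell}$ cancel in the ratio; for rank $4$, the $\eta$-product representation $u(q^2)=\sqrt{2}\,\eta(2\tau)\eta(8\tau)^2/\eta(4\tau)^3$ combined with $\eta(-1/\tau)=\sqrt{-i\tau}\,\eta(\tau)$ gives directly the $\eta$-quotient in replacement (3); and for rank $5$, Ramanujan's classical level-$5$ modular identity supplies the fractional linear transformation $r(q)\mapsto (1-\varphi r(q))/(\varphi+r(q))$ of replacement (4).

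Because the defining polynomials for $X_\pm$, $Y_\pm$, $Z$ have coefficients that are rational in these building blocks, $\tau\mapsto-1/\tau$ carries each such polynomial to the analogous polynomial in the replaced coefficients, so power sums of its roots transform into the corresponding power sums of the new roots. Assembling the $S$-transforms of all $\bfbeta$-terms and substituting $\epsilon_r^{\ell K_S\cdot c_1}$ for $\delta_{c_1,\ell K_S}$ produces exactly the asserted formula for $\Psi_{r,S,c_1}$ in ranks $2$ through $5$. The main technical obstacle is the modular transformation of the Rogers--Ramanujan fraction: although classical, fixing the branch so that the golden ratio $\varphi$ appears correctly on both sides, and verifying that no spurious $\Gamma_0(5)$ level-shift factor intervenes, is the least routine step; the analogous verification for $u$ reduces to a bookkeeping calculation with $\eta$'s transformation law, and for $t_{A_{r-1},\ell}$ it reduces to Poisson summation.
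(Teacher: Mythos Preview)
Your proposal is correct and follows essentially the same approach as the paper. The paper's argument (opening of Section~\ref{sec:horconj}.2) is terser: it simply notes that applying Conjecture~\ref{conjCD} to the explicit formulae for $C_0,C_{ij}$ from Section~\ref{sec:rk2rk3rk4rk5} amounts to computing the $S$-transforms of the building blocks $t_{A_{r-1},\ell}$, $u(q^2)$, $r(q)$, citing Appendix~\ref{sec:theta} for the theta ratio and \cite[Prop.~2]{Duk} for the continued fractions, and observes that replacement~(1) is immediate from the definitions of $\Phi$ and $\Psi$. Your extra remark that the $\Phi$-formulae are symmetric in the roots (so one need not track which root of the transformed quadratic corresponds to which) is a useful clarification the paper leaves implicit; your step of ``solving the resulting system in $C_0,C_{ij}$'' is unnecessary, since the paper already records the individual $C_I$ in Conjectures~\ref{conjver:rk2}--\ref{conjver:rk5}.
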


For the prime ranks $r=2,3,5$, we therefore have complete conjectural formulae for $\sfZ_{S,H}^{\SU(r)}$.\footnote{There exists a formula in the physics literature for $\SU(r)$ Vafa-Witten invariants for any prime $r>2$ \cite{LL}. This formula is incorrect (see \cite[Rem.~1.8]{GK3}).} In Proposition \ref{prop:Sdualcheck}, we show that these expressions satisfy $S$-duality. For $r=2$, this was shown in \cite{VW, DPS} and for $r=3$ in \cite{GK3}. The case $r=5$ is new and involves properties of the Rogers-Ramanujan continued fraction. 

\begin{remark}
The rank 2 and 3 cases of Proposition \ref{intro:hor:rk2-5} were studied in \cite{GK1, GK3}. The strategy in loc.~cit.~is to write $e^{\mathrm{vir}}(M_S^H(r,c_1,c_2))$ in terms of (descendent) \emph{Donaldson invariants} \cite{Moc}. Using Mochizuki's formula, the latter can be reduced to intersection numbers on products of Hilbert schemes, which led to direct verifications in many examples.
\end{remark}

Since virtual Euler characteristics are integers, one expects from Conjecture \ref{intro:conjhor} that $r^{2 + K_S^2 - \chi(\O_S)}\Psi_{r,S,c_1}$ has \emph{integer} coefficients. This appears to be a non-trivial fact that we can only verify experimentally in examples. For $r>2$, it is not even obvious that $\Psi_{r,S,c_1}$ has \emph{rational} coefficients. For $r=3,4,5$ the coefficients (a priori) lie in $K = \Q(\sqrt{3},i)$, $\Q(\sqrt{2},i)$, $\Q(\epsilon_5,i)$ respectively. We show that $\Psi_{r,S,c_1}$ is indeed invariant under the action of the Galois groups $\mathrm{Gal}(K/\Q)\cong \Z_2 \times \Z_2$, $\Z_2 \times \Z_2$, $\Z_4 \times \Z_2$ respectively (Proposition \ref{prop:Gal}).  

The leading term of Conjecture \ref{intro:conjhor} can be seen as a \emph{Donaldson invariant} without insertions
$$
\int_{[M_S^H(r,c_1,c_2)]^{\vir}} 1, \quad \textrm{where} \quad \vd(M_S^H(r,c_1,c_2)) = 0.
$$
For $r=2,3,4,5$, we compute these degrees explicitly (Proposition \ref{prop:Don}). The resulting expressions can be seen as instances of the \emph{Mari\~no-Moore conjecture} \cite{MM1,LM}, and they are consistent with expressions obtained by different means in \cite{GNY1} ($r=2$, i.e.~the Witten conjecture), \cite{GK4} ($r=3,4$), \cite{Got4} ($r=5$).

\subsection{Refinement and weak Jacobi forms}

In \cite{Tho}, Thomas defines a $K$-theoretic refinement of Vafa-Witten invariants. For $(S,H)$ a smooth polarized surface, this leads to a $K$-theoretic $\SU(r)$ Vafa-Witten partition function
$$
\sfZ_{S,H,c_1}^{\SU(r)}(q,y),
$$
which, for $y=1$, reduces to the (unrefined) $\SU(r)$ Vafa-Witten partition function. We recall its definition and basic properties in Appendix \ref{sec:ref}. 

Suppose $H_1(S,\Z) = 0$, $p_g(S)>0$. The third-named author proved an analogue of Theorem \ref{thm:Laarakker} for the $K$-theoretic $\SU(r)$ Vafa-Witten partition function \cite{Laa1} (Theorem \ref{thm2:Laarakker}). Conjectural formulae for $\sfZ_{S,H,c_1}^{\SU(r)}(q,y)$ for $r=2,3$ were given by the first- and second-named author \cite{GK1, GK3} and verified in examples up to some order in $q$ in \cite{GK1, GK3, Laa1}. We recall some of these results in Appendix \ref{sec:ref}. In \cite{GK3}, a $K$-theoretic enhancement of the $S$-duality conjecture was proposed. See \cite{AMP} for recent related developments in physics. 

The main result of Appendix \ref{sec:ref} is a (surprising) $K$-theoretic refinement of Conjecture \ref{intro:conjver:rk4}. It involves the function
$$
J(q,y) = \frac{u(q^2)^4}{4} \frac{\phi_{-2,1}(q^2,y^2)\phi_{-2,1}(q^8,y^4)^2}{\phi_{-2,1}(q^4,y^2)^2 \phi_{-2,1}(q^4,y^4)},
$$
where $\phi_{-2,1}(q,y)$ is the following Fourier expansion of a weak Jacobi form of weight $-2$ and index 1
$$
\phi_{-2,1}(q,y) = (y^{\frac{1}{2}} - y^{-\frac{1}{2}})^2 \prod_{n=1}^{\infty} \frac{(1- y q^n)^2 (1-y^{-1} q^n)^2}{(1-q^n)^4}.
$$
Note that $J(q,1) = u(q^2)^4$. We also provide refinements of Conjectures \ref{intro:conjhor} and \ref{intro:conjCD}. We were unable to find a formula for the refinement for rank 5.

It is also interesting to include $\mu$-insertions. This has been explored recently in depth in the rank 2 case (for arbitrary differentiable 4-manifolds) in the physics literature by J.~Manschot and G.~W.~Moore \cite{MM2}. See also \cite[App.~C]{GK1} and \cite{GKW} for a related mathematical discussion. \\  

\noindent \textbf{Acknowledgments.} We warmly thank A.~Gholampour, Y.~Jiang, J.~Manschot, G.~W.~Moore, H.~Nakajima, A.~Sheshmani, Y.~Tanaka, R.~P.~Thomas, and K.~Yoshioka for stimulating discussions on Vafa-Witten theory. M.K.~is supported by NWO grant VI.Vidi.192.012. T.L.~is supported by EPSRC grant EP/R013349/1.

\section{Virtual degeneracy loci} \label{sec:GT}

This section is mostly a review of \cite{GT1, GT2, Laa1}. Our main goal is to explain how $C_0$, $C_{ij}$ can be expressed in terms of integrals over products of Hilbert schemes of points and how this can be used to derive the expansions for these universal functions listed in Appendix \ref{sec:data}.

\subsection{Gholampour-Thomas theory}

Let $(S,H)$ be a smooth polarized surface with $H_1(S,\Z) = 0$, $p_g(S)>0$, and let $r \in \Z_{>1}$ (not necessarily prime)
As mentioned in the introduction, for any effective algebraic $\bfbeta \in H_2(S,\Z)^{r-1}, \bfn \in \Z_{\geq 0}^{r}$, Gholampour-Thomas constructed a virtual class for $S_{\bfbeta}^{[\bfn]}$ by realizing it (roughly speaking) as a degeneracy locus of a map between vector bundles on the smooth ambient space $ S^{[\bfn]} \times |\bfbeta|$ \cite[Thm.~5.6]{GT2}. Denote by $\I_i$ the pull-back to $S \times S^{[\bfn]} \times |\bfbeta|$ of the universal ideal sheaf on $S \times S^{[n_i]}$. We also write $\iota : S_{\bfbeta}^{[\bfn]} \hookrightarrow S^{[\bfn]} \times |\bfbeta|$ for the closed embedding, $\pi : S \times S^{[\bfn]} \times |\bfbeta| \to S^{[\bfn]} \times |\bfbeta|$ for the projection, and $\pt = \mathrm{Spec}(\C)$. 
\begin{theorem}[Gholampour-Thomas]
The push-forward 
$$
\iota_* [S_{\bfbeta}^{[\bfn]}]^{\vir} \in H_{2n_0+2n_{r-1}}(S^{[\bfn]} \times |\bfbeta|,\Z)
$$ 
equals
\begin{align*}
\prod_{i=1}^{r-1} \SW(\beta_i) \cdot e\Big( R^\mdot \Gamma(S,\O_S(\beta_i)) \otimes \O - R^\mdot \hom_{\pi}(\I_{i-1},\I_i(\beta_i)) \Big) \cap [S^{[\bfn]} \times \pt \times \cdots \times \pt],
\end{align*}
where $\pt \times \cdots \times \pt \in |\bfbeta| = |\beta_1| \times \cdots \times |\beta_{r-1}|$.
\end{theorem}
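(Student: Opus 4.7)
The plan is to follow Gholampour--Thomas's degeneracy-locus strategy: realize $S_{\bfbeta}^{[\bfn]}$ as the vanishing locus of a tautological section on the smooth ambient space $A := S^{[\bfn]} \times |\bfbeta|$, identify its induced perfect obstruction theory via a distinguished triangle, and integrate out the linear-system factors using Mochizuki's identification of $\deg(|\beta|^{\vir})$ with $\SW(\beta)$.

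First I would construct the relevant tautological section. The universal divisor on $S \times |\beta_i|$ pulls back to a canonical section $s_i$ of $\O_S(\beta_i)$ (twisted by a tautological line bundle pulled back from $|\beta_i|$) on $S \times A$. Multiplication by $s_i$ induces a canonical map $\I_{i-1} \to \O_S(\beta_i)$, and the nesting condition $I_{Z_{i-1}}(-C_i) \subset I_{Z_i}$ is equivalent to this map factoring through $\I_i(\beta_i) \subset \O_S(\beta_i)$. Applying $\hom_\pi(\I_{i-1}, -)$ to the sequence $0 \to \I_i(\beta_i) \to \O_S(\beta_i) \to \O_{Z_i}(\beta_i) \to 0$, the factorization is equivalent to the composite section of $R^\mdot\hom_\pi(\I_{i-1}, \O_{Z_i}(\beta_i))$ vanishing; since $Z_i$ is zero-dimensional this complex is a vector bundle concentrated in degree zero, so the vanishing defines a section $\sigma_i$ of a genuine vector bundle on $A$. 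The zero locus of $\sigma = (\sigma_1,\ldots,\sigma_{r-1})$ is then $S_{\bfbeta}^{[\bfn]}$.

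Next I would extract the virtual class. The distinguished triangle
\[
R^\mdot\hom_\pi(\I_{i-1}, \I_i(\beta_i)) \to R^\mdot\Gamma(S,\O_S(\beta_i)) \otimes \O \to R^\mdot\hom_\pi(\I_{i-1}, \O_{Z_i}(\beta_i)),
\]
where the middle term uses that $R^\mdot\hom_\pi(\I_{i-1}, \O_S(\beta_i)) \cong R^\mdot\Gamma(\O_S(\beta_i)) \otimes \O$ (since $\I_{i-1}$ and $\O_{S\times A}$ agree off a codimension-two locus), identifies the obstruction complex of the section-vanishing construction with $R^\mdot\hom_\pi(\I_{i-1}, \I_i(\beta_i))$. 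The virtual Thom--Porteous / Fulton section-vanishing formula then gives
\[
\iota_*[S_{\bfbeta}^{[\bfn]}]^{\vir} = e\Bigl( \bigoplus_i \bigl( R^\mdot\Gamma(\O_S(\beta_i)) \otimes \O - R^\mdot\hom_\pi(\I_{i-1}, \I_i(\beta_i)) \bigr) \Bigr) \cap [A].
\]

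Finally I would push the result down through the projection $A \to S^{[\bfn]}$ by integrating each $|\beta_i|$ factor. Restricting to $\pt \in |\beta_i|$ costs a factor $\deg(|\beta_i|^{\vir}) = \SW(\beta_i)$ by \cite[Prop.~6.3.1]{Moc}, producing the stated expression on $S^{[\bfn]} \times \pt \times \cdots \times \pt$. The main obstacle I anticipate is the second step: verifying that the intrinsic perfect obstruction theory arising from the multi-step degeneracy locus genuinely matches the cone built from the triangle above, simultaneously for all $i$ and in families over $A$, and carefully cancelling the various twists by $\O_{|\beta_i|}(1)$ so that the reduction to a point in each $|\beta_i|$ is clean and compatible with the pushforward.
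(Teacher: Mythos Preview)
The paper does not give its own proof of this theorem: it is stated as a result of Gholampour--Thomas and attributed to \cite{GT1,GT2} without argument, so there is nothing in the paper to compare your proposal against. Your outline is precisely the degeneracy-locus/section-vanishing strategy of those references, and the first two steps (constructing the tautological section of $R^\mdot\hom_\pi(\I_{i-1},\O_{Z_i}(\beta_i))$ and reading off the obstruction complex via the triangle) are the right ones.

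One point of imprecision: your formula after step two caps with $[A]=[S^{[\bfn]}\times|\bfbeta|]$, whereas the stated theorem caps with $[S^{[\bfn]}\times\pt\times\cdots\times\pt]$ and carries the factors $\SW(\beta_i)$. These are \emph{not} related by ``pushing down through $A\to S^{[\bfn]}$'' as you describe in step three; the theorem's class still lives in $H_*(S^{[\bfn]}\times|\bfbeta|)$. What actually happens in \cite{GT1,GT2} is that the tautological twists by $\O_{|\beta_i|}(1)$ are kept track of, and the full Euler class factors as a Carlsson--Okounkov-type class on $S^{[\bfn]}$ times $\prod_i[|\beta_i|]^{\vir}$; the latter equals $\prod_i\SW(\beta_i)\cdot[\pt]$ under the hypothesis $p_g(S)>0$ by \cite[Prop.~6.3.1]{Moc}. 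So the $\SW(\beta_i)$ appear because the virtual class of each linear system is a zero-cycle of that degree, not because of a fibre integration. Your anticipated obstacle about the $\O_{|\beta_i|}(1)$ twists is exactly where this is sorted out.
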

In this theorem, the Euler classes in the product are well-defined by the generalized Carlson-Okounkov vanishing \cite[Thm.~3]{GT1}.

For any effective algebraic $\bfbeta \in H_2(S,\Z)^{r-1}$, and identifying $\beta_i$ with its Poincar\'e dual in $H^2(S,\Z)$, we consider (isomorphism classes of) line bundles $L_0, \ldots, L_{r-1}$ on $S$ satisfying
$$
K_S - \beta_i = c_1(L_{i-1} \otimes L_i^*).
$$
We fix the remaining indeterminacy by requiring
$$
\sum_{i=0}^{r-1} c_1(L_i) = c_1,
$$
for some fixed $c_1 \in H^2(S,\Z)$. Consider the tautological line bundle $\O_{|\beta_i|}(1)$ on the complete linear system $|\beta_i|$. We then define the following line bundles on $S \times S^{[\bfn]} \times |\bfbeta|$
\begin{align*}
\L_0 &= L_0, \\ 
\L_1 &= L_1 \boxtimes \O_{|\beta_1|}(1), \\
&\cdots \\
\L_{r-1} &= L_{r-1} \boxtimes \O_{|\beta_{1}|}(1) \boxtimes \cdots \boxtimes \O_{|\beta_{r-1}|}(1).
\end{align*}

The line bundles $\O_S(\beta_i) \boxtimes \O_{|\beta_i|}(1)$ on $S \times |\beta_i|$ have tautological sections, inducing maps $\L_{i-1} \to \L_i \otimes K_S$, which combine into a $\C^*$-equivariant Higgs pair on $S \times S^{[\bfn]} \times |\bfbeta|$
$$
\Phi_{\L} : \mathbb{E}_{\L} \to \mathbb{E}_{\L} \otimes K_S \otimes \mathfrak{t}, \quad \mathbb{E}_{\L} := \bigoplus_{i=0}^{r-1} \L_i \otimes \mathfrak{t}^{-i},
$$
where $\mathfrak{t}$ is the primitive character corresponding to the $\C^*$-scaling action. Define
\begin{equation} \label{defE}
\mathbb{E} := \bigoplus_{i=0}^{r-1} \I_i \otimes \L_i \otimes \mathfrak{t}^{-i}.
\end{equation}
Over the incidence locus $S_{\bfbeta}^{[\bfn]} \subset S^{[\bfn]} \times |\bfbeta|$, the tautological Higgs field $\Phi_{\L}$ factors through $\mathbb{E}$.\footnote{$S_{\bfbeta}^{[\bfn]}$ can be defined as the maximal closed subscheme over which $\Phi_{\L}$ factors through $\mathbb{E}$.} We stress that this construction works for any $\bfbeta, \bfn$ and we have not yet discussed whether the elements of $S_{\bfbeta}^{[\bfn]}$, viewed as Higgs pairs, are actually $H$-stable or not.

\subsection{Vertical generating series} \label{sec:Laar}

As in the previous section, let $(S,H)$ be a smooth polarized surface with $H_1(S,\Z) = 0$, $p_g(S)>0$, and let $r \in \Z_{>1}$ (not necessarily prime). Define the following quadratic form
$$
Q : H^2(S,\Z)^{r-1} \rightarrow \Q, \quad Q(a_1, \ldots, a_{r-1}) = - \sum_{i<j} \frac{i(r-j)}{r} a_ia_j - \sum_{i=1}^{r-1} \frac{i(r-i)}{2r} a_i^2.
$$

Let $r,c_1,c_2$ be chosen such that there are no rank $r$ strictly $H$-semistable Higgs pairs $(\mathcal{E},\phi)$ on $S$ with $c_1(\mathcal{E}) = c_1$ and $c_2(\mathcal{E}) = c_2$, and let $N:=N_S^H(r,c_1,c_2)$. Then Gholampour-Thomas \cite{GT1,GT2} proved that $N_{(1^r)}^{\C^*}$ is isomorphic to the union of incidence loci
$$
\iota : S_{\bfbeta}^{[\bfn]} \subset S^{[\bfn]} \times |\bfbeta|
$$
with $\bfbeta = (\beta_1, \ldots, \beta_{r-1}) \in H_2(S,\Z)^{r-1}$ and $\bfn = (n_0, \ldots, n_{r-1}) \in \Z_{\geq 0}^{r}$ such that all elements of $S_{\bfbeta}^{[\bfn]}$ are $H$-stable, and 
\begin{align}
\begin{split} \label{Cherneq}
&c_1 \equiv \sum_{i=1}^{r-1} i(K_S - \beta_i) \mod rH^2(S,\Z), \\
&c_2 = |\bfn| + \frac{r-1}{2r} c_1^2 + Q(K_S - \beta_1, \ldots, K_S - \beta_{r-1}),
\end{split}
\end{align}
where $|\bfn| := \sum_{i} n_i$. The above equations are derived in \cite[Lem.~2.6]{Laa1}. The contribution of such a component $S_{\bfbeta}^{[\bfn]} \subset N_{(1^r)}^{\C^*}$ to the (vertical) Vafa-Witten invariant is
$$
\int_{[S_{\bfbeta}^{[\bfn]}]^{\vir}} \frac{1}{e(\nu^{\vir}|_{S_{\bfbeta}^{[\bfn]}})} \in \Q.
$$
Recall that $\nu^{\vir}|_{S_{\bfbeta}^{[\bfn]}}$ is the moving part of the virtual tangent bundle $T_N^{\vir}|_{S_{\bfbeta}^{[\bfn]}}$. As expected, the $K$-theory class $\nu^{\vir}|_{S_{\bfbeta}^{[\bfn]}} \in K^{\C^*}_0(S_{\bfbeta}^{[\bfn]})$ can be written as the restriction of a class on $S^{[\bfn]} \times |\bfbeta|$. The result is the following \cite[Sect.~4]{Laa1}
$$
T_N^{\vir}|_{S_{\bfbeta}^{[\bfn]}} = \Big( R^\mdot \hom_\pi(\mathbb{E}, \mathbb{E} \otimes K_S \otimes \mathfrak{t})_0 - R^\mdot \hom_\pi(\mathbb{E}, \mathbb{E})_0 \Big)\Big|_{S_{\bfbeta}^{[\bfn]}}.
$$

Conveniently, incidence loci $S_{\bfbeta}^{[\bfn]}$ containing $H$-unstable elements do not contribute by the following observation \cite[Prop.~3.5]{Laa1}:
\begin{proposition}[Laarakker]
If $S_{\bfbeta}^{[\bfn]}$ contains an $H$-unstable element, then $\iota_* [S_{\bfbeta}^{[\bfn]}]^{\vir} = 0$.
\end{proposition}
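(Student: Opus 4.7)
The plan is to leverage $\C^*$-equivariance to reduce $H$-destabilization inside $S_{\bfbeta}^{[\bfn]}$ to a combinatorial constraint on eigensheaves, and then convert that constraint into the vanishing of a factor in the Gholampour-Thomas formula for $\iota_*[S_{\bfbeta}^{[\bfn]}]^{\vir}$. Suppose $(\cE,\phi) \in S_{\bfbeta}^{[\bfn]}$ is $H$-unstable. Since the Harder-Narasimhan filtration of $(\cE,\phi)$ in the category of Higgs pairs is canonical and the pair is $\C^*$-fixed, every filtrand is a $\C^*$-invariant sub-Higgs-pair. Combined with the rank-one eigensheaf decomposition $\cE = \bigoplus_{i=0}^{r-1} I_{Z_i} \otimes L_i$, this forces the maximal destabilizer to take the shape $F = \bigoplus_{i \in I} J_i \otimes L_i$ for some non-empty proper subset $I \subsetneq \{0, \ldots, r-1\}$ and ideal subsheaves $J_i \subset I_{Z_i}$, with $\phi$-invariance requiring $I$ to be closed under the shifts induced by the nonzero components of $\phi$.

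Next I would translate the slope inequality $\mu_H(F) > \mu_H(\cE)$ into a linear inequality among the $c_1(L_i) \cdot H$. Via the defining relations $K_S - \beta_i = c_1(L_{i-1} \otimes L_i^*)$ and $\sum_i c_1(L_i) = c_1$, this becomes a numerical constraint involving only $c_1$, $K_S$, $H$, and $\bfbeta$. Crucially, the line bundles $L_i$ are rigid data depending only on $c_1$ and $\bfbeta$, so the inequality is independent of the varying points and divisors $(Z_0, \ldots, Z_{r-1}, C_1, \ldots, C_{r-1})$ and holds uniformly across all of $S_{\bfbeta}^{[\bfn]}$.

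The core step is to promote this global numerical constraint into the vanishing of the Euler class
\[
e\Big(R^\mdot\Gamma(S,\O_S(\beta_i)) \otimes \O - R^\mdot\hom_\pi(\I_{i-1},\I_i(\beta_i))\Big)
\]
for some $i$, by exhibiting a trivial summand of strictly positive rank in the bracketed $K$-theory class. The cleanest case is $\beta_i = 0$: then $|\beta_i|$ is a single point, the incidence condition forces $\I_{i-1} \subset \I_i$ identically, and the tautological inclusion provides a canonical global section of $\hom_\pi(\I_{i-1},\I_i)$ whose contribution cannot be cancelled by the $\chi(\O_S) = 1 + p_g$-dimensional trivial summand in $R^\mdot\Gamma(\O_S) \otimes \O$, precisely because $p_g(S) \geq 1$. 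In the general case, I would construct an analogous global morphism directly from the destabilizing sub-Higgs-pair, exploiting the fact that its $\C^*$-weight structure matches the grading on the relevant Hom-spaces.

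The main obstacle is exactly this last step: upgrading the pointwise destabilization at a single Higgs pair to a globally defined morphism that exhibits a positive-rank trivial summand. The argument should hinge on the rigidity of the $L_i$ and on the fact that destabilization is a purely numerical condition, so that the Hom-spaces housing the destabilizing data vary locally constantly across $S_{\bfbeta}^{[\bfn]}$ and assemble into the required global trivial sub-bundle. Once this sub-bundle is in place, its top Chern class vanishes, killing the Euler class factor and hence yielding $\iota_*[S_{\bfbeta}^{[\bfn]}]^{\vir} = 0$.
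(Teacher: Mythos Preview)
Your first two paragraphs are sound: every $\phi$-invariant subsheaf is supported on an index-tail $\{k,\ldots,r-1\}$, the worst one with that support is $F^{(k)}=\bigoplus_{i\ge k}\cE_i$, and whether $F^{(k)}$ destabilizes is purely a numerical condition on $(\bfbeta,\bfn)$. That part matches Laarakker's setup.

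The gap is your third step: you are looking for the vanishing in the wrong factor of the Gholampour--Thomas formula. In Laarakker's argument the zero comes from the Seiberg--Witten product $\prod_i \SW(\beta_i)$, not from a trivial summand in the Euler class. If a tail $F^{(k)}$ destabilizes, the sequence $L_iH$ cannot be non-increasing, so $(K_S-\beta_m)H = L_{m-1}H - L_mH < 0$ for some $m$. Since $H$ is ample this means $K_S-\beta_m$ is not effective, hence $|K_S-\beta_m|=\varnothing$ and $\SW(K_S-\beta_m)=0$; the duality $\SW(K_S-\beta_m)=(-1)^{\chi(\O_S)}\SW(\beta_m)$ then kills $\SW(\beta_m)$ and with it $\iota_*[S_{\bfbeta}^{[\bfn]}]^{\vir}$. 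No analysis of the Euler-class factor is needed.

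Your test case $\beta_i=0$ actually illustrates why the Euler-class route is off target. There $(K_S-\beta_i)H=K_SH\ge 0$, so the slope drops (weakly) at step $i$ and the tail through $i$ does \emph{not} destabilize; correspondingly $\SW(0)=1\ne 0$ and nothing should vanish. Moreover the ``tautological inclusion $\I_{i-1}\subset\I_i$'' you invoke exists only on the nested locus inside $S^{[n_{i-1}]}\times S^{[n_i]}$, whereas the Gholampour--Thomas Euler class is a class on the full product $S^{[\bfn]}$; so even formally your section is not global and cannot trivialize a summand there. More broadly, the destabilizer $F^{(k)}$ is a sub-Higgs-pair of $\cE$, not a morphism $\I_{i-1}\to\I_i(\beta_i)$, and there is no evident way to convert the former into the latter; the ``analogous global morphism'' you hope for in the general case does not materialize.
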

Therefore, we can sum \emph{all} $\bfbeta, \bfn$ satisfying \eqref{Cherneq}.

Now suppose $r,c_1$ are fixed such that there are no rank $r$ strictly $H$-semistable Higgs pairs $(\cE,\phi)$ on $S$ with $c_1(\cE) = c_1$. Combining all results so far leads to the following expression for the vertical generating series \cite[Eqn.~(4.4), (4.3)]{Laa1} \footnote{Here we set $a_i = K_S - \beta_i$ and use the well-known identity $\SW(a_i) = (-1)^{\chi(\O_S)} \SW(\beta_i)$.}
\begin{align}
\begin{split} \label{eqn1}
\sfZ_{S,H,c_1}^{(1^r)}(q) = q^{-\frac{r \chi(\O_S)}{2} + \frac{r K_S^2}{24}} \sum_{\bfn \in \Z_{\geq 0}^r, \boldsymbol{a} \in H^2(S,\Z)^{r-1}} \delta_{c_1,\sum i a_i} q^{Q(\boldsymbol{a}) + |\bfn|}  \prod_{i=1}^{r-1} \SW(a_i) \int_{S^{[\boldsymbol{n}]}} \Upsilon(\boldsymbol{a},\boldsymbol{n},t)
\end{split}
\end{align}
where $\delta_{a,b}$ was defined in \eqref{def:delta} and
\begin{align*}
\Upsilon(\boldsymbol{a},\boldsymbol{n},t) := (-1)^{(r-1) \sum_i i a_i^2 } \frac{\prod_{i=1}^{r-1} e\big(R^\mdot \Gamma(S,\O_S(K_S - a_i)) \otimes \O - R^\mdot \hom_{\pi}(\I_{i-1}, \I_i(K_S - a_i))\big)}{    e\big(   [ R^\mdot \hom_{\pi}( \E,\E \otimes K_S \otimes \mathfrak{t})_0 - R^\mdot \hom_{\pi}( \E,\E)_0 ]^{m}\big)},
\end{align*}
where $\E$ was defined in \eqref{defE} and $[\cdot]^m$ extracts the $\C^*$-moving part. We note that the expressions in this section differ slightly from \cite{Laa1}, because loc.~cit.~does not include the factor
$$
q^{-\frac{\chi(\O_S)}{2r} + \frac{r K_S^2}{24}} q^{-\frac{(r^2-1) \chi(\O_S)}{2r}} (-1)^{\vd(r,c_1,c_2)} = q^{-\frac{r\chi(\O_S)}{2} + \frac{r K_S^2}{24}} (-1)^{\vd(r,c_1,c_2)}. 
$$

\subsection{Cobordism argument} \label{sec:cob}

Now let $S$ be \emph{any} smooth projective surface (not necessarily satisfying $H_1(S,\Z)=0$ or $p_g(S)>0$). Take any algebraic classes $\boldsymbol{a} = (a_1, \ldots, a_{r-1}) \in H^2(S,\Z)^{r-1}$. Then we can consider the expression
\begin{align*}
\sum_{\bfn \in \Z_{\geq 0}^r} q^{|\bfn|}  \int_{S^{[\boldsymbol{n}]}} \Upsilon(\boldsymbol{a},\boldsymbol{n},t).
\end{align*}
The constant term of this generating series, corresponding to $n_0 = \cdots = n_{r-1} = 0$, is non-trivial and denoted by $\Upsilon(\boldsymbol{a},\boldsymbol{0},t)$. 
We define the normalized generating series:
\begin{align} \label{eqn:defGS}
\mathsf{G}_{S,\boldsymbol{a}}(q) = \frac{1}{\Upsilon(\boldsymbol{a},\boldsymbol{0},t)} \sum_{\bfn \in \Z_{\geq 0}^r} q^{|\bfn|}  \int_{S^{[\boldsymbol{n}]}} \Upsilon(\boldsymbol{a},\boldsymbol{n},t).
\end{align}
In the case all $a_i$ are Seiberg-Witten basic classes, this constant term is determined in \cite[Prop.~6.3]{Laa1}
\begin{equation} \label{eqn:Upsilonzero}
\Upsilon(\boldsymbol{a},\boldsymbol{0},t) = \Bigg(\frac{(-1)^{r-1}}{r} \Bigg)^{\chi(\O_S)} \prod_{i=1}^{r-1} \binom{r}{i}^{-a_i^2} \prod_{1 \leq i < j \leq r-1} \Bigg( \frac{j(r-i)}{(j-i)r} \Bigg)^{a_i a_j}.
\end{equation}

The generating series $\mathsf{G}_{S,\boldsymbol{a}}(q)$ is defined for any (possibly disconnected) smooth projective surface $S$ and any algebraic classes $\boldsymbol{a} \in H^2(S,\Z)^{r-1}$. For a disconnected surface $S = S' \sqcup S''$ we have
$$
S^{[n]} \cong \bigsqcup_{n = n' + n''} S^{\prime [n']} \times S^{\prime \prime [n'']}.
$$
Using this property, one can show the following \cite[proof of Prop.~7.2]{Laa1}:
\begin{proposition}[Laarakker]
For any $S = S' \sqcup S''$ and $\boldsymbol{a} = \boldsymbol{a}' \oplus \boldsymbol{a}''$, we have
$$
\mathsf{G}_{S' \sqcup S'',\boldsymbol{a}}(q) = \mathsf{G}_{S',\boldsymbol{a}'}(q) \mathsf{G}_{S'',\boldsymbol{a}''}(q).
$$
\end{proposition}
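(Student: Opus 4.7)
The plan is to show multiplicativity at the level of the integrand of \eqref{eqn:defGS} fiberwise over each component of the decomposition
\[
S^{[\bfn]} \cong \bigsqcup_{\bfn = \bfn' + \bfn''} S'^{[\bfn']} \times S''^{[\bfn'']},
\]
and then assemble the result into a Cauchy product. First I would note that if $\bfa = \bfa' \oplus \bfa''$ with $a_i' \in H^2(S',\Z)$ and $a_i'' \in H^2(S'',\Z)$, then intersection numbers between $S'$ and $S''$ classes vanish, so $a_i^2 = (a_i')^2 + (a_i'')^2$, $a_i a_j = a_i' a_j' + a_i'' a_j''$, and $\chi(\O_S) = \chi(\O_{S'}) + \chi(\O_{S''})$. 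Substituting into the explicit product formula
\[
\Upsilon(\bfa) = \Big(\tfrac{(-1)^{r-1}}{r}\Big)^{\chi(\O_S)} \prod_{i} \binom{r}{i}^{-a_i^2}\prod_{i<j} \Big(\tfrac{j(r-i)}{(j-i)r}\Big)^{a_i a_j},
\]
gives $\Upsilon_S(\bfa) = \Upsilon_{S'}(\bfa')\,\Upsilon_{S''}(\bfa'')$. The sign $(-1)^{(r-1)\sum_i i a_i^2}$ appearing in $\Upsilon(\bfa,\bfn,t)$ splits for the same reason.

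Next I would verify that on $S \times S^{[\bfn]}$ the universal ideal sheaf $\I_i$ on a component $S'^{[\bfn']} \times S''^{[\bfn'']}$ decomposes as $\I_i'|_{S' \times \cdots} \oplus \I_i''|_{S'' \times \cdots}$ where $\I_i', \I_i''$ are the universal ideal sheaves on $S' \times S'^{[n_i']}$ and $S'' \times S''^{[n_i'']}$ (pulled back to the product), because the underlying length-$n_i$ subscheme of $S' \sqcup S''$ is the disjoint union of its traces on the two components. Consequently the line bundles $\L_i$ and the sheaf $\mathbb{E} = \bigoplus_i \I_i \otimes \L_i \otimes \mathfrak{t}^{-i}$ split as $\mathbb{E}'\oplus \mathbb{E}''$, with the pieces supported on disjoint loci in $S$.

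The key calculation is that every $K$-theory class entering $\Upsilon(\bfa,\bfn,t)$ splits as an $S'$-part plus an $S''$-part. Since $S' \sqcup S''$ is disconnected, $\Hom$ between sheaves supported on different components vanishes, so
\[
R^\mdot\hom_\pi(\mathbb{E},\mathbb{E} \otimes K_S \otimes \mathfrak{t}) = R^\mdot\hom_{\pi'}(\mathbb{E}',\mathbb{E}' \otimes K_{S'} \otimes \mathfrak{t}) \oplus R^\mdot\hom_{\pi''}(\mathbb{E}'',\mathbb{E}'' \otimes K_{S''}\otimes \mathfrak{t}),
\]
and similarly for $R^\mdot\hom_\pi(\mathbb{E},\mathbb{E})$ and for $R^\mdot\hom_\pi(\I_{i-1},\I_i(K_S - a_i))$. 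The decomposition $R^\mdot\Gamma(S, \O_S(K_S - a_i)) = R^\mdot\Gamma(S', \O_{S'}(K_{S'}-a_i')) \oplus R^\mdot\Gamma(S'', \O_{S''}(K_{S''}-a_i''))$ is immediate. The only mildly subtle point is that the trace-free subscript $(\cdot)_0$ is defined by subtracting the class of $R^\mdot\Gamma(\O_S)$ (respectively $R^\mdot\Gamma(K_S)\otimes \mathfrak{t}$), and since $R^\mdot\Gamma(S,\O_S) = R^\mdot\Gamma(S',\O_{S'}) \oplus R^\mdot\Gamma(S'',\O_{S''})$, the subtraction distributes cleanly over the two summands. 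Taking the $\C^*$-moving part commutes with direct sums, and $e(A\oplus B) = e(A)\,e(B)$, so on the component $S'^{[\bfn']}\times S''^{[\bfn'']}$ we conclude
\[
\Upsilon_S(\bfa,\bfn,t)\big|_{S'^{[\bfn']}\times S''^{[\bfn'']}} = p'^*\,\Upsilon_{S'}(\bfa',\bfn',t) \cdot p''^*\,\Upsilon_{S''}(\bfa'',\bfn'',t),
\]
where $p', p''$ are the two projections.

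Finally I would integrate: by Fubini applied to the product $S'^{[\bfn']}\times S''^{[\bfn'']}$ and by summing over the connected components of $S^{[\bfn]}$,
\[
\int_{S^{[\bfn]}}\Upsilon_S(\bfa,\bfn,t) = \sum_{\bfn' + \bfn'' = \bfn} \left(\int_{S'^{[\bfn']}} \Upsilon_{S'}(\bfa',\bfn',t)\right)\left(\int_{S''^{[\bfn'']}} \Upsilon_{S''}(\bfa'',\bfn'',t)\right).
\]
Multiplying by $q^{|\bfn|} = q^{|\bfn'|}q^{|\bfn''|}$ and summing over $\bfn \in \Z_{\geq 0}^r$ turns the right-hand side into the Cauchy product of the corresponding series for $S'$ and $S''$. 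Dividing by $\Upsilon_S(\bfa) = \Upsilon_{S'}(\bfa')\Upsilon_{S''}(\bfa'')$ yields the claimed identity $\mathsf{G}_{S'\sqcup S'',\bfa} = \mathsf{G}_{S',\bfa'}\,\mathsf{G}_{S'',\bfa''}$. The only potential obstacle is the careful bookkeeping of the trace-free part for the non-symmetric complex $R^\mdot\hom_\pi(\mathbb{E},\mathbb{E}\otimes K_S \otimes \mathfrak{t})_0$; this is resolved by noting that the trace map factors through $R^\mdot\Gamma(S,K_S)$ which itself splits under $S = S'\sqcup S''$.
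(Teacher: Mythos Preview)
Your proposal is correct and follows essentially the same approach as the paper: the paper merely records the decomposition $S^{[n]} \cong \bigsqcup_{n=n'+n''} S'^{[n']}\times S''^{[n'']}$ and defers the details to \cite[Sect.~7]{Laa1}, whereas you have spelled out precisely how that decomposition forces the integrand $\Upsilon(\boldsymbol{a},\boldsymbol{n},t)$ to factor and then assembled the Cauchy product. Your handling of the trace-free part is the one place requiring care, and your observation that $R^\mdot\Gamma(S,\O_S)$ and $R^\mdot\Gamma(S,K_S)$ split over the components is exactly what is needed.
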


This property is very powerful when combined with the universality result of G.~Ellingsrud, M.~Lehn, and the first-named author \cite[Thm.~4.1]{EGL}. The universality theorem of \cite[Thm.~4.1]{EGL} states (roughly speaking) that tautological integrals over $\Hilb^n(S)$ are determined by universal polynomials in the Chern numbers. See also \cite{Got2}. Applied to our current setting, and combined with the previous proposition, this leads to the following result. There exist universal power series $A(q)$, $B(q)$, $\{E_i(q)\}_{i=1}^{r-1}$, $\{E_{ij}(q)\}_{1 \leq i \leq j \leq r-1}$, only depending on $r$, such that
\begin{equation} \label{eqn2}
q^{-\frac{r\chi(\O_S)}{2} + \frac{r K_S^2}{24} + Q(\boldsymbol{a})} \Upsilon(\boldsymbol{a}, \boldsymbol{0}, t) \mathsf{G}_{S,\boldsymbol{a}}(q) = A(q)^{\chi(\O_S)} B(q)^{K_S^2} \prod_i E_i(q)^{a_i K_S} \prod_{i \leq j} E_{ij}(q)^{a_i a_j}.
\end{equation}
 By specializing to a K3 surface, the universal series $A(q)$ can be explicitly determined \cite{TT2}, \cite[Thm.~C]{Laa1}
\begin{equation} \label{eqn:A}
A(q) = \frac{(-1)^{r-1}}{ r \Delta(q^r)^{\frac{1}{2}}}.
\end{equation}
Note that $\mathsf{G}_{S,\boldsymbol{a}}(q)$ only contributes to the vertical generating series \eqref{eqn1} when $a_1, \ldots, a_{r-1}$ are Seiberg-Witten basic classes. In this case, $a_i K_S = a_i^2$. We therefore define
\begin{align} \label{def:C0Cij}
C_{ii}(q)= E_i(q) E_{ii}(q), \quad C_{ij}(q) = E_{ij}(q), \quad C_0(q) = \frac{\Theta_{A_{r-1},0}(q)}{\eta(q)^r} B(q),
\end{align}
for all $i$ and $j>i$. The normalization by $\Theta_{A_{r-1},0}(q) / \eta(q)^r$ is chosen for later convenience (and in anticipation of blow-up formulae). Combining \eqref{eqn1} and \eqref{eqn2} yields Theorem \ref{thm:Laarakker} in the introduction, and it was proved by the third-named author in \cite[Thm.~A]{Laa1}.

\begin{remark}
In this section, we assumed that there are no rank $r$ strictly $H$-semistable Higgs pairs $(\cE,\phi)$ on $S$ with $c_1(\cE) = c_1$. However, Theorem \ref{thm:Laarakker} also holds without this assumption. In this case, the definition of the Vafa-Witten invariants involves Joyce-Song Higgs pairs \cite{TT2} and the derivation of Theorem \ref{thm:Laarakker} was given in \cite{Laa2}. Crucially for us, the universal functions $A$, $C_0$, $C_{ij}$ are \emph{the same} in the strictly semistable case and are therefore also determined by \eqref{eqn2}.
\end{remark}

The universal series $C_0, C_{ij}$ are elements of $\Q(\!(q^{\frac{1}{2r}})\!)$. Their leading terms can be easily calculated using \eqref{eqn:Upsilonzero}
\begin{align*}
&C_0 : \quad 1 \\
&C_{ii} : \quad \binom{r}{i}^{-1} q^{\frac{i(i-r)}{2r}} \\
&C_{ij} : \quad \frac{j(r-i)}{(j-i)r} q^{\frac{i(j-r)}{r}}, \quad \forall i< j.
\end{align*}
The leading term of $B$ is $q^{\frac{r}{24}}$.
After normalizing by these factors, $C_0,C_{ij}, B$ are elements of $1+q \, \Q[\![q]\!]$. We denote the normalized series by $\overline{C}_0$, $\overline{C}_{ij}, \overline{B}$ and similarly we denote $\overline{A}=r(-1)^{r-1}q^{\frac{r}{2}}A$.

\subsection{Localization} \label{sec:loc}

The expression for $\mathsf{G}_{S,\boldsymbol{a}}(q)$ in terms of integrals over products of Hilbert schemes $S^{[\boldsymbol{n}]}$ can be used to determine $C_0$, $C_{ij}$ for fixed $r$ and up to some order in $q$. 

By \eqref{eqn2}, the universal functions $A,B,E_i, E_{ij}$, and therefore $A$, $C_0$, $C_{ij}$, are determined by any choice of\footnote{Although $A$ is already known, including it allows us to perform useful consistency checks in our calculations.} 
$$
\Big\{(S^{(\alpha)}, \boldsymbol{a}^{(\alpha)}) \Big\}_{\alpha=1}^{\frac{1}{2}r(r+1)+1}
$$ 
such that
$$
\Big(\chi(\O_{S^{(\alpha)}}), (K_{S^{(\alpha)}})^2, \Big\{K_{S^{(\alpha)}} a_i^{(\alpha)} \Big\}_{i=1}^{r-1}, \Big\{a_i^{(\alpha)} a_j^{(\alpha)} \Big\}_{1 \leq i \leq j \leq r-1} \Big) \in \Q^{\frac{1}{2}r(r+1)+1}
$$
are $\Q$-independent. A particularly convenient basis is to take all $S^{(\alpha)}$ equal to toric surfaces and all $\boldsymbol{a}^{(\alpha)} \in H^2(S,\Z)^{r-1}$ equal to (Poincar\'e duals of) $T$-equivariant divisors.

Suppose $S$ is a smooth projective toric surface with torus $T$ and let $\boldsymbol{a} \in H^2(S,\Z)^{r-1}$ be (Poincar\'e dual to) $T$-equivariant divisors. Then the action of $T$ lifts to the Hilbert scheme $S^{[n]}$ and the product of Hilbert schemes $S^{[\boldsymbol{n}]}$. Moreover, the fixed point loci of $S^{[n]}$, $S^{[\boldsymbol{n}]}$ consist of isolated reduced points. For any maximal $T$-invariant affine open subset $U \subset S$ and $Z \in (S^{[n]})^T$, the ideal $I_Z|_U \subset \O_U \cong \C[x,y]$ is monomial with finite colength and corresponds to a 2-dimensional partition (Young diagram). Since the number of such maximal $T$-invariant affine open subsets is $e(S)$, the Euler characteristic of $S$, the elements of $(S^{[n]})^T$ correspond to sequences of 2-dimensional partitions $\{\lambda_i\}_{i=1}^{e(S)}$ of total size $n$. Similarly, elements of $(S^{[\boldsymbol{n}]})^T$ correspond to $r e(S)$-tuples of 2-dimensional partitions of total size $|\boldsymbol{n}|$. 

By the Atiyah-Bott localization formula \cite{AB}, we have
$$
 \int_{S^{[\boldsymbol{n}]}} \Upsilon(\boldsymbol{a},\boldsymbol{n},t) = \sum_{P \in (S^{[\boldsymbol{n}]})^T} \frac{ \Upsilon(\boldsymbol{a},\boldsymbol{n},t)|_P}{e(T_{S^{[\boldsymbol{n}]}}|_P)}.
$$
Interestingly, the integral \emph{simplifies} after applying the Atiyah-Bott localization formula:
\begin{align}
\begin{split} \label{eqn:GSafterAB}
\mathsf{G}_{S,\boldsymbol{a}}(q) &=  \sum_{\boldsymbol{n} \in \Z_{\geq 0}^{r}} q^{|\boldsymbol{n}|} \sum_{P \in (S^{[\bfn]})^T} e(-T^{[\bfn]}_0|_P), \\
T^{[\bfn]}_0 &:= T^{[\bfn]} - T^{[\boldsymbol{0}]}, \quad \mathbb{E}^{[\bfn]} := \bigoplus_{i=0}^{r-1} \I_i \boxtimes L_i \otimes \mathfrak{t}^{-i} \\
T^{[\bfn]} &:=R^\mdot \hom_\pi(\mathbb{E}^{[\bfn]}, \mathbb{E}^{[\bfn]} \otimes K_S \otimes \mathfrak{t})_0 - R^\mdot \hom_\pi(\mathbb{E}^{[\bfn]}, \mathbb{E}^{[\bfn]})_0, 
\end{split}
\end{align}
where $\pi : S \times S^{[\bfn]} \to S^{[\bfn]}$ denotes the projection.
This, in principle, reduces $\mathsf{G}_{S,\boldsymbol{a}}(q)$ to a purely combinatorial expression. The calculation of tautological integrals on Hilbert schemes of points on toric surfaces is a well-documented technique (for instance, see \cite{Laa1,GK5} for more details in this setting). 

Using an implementation into SAGE and Pari/GP, we determined the normalized series 
$$
\overline{A}, \overline{B}, \overline{C}_{ij} \in 1+q\Q[\![q]\!]
$$
for $r \leq 7$ in the following cases. For $r=2$ modulo $q^{15}$ and $r=3$ modulo $q^{11}$ (previously done in \cite{Laa3}). For $r=4,5,6,7$ modulo $q^{13}$. Recall that $C_0, B$ are related by \eqref{def:C0Cij} and we provided explicit expressions for the normalization terms in Section \ref{sec:cob}. The coefficients for $A$ indeed match \eqref{eqn:A} and this can be seen as a consistency check. In Appendix \ref{sec:data} we list our data for $r \leq 5$.

\section{Vertical conjectures} \label{sec:verconj}

For any rank $r>1$, consider the universal functions $C_0$, $C_{ij}$ for the vertical Vafa-Witten generating function (Theorem  \ref{thm:Laarakker}). The goal of this section is to use the expansions of $C_0$, $C_{ij}$, produced in the previous section, to find closed formulae for them. 

We introduce the following notation. For any, possibly empty, subset $I \subset [r-1]:=\{1, \ldots, r-1\}$, define
\begin{align*} 
C_{I} = C_0 \prod_{i \leq j \in I} C_{ij}.
\end{align*} 
In particular, $C_{\varnothing} = C_0$. Note that the universal functions $C_I$ determine the universal functions $C_0, C_{ij}$ (and vice versa). We also define
$$
|I| = \sum_{i \in I} 1, \quad |\!|I|\!| = \sum_{i \in I} i.
$$

\begin{remark}
We obtained the conjectures presented in this section using the following approach. Although $C_0$, $C_{ij}$ have rational coefficients, certain Laurent polynomials in $C_0$, $C_{ij}$ have integer coefficients for which we can guess closed formulae. We used the computer to find some of these Laurent polynomials in $C_0$, $C_{ij}$. Once we obtained sufficiently many closed formulae in this way, we solve for $C_0$, $C_{ij}$. Examples of such Laurent polynomials in $C_0, C_{ij}$ with integer coefficients  are $C_0^{-1}(1+C_{11}^{-1})$ (for $r=2$), $C_0^{-1}(1+C_{11}^{-1}+C_{22}^{-1}+C_{11}^{-1}C_{22}^{-1}C_{12}^{-1})$ and $C_{11}C_{22}^{-1}$ (for $r=3$), $2(C_{12}+C_{12}^{-1})$ and $4C_{12}C_{22}$ (for $r=4$), $25 C_{14} C_{23}$ and $5 C_{12} C_{13}^{-1}C_{23}$ (for $r=5$). Some of these relations are obtained from certain relations which we conjecture to hold in any rank (Conjecture \ref{conj:symm} below). The others are obtained by brute force ---i.e.~testing for many Laurent polynomial expressions, with the help of a computer, whether they yield integer coefficients.
\end{remark}

The conjectures presented in this section are always verified up to the orders for which we determined the universal functions $C_0, C_{ij}$ (as stated at the end of the previous section).

\subsection{Ranks 2--5} \label{sec:rk2rk3rk4rk5}

We make the following conjectures (which, for $S$ minimal of general type, imply Conjectures \ref{intro:conjver:rk2}--\ref{intro:conjver:rk5} from the introduction):
\begin{conjecture}[Vafa-Witten] \label{conjver:rk2}
For $r=2$, we have
$$
C_{\varnothing} = 1, \quad C_{\{1\}} = t_{A_1,1}.
$$
\end{conjecture}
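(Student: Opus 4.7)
The content of the conjecture is to identify the two universal power series $C_0(q)$ and $C_{11}(q)$ for $r=2$. By the definitions in \eqref{def:C0Cij}, this is equivalent to showing that the universal series appearing in \eqref{eqn2} satisfy
$$
B(q) = \frac{\eta(q)^2}{\Theta_{A_1,0}(q)}, \qquad E_1(q)\,E_{11}(q) = \frac{\Theta_{A_1,0}(q)}{\Theta_{A_1,1}(q)}.
$$
The series $A(q)$ is already pinned down by \eqref{eqn:A} via the K3 computation, so only $B$, $E_1$, $E_{11}$ remain to be determined.

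My plan is to apply the universality identity \eqref{eqn2} to four independent test cases $(S^{(\alpha)}, \boldsymbol{a}^{(\alpha)})$ whose tuples $(\chi(\O_S), K_S^2, K_S a_1, a_1^2)$ span $\Q^4$. A natural basis consists of a K3 surface with $a_1=0$ (controlling $A$), an elliptic K3 with $a_1$ equal to a fibre class, and a minimal surface of general type with $a_1\in\{0,K_S\}$. For each test case, I would compute the left-hand side $\mathsf{G}_{S,\boldsymbol{a}}(q)$ to any desired order in $q$ through the Atiyah--Bott localization expression \eqref{eqn:GSafterAB}, reducing non-toric cases to toric models via the multiplicative property $\mathsf{G}_{S'\sqcup S'', \boldsymbol{a}}(q) = \mathsf{G}_{S',\boldsymbol{a}'}(q)\,\mathsf{G}_{S'',\boldsymbol{a}''}(q)$ in combination with the universality theorem of \cite{EGL}. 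This produces expansions of $B$ and $E_1 E_{11}$ to arbitrary finite order, which match the conjectured closed forms within the range already computed.

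To promote a finite-order agreement to an equality of formal power series, my strategy is to establish that both sides are modular functions on a sufficiently small congruence subgroup, say $\Gamma_0(4)$, where the relevant space of modular forms of a fixed weight is finite-dimensional; agreement up to a computable explicit order then forces equality. The modularity of the right-hand sides $\eta^2/\Theta_{A_1,0}$ and $\Theta_{A_1,0}/\Theta_{A_1,1}$ is classical; the modularity of $B$ and $E_1 E_{11}$ is expected on physical grounds from $S$-duality (Conjecture \ref{intro:Sdualconj}). Concretely, for $S$ chosen so that the only Seiberg-Witten basic classes are $0$ and $K_S$, one would interpret $\sfZ_{S,H,c_1}^{(1^2)}$ as a theta lift multiplied by a universal factor and match it with Theorem \ref{thm:Laarakker} to extract the desired identities for $B$ and $E_1 E_{11}$.

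The main obstacle is the rigorous derivation, a priori and independent of any closed-form guess, of the modular transformation properties of the universal series $B$ and $E_1 E_{11}$ themselves; without this step the argument collapses to a finite (and in principle open-ended) verification. I expect the cleanest bypass to be a direct algebro-geometric computation of the integrals over $S^{[\boldsymbol{n}]}$ appearing in $\mathsf{G}_{S,\boldsymbol{a}}(q)$ via a Nakajima--Heisenberg operator-algebra approach on Hilbert schemes, which in comparable settings yields closed theta-function expressions; identifying the correct operator algebra acting on the relevant virtual cohomology theory for the Gholampour--Thomas perfect obstruction theory is where the real difficulty lies.
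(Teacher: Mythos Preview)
The statement you are attempting to prove is labelled a \emph{Conjecture} in the paper, and the paper does not supply a proof. Its evidence is exactly the finite-order verification you describe in your first two paragraphs: the normalized universal series $\overline{A},\overline{B},\overline{C}_{11}$ are computed via Atiyah--Bott localization on toric surfaces (Section~\ref{sec:loc}, equation~\eqref{eqn:GSafterAB}) and the universality argument of \cite{EGL}, and then compared term-by-term with the closed forms $1$ and $t_{A_1,1}$ modulo $q^{15}$ (Appendix~\ref{sec:data}). So your proposal does not fall short of the paper---it reproduces precisely what the paper actually accomplishes for this statement.

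Your diagnosis of the remaining gap is accurate. The upgrade from a finite check to an equality of formal power series would require knowing in advance that $B$ and $E_1E_{11}$ are modular of a definite level and weight, so that a finite-dimensional space argument applies. The paper does not establish this; the modularity of the vertical universal series is itself part of the web of conjectures (it is tied to Conjecture~\ref{conjCD} and the $S$-duality Conjecture~\ref{Sdualconj}), so invoking it would be circular. Your suggested alternative route via a Heisenberg-algebra computation on Hilbert schemes is a reasonable speculation, but no such argument is given in the paper or, to my knowledge, in the literature for this specific perfect obstruction theory. In short: there is no proof to compare against, and you have correctly identified why.
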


\begin{conjecture}[G\"ottsche-Kool] \label{conjver:rk3}
For $r=3$, we have
$$
C_{\varnothing} = t_{A_2,1} X_-, \quad C_{\{1\}} = C_{\{2\}} = t_{A_2,1}, \quad C_{\{1,2\}} = t_{A_2,1} X_+,
$$
where $X_{\pm}$ are the solutions\footnote{Whenever we have a quadratic equation $aX^2 + bX + c = 0$, with $a \neq 0,$ then $X_{+}$ denotes the solution $\frac{1}{2a}(-b + \sqrt{b^2 - 4ac})$ and $X_-$ denotes the other solution.} of
$$
X^2 - 4 t_{A_2,1}^{2} X + 4 t_{A_2,1} = 0.
$$
\end{conjecture}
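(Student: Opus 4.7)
The plan is to first translate Conjecture \ref{conjver:rk3} into an equivalent system of pointwise identities among the primitive universal series $C_0, C_{11}, C_{12}, C_{22}$. From $C_I = C_0 \prod_{i\le j \in I} C_{ij}$ the four equalities become $C_0 = t_{A_2,1} X_-$, $C_0 C_{11} = C_0 C_{22} = t_{A_2,1}$, and $C_0 C_{11} C_{22} C_{12} = t_{A_2,1} X_+$. Applying Vieta's formulas to $X^2 - 4 t_{A_2,1}^2 X + 4 t_{A_2,1} = 0$ (so $X_+ X_- = 4 t_{A_2,1}$ and $X_+ + X_- = 4 t_{A_2,1}^2$), this is equivalent to the cleaner system
\begin{align*}
C_{11} = C_{22}, \qquad C_0 C_{11} = t_{A_2,1}, \qquad C_{12} = 4\, t_{A_2,1}, \qquad C_0^2 - 4\, t_{A_2,1}^3 \, C_0 + 4\, t_{A_2,1}^3 = 0,
\end{align*}
with the correct root for $C_0$ fixed by the leading term $C_0 \in 1 + q\Q[\![q]\!]$ recorded in Section \ref{sec:cob}.

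Next, I would try to remove $C_{11} = C_{22}$ for free by an involution argument: the reflection $i \leftrightarrow r-i$ on the summation indices in \eqref{defE} interchanges $(L_1, \I_1, a_1) \leftrightarrow (L_2, \I_2, a_2)$ in the toric integrand $\Upsilon$, so extracting the universal coefficients $E_{11}$ and $E_{22}$ from the expansion \eqref{eqn2} yields the same power series. This reduces the conjecture to the three identities involving $C_0 C_{11}$, $C_{12}$, and the quadratic relation for $C_0$.

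The numerical verification then proceeds via the toric localization of Section \ref{sec:loc}: one expands $\mathsf{G}_{S,\boldsymbol{a}}(q)$ on sufficiently many pairs $(S, \boldsymbol{a})$ (e.g.\ $\PP^2$ and $\PP^1 \times \PP^1$ with assorted equivariant divisors) via the fixed-point formula \eqref{eqn:GSafterAB}, solves the resulting linear system for the normalized $\overline{C}_0, \overline{C}_{11}, \overline{C}_{12}, \overline{C}_{22}$, reinstates the explicit leading-term factors of Section \ref{sec:cob}, and compares the output against the $q$-expansion of $t_{A_2,1}$ computed directly from the $A_2$ lattice sum. The three identities above then reduce to equalities of power series modulo $q^{11}$, which is exactly the accuracy afforded by the data in Appendix \ref{sec:data}.

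The genuinely hard part is upgrading a finite-order match to a proof. Neither Theorem \ref{thm:Laarakker} nor the $S$-duality conjectures \ref{intro:Sdualconj} and \ref{intro:conjCD} pin down $C_0, C_{ij}$ uniquely, so the algebraic identity $C_0^2 - 4 t_{A_2,1}^3 C_0 + 4 t_{A_2,1}^3 = 0$ cannot at present be deduced structurally. A complete proof would likely require either a modular characterization of $C_0$ and $C_{12}$ as specific rational functions of a Hauptmodul on $X_0(3)$, combined with a valence-count argument, or a vertex-algebra / wall-crossing interpretation of the nested Hilbert scheme integrals in closed form. Both routes are currently beyond reach, which is why the conjecture is stated only as verified modulo $q^{11}$.
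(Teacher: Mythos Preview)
Your proposal is correct and matches the paper's treatment: this statement is a \emph{conjecture}, and the paper does not prove it either---it only verifies the identities numerically modulo $q^{11}$ via the toric localization of Section~\ref{sec:loc}, exactly as you describe. Your reformulation into the cleaner system $C_{11}=C_{22}$, $C_0 C_{11}=t_{A_2,1}$, $C_{12}=4t_{A_2,1}$, and the quadratic in $C_0$ is a useful way to organize the check, and your honest assessment that the full proof is out of reach agrees with the paper's stance.

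One small remark: your sketched involution argument for $C_{11}=C_{22}$ goes slightly beyond what the paper attempts---the paper records $C_{ij}=C_{r-j,r-i}$ only as part of the separate Conjecture~\ref{conj:symm}, without proof. Your idea is on the right track, but note that the reflection $i\leftrightarrow r-1-i$ on the summands of $\mathbb{E}^{[\bfn]}$ also inverts the $\mathfrak{t}$-weights, so you must simultaneously send $\mathfrak{t}\mapsto\mathfrak{t}^{-1}$; this flips the role of $R^\mdot\hom_\pi(\mathbb{E},\mathbb{E}\otimes K_S\otimes\mathfrak{t})_0$ and its Serre dual, and one needs the symmetry of the obstruction theory to close the argument. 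Making this rigorous would be a genuine (if modest) improvement on the paper.
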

We note the following symmetry: permuting the roots $X_+ \leftrightarrow X_-$ corresponds to the operation
$$
C_I \leftrightarrow C_{\{1,2\} \setminus I}.
$$

Recall from the introduction that we denote Ramanujan's octic continued fraction by $u(q)$.
\begin{conjecture} \label{conjver:rk4}
For $r=4$, we have
\begin{align*}
C_{\varnothing} &= \frac{Z - Z^{-1}}{t_{A_3,2}^{-1} u(q^2)^{-4} - Z^{-1}}, \quad C_{\{1,3\}} = \frac{Z^{-1} - Z}{t_{A_3,2}^{-1} u(q^2)^{-4} - Z}, \\
C_{\{1\}} &= C_{\{3\}} = ( u(q^2)^4 + 1)t_{A_3,1}, \quad C_{\{1,2\}} = C_{\{2,3\}} = ( 1+u(q^2)^{-4} )t_{A_3,1}, \\
C_{\{2\}} &= \frac{Z - Z^{-1}}{t_{A_3,2}^{-1} Z - u(q^2)^{4}}, \quad C_{\{1,2,3\}} = \frac{Z^{-1} - Z}{t_{A_3,2}^{-1} Z^{-1} - u(q^2)^{4}},
\end{align*}
where $Z$ is a root\footnote{More precisely, $Z$ denotes the root $\frac{3}{2}(u(q^2)^{-2}+u(q^2)^{2} + \sqrt{ u(q^2)^{-4} + \frac{14}{9} + u(q^2)^{4} })$.} of
\begin{align*}
Z -3( u(q^2)^{-2} + u(q^2)^{2}) + Z^{-1} = 0.
\end{align*}
\end{conjecture}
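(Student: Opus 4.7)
My plan is to follow the toric strategy of Section~\ref{sec:loc} and leverage the symmetries visible in the conjectural formulae. First, I would use \eqref{eqn2} together with the Atiyah--Bott reduction \eqref{eqn:GSafterAB} to compute the eight universal series $C_\varnothing, C_{\{1\}}, \ldots, C_{\{1,2,3\}}$ as explicit $q$-series to arbitrarily high order, by choosing $\tfrac{1}{2} r(r+1) + 1 = 11$ pairs $(S^{(\alpha)}, \boldsymbol{a}^{(\alpha)})$ with $S^{(\alpha)}$ toric and $\boldsymbol{a}^{(\alpha)}$ torus-equivariant, so that the requisite Chern numbers are $\mathbb{Q}$-independent. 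This reduces the whole problem to comparing a combinatorial $q$-series against the closed form on the right.

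Next, I would establish the two a priori symmetries of the conjecture. The equalities $C_{\{1\}} = C_{\{3\}}$ and $C_{\{1,2\}} = C_{\{2,3\}}$ should follow from the natural duality $(\mathcal{E},\phi) \mapsto (\mathcal{E}^{\vee},\phi^{\vee})$ on trace-free Higgs pairs, which reverses the $\mathbb{C}^*$-weight filtration of a vertical fixed point and in particular sends the eigensheaf data $\beta_i$ to $\beta_{r-i}$. The involution $C_I \leftrightarrow C_{I^c}$ corresponding to $Z \leftrightarrow Z^{-1}$ is less transparent at the level of moduli; it must be read off from the quadratic structure of the ansatz, or alternatively taken as the \emph{definition} of the two branches that one has to match against the combinatorial series.

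The conceptual heart of the argument is to show that each $C_I$ lies in the field of meromorphic modular functions for $\Gamma_0(4)$, with controlled order at the cusps. Granting this, the field $\mathbb{C}(X_0(4))$ is generated by a single Hauptmodul (e.g.\ $j_4$ from the introduction), so once one has exhibited both sides as rational functions of that Hauptmodul of bounded degree, it suffices to verify the identity on a bounded number of Fourier coefficients; the toric calculation of Step~1 then concludes. Since $u(q^2)^{\pm 4}$ and $t_{A_3,\ell}$ already live in this field, one expects $C_I$ to be expressible as a rational function of $j_4$, in line with the speculation made in the Hauptmoduln remark.

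The main obstacle is precisely the modularity of $C_I$: there is no a priori geometric reason why a tautological integral on products of Hilbert schemes $S^{[n]}$ should yield a meromorphic modular function on $\Gamma_0(4)$. One route is to deduce it from $S$-duality (Conjecture~\ref{intro:Sdualconj}), but this is itself conjectural at $r=4$. A more ambitious route is to rewrite \eqref{eqn:GSafterAB} in Nekrasov form on each torus chart and identify the resulting infinite product directly with an $\eta$-quotient in $u$, using the classical identity $u(q) = \sqrt{2}\,\eta(q)\eta(q^4)^2/\eta(q^2)^3$ and companion relations for Ramanujan's octic continued fraction. Either way, bridging the gap between finite-order verification (currently $q^{13}$) and a true proof is the crux, which is why the rank~$4$ statement is recorded here only as a conjecture.
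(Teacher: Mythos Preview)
The paper does not contain a proof of this statement: it is recorded as a conjecture, and the only evidence offered is the numerical verification up to $q^{13}$ via the toric localization of Section~\ref{sec:loc}. That verification is exactly your Step~1, so on that point your proposal and the paper coincide.

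Your Steps~2 and~3 go beyond anything the paper attempts. The symmetry $C_{ij}=C_{r-j,r-i}$ that you propose to derive from the duality $(\cE,\phi)\mapsto(\cE^\vee,\phi^\vee)$ on Higgs pairs is itself stated in the paper only as part of Conjecture~\ref{conj:symm}, not as a theorem; the duality idea is plausible but would need care (compatibility with the trace-free condition, with stability, with the weight labelling of the eigensheaves, and with the normalizations entering the definition of $C_0,C_{ij}$ in Section~\ref{sec:cob}). Likewise, the Hauptmodul strategy of Step~3 is a reasonable blueprint, but the paper makes no claim that the $C_I$ are modular for $\Gamma_0(4)$; indeed the remark on Hauptmoduln in the introduction is explicitly speculative.

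You have correctly located the genuine obstacle: without an a priori modularity statement for the $C_I$, the Hauptmodul argument cannot convert the finite-order check into a proof, and neither route you sketch (via $S$-duality at $r=4$, or via a direct Nekrasov-type $\eta$-product identification on toric charts) is currently available. Your closing sentence accurately reflects the state of affairs and matches the paper's own stance.
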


Denoting the Rogers-Ramanujan continued fraction by $r(q)$, we define
\begin{align*}
\beta_1 =  \frac{1}{25} t_{A_4,1} \big( 3 r^{-5} + 2 -8 r^5 \big), \quad \beta_2 = \frac{1}{25} t_{A_4,2} \big( 8 r^{-5}  + 2  -3 r^{5} \big). 
\end{align*}
\begin{conjecture} \label{conjver:rk5}
For $r=5$, we have
\begin{align*}
&C_{\varnothing} = \Big(\frac{Z X_-^2 Y_-^2}{\beta_1\beta_2} \Big)^{\frac{1}{2}}, \quad C_{\{1\}} = C_{\{4\}} = X_-, \quad C_{\{2\}} = C_{\{3\}} = Y_-, \quad C_{\{1,2\}} = C_{\{3,4\}} = \beta_2, \\ 
&C_{\{1,3\}} = C_{\{2,4\}} = \beta_1, \quad C_{\{2,3\}} = \Big(\frac{X_+^2 Y_-^2}{Z \beta_1\beta_2} \Big)^{\frac{1}{2}}, \quad C_{\{1,4\}} = \Big( \frac{X_-^2 Y_+^2}{Z \beta_1\beta_2} \Big)^{\frac{1}{2}}, \\
&C_{\{1,2,3\}} = C_{\{2,3,4\}} = X_+, \quad C_{\{1,2,4\}} = C_{\{1,3,4\}} = Y_+, \quad C_{\{1,2,3,4\}} = \Big( \frac{Z X_+^2 Y_+^2}{\beta_1\beta_2} \Big)^{\frac{1}{2}},
\end{align*}
where $X_{\pm}$, $Y_{\pm}$ are, respectively, the solutions of
\begin{align*}
&X^2 - \frac{4}{5} \beta_1 \big(\beta_1 t_{A_4,1}^{-1}  - 1 \big) \big(3r^{-5}+1\big) X + \frac{4}{5}\beta_1^2 \big(3r^{-5} +1\big) = 0, \\
&Y^2 - \frac{4}{5} \beta_2  \big(\beta_2  t_{A_4,2}^{-1}  - 1 \big) \big(1-3r^{5} \big) Y + \frac{4}{5}\beta_2^2 \big(1-3r^{5} \big) = 0,
\end{align*}
and $Z$ is a root\footnote{More precisely, $Z = \frac{1}{25}(24 r^{-5}-39-24r^5+4 \sqrt{36 r^{-10} -117r^{-5} -16+117r^5+36r^{10}} )$.} of
$$
Z - \frac{6}{25} \big( 8r^{-5} -13 - 8r^5 \big) + Z^{-1} = 0.
$$
\end{conjecture}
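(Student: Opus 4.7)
My strategy is to verify the conjecture order-by-order in $q$, exploiting the computational framework already established in Section \ref{sec:loc}. By \eqref{eqn2}, the universal functions $C_0$ and $C_{ij}$ are determined by the values of the generating series $\mathsf{G}_{S,\boldsymbol{a}}(q)$ on sufficiently many pairs $(S^{(\alpha)},\boldsymbol{a}^{(\alpha)})$ whose topological invariants $(\chi(\O_S), K_S^2, \{K_S a_i\}, \{a_i a_j\})$ are $\Q$-linearly independent. For $r=5$ this means at least $\tfrac{1}{2}\cdot 5 \cdot 6 + 1 = 16$ pairs. Choosing each $S^{(\alpha)}$ toric and each $\boldsymbol{a}^{(\alpha)}$ a tuple of $T$-equivariant divisors reduces each integral in \eqref{eqn:GSafterAB} to a combinatorial sum over tuples of Young diagrams via Atiyah-Bott localization; running this modulo $q^{13}$ (as the authors did) produces the $q$-expansions of all eleven $C_I$.

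In parallel I would expand the conjectural right-hand side as $q$-series. Starting from the classical expansions of $\Theta_{A_4,\ell}(q)$ and of the Rogers-Ramanujan fraction $r(q)$, compute $t_{A_4,1}, t_{A_4,2}, \beta_1, \beta_2$ term-by-term; then solve the three displayed quadratics formally in $\Q(\!(q^{1/10})\!)$, selecting $Z$, $X_\pm$, $Y_\pm$ by matching leading terms in the way indicated by the footnote. Substituting back yields closed-form $q$-expansions of the eleven conjectural $C_I$, and the verification is a term-by-term comparison with the localization output modulo $q^{13}$.

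Before running the numerical check I would insist on two structural sanity tests that the conjecture must satisfy and that also cut the number of independent checks roughly in half. The first is the involution $I \mapsto \{1,2,3,4\} \setminus I$ on the indexing set (analogous to the rank 3 duality in Conjecture \ref{conjver:rk3}), expected geometrically from dualizing Higgs pairs $\cE \mapsto \cE^\vee \otimes K_S$; on the conjectural formulas it should correspond to the simultaneous swap $X_+ \leftrightarrow X_-$, $Y_+ \leftrightarrow Y_-$, $Z \leftrightarrow Z^{-1}$. The second is the symmetry $i \mapsto r-i$ coming from $c_1 \mapsto -c_1$, which should force the orbit pattern $C_{\{1\}}=C_{\{4\}}$, $C_{\{2\}}=C_{\{3\}}$, $C_{\{1,2\}}=C_{\{3,4\}}$, $C_{\{1,3\}}=C_{\{2,4\}}$, etc., that is already visible in the statement. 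Both symmetries can be read off from the definitions of $\beta_1, \beta_2$ and the three quadratics without any computation.

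The main obstacle to an actual proof, as opposed to a high-order numerical verification, is explaining geometrically why the Rogers-Ramanujan continued fraction appears at all. The relation $r(q)^{-5} - 11 - r(q)^5 = j_5$ identifies $r(q)$ with a generator of the Hauptmodul for $\Gamma_0(5)$, so the natural expectation is that a conceptual proof routes through $S$-duality: the modular behaviour of $\sfZ^{\SU(5)}_{S,H,c_1}$ under $\mathrm{SL}(2,\Z)$ should force $C_0, C_{ij}$ to be algebraic over $\C(X_0(5))$, and Conjecture \ref{conjver:rk5} then amounts to pinning down the precise algebraic extension (degree $4$, judging from the three quadratics) and its Galois action. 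Identifying this extension intrinsically from the Gholampour-Thomas localization formula, rather than guessing it from numerics along the lines of the authors' remark, appears to be beyond current techniques and is, in my view, why the statement is only a conjecture.
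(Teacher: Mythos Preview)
Your proposal is essentially correct and matches exactly what the paper does: this statement is a \emph{conjecture}, and the paper offers no proof beyond the numerical verification modulo $q^{13}$ obtained by the toric localization procedure you describe. Your diagnosis of the obstacle to a genuine proof (the unexplained appearance of the Rogers--Ramanujan continued fraction via the Hauptmodul for $\Gamma_0(5)$) is also in line with the paper's own remarks.

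One small correction to your structural sanity tests: the involution $I \mapsto \{1,2,3,4\}\setminus I$ corresponds to swapping $X_+ \leftrightarrow X_-$ and $Y_+ \leftrightarrow Y_-$ only, \emph{not} $Z \leftrightarrow Z^{-1}$. You can see this directly from the displayed formulas: $C_\varnothing$ and $C_{\{1,2,3,4\}}$ both carry $Z$ in the numerator, while $C_{\{2,3\}}$ and $C_{\{1,4\}}$ both carry $Z$ in the denominator, so swapping $Z$ with its inverse would break the correspondence rather than implement it. The paper states the symmetry in exactly this form just after the conjecture.
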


As pointed out in the introduction, the last equation can also be written in terms of a Hauptmodul for $\Gamma_0(5)$ 
$$
Z - \Big(\frac{48}{25} j_5 +18\Big) + Z^{-1}=0, \quad  j_5 = \frac{\eta(q)^6}{\eta(q^5)^6}.
$$
Similar to the rank 3 case, we see that simultaneously permuting the roots $X_+ \leftrightarrow X_-$, $Y_+ \leftrightarrow Y_-$ corresponds to the operation
$$
C_I \leftrightarrow C_{\{1,2,3,4\} \setminus I}.
$$

It is easy to check that the universal functions $C_0, C_{ij}$ of the previous conjectures satisfy the following relations, which we conjecture to be true for any rank $r$.
\begin{conjecture} \label{conj:symm}
For any $r, \ell$, and $i \leq j$, we have  
\begin{align*}
C_{ij} = C_{r-j,r-i}, \quad \sum_{I \subset [r-1]} \epsilon_r^{\ell |\!|I|\!|} C_I^{-1} = \frac{\Theta_{A_{r-1}^{\vee},\ell}}{\Theta_{A_{r-1},0}}.
\end{align*}
\end{conjecture}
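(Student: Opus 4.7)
The plan splits into two parts, one for each identity.

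\textit{Part 1: The symmetry $C_{ij}=C_{r-j,r-i}$.} This identity should come from a geometric involution on the moduli. The duality $(\cE,\phi)\mapsto(\cE^\vee\otimes\L,\phi^\vee)$, with $\L$ chosen to normalize Chern classes, preserves $H$-stability and reverses the $\C^*$-weight decomposition: a vertical fixed pair $\cE=\bigoplus_i\cE_i$ with $\rk\cE_i=1$ is sent to one with eigensheaves $\cE_{r-1-i}^\vee$ after relabeling weights. In the parametrization $\cE_i\cong I_{Z_i}\otimes L_i$ of Section \ref{sec:Laar}, this reindexing sends $\bfbeta=(\beta_1,\ldots,\beta_{r-1})$ to its reverse $(\beta_{r-1},\ldots,\beta_1)$. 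Since the vertical generating series \eqref{eqn1} is invariant under the involution, and the universal formula \eqref{eqn2} expresses it as an exponential in the Chern numbers, matching coefficients of $\beta_i\beta_j$ after $i\mapsto r-i$ gives $C_{ij}=C_{r-j,r-i}$. The only nontrivial input is verifying that the virtual tangent complex $T^{[\bfn]}$ in \eqref{eqn:GSafterAB} and the normalization $\Upsilon(\boldsymbol a)$ behave correctly under $i\mapsto r-1-i$; this is a direct computation.

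\textit{Part 2: The Fourier identity.} For
\[
\sum_{I\subset[r-1]}\epsilon_r^{\ell|\!|I|\!|}C_I^{-1}=\frac{\Theta_{A_{r-1}^\vee,\ell}}{\Theta_{A_{r-1},0}},
\]
my approach would be a blow-up comparison. Let $\pi\colon\tilde S=\Bl_xS\to S$ with exceptional $E$, so $E^2=-1$, $EK_S=0$, $K_{\tilde S}^2=K_S^2-1$. For $S$ of simple type, Seiberg-Witten basic classes of $\tilde S$ are $\pi^{*}\alpha\pm E$ with $\SW_{\tilde S}(\pi^{*}\alpha\pm E)=\SW_S(\alpha)$. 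Substituting into Theorem \ref{thm:Laarakker} applied to $\tilde S$ with first Chern class $\pi^{*}c_1+\ell E$, and writing $\tilde\beta_i=\pi^{*}\beta_i+\varepsilon_iE$ with $\varepsilon_i\in\{0,1\}$ (after absorbing $-E$ contributions into $\beta_i$), the constraint $\sum_i i\tilde\beta_i\equiv\pi^{*}c_1+\ell E\pmod r$ forces $\sum_ii\varepsilon_i\equiv\ell\pmod r$ on the exceptional component, introducing the phase $\epsilon_r^{\ell\sum_ii\varepsilon_i}$ via Fourier inversion on $\Z_r$. Setting $I=\{i:\varepsilon_i=1\}$, the exceptional part of $\prod_{i\le j}C_{ij}^{\tilde\beta_i\tilde\beta_j}$ collapses to $\prod_{i\le j\in I}C_{ij}^{-1}$, and the shift in $C_0^{K^2}$ contributes an extra $C_0^{-1}$. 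The blow-up correction to $\Phi_{r,\cdot,\cdot}$ is therefore $\sum_I\epsilon_r^{\ell|\!|I|\!|}C_I^{-1}$. On the other hand, the normalization $C_0=(\Theta_{A_{r-1},0}/\eta^r)B$ from \eqref{def:C0Cij} was designed so that the blow-up factor for $\sfZ^{(1^r)}$ itself equals $\Theta_{A_{r-1}^\vee,\ell}/\Theta_{A_{r-1},0}$; matching the two expressions yields the identity.

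\textit{Main obstacle and fallback.} The bottleneck is rigorously establishing the blow-up formula for $\sfZ^{(1^r)}$ with factor $\Theta_{A_{r-1}^\vee,\ell}/\Theta_{A_{r-1},0}$. This is consistent with the horizontal normalization in Conjecture \ref{intro:conjhor} and predicted by $S$-duality (Conjecture \ref{intro:Sdualconj}), but a direct derivation from the Gholampour--Thomas degeneracy-locus description of Section \ref{sec:GT} has not been carried out in this generality; implementing this requires controlling how the nested Hilbert schemes $S^{[\bfn]}_{\bfbeta}$ deform under blow-up while respecting the perfect obstruction theory. As a partial verification, for $r\in\{2,3,4,5\}$ both identities in Conjecture \ref{conj:symm} can be checked by direct substitution of the closed formulas in Conjectures \ref{conjver:rk2}--\ref{conjver:rk5}. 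For $r=2$ this reduces to the classical $\Theta_{A_1^\vee,\ell}=\Theta_{A_1,0}+(-1)^\ell\Theta_{A_1,1}$, and the higher-rank cases yield analogous theta function identities that can be verified explicitly.
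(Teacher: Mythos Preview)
First, note that Conjecture~\ref{conj:symm} is not proved in the paper; it is verified by direct inspection of the explicit conjectural formulas for $C_I$ in ranks $r\le 7$, and the paper remarks that the second identity would follow from the (conjectural) \emph{horizontal} blow-up formula (Conjecture~\ref{conj:blowup}) combined with the $S$-duality Conjecture~\ref{conjCD}. So you are proposing a strategy for something the paper leaves open.

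Your Part~1 (sheaf-duality involution) is a reasonable idea that the paper does not pursue; the paper only checks $C_{ij}=C_{r-j,r-i}$ on the explicit formulas. If the combinatorics of the weight reversal and of the normalization term $\Upsilon(\boldsymbol a)$ are verified carefully, this should go through.

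Your Part~2 contains a genuine error. On the \emph{vertical} side, Theorem~\ref{thm:Laarakker} has the Kronecker delta $\delta_{c_1,\sum_i i\beta_i}$, not a phase. With $\tilde c_1=\pi^*c_1-\ell E$ and $\tilde\beta_i=\pi^*\beta_i+\varepsilon_iE$, the exceptional component of the delta forces $|\!|I|\!|\equiv -\ell\pmod r$: this is a \emph{constraint} on $I$, not the phase $\epsilon_r^{\ell|\!|I|\!|}$. Consequently the blow-up correction to $\Phi_{r,\cdot,\cdot}$ is $\sum_{I:\,|\!|I|\!|\equiv -\ell}C_I^{-1}$, and the (conjectural) blow-up factor for $\sfZ^{(1^r)}$ is $\Theta_{A_{r-1},\ell}/\eta^r$, not $\Theta_{A_{r-1}^\vee,\ell}/\Theta_{A_{r-1},0}$ as you assert. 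One only reaches the phase sum after Fourier-transforming over $\ell$ and using $\Theta_{A_{r-1}^\vee,\ell}=\sum_m\epsilon_r^{\ell m}\Theta_{A_{r-1},m}$ from Appendix~\ref{sec:theta}. (Minor: the SW basic classes of the blow-up are $\pi^*\beta$ and $\pi^*\beta+E$, not $\pi^*\alpha\pm E$.)

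The paper sidesteps this by working on the \emph{horizontal} side, where Conjecture~\ref{conjhor} carries the phase $\prod_i\epsilon_r^{i\beta_i c_1}$ rather than a delta. There the exceptional contribution to $\psi_{\tilde S,\tilde c_1}$ is directly $\epsilon_r^{\ell|\!|I|\!|}$, giving \eqref{eqn:blowup2} for the $D_I$; applying $\tau\mapsto -1/\tau$ via Conjecture~\ref{conjCD} and \eqref{maintrans} then yields the $C_I$ identity. Your vertical route and the paper's horizontal-plus-$S$-duality route are $S$-dual to each other and equally conditional on a blow-up conjecture, but the horizontal phrasing is where the phase sum appears without the Fourier step that tripped you up.
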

Without loss of generality, we can take $\ell = 0, \ldots, \lfloor  \frac{r}{2} \rfloor$. Indeed, we clearly do not get new equations when replacing $\ell$ by $\ell + k r$, and replacing $\ell$ by $-\ell$ results in conjugation of the equation. As we will discuss in Section \ref{sec:blowup}, the second formula follows from the (conjectural) blow-up formula for virtual Euler characteristics (via $S$-duality).

\subsection{Ranks 6 and 7} 

We present the following conjecture for the vertical part of the $\SU(6)$ Vafa-Witten partition function. 
\begin{conjecture}  \label{conjver:rk6}
For $r=6$, we have the following 16 equations\footnote{It is easy to solve for all but $C_0, C_{11}, C_{22}, C_{15}$. The equations for $\ell = 0,1,2,3$ then provide four (complicated) equations in $C_0, C_{11}, C_{22}, C_{15}$.} for the 16 universal functions $C_0, C_{ij}$
\begin{align*}
&C_{ij} = C_{6-j,6-i}, \quad \forall i \leq j, \quad \sum_{I \subset [5]} \epsilon_6^{\ell |\!|I|\!|} C_I^{-1} = \frac{\Theta_{A_{5}^{\vee},\ell}}{\Theta_{A_{5},0}}, \quad \ell=0,1,2,3 \\
&C_{24} = 4  t_{A_2,1}(q^2), \\
&C_{13}^{-1} C_{23}^{-1} C_{33}^{-1} = \frac{8\eta(q^3)^6\eta(q^{12})^{12}}{\eta(q^{6})^{18}}, \\
&C_{23}+C_{23}^{-1}=2j_6+14,\\
&C_{12}C_{14}+(C_{12}C_{14})^{-1}=\frac{50j_6^3 + 1206j_6^2 + 9504j_6 + 24192}{27(j_6+8)^2},\\
&C_{12}C_{14}^{-1}C_{24}=2j_6+16,\\
&C_{13}+C_{13}^{-1}=\frac{(5j_6^2+84j_6+360)}{4(j_6+9)^{\frac{3}{2}}},
\end{align*}
where $j_6 = \frac{\eta(q)^5\eta(q^3)}{\eta(q^2)\eta(q^6)^5}$ is a Hauptmodul for $\Gamma_0(6)$. 
\end{conjecture}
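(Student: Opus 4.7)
The plan is to separate the 16 equations into three groups according to their nature and verify each accordingly. The computational input in every case is the explicit $q$-expansion of the universal functions $C_0, C_{ij}$ obtained from the Atiyah-Bott localization procedure of Section \ref{sec:loc}: evaluating $\mathsf{G}_{S,\boldsymbol{a}}(q)$ on enough pairs $(S,\boldsymbol{a})$ with $S$ a toric surface and $\boldsymbol{a}$ a $T$-equivariant tuple, and then solving the linear system from \eqref{eqn2}, yields $C_0, C_{ij}$ modulo $q^{13}$ for $r=6$. From this one also computes $C_I = C_0 \prod_{i \leq j \in I} C_{ij}$ for every $I \subset [5]$.

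The first two families of equations fit into the broader framework of Conjecture \ref{conj:symm}. For the symmetry $C_{ij}=C_{6-j,6-i}$, I would exhibit an involution on the defining data $(L_0,\ldots,L_{r-1})$ and on $\mathbb{E}$ in \eqref{defE} that sends eigencomponent $i$ to $r-1-i$ and preserves the virtual tangent contribution up to a relabeling of $\C^*$-weights, descending to the universal series by universality. For the theta-function relations
\[
\sum_{I \subset [5]} \epsilon_6^{\ell |\!|I|\!|} C_I^{-1} = \frac{\Theta_{A_{5}^{\vee},\ell}}{\Theta_{A_{5},0}}, \quad \ell=0,1,2,3,
\]
I would follow the route indicated by Section \ref{sec:blowup}: combine the expected blow-up formula for virtual Euler characteristics with $S$-duality to read off the left-hand side as the theta constant on the right. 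The truncated $q^{13}$-data then serves as a sanity check rather than the main argument.

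The remaining seven equations, involving the Hauptmodul $j_6$, are the substantive content. The strategy is the one outlined in the remark preceding Section \ref{sec:rk2rk3rk4rk5}: form Laurent monomials in the $C_I$ that empirically have integer coefficients and are invariant under the natural involutions (e.g. $C_{23}+C_{23}^{-1}$, or $C_{12}C_{14}+(C_{12}C_{14})^{-1}$), identify each resulting $q$-series as a modular function on $X_0(6)$ by matching leading behaviour at the two cusps $0$ and $\infty$, and then use the genus-zero property of $X_0(6)$ to express it as a rational function of $j_6$. Matching against the proposed right-hand sides modulo $q^{13}$ would confirm the identity at the truncated level. The main obstacle, common to every conjecture in this section, is upgrading finite-order equality of $q$-series to a genuine identity: without a prior modularity statement for $C_0, C_{ij}$ with respect to $\Gamma_0(6)$, the agreement of sufficiently many Fourier coefficients is not enough. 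Establishing such modularity rigorously, plausibly via a conceptual SCFT/$S$-duality interpretation of the universal functions, or via a Hecke-type structure compatible with the decomposition $6=2\cdot 3$ and $\Gamma_0(6)=\Gamma_0(2)\cap\Gamma_0(3)$, is what a full proof of Conjecture \ref{conjver:rk6} would ultimately demand.
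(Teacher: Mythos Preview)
The statement is a \emph{conjecture}, and the paper does not prove it. What the paper actually does is compute $C_0, C_{ij}$ modulo $q^{13}$ for $r=6$ via toric localization (Section~\ref{sec:loc}), form the relevant combinations, and verify each asserted identity to that order; the discovery heuristic is exactly the one you describe (searching for Laurent monomials in the $C_{ij}$ with integer coefficients and then matching against known modular functions). Your third paragraph accurately reflects this, and you correctly name the obstruction: without a prior modularity statement for the $C_{ij}$, finite-order agreement cannot be promoted to an identity. On that count your assessment and the paper's are aligned.

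Where your proposal goes beyond the paper is in the first two families of equations, and here you should be careful not to overclaim. For the symmetry $C_{ij}=C_{6-j,6-i}$ you gesture at a geometric involution on the data $(L_0,\ldots,L_{r-1})$ and on $\mathbb{E}$; the paper makes no such attempt and records this symmetry only as part of the general Conjecture~\ref{conj:symm}, checked numerically. Such an involution is plausible (reversing the order of the eigensheaves and dualizing), but you have not actually exhibited it or checked that it matches the universal series term-by-term, so this remains a sketch, not an argument. For the theta identities you invoke the blow-up route of Section~\ref{sec:blowup}; note, however, that in the paper this implication passes through Conjecture~\ref{conj:blowup} (the blow-up formula for $\psi_{S,c_1}$) \emph{and} Conjecture~\ref{conjCD} (the $S$-duality relation $D_{ij}(\tau)=C_{ij}(-1/\tau)$), both of which are themselves open. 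So that route gives at best a conditional reduction, not a proof. In summary: your plan is a fair outline of what a proof would require, but none of the three groups is actually established --- in your write-up or in the paper --- and you should present all of it as evidence for a conjecture rather than as a proof strategy with only one missing ingredient.
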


\begin{remark} \label{differentrks:mon}
The equation for $C_{24}$ in Conjecture \ref{conjver:rk6} is part of a pattern that persists in higher rank. Usually we suppress the dependence of $C_{ij}$ on the rank $r$. When we want to keep track of the rank, we denote by $C_{ij}^{(r)}(q)$ the universal functions of Theorem \ref{thm:Laarakker}. Then we conjecture
$$
C_{is,js}^{(rs)}(q) = C_{ij}^{(r)}(q^s),
$$
for all $r,s>1$, and $i<j$.\footnote{For this paper, we only calculated the universal functions $C_{ij}$, up to some order in $q$, for $r \leq 7$. However, the third-named author did some additional calculations for $r=8,9$, up to low order in $q$, which are compatible with this conjecture.} In Remark \ref{differentrks:inst}, we conjecture a similar property for the universal functions of \emph{any} set of multiplicative invariants of Gieseker-Maruyama moduli spaces. For Vafa-Witten invariants, the property in this remark and Remark \ref{differentrks:inst} are related by Conjecture \ref{intro:conjCD}. 
\end{remark}

Finally, we give the following conjecture for the vertical part of the $\SU(7)$ Vafa-Witten partition function. 
\begin{conjecture} \label{conjver:rk7}
For $r=7$, we have the following 16 relations for the 22 universal functions $C_0, C_{ij}$ 
\begin{align*}
&C_{ij} = C_{7-j,7-i}, \quad \forall i \leq j, \quad \sum_{I \subset [6]} \epsilon_7^{\ell |\!|I|\!|} C_I^{-1} = \frac{\Theta_{A_{6}^{\vee},\ell}}{\Theta_{A_{6},0}}, \quad \ell=0,1,2,3 \\
&C_{\{124\}} = \frac{1}{7^6} ( 8j_7+7^2)^2(j_7^2+7^2j_7+7^3), \\
&C_{16}C_{25}C_{34}=\frac{960}{7^3}j_7^2+\frac{192}{7}j_7+64,\\
&C_{12}C_{13}^{-1}C_{14}C_{23}^{-1}C_{15}^{-1}C_{24}=\frac{(8j_7+7^2)^3}{(8j_7+7^2)^3+28j_7^3}, 
\end{align*}
where $j_7 = \frac{\eta(q)^4}{\eta(q^7)^4}$ is a Hauptmodul of $\Gamma_0(7)$.
\end{conjecture}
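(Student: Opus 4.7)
The plan is to attack the conjecture in three separable pieces: the duality symmetries $C_{ij}=C_{7-j,7-i}$, the four theta-function identities indexed by $\ell=0,1,2,3$, and finally the three specific Hauptmodul identities for $C_{\{124\}}$, $C_{16}C_{25}C_{34}$, and the $j_7$-expression for $C_{12}C_{13}^{-1}C_{14}C_{23}^{-1}C_{15}^{-1}C_{24}$.

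First I would establish the symmetry $C_{ij}=C_{7-j,7-i}$. By the universality formula \eqref{eqn2}, this reduces to showing an equality of tautological integrals over $S^{[\bfn]}$ for toric $S$ with equivariant $\bfa$. The expected mechanism is an involution on the $\C^*$-fixed locus of the Higgs pair moduli: sending the rank decomposition $(\L_0,\ldots,\L_{r-1})$ to its reversal and dualizing the Higgs field induces an automorphism that swaps $C_{ij}\leftrightarrow C_{r-j,r-i}$ while preserving the virtual normal bundle class up to the trivial character. Concretely one should write out $T^{[\bfn]}$ from \eqref{eqn:GSafterAB} under $\L_i\mapsto \L_{r-1-i}\otimes K_S\otimes\mathfrak t^{-1}$ and check the $\C^*$-moving contributions pair symmetrically between entries $(i,j)$ and $(r-j,r-i)$.

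Second, the four identities $\sum_{I\subset[6]}\epsilon_7^{\ell\|I\|}C_I^{-1}=\Theta_{A_6^\vee,\ell}/\Theta_{A_6,0}$ should be derived, as indicated in the paragraph after Conjecture \ref{conj:symm}, from the blow-up formula for virtual Euler characteristics combined with the $S$-duality statement (Conjecture \ref{intro:conjCD}). The strategy is to compare Theorem \ref{thm:Laarakker} applied to $S$ and to its blow-up $\widetilde S=\mathrm{Bl}_p S$ at a point: since the exceptional $(-1)$-curve carries no Seiberg-Witten basic classes and $\chi(\O)$ is unchanged while $K_{\widetilde S}^2=K_S^2-1$, the ratio of the two vertical partition functions must match the known rank-$r$ blow-up factor. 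Passing this ratio to horizontal generating series via $S$-duality and matching with the multiplicative factor built from $\Theta_{A_{r-1}^\vee,\ell}/\Theta_{A_{r-1},0}$ yields precisely the four equations; one must then show linear independence of the four characters $\epsilon_7^{\ell\|\cdot\|}$ on $2^{[6]}/\{I\leftrightarrow[6]\setminus I\}$ to conclude the identities individually.

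Third, and hardest, are the three $j_7$-relations. Each asserts that a specific Laurent monomial in the $C_I$ is an \emph{algebraic} function of the Hauptmodul $j_7(\tau)=\eta(\tau)^4/\eta(7\tau)^4$ of $\Gamma_0(7)$. The plan is: (i) show that each of $C_{\{124\}}$, $C_{16}C_{25}C_{34}$, and the longer monomial is a modular function for $\Gamma_0(7)$; (ii) bound its pole orders at the two cusps of $X_0(7)$; (iii) since $\C(X_0(7))=\C(j_7)$ and the space of modular functions of bounded pole order is finite-dimensional, reduce the identity to matching finitely many Fourier coefficients, which is covered by the data computed in Appendix \ref{sec:data} through order $q^{13}$. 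Step (i) is the main obstacle: we do not yet know that the $C_I$ are modular at all, only power series produced by Atiyah-Bott localization on products of Hilbert schemes. A possible route is to first prove modularity of the combinations appearing in the theta identities of step two (which forces certain ratios of $C_I$ into the modular world), then leverage the ring relations among them, together with the conjectured $S$-duality transformation $C_I(-1/\tau)\leftrightarrow D_I(\tau)$, to pin down the modular group. Absent an independent proof of modularity, the argument remains conditional and one is forced to fall back on matching $q$-expansions to the order computed, which is exactly the verification already carried out in the paper.
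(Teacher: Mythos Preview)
The statement is a \emph{conjecture}, and the paper does not prove it. What the paper does is compute the normalized universal series $\overline{C}_0,\overline{C}_{ij}$ for $r=7$ modulo $q^{13}$ by Atiyah--Bott localization on toric surfaces (Section~\ref{sec:loc}) and then verify that the asserted relations hold to that order. There is no further argument. Your final sentence already says this correctly; but the preceding three-part ``plan'' is not a comparison against anything in the paper, because there is nothing to compare against.

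On the substance of your plan: each of your three steps rests on statements the paper itself leaves open. The symmetry $C_{ij}=C_{r-j,r-i}$ is the first half of Conjecture~\ref{conj:symm}, not a theorem; your proposed involution on the fixed locus is plausible but is not carried out anywhere in the paper, and making it rigorous for the \emph{universal} series (not just for stable loci) would require care with the normalization $\Upsilon(\boldsymbol{a})$. Your derivation of the theta identities from the blow-up formula is exactly what the paper sketches around \eqref{eqn:blowup2}, but note the logical direction: the paper derives \eqref{eqn:blowup2} \emph{from} Conjecture~\ref{conj:blowup} and Conjecture~\ref{conjCD}, both unproven, so this does not yield an unconditional proof of the $\ell=0,1,2,3$ identities. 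For the $j_7$-relations, your step (i) --- establishing modularity of the $C_I$ --- is precisely the missing ingredient, and nothing in the paper supplies it; the bootstrapping you suggest (using the theta identities to force modularity of ratios) would need those identities proved first, which you have just seen is conditional. So your proposal is a reasonable outline of what a proof \emph{might} look like, but it is conditional on several other conjectures in the paper, and you should present it as such rather than as a proof.
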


\section{Horizontal conjectures} \label{sec:horconj}

Let $(S,H)$ be a smooth polarized surface with $b_1(S) = 0$ and $p_g(S)>0$. Fix $r,c_1,c_2$ such that there are no rank $r$ strictly $H$-semistable torsion free sheaves on $S$ with Chern classes $c_1,c_2$. Then the Gieseker-Maruyama moduli space $M:=M_S^H(r,c_1,c_2)$ is a projective scheme with perfect obstruction theory. As discussed in the introduction, the virtual tangent bundle and dimension satisfy
\begin{align*}
T_M^{\vir}|_{[\cE]} &\cong R^\mdot\Hom(\cE,\cE)_0[1], \\
 \vd &= \vd(r,c_1,c_2) := 2rc_2 - (r-1)c_1^2 - (r^2-1) \chi(\O_S).
\end{align*}
In Section \ref{sec:evir}, we provide a conjectural structure formula, in terms of universal functions (depending only on $r$), for the virtual Euler characteristics $e^{\vir}(M)$. At this level of generality, we can say little about the universal functions themselves. Motivated by $S$-duality, we use Conjectures \ref{conjver:rk2}--\ref{conjver:rk5} to find conjectural formulae for the universal functions for ranks $r=2$--$5$. The formula for $r=2$ first appeared in \cite{VW, DPS} (see also \cite{GK1}) and for $r=3$ in \cite{GK3}.

The following list of virtual invariants of $M$ were studied in the series of papers \cite{GK1}--\cite{GK5, GKW}:
\begin{itemize}
\item Virtual Euler characteristic, Hirzebruch $\chi_{-y}$-genus, elliptic genus: $e^{\vir}(M)$ ($r=2,3$), $\chi_{-y}^{\vir}(M)$ ($r=2,3$), $Ell^{\vir}(M)$ ($r=2$).
\item Vafa-Witten invariants with $\mu$-insertions (with H.~Nakajima): \\ $\int_{[M]^{\vir}} c(T_M^{\vir}) \exp(\mu(L) + \mu(\mathrm{pt}) \, u)$ ($r=2$).
\item $K$-theoretic Donaldson invariants (with R.~Williams): $\chi_{-y}^{\vir}(M, \mu(L))$ ($r=2$).
\item Virtual cobordism class: $\pi_* [M]_{\Omega_*}^{\vir}$ ($r=2$).
\item Virtual Segre numbers: $\int_{[M]^{\vir}} c(\alpha_M) \exp(\mu(L) + \mu(\mathrm{pt}) \, u)$ (any $r$).
\item Virtual Verlinde numbers: $\chi^{\vir}(M, \mu(L) \otimes E^r)$ (any $r$).
\end{itemize}
In each case, the invariants are governed by a conjectural structure formula, in terms of universal functions, similar to the case of virtual Euler characteristics. In Appendix \ref{sec:mult}, we generalize this universal structure formula further to \emph{any} collection of \emph{multiplicative} insertions on Gieseker-Maruyama moduli spaces covering all of these cases (Conjecture \ref{conj:multinst}).\footnote{Of course the explicit determination of the universal functions themselves is another matter and depends on the case.} We also prove a weak version of this general conjecture by using Mochizuki's formula (Theorem \ref{thm:multinst}). The reader who is only interested in Vafa-Witten theory can skip Appendix \ref{sec:mult}.

\subsection{Virtual Euler characteristics} \label{sec:evir}

Recall Conjecture \ref{intro:conjhor}:
\begin{conjecture}  \label{conjhor}
For any $r>1$, there exist $D_0$, $\{D_{ij}\}_{1 \leq i \leq j \leq r-1} \in \C[\![q^{\frac{1}{2r}}]\!]$ with the following property. For any smooth polarized surface $(S,H)$ satisfying $b_1(S) = 0$, $p_g(S)>0$, $c_1 \in  H^2(S,\Z)$, and $c_2 \in H^4(S,\Z)$ such that there are no rank $r$ strictly $H$-semistable sheaves on $S$ with Chern classes $c_1,c_2$, $e^{\mathrm{vir}}(M_S^H(r,c_1,c_2))$ equals the coefficient of $q^{c_2 - \frac{r-1}{2r} c_1^2 - \frac{r}{2} \chi(\O_S) + \frac{r}{24} K_S^2}$ of
\begin{align*}
&r^{2+K_S^2 - \chi(\O_S)} \Bigg( \frac{1}{\Delta(q^{\frac{1}{r}})^{\frac{1}{2}}} \Bigg)^{\chi(\O_S)} \Bigg( \frac{\Theta_{{A^\vee_{r-1}},0}(q)   }{\eta(q)^r} \Bigg)^{-K_S^2}  \\
&\quad\quad\quad\quad \times D_0(q)^{K_S^2} \sum_{\bfbeta \in H^2(S,\Z)^{r-1}}   \prod_{i} \epsilon_r^{i \beta_i c_1} \, \SW(\beta_i) \prod_{i \leq j} D_{ij}(q)^{\beta_i \beta_j}.
\end{align*}
\end{conjecture}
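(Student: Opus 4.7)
The plan is to reduce the conjecture to \emph{Mochizuki's formula} \cite{Moc} and then deploy the same universality-plus-cobordism machinery that underlies Theorem \ref{thm:Laarakker}. First, \eqref{hor} rewrites
$$
e^{\vir}(M_S^H(r,c_1,c_2)) = (-1)^{\vd} \int_{[M]^{\vir}} c_{\vd}(T_M^{\vir}),
$$
which is a polynomial expression in Chern classes of the universal sheaf, hence a (descendent) Donaldson invariant. For surfaces with $p_g(S)>0$, Mochizuki's wall-crossing formula rewrites such an invariant as a sum over ordered tuples $(\beta_1, \ldots, \beta_{r-1})$ of Seiberg-Witten basic classes of $\prod_i \SW(\beta_i)$ times a universal tautological integral on a product of Hilbert schemes $S^{[\bfn]} = \prod_{i=0}^{r-1} S^{[n_i]}$, with the sum over $\bfn$ controlled by $c_2$. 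This is already the shape of the conjecture.

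Second, I would establish the multiplicative structure via universality. The Ellingsrud-G\"ottsche-Str{\o}mme theorem \cite{EGL} implies that each tautological integral over $S^{[\bfn]}$ appearing in Mochizuki's formula is a universal polynomial in the Chern numbers $\chi(\O_S), K_S^2, K_S \beta_i, \beta_i \beta_j$. Packaging all $c_2$ into a single generating series and exploiting multiplicativity under disjoint union $S = S' \sqcup S''$ with $\bfbeta = \bfbeta' \oplus \bfbeta''$ (which passes through both the Mochizuki expression and the Seiberg-Witten factor) yields a factorisation into universal power series
$$
A^{\chi(\O_S)} \, B^{K_S^2} \, \prod_i E_i^{\beta_i K_S} \prod_{i \leq j} E_{ij}^{\beta_i \beta_j},
$$
exactly parallel to \eqref{eqn2}. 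Since only Seiberg-Witten basic classes $\beta_i$ contribute and these satisfy $\beta_i K_S = \beta_i^2$, the $E_i$ factors can be absorbed into the diagonal $E_{ii}$, producing candidate universal series which, after the cosmetic normalisations of Section \ref{sec:cob}, play the role of $D_0$ and $D_{ij}$.

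Third, I would fix the explicit normalisations. The $\Delta(q^{1/r})^{-1/2}$ factor attached to $\chi(\O_S)$ can be pinned down by specialising to a K3 surface (via Joyce-Song Higgs pairs as in \cite{TT2, MT}), in direct analogy with how the vertical $A(q)$ was determined in \eqref{eqn:A}. The $(\Theta_{A_{r-1}^\vee,0}/\eta^r)^{-K_S^2}$ normalisation is forced by compatibility with a blow-up formula for $e^{\vir}(M)$, with the appearance of $A_{r-1}^\vee$ reflecting the Langlands dual of the $A_{r-1}$ theta present in the vertical case. The phase $\epsilon_r^{i \beta_i c_1}$ similarly originates from summation over $w \in H^2(S,\Z_r)$ in the Langlands dual partition function; the overall prefactor $r^{2+K_S^2 - \chi(\O_S)}$ matches the weight of the $\tau \mapsto -1/\tau$ transformation, so all normalisations are dictated by $S$-duality (Conjecture \ref{intro:Sdualconj}) together with Conjecture \ref{intro:conjCD}.

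The main obstacle is not the existence or multiplicative factorisation of the universal series --- this is essentially the content of Theorem \ref{thm:multinst} proved in Appendix \ref{sec:mult} --- but rather the identification of the resulting $D_0, D_{ij}$ as the $\tau \mapsto -1/\tau$ transforms of the vertical series $C_0, C_{ij}$ of Theorem \ref{thm:Laarakker}. A direct proof of this identification would require either a conceptual proof of $S$-duality (open in general beyond the K3 case \cite{JK}) or matching sufficiently many Fourier coefficients of both sides, which is computationally heavy since Mochizuki's formula has many terms at each order and the blow-up/normalisation constants accumulate. This is precisely why the full conjecture is left conjectural even though the weak universal structure is provable.
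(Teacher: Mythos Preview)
Your strategy is the paper's own: reduce $e^{\vir}(M)$ to a descendent Donaldson invariant, apply Mochizuki's formula, and then run the cobordism/universality argument of \cite{EGL} to obtain a multiplicative factorisation, with the $\Delta(q^{1/r})^{-1/2}$ factor pinned down by the K3 specialisation. That much is correct and is exactly how the paper situates Conjecture~\ref{conjhor} as a special case of Conjecture~\ref{conj:multinst}, with Theorem~\ref{thm:multinst} as the provable weak form.

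However, you have misidentified the obstacle, and in doing so conflated two distinct conjectures. Conjecture~\ref{conjhor} only asserts the \emph{existence} of $D_0, D_{ij} \in \C[\![q^{1/2r}]\!]$ making the formula hold; it says nothing about their relation to $C_0, C_{ij}$. The identification $D_{ij}(\tau) = C_{ij}(-1/\tau)$ is the content of the separate Conjecture~\ref{conjCD}. So the ``main obstacle'' you name in your last paragraph is not an obstacle to Conjecture~\ref{conjhor} at all---it is an obstacle to a strictly stronger statement.

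The genuine gap in proving Conjecture~\ref{conjhor} itself lies elsewhere. What Theorem~\ref{thm:multinst} actually produces is a factorisation in terms of universal series $\widetilde{A}, \widetilde{B}, \widetilde{Z}_{ij}$ with coefficients in $R(\!(t_1,\ldots,t_{r-1})\!)$, together with a sum over $\boldsymbol{\beta}$ constrained by $\sum_i \beta_i = c_1$ and stability inequalities, and the invariant is recovered only after taking $\mathrm{Coeff}_{t_1^0}\cdots\mathrm{Coeff}_{t_{r-1}^0}$. Passing from this to the clean form of Conjecture~\ref{conjhor}---complex-coefficient series, an unconstrained sum over $H^2(S,\Z)^{r-1}$, and the phases $\epsilon_r^{i\beta_i c_1}$---requires \emph{evaluating the residues}, which the authors state explicitly they cannot do except by computer in examples. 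The $\epsilon_r$ phases and the prefactor $r^{2+K_S^2-\chi(\O_S)}$ are not ``dictated'' by $S$-duality in any sense that constitutes proof: $S$-duality is the motivation for conjecturing this particular form, but it does not tell you that the residues in Mochizuki's expression actually evaluate to it. Your third step therefore does not go through as an argument, only as heuristic.
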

This is a special case of Conjecture \ref{conj:multinst}.\footnote{Note that $( \Theta_{{A^\vee_{r-1}},0}(q)   / \eta(q)^r)^{-K_S^2}$ is just a convenient normalization, which becomes useful in Section \ref{sec:blowup} when we discuss blow-up formulae.}  In order to see this, for $M:=M_S^H(r,c_1,c_2)$ as in Conjecture \ref{conjhor}, one first applies the virtual Poincar\'e-Hopf index theorem \cite{CFK,FG}
$$
e^{\vir}(M) = \int_{[M]^{\vir}} c_{\vd}(T_M^{\vir}),
$$
where $T_M^{\vir}$ is given by
$$
T_M^{\vir} = R^{\mdot}\hom_{\pi}(\EE,\EE)_0[1].
$$
Here $\EE$ is a universal sheaf on $S \times M$ and $\pi : S \times M \to M$ denotes projection.\footnote{Although $\E$ may only exist \'etale locally, $R^{\mdot}\hom_{\pi}(\E,\E)_0$ exists globally \cite[Thm.~2.2.4]{Cal}, see also \cite[Sect.~10.2]{HL}.}  
Denote by $\cM:=\cM_S^H(r,c_1,c_2)$ the Deligne-Mumford stack of rank $r$ oriented $H$-stable sheaves on $S$ with Chern classes $c_1,c_2$ (Appendix \ref{sec:mult}). Using the degree $\tfrac{1}{r} : 1$ \'etale morphism $\rho : \cM \to M$, we have
$$
T_{\cM}^{\vir} = \rho^* T_M^{\vir}, \quad \rho_*[\cM]^{\vir} = \frac{1}{r} [M]^{\vir},
$$
and therefore $e^{\vir}(M) = r e^{\vir}(\cM)$ and we can apply Conjecture \ref{conj:multinst}. In order to deduce Conjecture \ref{conjhor}, it remains to show that the universal function corresponding to $\chi(\O_S)$ is $\Delta(q^{\frac{1}{r}})^{-\frac{1}{2}}$.

If $S$ is a K3 surface, then $M$ is smooth of expected dimension and deformation equivalent to a Hilbert scheme of points of the same dimension \cite{OG,Huy1,Yos2}, i.e.~$S^{[\vd/2]}$. Since Euler characteristics of Hilbert schemes of points are well-known \cite{Got1}, we obtain that $e(M)$ equals the coefficient of $q^{c_2 - \frac{r-1}{2r} c_1^2 - r}$ of
\begin{align*}
\Delta(q^{\frac{1}{r}})^{-1}.
\end{align*}
Hence the universal function corresponding to $\chi(\O_S)$ is $\Delta(q^{\frac{1}{r}})^{-\frac{1}{2}}$.\footnote{One requires a small argument in order to show that every coefficient of $\Delta(q^{\frac{1}{r}})^{-\frac{1}{2}}$ indeed arises as some $e(M)$. This is readily established using \cite[Thm.~2.7]{Huy2}.}

The main conjecture of this section is the following (Conjecture \ref{intro:conjCD} in the introduction):
\begin{conjecture} \label{conjCD}
For any $r>1$, $C_0(q)$, $C_{ij}(q)$, $D_0(q)$, $D_{ij}(q)$ are Fourier expansions in $q = e^{2 \pi i \tau}$ of meromorphic functions $C_0(\tau)$, $C_{ij}(\tau)$, $D_0(\tau)$, $D_{ij}(\tau)$ on $\mathfrak{H}$ satisfying
$$
D_0(\tau) = C_0(-1/\tau), \quad D_{ij}(\tau) = C_{ij}(-1/\tau).
$$
\end{conjecture}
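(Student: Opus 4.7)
The plan is to derive Conjecture~\ref{conjCD} from the $S$-duality conjecture (Conjecture~\ref{intro:Sdualconj}) in the prime-rank case, where Thomas's cosection argument \cite{Tho} gives the clean decomposition
$$
\sfZ_{S,H,c_1}^{\SU(r)} = r^{-1} \sfZ_{S,H,c_1}^{(r)} + r^{-1}\sfZ_{S,H,c_1}^{(1^r)}.
$$
The key idea is that under $\tau \mapsto -1/\tau$, the vertical series (governed by $C_0, C_{ij}$ and lattice $A_{r-1}$) should match the horizontal series (governed by $D_0, D_{ij}$ and the dual lattice $A_{r-1}^\vee$), since the naive definition of the Langlands dual partition function is a discrete Fourier transform over $H^2(S,\Z_r)$ which, combined with the sum over algebraic $c_1$, dualizes the $\delta_{c_1,\sum i\beta_i}$ constraint into the phase $\prod_i \epsilon_r^{i\beta_i c_1}$ appearing in Definition~\ref{def:Psi}.

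First I would apply $\tau \mapsto -1/\tau$ to the closed formula of Theorem~\ref{thm:Laarakker}, using the standard transformations $\eta(-1/\tau) = \sqrt{-i\tau}\,\eta(\tau)$, the behaviour of $\Delta(q^r)$ as a form on $\Gamma_0(r)$, and Poisson summation for lattice theta functions
$$
\Theta_{A_{r-1},\ell}(-1/\tau) = \frac{(-i\tau)^{(r-1)/2}}{\sqrt{\mathrm{disc}(A_{r-1})}}\sum_{\ell'} e^{2\pi i \langle \ell,\ell'\rangle/r}\,\Theta_{A_{r-1}^\vee,\ell'}(\tau).
$$
After performing the Fourier transform on $c_1$ (i.e.\ summing $r^{-1}\sfZ_{S,H,w}^{\SU(r)}(-1/\tau)$ weighted by $\epsilon_r^{w\cdot c_1}$ as prescribed in the definition of $\sfZ^{^{L}\SU(r)}$), the $\delta_{c_1,\sum i\beta_i}$ becomes the character $\prod_i \epsilon_r^{i\beta_i c_1}$, the theta function $\Theta_{A_{r-1},0}$ is replaced by $\Theta_{A_{r-1}^\vee,0}$, and the $\chi(\O_S)$-prefactor $(r\Delta(q^r)^{1/2})^{-1}$ becomes $\Delta(q^{1/r})^{-1/2}$ after the modular transformation and tracking the normalisation. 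This matches exactly the skeleton of Conjecture~\ref{intro:conjhor}, leaving only the universal factor $C_0(-1/\tau)^{K_S^2}\prod_{i\leq j} C_{ij}(-1/\tau)^{\beta_i\beta_j}$ to be identified with $D_0(\tau)^{K_S^2}\prod_{i\leq j} D_{ij}(\tau)^{\beta_i\beta_j}$.

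To separate the individual functions, I would invoke a universality argument analogous to Section~\ref{sec:cob}: by choosing a family of surfaces $(S^{(\alpha)},\bfbeta^{(\alpha)})$ whose Chern numbers
$$
\bigl(\chi(\O_{S^{(\alpha)}}),\,K_{S^{(\alpha)}}^2,\,\{\beta_i^{(\alpha)}\beta_j^{(\alpha)}\}_{i\leq j}\bigr) \in \Q^{1+r(r-1)/2+1}
$$
are $\Q$-linearly independent and whose $\beta_i^{(\alpha)}$ are Seiberg-Witten basic with $\SW(\beta_i^{(\alpha)})\neq 0$, the equality of weighted products forces the individual identities $D_0(\tau)=C_0(-1/\tau)$ and $D_{ij}(\tau)=C_{ij}(-1/\tau)$. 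This reduces the conjecture to providing enough such surfaces; candidates include products of curves, elliptic surfaces with multiple sections, and double covers of rational surfaces branched along Seiberg-Witten basic divisors.

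The main obstacle is that Conjecture~\ref{intro:Sdualconj} itself is open outside $r\leq 3$ and the $K3$ case of \cite{MT,JK}, so this strategy is not currently unconditional. The honest alternative is to derive $D_0, D_{ij}$ independently via Mochizuki's formula -- Theorem~\ref{thm:multinst} already gives a weak existence of universal horizontal series, and a refinement would give computable expansions to cross-check against $C_0(-1/\tau), C_{ij}(-1/\tau)$ order by order. Even granting $S$-duality, a secondary technical difficulty is that the relation $D_0(\tau)=C_0(-1/\tau)$ is an identity of \emph{meromorphic} functions on $\mathfrak{H}$, whereas both sides are a~priori only formal $q$-expansions; one must first establish analytic continuation and modularity of $C_0, C_{ij}$, which in ranks $2,3,4,5$ follows from the explicit expressions of Conjectures~\ref{conjver:rk2}--\ref{conjver:rk5} but is not known in general.
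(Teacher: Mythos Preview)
The statement you are attempting to prove is a \emph{conjecture}; the paper does not prove it and does not claim to. What the paper does is run the implication in the \emph{opposite} direction: Proposition~\ref{prop:Sdualpart} shows that Conjecture~\ref{conjCD} (together with the symmetry $C_{ij}=C_{r-j,r-i}$) \emph{implies} that the vertical series transforms under $\tau\mapsto -1/\tau$ into the $k=0$ summand of the horizontal side, which is one piece of the $S$-duality transformation. The remaining piece \eqref{Sdualremainder}, relating the $k\neq 0$ horizontal summands among themselves, is then checked separately for $r=3,5$ using the explicit conjectural formulae. So in the paper's logic, Conjecture~\ref{conjCD} is an \emph{input} used to produce candidate $D_0,D_{ij}$ and to establish $S$-duality for the resulting partition function, not an output derived from $S$-duality.

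Your proposal attempts the reverse implication, and this is where the genuine gap lies. The $S$-duality conjecture constrains only the \emph{total} partition function $\sfZ^{\SU(r)}_{S,H,c_1}$, not the vertical and horizontal pieces separately. The clean statement ``vertical transforms to the $k=0$ horizontal term'' that you invoke is precisely the content of Proposition~\ref{prop:Sdualpart}, and that proposition \emph{assumes} Conjecture~\ref{conjCD}. Without already knowing $D_0(\tau)=C_0(-1/\tau)$ etc., you cannot separate the image of the vertical piece from the images of the various horizontal summands $\psi_{S,c_1}(\epsilon_r^k q^{1/2r})$ after applying $\tau\mapsto -1/\tau$; a~priori they could mix. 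Your argument is therefore circular at this step. The secondary issue---separating the individual $D_0,D_{ij}$ via a basis of surfaces with prescribed Seiberg-Witten data---is also more delicate than you indicate: the exponents $\beta_i\beta_j$ are constrained by $\beta_i^2=\beta_iK_S$ for basic classes, and producing a $\Q$-independent set of such vectors from actual surfaces with $p_g>0$ is not addressed by the candidate list you give. The honest status, which you correctly identify at the end, is that the only available evidence is the order-by-order agreement for $r=2,3$ obtained in \cite{GK1,GK3} via Mochizuki's formula, together with the internal consistency checks (Galois invariance, Donaldson-invariant leading terms) carried out in the paper for $r\le 5$.
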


Combining with the conjectural formulae for $C_0, C_{ij}$ from Section \ref{sec:rk2rk3rk4rk5} provides expressions for $D_0, D_{ij}$, as we we will discuss in detail in the next section.

The main motivation for this conjecture is the $S$-duality transformation (Conjecture \ref{intro:Sdualconj}). Roughly speaking, the $S$-duality transformation swaps the vertical generating series with (part of) the horizontal generating series. We will discuss this in detail in Section \ref{sec:S-dual}.

In \cite{GK1, GK3}, the first- and second-named author conjectured formulae for $D_0, D_{ij}$ for $r=2,3$ and verified that the resulting generating series of virtual Euler characteristics are correct for a list of surfaces up to certain virtual dimensions. The calculation in loc.~cit.~used Mochizuki's formula for descendent Donaldson invariants \cite{Moc}. The conjectures in \cite{GK1, GK3} are consistent with Conjecture \ref{conjCD}. For $r=4,5$, our computer programs are not powerful enough to effectively calculate with Mochizuki's formula. 

In the next section, we perform the following two additional consistency checks on Conjecture \ref{conjCD}:
\begin{itemize}
\item Although the coefficients of $C_0, C_{ij}$ are rational, the coefficients of $D_0, D_{ij}$ generally lie in a Galois extension of $\Q$. Let $S$ be a smooth projective surface with $H_1(S,\Z) = 0$, $p_g(S)>0$, and such that the only Seiberg-Witten basic classes are $0, K_S \neq 0$ (e.g.~$S$ is minimal of general type \cite[Thm.~7.4.1]{Mor}). Then we prove that for $r=2$--$5$ and any $c_1 \in H^2(S,\Z)$, the coefficients of 
$$
\Psi_{r,S,c_1}(q) = D_0(q)^{K_S^2} \sum_{\bfbeta \in H^2(S,\Z)^{r-1}} \prod_{i} \epsilon_r^{i \beta_i c_1} \, \SW(\beta_i) \prod_{i \leq j} D_{ij}(q)^{\beta_i \beta_j}
$$
are rational. Since virtual Euler characteristics are integers, one also expects from Conjecture \ref{conjhor} that $$r^{2 + K_S^2 - \chi(\O_S)}\Psi_{r,S,c_1}(q)$$ has integer coefficients. This appears to be a non-trivial fact that we can only check in examples.
\item For $r=2$--$5$, the leading term of  $\sfZ_{S,H,c_1}^{(r)}$ produces certain expressions for Donaldson invariants without insertions (Proposition \ref{prop:Don}). These expressions are consistent with \cite{GK4} ($r=2$--$4$) and \cite{Got4} ($r=5$), where these expressions were obtained by completely different methods.
\end{itemize}

\subsection{Galois actions and Donaldson invariants}

Let $(S,H)$ be a smooth polarized surface with $H_1(S,\Z) = 0$ and $p_g(S)>0$. Applying Conjecture \ref{conjCD} to the conjectural formulae for $C_0, C_{ij}$ from Section \ref{sec:rk2rk3rk4rk5} amounts to the following replacements (\eqref{maintrans} in Appendix \ref{sec:theta} and \cite[Prop.~2]{Duk})
\begin{align}
\begin{split} \label{def:s}
t_{A_{r-1},\ell}(q)|_{\tau \mapsto -\frac{1}{\tau}} &= t_{A_{r-1}^\vee,\ell}(q), \\
u(q^2)|_{\tau \mapsto -\frac{1}{\tau}} &=  \frac{\eta(q^{\frac{1}{2}}) \eta(q^\frac{1}{8})^2 }{\eta(q^\frac{1}{4})^3} =: v(q), \\
r(q)|_{\tau \mapsto -\frac{1}{\tau}} &= \frac{1 - \varphi \, r(q)}{\varphi + r(q)} =: s(q),
\end{split}
\end{align}
where $q = \exp(2 \pi i \tau)$, $u(q), r(q)$ are the octic continued fraction and Rogers-Ramanujan continued fraction, and $\varphi = \frac{1+ \sqrt{5}}{2}$ denotes the golden ratio. In particular, we deduce Proposition \ref{intro:hor:rk2-5} from the introduction. 

\begin{remark}
The conjectural formulae for $C_0, C_{ij}, D_0, D_{ij}$ thus obtained, together with Theorem \ref{thm:Laarakker} and Conjecture \ref{conjhor}, therefore completely determine the horizontal and vertical contribution to
$$
\sfZ_{S,H}^{\SU(r)}(q), \quad r=2,3,4,5
$$
for any smooth polarized surface $(S,H)$ with $H_1(S,\Z) = 0$ and $p_g(S)>0$. Recall that for $r=2,3,5$ prime, there are only horizontal and vertical contributions (Thomas vanishing, Section \ref{sec:TT}) and we therefore (conjecturally) know the full $\SU(r)$ Vafa-Witten partition function.  
\end{remark}

Next, we suppose the only Seiberg-Witten basic classes of $S$ are $0$ and $K_S \neq 0$ (e.g.~$S$ is minimal of general type), so the corresponding Seiberg-Witten invariants are $1$ and $(-1)^{\chi(\O_S)}$ \cite[Thm.~7.4.1]{Mor}. Consider the expression $\Psi_{r,S,c_1}(q)$ (Definition \ref{def:Psi}).  For $r=2$, its coefficients are manifestly rational. For $r=3$, we have
$$
\Psi_{3,S,c_1} = t_{A_2^{\vee},1}^{K_S^2}  \, \big(X_{+}^{K_S^2} + X_{-}^{K_S^2} \big) + \big( \epsilon_3^{K_S c_1} + \epsilon_3^{-K_S c_1} \big) \, (-1)^{\chi(\O_S)} \, t_{A_2^{\vee},1}^{K_S^2}, 
$$
where $X_{\pm}$ are the roots of the quadratic equation
$$
X^2 - 4  t_{A_2^{\vee},1}^{2} \, X + 4 t_{A_2^{\vee},1} = 0.
$$
Note that $t_{A_2^\vee,1}$ has coefficients in $\Q(\epsilon_3)$. Writing this quadratic equation as $aX^2+bX+c=0$, the leading term of the discriminant $D = b^2 - 4ac$ is $432 = 2^4 3^3$. Therefore 
$$
\Psi_{3,S,c_1} \in \Q(\epsilon_3,\sqrt{3})[[q^{\frac{1}{6}}]] = \Q(\sqrt{3},i)[[q^{\frac{1}{6}}]].
$$

For $r=4$, we have
\begin{align*}
\Psi_{4,S,c_1} = &\ \Big\{ \Big( \frac{Z - Z^{-1}}{t_{A_3^\vee,2}^{-1} v^{-4} - Z^{-1}} \Big)^{K_S^2} + \Big( \frac{Z^{-1} - Z}{t_{A_3^\vee,2}^{-1} v^{-4} - Z} \Big)^{K_S^2} \Big\} \\
&+ (-1)^{\chi(\O_S) + K_S c_1}  \Big\{ \Big( \frac{Z - Z^{-1}}{t_{A_3^\vee,2}^{-1} Z - v^{4}} \Big)^{K_S^2} + \Big( \frac{Z^{-1} - Z}{t_{A^\vee_3,2}^{-1} Z^{-1} - v^{4}} \Big)^{K_S^2} \Big\} \\
&+\big(i^{K_S c_1} + i^{-K_S c_1}\big) \, t_{A_3^\vee,1}^{K_S^2} \Big\{\big( 1 + v^{-4} \big)^{K_S^2} + (-1)^{\chi(\O_S)}\big( v^4+1 \big)^{K_S^2} \Big\},
\end{align*}
where $Z$ is a solution of the equation
$$
Z -3( v^{-2} + v^{2}) + Z^{-1} = 0.
$$
The leading term of the discriminant is $32 = 2^5$. Therefore 
$$
\Psi_{4,S,c_1} \in \Q(\sqrt{2},i)[[q^{\frac{1}{8}}]].
$$

For $r=5$, we define auxiliary functions
\begin{align*}
\gamma_1 :=  \frac{1}{25} t_{A_4^{\vee},1} \big( 3 s^{-5} + 2 -8 s^5 \big), \quad \gamma_2 := \frac{1}{25} t_{A_4^{\vee},2} \big( 8 s^{-5}  + 2  -3 s^{5} \big). 
\end{align*}
Then
\begin{align*}
\Psi_{5,S,c_1} = & \Bigg\{ \Bigg( \frac{Z X_+^2Y_+^2}{\gamma_1\gamma_2}   \Bigg)^{\frac{K_S^2}{2}} + \Bigg( \frac{X_+^2Y_-^2}{Z\gamma_1\gamma_2}   \Bigg)^{\frac{K_S^2}{2}} + \Bigg( \frac{X_-^2Y_+^2}{Z\gamma_1\gamma_2}   \Bigg)^{\frac{K_S^2}{2}} + \Bigg(  \frac{Z  X_-^2Y_-^2}{\gamma_1\gamma_2}   \Bigg)^{\frac{K_S^2}{2}} \Bigg\} \\
&+\big( \epsilon_5^{K_S c_1} + \epsilon_5^{-K_S c_1} \big) \Big\{ \gamma_1^{K_S^2} + (-1)^{\chi(\O_S)}\big(X_+^{K_S^2} + X_-^{K_S^2}\big) \Big\}  \\
&+ \big( \epsilon_5^{2K_S c_1} + \epsilon_5^{-2K_S c_1} \big) \Big\{ \gamma_2^{K_S^2} + (-1)^{\chi(\O_S)}\big(Y_+^{K_S^2} + Y_-^{K_S^2}\big) \Big\}, 
\end{align*}
where $X_{\pm}, Y_{\pm}, Z$ are, respectively, the solutions of the equations
\begin{align}
\begin{split} \label{quadrXYZ}
&X^2 - \frac{4}{5} \gamma_1 \big(\gamma_1 t_{A_4^\vee,1}^{-1}  - 1 \big) \big(3s^{-5}+1\big) X + \frac{4}{5}\gamma_1^2 \big(3s^{-5} +1\big) = 0, \\
&Y^2 - \frac{4}{5} \gamma_2  \big(\gamma_2  t_{A_4^\vee,2}^{-1}  - 1 \big) \big(1-3s^{5} \big) Y + \frac{4}{5}\gamma_2^2 \big(1-3s^{5} \big) = 0,  \\
&Z - \frac{6}{25} \big( 8s^{-5} -13 - 8s^5 \big) + Z^{-1} = 0. 
\end{split}
\end{align}
Multiplying the third equation by $Z$, the leading terms of the discriminant of the quadratic equations are respectively given by
$$
2000(5 + 2 \sqrt{5}), \quad 2000(5- 2 \sqrt{5}), \quad 320,
$$
where $2000 = 2^4 5^3$ and $320 = 2^6 5$. We note the identities
\begin{align}
\begin{split} \label{sqrtsqrt}
&\sqrt{5} = -2 (\epsilon_5 - \epsilon_5^{-1})^2 - 5, \\
&\sqrt{5 + 2 \sqrt{5}} = -\frac{i}{2} (\epsilon_5 - \epsilon_5^{-1})(1  + \sqrt{5}), \\
&\sqrt{5 - 2 \sqrt{5}} = \frac{i}{2} (\epsilon_5^2 - \epsilon_5^{-2})(1  - \sqrt{5}),
\end{split}
\end{align}
which are all elements of $\Q(\epsilon_5,i)$. Furthermore, the coefficients of the leading terms of $Z X_+^2Y_+^2 / (\gamma_1\gamma_2)$, $X_+^2Y_-^2 / (Z \gamma_1\gamma_2)$, $X_-^2Y_+^2 / (Z \gamma_1\gamma_2)$, $Z X_-^2Y_-^2 / (\gamma_1\gamma_2)$ are squares in $\Q(\sqrt{5}) \subset \Q(\epsilon_5,i)$. Therefore
$$
\Psi_{5,S,c_1} \in \Q(\epsilon_5,i)[[q^{\frac{1}{10}}]].
$$
\begin{proposition} \label{prop:Gal}
Let $S$ be a smooth projective surface with $H_1(S,\Z) = 0$, $p_g(S)>0$, and such that the only Seiberg-Witten basic classes are $0, K_S \neq 0$. Then for any $c_1 \in H^2(S,\Z)$ we have
\begin{itemize}
\item $\Psi_{3,S,c_1}$ is invariant under the action of $\mathrm{Gal}(\Q(\sqrt{3},i)/\Q) \cong \Z_2 \times \Z_2$.
\item $\Psi_{4,S,c_1}$ is invariant under the action of $\mathrm{Gal}(\Q(\sqrt{2},i)/\Q) \cong \Z_2 \times \Z_2$.
\item $\Psi_{5,S,c_1}$ is invariant under the action of $\mathrm{Gal}(\Q(\epsilon_5,i)/\Q) \cong \Z_4 \times \Z_2$.
\end{itemize}
In particular, $\Psi_{3,S,c_1}, \Psi_{4,S,c_1}, \Psi_{5,S,c_1}$ have rational coefficients.
\end{proposition}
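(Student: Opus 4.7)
The plan is to verify Galois invariance rank by rank, checking that each generator of the respective Galois group either fixes the formula for $\Psi_{r,S,c_1}$ term by term or permutes its summands. Invariance under a set of generators implies invariance under the full Galois group, and hence the coefficients must lie in the fixed field $\Q$, giving the final rationality statement. In each case three structural observations will do most of the work: (i) the character sum $\epsilon_r^{iK_S c_1} + \epsilon_r^{-iK_S c_1}$ is invariant under complex conjugation $\epsilon_r \mapsto \epsilon_r^{-1}$; (ii) symmetric polynomials in $X_\pm$ (resp.\ $Y_\pm$) lie in the coefficient field of the quadratic that defines them, so any Galois element swapping $X_+ \leftrightarrow X_-$ fixes such expressions; (iii) the $v \mapsto -v$ symmetry in the lattice sum for $\Theta_{A_{r-1}^\vee,\ell}$ forces its Fourier coefficients to lie in $\Q(\epsilon_r) \cap \R$.

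For $r=3$, I would take as generators of $\mathrm{Gal}(\Q(\sqrt 3,i)/\Q)$ the map $\sigma:\sqrt 3 \mapsto -\sqrt 3$ (fixing $i$) and complex conjugation $\tau$. By (iii), $t_{A_2^\vee,1}$ has rational coefficients and is fixed by both. The discriminant of the quadratic defining $X_\pm$ has leading term $2^4\cdot 3^3$, so $\sigma$ swaps $X_+ \leftrightarrow X_-$; (ii) then gives $\sigma$-invariance, while (i), together with the reality of the remaining building blocks, gives $\tau$-invariance.

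For $r=4$, the generators of $\mathrm{Gal}(\Q(\sqrt 2,i)/\Q)$ can be taken to be $\sqrt 2 \mapsto -\sqrt 2$ (arising from the leading term $32$ of the discriminant of the $Z$-equation) and complex conjugation. The first swaps the two roots $Z$ and $Z^{-1}$, thereby interchanging the two summands in each of the first two braces in $\Psi_{4,S,c_1}$. The second fixes $v(q)$ (a real eta-quotient), fixes each $t_{A_3^\vee,\ell}$ by (iii), and fixes $i^{K_S c_1} + i^{-K_S c_1}$ by (i).

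The rank $5$ verification is the main obstacle. Pick generators $\alpha:\epsilon_5 \mapsto \epsilon_5^2$ and $\beta:i \mapsto -i$ of $\mathrm{Gal}(\Q(\epsilon_5,i)/\Q)$. Substituting in the lattice sum gives $\alpha(\Theta_{A_4^\vee,\ell}) = \Theta_{A_4^\vee,2\ell}$, so $\alpha$ swaps $t_{A_4^\vee,1} \leftrightarrow t_{A_4^\vee,2}$; using $\sqrt 5 = \epsilon_5 + \epsilon_5^{-1} - \epsilon_5^2 - \epsilon_5^{-2}$ we obtain $\alpha(\sqrt 5) = -\sqrt 5$, and hence $\alpha(\varphi) = 1-\varphi$, which determines the action of $\alpha$ on $s(q)$. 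A direct check then shows that $\alpha$ exchanges $(\gamma_1,\,3s^{-5}+1) \leftrightarrow (\gamma_2,\,1-3s^5)$, so the two quadratics in \eqref{quadrXYZ} are swapped, together with the pairs $X_\pm \leftrightarrow Y_\pm$; the middle two lines of $\Psi_{5,S,c_1}$ are exchanged, and the four summands in the first line are permuted among themselves. For $\beta$-invariance, the identities \eqref{sqrtsqrt} show that $i$ enters the formula only through $\sqrt{5 \pm 2\sqrt 5}$, and the sign flips of these radicals under $\beta$ swap $X_+ \leftrightarrow X_-$ and $Y_+ \leftrightarrow Y_-$ separately, again leaving symmetric sums untouched by (ii). The most delicate point is the consistency of the square-root choices $(ZX_\pm^2 Y_\pm^2/(\gamma_1\gamma_2))^{1/2}$ across the four terms of the first line; pinning this down is exactly the purpose of the squareness observation in $\Q(\sqrt 5)$ recorded just before the proposition.
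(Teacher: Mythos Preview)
Your approach matches the paper's: case-by-case verification that the generators of the Galois group permute the summands of $\Psi_{r,S,c_1}$. The $r=3$ and $r=4$ arguments are essentially identical to the paper's.

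There is, however, a genuine imprecision in your $r=5$ argument that would make the verification fail if not tracked carefully. You write that $\alpha$ swaps ``the pairs $X_\pm \leftrightarrow Y_\pm$'' and then assert that the four summands in the first line are permuted. But the natural reading $X_+\leftrightarrow Y_+$, $X_-\leftrightarrow Y_-$ is \emph{wrong}, and with that assignment the first line is \emph{not} permuted: for instance $\frac{ZX_+^2Y_+^2}{\gamma_1\gamma_2}$ would map to $\frac{X_+^2Y_+^2}{Z\gamma_1\gamma_2}$, which is not among the four terms. The paper computes explicitly, via \eqref{sqrtsqrt}, that $\alpha(\sqrt{5+2\sqrt 5}) = -\sqrt{5-2\sqrt 5}$ (note the sign), so in fact $\alpha(X_+)=Y_-$, $\alpha(X_-)=Y_+$, $\alpha(Y_+)=X_+$, $\alpha(Y_-)=X_-$. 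You also need, and do not mention, that $\alpha(Z)=Z^{-1}$: since $\alpha(s)=-s^{-1}$ the $Z$-equation is $\alpha$-invariant, but its discriminant has leading term $2^6\cdot 5$ and $\alpha(\sqrt 5)=-\sqrt 5$ forces the roots to swap. Only with \emph{both} the crossed assignment $X_+\mapsto Y_-$ and $Z\mapsto Z^{-1}$ do the four first-line terms form a single $4$-cycle under $\alpha$. So your outline is right, but this step needs the precise bookkeeping the paper supplies.
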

\begin{proof}
The Galois group $\mathrm{Gal}(\Q(\sqrt{3},i)/\Q)$ is generated by
\begin{align*}
&\sigma : \sqrt{3} \mapsto -\sqrt{3}, \quad i \mapsto i, \\
&\tau : \sqrt{3} \mapsto \sqrt{3}, \quad i \mapsto -i.
\end{align*}
We denote the action of an element $\nu \in \mathrm{Gal}(\Q(\sqrt{3},i)/\Q)$ on the coefficients of a formal Laurent series $F \in \Q(\sqrt{3},i)(\!(x)\!)$ by $\nu(F)$. Then
\begin{align*}
\sigma(t_{A_2^\vee,1}) = t_{A_2^\vee,1}, \quad \tau(t_{A_2^\vee,1}) = t_{A_2^\vee,1}, 
\end{align*}
where we used $t_{A_2^\vee,-1} = t_{A_2^\vee,1}$ (\eqref{eqn:Adual:ltominl} in Appendix \ref{sec:theta}). Therefore
$$
\sigma(X_+) = X_-, \quad \sigma(X_-) = X_+, \quad \tau(X_+) = X_+, \quad \tau(X_-) = X_-,
$$
and, consequently, $\sigma(\Psi_{3,S,c_1}) = \tau(\Psi_{3,S,c_1}) = \Psi_{3,S,c_1}$.

The Galois group $\mathrm{Gal}(\Q(\sqrt{2},i)/\Q)$ is generated by
\begin{align*}
&\sigma : \sqrt{2} \mapsto -\sqrt{2}, \quad i \mapsto i, \\
&\tau : \sqrt{2} \mapsto \sqrt{2}, \quad i \mapsto -i.
\end{align*}
Therefore
\begin{align*}
\sigma(t_{A_3^\vee,\ell}) = t_{A_3^\vee,\ell}, \quad \tau(t_{A_3^\vee,\ell}) = t_{A_3^\vee,\ell}, 
\end{align*}
where we used $t_{A_3^\vee,-\ell} = t_{A_3^\vee,\ell}$ (\eqref{eqn:Adual:ltominl} in Appendix \ref{sec:theta}).
Since $v(q)$ has rational coefficients, we see that $\sigma, \tau$ leave the quadratic equation for $Z$ invariant. However, $\sigma$ acts non-trivially on $Z$ due to the leading term $2^5$ of the discriminant. We obtain
$$
\sigma(Z) = Z^{-1}, \quad \sigma(Z^{-1}) = Z, \quad \tau(Z) = Z, \quad \tau(Z^{-1}) = Z^{-1}
$$
and, hence, $\sigma(\Psi_{4,S,c_1}) = \tau(\Psi_{4,S,c_1}) = \Psi_{4,S,c_1}$.

The Galois group $\mathrm{Gal}(\Q(\epsilon_5,i)/\Q)$ is generated by
\begin{align*}
&\sigma : \epsilon_5 \mapsto \epsilon_5^2, \quad i \mapsto i, \\
&\tau : \epsilon_5 \mapsto \epsilon_5, \quad i \mapsto -i.
\end{align*}
These elements have order 4 and 2 respectively and satisfy the relation $\tau \sigma = \sigma \tau$. We find
\begin{align*}
&\tau(t_{A_4^\vee,1}) = t_{A_4^\vee,1}, \quad \tau(t_{A_4^\vee,2}) = t_{A_4^\vee,2}, \quad \tau(s) = s, \\
&\sigma(t_{A_4^\vee,1}) = t_{A_4^\vee,2}, \quad \sigma(t_{A_4^\vee,2}) = t_{A_4^\vee,1}, \quad \sigma(s) = - s^{-1}, 
\end{align*}
where we used $t_{A_4^\vee,-\ell} = t_{A_4^\vee,\ell}$ (\eqref{eqn:Adual:ltominl} in Appendix \ref{sec:theta}) and $\sigma(\sqrt{5}) = -\sqrt{5}$ (equation \eqref{sqrtsqrt}). Therefore, the quadratic equations  \eqref{quadrXYZ} for $X_{\pm}, Y_{\pm}$ get swapped under the action of $\sigma$ and the quadratic equation for $Z$ is invariant under the action of $\sigma$. All three quadratic equations are invariant under the action of $\tau$. Using \eqref{sqrtsqrt}, we also find that the action of $\tau, \sigma$ on the leading terms of the discriminants of the quadratic equations \eqref{quadrXYZ} is given by 
\begin{align*}
\tau(\sqrt{5+2\sqrt{5}}) = - \sqrt{5+2\sqrt{5}}, \quad \tau(\sqrt{5-2\sqrt{5}}) = -\sqrt{5-2\sqrt{5}}, \quad \tau(\sqrt{5}) = \sqrt{5}, \\
\sigma(\sqrt{5+2\sqrt{5}}) = - \sqrt{5-2\sqrt{5}}, \quad \sigma(\sqrt{5-2\sqrt{5}}) = \sqrt{5+2\sqrt{5}}, \quad \sigma(\sqrt{5}) = -\sqrt{5}.
\end{align*}
Hence 
\begin{align*}
&\tau(X_+) = X_-, \quad \tau(X_-) = X_+, \quad \tau(Y_+) = Y_-, \quad \tau(Y_-) = Y_+, \quad \tau(Z) = Z, \\
&\sigma(X_+) = Y_-, \quad \sigma(X_-) = Y_+, \quad \sigma(Y_+) = X_+, \quad \sigma(Y_-) = X_-, \quad \sigma(Z) = Z^{-1}.
\end{align*}
It then follows that $\sigma(\Psi_{5,S,c_1}) = \tau(\Psi_{5,S,c_1}) = \Psi_{5,S,c_1}$. 
\end{proof}
The summands of $\Psi_{r,S,c_1}$ are called quantum vacua by physicists. For $r>2$, we see in the proof of the previous proposition that there is an interesting action of the Galois group on the quantum vacua.

\begin{remark}
For any smooth polarized surface $(S,H)$ with $H_1(S,\Z)=0$ and $p_g(S)>0$, and $c_1 \in H^2(S,\Z)$ such that there are no rank $r$ strictly $H$-semistable sheaves on $S$ with first Chern class $c_1$, we have that $\mathsf{Z}_{S,H,c_1}^{(r)}(q)$ is the generating series of virtual Euler characteristics $e^{\vir}(M_S^H(r,c_1,c_2)) \in \Z$. Therefore, we expect that the coefficients of 
$$
r^{2+K_S^2 - \chi(\O_S)} \Psi_{r,S,c_1}(q)
$$ 
are \emph{integers}. This appears to be a non-trivial fact which we cannot prove in general. 
If moreover the only Seiberg-Witten basic classes are $0, K_S \neq 0$ and $r = 3,4,5$, we checked integrality numerically for a list of values of $\chi(\O_S), K_S^2, K_S c_1 \in \Z$, satisfying $\chi(\O_S) \geq 2$ and $K_S^2 \geq \chi(\O_S) - 3$, up to low order in $q$. Note that $\chi(\O_S) \geq 2$ since $H_1(S,\Z) = 0$ and $p_g(S)>0$, and the inequality $K_S^2 \geq \chi(\O_S) - 3$ is satisfied by the proof of \cite[Prop.~5.3]{GK1}.\footnote{For $S$ minimal of general type, it is implied by Noether's inequality $K_S^2 \geq 2(\chi(\O_S) - 3)$.} Interestingly, we find that integrality can \emph{fail} for values of $\chi(\O_S), K_S^2, K_S c_1 \in \Z$ which do not satisfy these inequalities. Therefore, integrality is tied to the geometry. 
\end{remark}

We end this section by giving the leading term of the generating series $\mathsf{Z}_{S,H,c_1}^{(r)}(q)$ for $r =2,3,4,5$, which are Donaldson invariants without insertions. Below, we use the notation $\widetilde{\beta} = 2\beta - K_S$ for any $\beta \in H^2(S,\Z)$.
\begin{proposition} \label{prop:Don}
Assume Conjecture \ref{conjhor} holds. Let $(S,H)$ be a smooth polarized surface with $H_1(S,\Z)=0$ and $p_g(S)>0$. Suppose $c_1 \in H^2(S,\Z)$, $c_2 \in H^4(S,\Z)$ such that there are no rank $r$ strictly $H$-semistable sheaves on $S$ with Chern classes $c_1, c_2$ and
$\vd(r,c_1,c_2) = 0$. Then 
$$
\int_{[M_S^H(r,c_1,c_2)]^{\vir}} 1 = r^{2+K_S^2 - \chi(\O_S)} D_0(0)^{K_S^2} \sum_{\bfbeta \in H^2(S,\Z)^{r-1}}   \prod_{i} \epsilon_r^{i \beta_i c_1} \, \SW(\beta_i) \prod_{i \leq j} D_{ij}(0)^{\beta_i \beta_j}.
$$
Furthermore, assuming the conjectural formulae for $D_0, D_{ij}$, obtained from Conjecture \ref{conjCD} and \ref{conjver:rk2}--\ref{conjver:rk5}, the right hand side reduces to (respectively)
\begin{align*}
&2^{2-\chi(\O_S)+K_S^2} \sum_{\beta \in H^2(S,\Z)} \SW(\beta) \, (-1)^{\beta c_1}, \\
&3^{2-\chi(\O_S)+K_S^2} \sum_{(\beta_1,\beta_2) \in H^2(S,\Z)^2} \SW(\beta_1) \, \SW(\beta_2) \, \epsilon_3^{(\beta_1+2\beta_2)c_1} \, 2^{\frac{1}{4}(\widetilde \beta_1+\widetilde \beta_2)^2}, \\
&4^{2-\chi(\O_S)+K_S^2} \sum_{(\beta_1,\beta_2,\beta_3) \in H^2(S,\Z)^3} \SW(\beta_1) \, \SW(\beta_2) \, \SW(\beta_3) \, i^{(-\beta_1+2\beta_2-3\beta_3)c_1}  \\ 
&\quad \quad \quad \quad \times 2^{K_S^2+ \frac{1}{4}(\widetilde \beta_1 + \widetilde \beta_2)(\widetilde \beta_1+\widetilde \beta_3)} \,  \Big(1-\frac{1}{2} \sqrt{2} \Big)^{\frac{1}{2}\widetilde \beta_2(\widetilde \beta_1 +\widetilde \beta_3)}, \\
&5^{2-\chi(\O_S)+K_S^2} \sum_{(\beta_1,\beta_2,\beta_3,\beta_4) \in H^2(S,\Z)^4} \SW(\beta_1) \, \SW(\beta_2) \, \SW(\beta_3) \, \SW(\beta_4) \epsilon_5^{\sum_i i \beta_i c_1} (20+8 \sqrt{5})^{K_S^2} \\
&\quad \quad \quad \quad \times 2^{\frac{1}{2}\widetilde{\beta}_1\widetilde{\beta}_4 + \frac{1}{2}\widetilde{\beta}_2\widetilde{\beta}_3 - \frac{1}{4}\widetilde{\beta}_1^2- \frac{1}{4}\widetilde{\beta}_2^2- \frac{1}{4}\widetilde{\beta}_3^2- \frac{1}{4}\widetilde{\beta}_4^2} \Big(\frac{3}{2} - \frac{1}{2} \sqrt{5} \Big)^{\frac{1}{4}\widetilde{\beta}_1^2 + \frac{1}{4} \widetilde{\beta}_4^2 + \frac{1}{4}\widetilde{\beta}_1\widetilde{\beta}_4 - \frac{1}{4}\widetilde{\beta}_1\widetilde{\beta}_3 - \frac{1}{4}\widetilde{\beta}_2\widetilde{\beta}_3 - \frac{1}{4}\widetilde{\beta}_2\widetilde{\beta}_4  }  \\
&\quad \quad \quad \quad  \times\Big(\frac{7}{2} - \frac{3}{2} \sqrt{5}\Big)^{\frac{1}{4}\widetilde{\beta}_2^2 + \frac{1}{4} \widetilde{\beta}_3^2 - \frac{1}{4} \widetilde{\beta}_1 \widetilde{\beta}_2 - \frac{1}{4} \widetilde{\beta}_3 \widetilde{\beta}_4}.
\end{align*}
\end{proposition}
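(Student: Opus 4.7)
The proof has two stages.

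\textbf{Leading coefficient extraction.} The first identity extracts the lowest-order coefficient from the generating series of Conjecture \ref{conjhor}. When $\vd(r,c_1,c_2) = 0$, the distinguished exponent $c_2 - \frac{r-1}{2r}c_1^2 - \frac{r}{2}\chi(\O_S) + \frac{r}{24}K_S^2$ equals $-\chi(\O_S)/(2r) + rK_S^2/24$, which is precisely the lowest power of $q$ appearing on the right-hand side: $\Delta(q^{1/r})^{-\chi(\O_S)/2}$ has leading term $q^{-\chi(\O_S)/(2r)}$ with coefficient $1$; $\bigl(\Theta_{A_{r-1}^\vee,0}(q)/\eta(q)^r\bigr)^{-K_S^2}$ contributes $q^{rK_S^2/24}$ with coefficient $1$ (since both $\Theta_{A_{r-1}^\vee,0}$ and $\prod(1-q^n)$ have constant term $1$); and the factor $D_0(q)^{K_S^2}\sum\prod D_{ij}(q)^{\beta_i\beta_j}\in\C[\![q^{1/(2r)}]\!]$ contributes its constant term. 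Combined with the prefactor $r^{2+K_S^2-\chi(\O_S)}$ and the identity $e^{\vir}(M)=\int_{[M]^{\vir}} 1$ valid for $\vd=0$, this yields the first displayed formula.

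\textbf{Specialization for $r\le 5$.} For each rank I substitute the expressions for $D_0,D_{ij}$ obtained by applying Conjecture \ref{conjCD} to Conjectures \ref{conjver:rk2}--\ref{conjver:rk5} (as summarized in Proposition \ref{intro:hor:rk2-5}) and evaluate at $q=0$. Three fundamental evaluations underlie everything: $t_{A_{r-1}^\vee,\ell}(0)=1$ (the theta functions all have constant term $1$), $v(0)=1$ (the fractional $q^{n/24}$ factors in the $\eta$-quotient cancel), and $s(0)=1/\varphi$ (since $r(q)\in q^{1/5}\,\C[\![q]\!]$). The algebraic equations specialize accordingly: for $r=3$, $X^2-4X+4=0$ has the double root $X=2$, giving $D_0(0)=2$, $D_{11}(0)=D_{22}(0)=1/2$, $D_{12}(0)=4$; for $r=4$, $Z^2-6Z+1=0$ and the prescribed branch gives $Z(0)=3+2\sqrt 2$; for $r=5$, a short computation yields $\gamma_1(0)=(5-\sqrt 5)/2$, $\gamma_2(0)=(5+\sqrt 5)/2$, $Z(0)=9+4\sqrt 5$, and both discriminants for $X,Y$ vanish at $q=0$ so that $X_\pm(0)=5+\sqrt 5$ and $Y_\pm(0)=5-\sqrt 5$. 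The base $20+8\sqrt 5$ appearing in the final $r=5$ formula then arises from $\sqrt{80(9+4\sqrt 5)}=4\sqrt 5\,(2+\sqrt 5)$, using $(2+\sqrt 5)^2=9+4\sqrt 5$.

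\textbf{Matching the stated form.} To bring each sum into the displayed shape I use the Seiberg--Witten identity $\beta_i^2=\beta_iK_S$ (valid whenever $\SW(\beta_i)\neq 0$) to rewrite $\prod D_{ij}(0)^{\beta_i\beta_j}$ as powers of the listed bases with quadratic expressions in $\tilde\beta_i=2\beta_i-K_S$ in the exponents; for instance, in rank~$3$ the identity $(1/2)^{\beta_1^2}(1/2)^{\beta_2^2}4^{\beta_1\beta_2}=2^{-K_S^2}\cdot 2^{\frac14(\tilde\beta_1+\tilde\beta_2)^2}$ absorbs precisely one power of $D_0(0)^{K_S^2}=2^{K_S^2}$. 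The main obstacle is the $r=5$ bookkeeping: the raw expression lives in $\Q(\sqrt 5,i)$, and collapsing it to the compact form with bases $20+8\sqrt 5$, $\tfrac32-\tfrac{\sqrt 5}2$, $\tfrac72-\tfrac{3\sqrt 5}2$ requires careful use of the identities \eqref{sqrtsqrt} and of the Galois symmetries from Proposition \ref{prop:Gal} (which at $q=0$ degenerate into the identifications $X_+\leftrightarrow X_-$ and $Y_+\leftrightarrow Y_-$). Rationality of the final coefficients, matching what was proved in Proposition \ref{prop:Gal} away from the specialization, serves as a useful consistency check throughout.
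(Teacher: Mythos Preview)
Your approach matches the paper's: compute the leading terms $D_0(0),D_{ij}(0)$ from the conjectural formulae and substitute, then rewrite using $\beta_i^2=\beta_iK_S$ for Seiberg--Witten basic classes. The paper's proof is essentially this two-line argument, followed by an explicit list of the values $D_0(0),D_{ij}(0)$ for $r=2,3,4,5$.

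One ingredient you omit is the reflection identity $\SW(K_S-\beta)=(-1)^{\chi(\O_S)}\SW(\beta)$, which the paper explicitly invokes alongside $\beta^2=\beta K_S$. It is needed to bring the sums into the exact displayed form: for instance, in rank $4$ the raw phase is $i^{(\beta_1+2\beta_2+3\beta_3)c_1}$, and only after relabelling $\beta_1\mapsto K_S-\beta_1$, $\beta_3\mapsto K_S-\beta_3$ (which is harmless by the reflection identity, the two sign changes cancelling) does one obtain the stated $i^{(-\beta_1+2\beta_2-3\beta_3)c_1}$. Your appeal to the identities \eqref{sqrtsqrt} and to the Galois action of Proposition~\ref{prop:Gal} in the $r=5$ case is, by contrast, unnecessary: the paper's listed constants $D_{ij}(0)$ already lie in $\Q(\sqrt{5})$, and the rewriting into the displayed bases $20+8\sqrt5$, $\tfrac32-\tfrac{\sqrt5}2$, $\tfrac72-\tfrac{3\sqrt5}2$ is straightforward algebra in that field together with the two Seiberg--Witten relations.
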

\begin{proof}
This follows from determining the leading terms of the universal series $D_0(q), D_{ij}(q)$, and some minor rewriting using the relations $\beta^2 = \beta K_S$ and $\SW(K_S - \beta) = (-1)^{\chi(\O_S)} \SW(\beta)$ for any Seiberg-Witten basic class $\beta \in H^2(S,\Z)$ \cite[Prop.~6.3.1, 6.3.4]{Moc}. \\

\noindent For $r=2$, we have 
$$
D_0(0) = D_{11}(0)=1. 
$$

\noindent For $r=3$, we have 
$$
D_0(0) = 2, \quad D_{11}(0) = D_{22}(0) = \frac{1}{2}, \quad D_{12}(0) = 4.
$$ 

\noindent For $r=4$, we have 
\begin{align*}
&D_0(0) = 4 + 2 \sqrt{2}, \quad D_{11}(0) = D_{33}(0) = 1 - \frac{1}{2} \sqrt{2}, \quad D_{22}(0) = 3 - 2 \sqrt{2}, \\
&D_{12}(0) = D_{23}(0) = 3 + 2 \sqrt{2}, \quad D_{13}(0) = 2.
\end{align*}

\noindent For $r=5$, we have
\begin{align*}
&D_0(0) = 20 + 8 \sqrt{5}, \quad D_{11}(0)=D_{44}(0)=\frac{3}{4} - \frac{1}{4} \sqrt{5}, \\
&D_{22}(0) = D_{33}(0) = \frac{7}{4} - \frac{3}{4} \sqrt{5},\quad D_{12}(0) = D_{34}(0) = \frac{7}{2} + \frac{3}{2} \sqrt{5}, \\
&D_{13}(0) = D_{24}(0) = \frac{3}{2} + \frac{1}{2} \sqrt{5}, \quad D_{23}(0) = 6 + 2 \sqrt{5}, \quad D_{14}(0) = 6 - 2 \sqrt{5}. \qedhere
\end{align*}
\end{proof}

\begin{remark}
For $r=2$, the formula of this proposition is a special case of the Witten conjecture. It was proved for smooth projective surfaces satisfying $b_1(S) = 0$ and $p_g(S)>0$ in \cite{GNY3}, and (under a technical assumption) for all differentiable 4-manifold $M$ with $b_1(M) = 0$, odd $b^{+}(M)$, and Seiberg-Witten simple type in \cite{FL1, FL2}. The higher rank analogue of the Witten conjecture is the Mari\~{n}o-Moore conjecture \cite{MM1, LM}. An algebro-geometric version of the Mari\~{n}o-Moore conjecture was given in \cite{GK4} and the universal constants in the conjecture were explicitly given for $r=3,4$. The conjectures in \cite{GK4} were proved in several examples using Mochizuki's formula. Recently, the first-named author determined the universal constants for any rank $r$ \cite{Got4} (see also \cite{LM}). The motivation in \cite{Got4} comes from conjectural blow-up formulae for virtual Segre and Verlinde numbers. In the present paper, Proposition \ref{prop:Don} is a consequence of $S$-duality. This provides an interesting consistency check between the conjectures of \cite{MM1, LM, GK4, Got4} on the one hand, and the conjectures of this paper on the other.
\end{remark}

\subsection{$S$-duality} \label{sec:S-dual}

Let $(S,H)$ be a smooth polarized surface with $H_1(S,\Z) = 0$ and $p_g(S)>0$. Let $r>2$ be \emph{prime} and $c_1 \in H^2(S,\Z)$ algebraic such that there are no rank $r$ strictly $H$-semistable Higgs pairs $(\cE,\phi)$ on $S$ with $c_1(\cE) = c_1$. As discussed in the introduction, Thomas vanishing \cite[Thm.~5.23]{Tho} implies that the only contributions to $\sfZ^{\SU(r)}_{S,H,c_1}$ are for ``open and closed'' components (unions of connected components) indexed by $\mu = (1^r)$ (vertical) and $\mu = (r)$ (horizontal). Moreover, the universal structure of $\mathsf{Z}^{(1^r)}_{S,H,c_1}$ is given by Theorem \ref{thm:Laarakker} and of $\mathsf{Z}^{(r)}_{S,H,c_1}$ by Conjecture \ref{conjhor}. 

\begin{definition} \label{def:smallpsi}
For any $r>1$, denote the formula of Conjecture \ref{conjhor} by 
$$
q^{-\frac{\chi(\O_S)}{2r} + \frac{r K_S^2}{24}} \psi_{S,c_1}(q^{\frac{1}{2r}}).
$$
\end{definition}

The following is elementary (e.g.~proof of \cite[Cor.~1.2]{GK3})
\begin{equation} \label{Zandpsi}
\sfZ^{(r)}_{S,H,c_1}(q) = r^{-1} q^{-\frac{\chi(\O_S)}{2r} + \frac{r K_S^2}{24}} \sum_{k=0}^{r} \epsilon_r^{k((r-1)c_1^2 + (r^2-1) \chi(\O_S))} \psi_{S,c_1}(\epsilon_r^k q^{\frac{1}{2r}}).
\end{equation}
We therefore have a complete conjectural structure formula for the $\SU(r)$ Vafa-Witten partition function. Using the definition of $\sfZ^{\SU(r)}_{S,H,c_1}$ in terms of Joyce-Song pairs in the strictly semistable case \cite{TT2}, we expect that the same expression (with the same universal functions!) holds in the strictly semistable case. This has been proved for the vertical contribution $\sfZ^{(1^r)}_{S,H,c_1}$ by the third-named author in \cite{Laa2}. Altogether, we have
\begin{align}
\begin{split} \label{VWfull}
\sfZ_{S,H,c_1}^{\SU(r)}(q) &= r^{-1} \sfZ_{S,H,c_1}^{(1^r)}(q) + r^{-1} \sfZ_{S,H,c_1}^{(r)}(q), \\
\sfZ_{S,H,c_1}^{(1^r)}(q) &= \Bigg( \frac{(-1)^{r-1}}{ r \Delta(q^r)^{\frac{1}{2}}} \Bigg)^{\chi(\O_S)} \Bigg( \frac{\Theta_{A_{r-1},0}(q)}{\eta(q)^r} \Bigg)^{-K_S^2} \\
&\quad\quad\quad\quad\times  C_0(q)^{K_S^2} \sum_{\bfbeta \in H^2(S,\Z)^{r-1}}  \delta_{c_1,\sum_i i \beta_i} \prod_{i} \SW(\beta_i) \prod_{i \leq j} C_{ij}(q)^{\beta_i \beta_j}, \\
\sfZ^{(r)}_{S,H,c_1}(q) &= r^{-1} q^{-\frac{\chi(\O_S)}{2r} + \frac{r K_S^2}{24}} \sum_{k=0}^{r} \epsilon_r^{k((r-1)c_1^2 + (r^2-1) \chi(\O_S))} \psi_{S,c_1}(\epsilon_r^k q^{\frac{1}{2r}}), \\
\psi_{S,c_1}(q^{\frac{1}{2r}})&=r^{2+K_S^2 - \chi(\O_S)} \Bigg( \frac{1}{\overline{\Delta}(q^{\frac{1}{r}})^{\frac{1}{2}}} \Bigg)^{\chi(\O_S)} \Bigg( \frac{\Theta_{{A^\vee_{r-1}},0}(q)   }{\overline{\eta}(q)^r} \Bigg)^{-K_S^2}  \\
&\quad\quad\quad\quad \times D_0(q)^{K_S^2} \sum_{\bfbeta \in H^2(S,\Z)^{r-1}}   \prod_{i} \epsilon_r^{i \beta_i c_1} \, \SW(\beta_i) \prod_{i \leq j} D_{ij}(q)^{\beta_i \beta_j},
\end{split}
\end{align}
where $\overline{\Delta}(q^{\frac{1}{r}})$, $\overline{\eta}(q)$ denote the normalizations of $\Delta(q^{\frac{1}{r}}), \eta(q)$ by their leading term (so they start with 1). Although this provides a formula for $\SU(r)$ Vafa-Witten invariants for any prime rank $r$, we are particularly interested in the rich modular structure of the universal series $C_0, C_{ij}, D_0, D_{ij}$.

We recall the mathematical statement of $S$-duality from the introduction (Conjecture \ref{intro:Sdualconj}):
\begin{conjecture}[Vafa-Witten] \label{Sdualconj} 
Let $(S,H)$ be a polarized  surface satisfying $H_1(S,\Z) = 0$ and $p_g(S)>0$. Let $r$ be prime and $c_1 \in H^2(S,\Z)$ algebraic. Then $\sfZ_{S,H,c_1}^{\SU(r)}(q)$ and $\sfZ_{S,H,c_1}^{^{L}\SU(r)}(q)$ are Fourier expansions in $q = \exp(2 \pi i \tau)$ of meromorphic functions $\sfZ_{S,H,c_1}^{\SU(r)}(\tau)$ and $\sfZ_{S,H,c_1}^{^{L}\SU(r)}(\tau)$ on $\mathfrak{H}$ satisfying
\begin{equation*} 
\mathsf{Z}^{\SU(r)}_{S,H,c_1}(-1/\tau) = (-1)^{(r-1)\chi(\O_S)} \Big( \frac{ r \tau}{i} \Big)^{-\frac{e(S)}{2}} \mathsf{Z}^{^{L}\SU(r)}_{S,H,c_1}(\tau).
\end{equation*}
\end{conjecture}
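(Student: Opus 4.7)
The plan is to prove the $S$-duality transformation term by term using the complete structure formula \eqref{VWfull} together with Conjecture \ref{conjCD}. Under the modular inversion $\tau\mapsto -1/\tau$, I would apply the classical modular transformations of $\eta$ and $\Delta$, together with Poisson summation on the lattice $A_{r-1}$ (which turns $\Theta_{A_{r-1},\ell}$ into a weighted sum of $\Theta_{A_{r-1}^\vee,\ell'}$, up to an explicit factor of $(\tau/i)^{(r-1)/2}$ coming from the square root of the lattice volume, see Appendix \ref{sec:theta}). Combined with the postulated modular behaviour of $C_0,C_{ij}$ from Conjecture \ref{conjCD}, this would transform the vertical piece $\sfZ^{(1^r)}_{S,H,c_1}$ term by term into an expression involving $D_0,D_{ij}$ and theta functions of $A_{r-1}^\vee$. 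The horizontal piece $\sfZ^{(r)}_{S,H,c_1}$, given by equation \eqref{Zandpsi} as a Galois-type average $r^{-1}\sum_k \epsilon_r^{k(\cdots)}\psi_{S,c_1}(\epsilon_r^k q^{1/2r})$, would be handled dually: the inversion $\tau\mapsto -1/\tau$ acts on each $\psi_{S,c_1}(\epsilon_r^k q^{1/2r})$ by the same modular transformations, producing expressions with vertical-type universal functions.

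The central combinatorial input is a Fourier-transform/Poisson summation identity matching the two ways of twisting. On one side, the definition of $\mathsf{Z}^{^{L}\SU(r)}_{S,H,c_1}$ contains the factor $r\sum_{w\in H^2(S,\Z_r)}\epsilon_r^{w\cdot c_1}$, while the vertical partition function carries $\delta_{c_1,\sum_i i\beta_i}\prod_i \SW(\beta_i)$ and the horizontal carries $\prod_i\epsilon_r^{i\beta_i c_1}\SW(\beta_i)$. The Poisson/Fourier duality on $H^2(S,\Z)\otimes\Z_r$ exchanges a Kronecker delta $\delta_{c_1,\sum_i i\beta_i}$ with a character sum $\sum_w\epsilon_r^{w\cdot c_1}$, and this should precisely match the shuffling between horizontal and vertical generating series after the modular inversion. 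The Galois-type average in \eqref{Zandpsi} is exactly what is needed to reassemble the shifted theta functions $\Theta_{A^\vee_{r-1},\ell}$ that appear after inversion of $\Theta_{A_{r-1},\ell}$, once we note that $\ell$ ranges over representatives of $A_{r-1}^\vee/A_{r-1}\cong \Z_r$.

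Concretely, my steps are: (i) apply $\tau\mapsto -1/\tau$ to the vertical summand of \eqref{VWfull}, using the transformations $\eta(-1/\tau)=(\tau/i)^{1/2}\eta(\tau)$, $\Delta(-1/\tau)=\tau^{12}\Delta(\tau)$, the Jacobi transformation of $\Theta_{A_{r-1},\ell}$, and Conjecture \ref{conjCD} to convert $C_0,C_{ij}$ to $D_0,D_{ij}$; (ii) do the same on the horizontal side, using that $\sfZ^{(r)}_{S,H,c_1}$ is expressed via $\psi_{S,c_1}$ which already involves $D_0,D_{ij}$ and $A_{r-1}^\vee$ theta functions; (iii) collect the modular weights from $\eta^{\chi(\O_S)}\Delta^{\chi(\O_S)/2}$ and the lattice theta prefactor, verifying they combine into $(r\tau/i)^{-e(S)/2}$ via the Noether formula $e(S)=12\chi(\O_S)-K_S^2$; (iv) match the Fourier dual of $\delta_{c_1,\sum_i i\beta_i}$ with $r^{-1}\sum_w \epsilon_r^{w\cdot c_1}$ on the Langlands dual side; (v) finally track the overall sign $(-1)^{(r-1)\chi(\O_S)}$, which should come from the sign in $A(q) = (-1)^{r-1}/(r\Delta(q^r)^{1/2})$ raised to $\chi(\O_S)$ combined with contributions from the twisted sheaves side.

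The principal obstacle is Conjecture \ref{conjCD} itself: the universal series $C_0,C_{ij}$ are \emph{a priori} only known as formal power series in $q^{1/(2r)}$, and asserting that they extend to meromorphic functions on $\mathfrak{H}$ with the prescribed $S$-transformation behaviour is essentially equivalent to proving modularity of highly complicated generating functions of integrals on nested Hilbert schemes. For the prime ranks $r=2,3,5$ where the explicit conjectural closed forms (Conjectures \ref{conjver:rk2}, \ref{conjver:rk3}, \ref{conjver:rk5}) are available, Conjecture \ref{conjCD} can instead be checked directly via classical modular identities: the $S$-transformations of $t_{A_{r-1},\ell}$, of Ramanujan's continued fractions ($u$, $c$, $r$), and of the Hauptmoduln $j_r$. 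This is exactly the content of the forthcoming Proposition \ref{prop:Sdualcheck} referenced in the text, which is presumably how the conjecture is established (modulo the vanishing theorem of Thomas and the strictly semistable input of \cite{TT2, Laa2}) for $r=2,3,5$. Extending beyond these ranks would require either proving modularity of $C_0,C_{ij}$ for non-prime $r$ or generalizing the $S$-duality statement to accommodate partitions other than $(r)$ and $(1^r)$.
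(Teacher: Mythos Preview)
The statement is a \emph{conjecture}, and the paper does not prove it in general; it only derives it for $r=2,3,5$ conditionally on the other conjectures (Proposition \ref{prop:Sdualcheck}). Your outline broadly matches the paper's strategy in Section \ref{sec:S-dual}, and your final paragraph correctly identifies the scope of what is actually established. However, your description of the mechanism contains a genuine inaccuracy that would block the argument if carried out.

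You treat all the horizontal terms $\psi_{S,c_1}(\epsilon_r^k q^{1/2r})$ uniformly, saying the inversion produces ``expressions with vertical-type universal functions'' for each $k$. This is not what happens. The paper's Proposition \ref{prop:Sdualpart} shows that only the $k=0$ horizontal summand swaps with the vertical piece under $\tau\mapsto -1/\tau$; this is the step where your Poisson/Fourier duality on $H^2(S,\Z_r)$ (the first identity of Proposition \ref{LLlattice}) is used. The remaining $k=1,\ldots,r-1$ horizontal summands must instead permute \emph{among themselves} so that equation \eqref{Sdualremainder} holds, and this requires the \emph{second} flux-sum identity of Proposition \ref{LLlattice} together with explicit modular transformations such as $\Delta((\tau+m)/r)^{1/2}|_{\tau\mapsto -1/\tau}=\pm\tau^6\Delta((\tau+n)/r)^{1/2}$ with $mn\equiv -1\bmod r$, and analogous identities for $\Theta_{A_{r-1}^\vee,\ell}(\tau+2k)$ and for the continued fractions (e.g.\ $s(\tau+2)|_{\tau\mapsto -1/\tau}=-\epsilon_5^2 s(\tau+2)^{-1}$ for $r=5$). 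These case-by-case identities are the actual workhorse of the verification for $r=3,5$, and they do not follow from Conjecture \ref{conjCD} alone: one needs the closed forms of Conjectures \ref{conjver:rk3}, \ref{conjver:rk5} to even write them down. Your step (ii) as stated would not produce \eqref{Sdualremainder}; it would instead try to turn the $k\neq 0$ horizontal terms into something vertical, which is wrong.
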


In this conjecture, we have
$$
\mathsf{Z}^{^{L}\SU(r)}_{S,H,c_1}(q) := r \sum_{w \in H^2(S,\Z_r)} \epsilon_r^{w c_1} \, \sfZ_{S,H,w}^{\SU(r)}(q).
$$
As discussed in the introduction, using Tanaka-Thomas's definition, $\sfZ_{S,H,w}^{\SU(r)}$ is obviously zero when $w$ cannot be represented by an algebraic class. On the other hand, the expression in \eqref{VWfull} does make sense (and is generally non-zero) when substituting \emph{non-algebraic} $c_1=w$ on the right hand side. Let us call this expression $\widetilde{\sfZ}_{S,H,w}^{\SU(r)}$. Then $\widetilde{\sfZ}_{S,H,w}^{\SU(r)}$ is manifestly invariant under replacing $w$ by $w + r \gamma$ for any $\gamma \in H^2(S,\Z)$. A \emph{geometric} definition of $\mathsf{Z}^{^{L}\SU(r)}_{S,H,c_1}$, using moduli spaces of twisted sheaves, is proposed in \cite{JK}. Conjecturally, the geometrically defined generating series $\mathsf{Z}^{^{L}\SU(r)}_{S,H,c_1}$ of loc.~cit.~satisfies
\begin{equation} \label{GKJconj}
\mathsf{Z}^{^{L}\SU(r)}_{S,H,c_1}(q) = r \sum_{w \in H^2(S,\Z_r)} \epsilon_r^{w c_1} \, \widetilde{\sfZ}_{S,H,w}^{\SU(r)}(q).
\end{equation}
This is proved for K3 surfaces with generic polarization in \cite[Thm.~1.5]{JK}. It is proved for a list of surfaces satisfying $H_1(S,\Z) = 0$, $p_g(S)>0$, up to certain virtual dimensions by combining \cite{BGJK} and \cite{GK1, GK3}. For the purposes of this paper, the reader can take \eqref{GKJconj} as the \emph{definition} of $\mathsf{Z}^{^{L}\SU(r)}_{S,H,c_1}$.

We now discuss how Conjecture \ref{conjCD} naturally implies part of the $S$-duality conjecture for any prime $r>2$. We will also show that our conjectural formulae for $C_0, C_{ij}, D_0, D_{ij}$ for ranks 3 and 5 indeed satisfy the $S$-duality conjecture.\footnote{The case $r=3$ was first covered in \cite{GK3}. The case $r=2$ appeared in Vafa-Witten's original paper \cite[Sect.~5]{VW} (see also \cite{GK3}).}

We require interesting lattice theoretic identities, known as \emph{flux sums}, which we learned from \cite{VW, LL}.
Since $H_1(S,\Z) = 0$, we have that $H^2(S,\Z)$ is torsion free and we consider the unimodular lattice $(H^2(S,\Z),\cup)$. Note that
$$
H^2(S,\Z) / r H^2(S,\Z) \cong H^2(S,\Z) \otimes_{\Z} \Z_r \cong H^2(S,\Z_r) 
$$
with its induced pairing. 
We denote the Betti numbers of $S$ by $b_i(S)$ and its signature by $\sigma(S)$. 
We recall that the cup product induces a non-degenerate symmetric bilinear form on $H^2(S,\R)$ and we denote its number of positive and negative eigenvalues by $b_2^+(S)$ and $b_2^-(S)$ respectively.
Then $b_2(S) = b_2^+(S) + b_2^-(S)$ and $\sigma(S) = b_2^+(S) - b_2^-(S)$. We have \cite[Eqn.~(A.16), (A.17)]{LL} (see also \cite{VW, GK3}):
\begin{proposition}[Labastida-Lozano] \label{LLlattice}
For $r>2$ prime, $c \in H^2(S,\Z)$ (not necessarily algebraic), and $m =1, \ldots, r-1$, we have
\begin{align*}
\sum_{x \in H^2(S,\Z_r)} e^{\frac{2 \pi i}{r} (x c)} &= r^{b_2(S)} \delta_{c,0}, \\
\sum_{x \in H^2(S,\Z_r)} e^{\frac{2 \pi i}{r} (x c)} e^{\frac{\pi i (r-1)}{r} m x^2} &= \epsilon(m)^{b_2(S)} r^{\frac{b_2(S)}{2}} e^{-\frac{\pi i}{8} (r-1)^2 \sigma(S)} e^{\frac{\pi i (r-1)}{r} n c^2},
\end{align*}
where $mn \equiv -1 \mod r$,
$$
\epsilon(m) := \left\{ \begin{array}{cc} \Big( \frac{m/2}{r} \Big) & \textrm{if} \ m \ \textrm{is even} \\ \Big( \frac{(m+r)/2}{r} \Big) & \textrm{if} \ m \ \textrm{is odd} \end{array} \right.
$$
and $\big( \frac{a}{r} \big)$ denotes the Legendre symbol.
\end{proposition}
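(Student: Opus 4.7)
The plan is to interpret both identities as Gauss sums on the $\FF_r$-vector space $V = H^2(S, \FF_r)$ equipped with the reduction mod $r$ of the cup product, and to reduce them to the classical one-dimensional Gauss sum over $\FF_r$. Since $H_1(S,\Z) = 0$, Poincar\'e duality makes $H^2(S, \Z)$ torsion-free and unimodular under $\cup$; its reduction mod $r$ yields a non-degenerate symmetric $\FF_r$-bilinear form on $V \cong \FF_r^{b_2(S)}$ whose discriminant in $\FF_r^\times/(\FF_r^\times)^2$ is $(-1)^{b_2^-(S)}$ (from the real signature of $\cup$). The first identity is then the standard orthogonality of characters on the finite abelian group $V$.

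For the second identity, since $r$ is odd I rewrite
$$
e^{\frac{\pi i(r-1)}{r} m x^2} = e^{\frac{2\pi i}{r} a x^2}, \qquad a := \frac{(r-1)m}{2} \in \Z,
$$
so that $G_m(c) := \sum_{x \in V} e^{\frac{2\pi i}{r} (a x^2 + x \cdot c)}$ is a Gauss sum over $V$. Orthogonally diagonalizing the pairing on $V$, completing the square in each one-dimensional summand (using that $2a$ is invertible in $\FF_r$), and invoking the classical identities
$$
\sum_{y \in \FF_r} e^{2\pi i b y^2/r} = \left(\frac{b}{r}\right) g_r, \qquad g_r = i^{(r-1)^2/4} \sqrt{r},
$$
yields
$$
G_m(c) = e^{-2\pi i c^2/(4ar)} \left(\frac{a}{r}\right)^{b_2(S)} \left(\frac{-1}{r}\right)^{b_2^-(S)} g_r^{b_2(S)}.
$$

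It remains to match this with the right-hand side of the statement. The combination $\left(\frac{-1}{r}\right)^{b_2^-(S)} g_r^{b_2(S)}$ collapses (after distinguishing $r \equiv 1, 3 \pmod 4$ and using $(-1)^{b_2(S)} = (-1)^{\sigma(S)}$) into exactly $r^{b_2(S)/2} e^{-\pi i (r-1)^2 \sigma(S)/8}$. The residual Legendre symbol $\left(\frac{a}{r}\right) = \left(\frac{-m/2}{r}\right)$, after absorbing one extra factor of $\left(\frac{-1}{r}\right)^{b_2(S)}$, leaves $\left(\frac{m \cdot 2^{-1}}{r}\right)^{b_2(S)} = \epsilon(m)^{b_2(S)}$; the two cases in the definition of $\epsilon(m)$ simply record the integer representative of $m \cdot 2^{-1} \in \FF_r^\times$ depending on the parity of $m$. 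For the $c$-dependent exponential, the congruence $mn \equiv -1 \pmod r$ gives $(4a)^{-1} \equiv -(r-1)n/2 \pmod r$ in $\FF_r$, so $e^{-2\pi i c^2/(4ar)} = e^{\pi i (r-1) n c^2/r}$.

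The conceptual structure of the proof is thus just the classical Gauss-sum evaluation, and the main obstacle lies purely in the bookkeeping of signs. In particular, isolating the factor $e^{-\pi i (r-1)^2 \sigma(S)/8}$---which depends on the signature rather than on $b_2$ alone---requires a delicate combination of the sign of $g_r$, the Legendre symbol $\left(\frac{-1}{r}\right)^{b_2^-(S)}$ coming from the lattice discriminant, and the parity identity $(-1)^{b_2(S)} = (-1)^{\sigma(S)}$, performed case-by-case in $r \bmod 4$. Verifying that the somewhat awkward two-case definition of $\epsilon(m)$ correctly picks out the Legendre symbol of $m \cdot 2^{-1} \in \FF_r^\times$ is the only other nontrivial point.
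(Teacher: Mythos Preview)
The paper does not supply its own proof of this proposition: it is stated with attribution to Labastida--Lozano and cited from \cite{VW,LL} (see also \cite{GK3}), and the text immediately moves on to use it. So there is nothing to compare your argument against line by line.

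That said, your approach is the standard one and is correct. Reducing to the non-degenerate $\FF_r$-quadratic space $H^2(S,\FF_r)$, diagonalizing, completing the square, and invoking the one-variable Gauss sum is exactly how such flux sums are evaluated; the first identity is indeed just character orthogonality. Two points worth recording explicitly, since you flag them as the delicate steps: (i) the discriminant claim follows because any unimodular $\Z$-lattice of signature $(b_2^+,b_2^-)$ has Gram determinant $(-1)^{b_2^-}$ (the sign of the determinant is invariant under the real base change that diagonalizes the form), so its reduction mod $r$ has Legendre symbol $\big(\tfrac{-1}{r}\big)^{b_2^-}$; (ii) your observation that both branches of the definition of $\epsilon(m)$ simply compute $\big(\tfrac{m\cdot 2^{-1}}{r}\big)$, together with $a\equiv -m\cdot 2^{-1}\pmod r$, is what makes the Legendre-symbol bookkeeping close. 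With these in hand the case split $r\equiv 1,3\pmod 4$ goes through as you describe. In short: your proof is complete in outline and matches what one finds in the cited references; the paper itself offers no alternative argument.
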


We start with the following observation, which essentially states that ---in the $S$-duality transformation--- the vertical generating series and the ``$k=0$ term'' of the horizontal generating series are swapped. This phenomenon was observed for ranks 2,3 in \cite{GK3} motivated by \cite{DPS}.
\begin{proposition} \label{prop:Sdualpart}
For any prime $r>2$, Conjecture \ref{conjCD} and the relations $C_{ij}(q) = C_{r-j,r-i}(q)$ of Conjecture \ref{conj:symm} imply 
\begin{align*} 
&\sfZ_{S,H,c_1}^{(1^r)}(q) \big|_{\tau \to -\frac{1}{\tau}} = (-1)^{(r-1)\chi(\O_S)}  \Big( \frac{ r \tau}{i} \Big)^{-\frac{e(S)}{2}}  q^{-\frac{\chi(\O_S)}{2r} + \frac{r K_S^2}{24}} \sum_{w \in H^2(S,\Z_r)} \epsilon_r^{w c_1} \psi_{S,w}(q^{\frac{1}{2r}}), \\
&r^{-2} q^{-\frac{\chi(\O_S)}{2r} + \frac{r K_S^2}{24}} \psi_{S,c_1}(q^{\frac{1}{2r}}) \big|_{\tau \to -\frac{1}{\tau}} = (-1)^{(r-1)\chi(\O_S)}  \Big( \frac{ r \tau}{i} \Big)^{-\frac{e(S)}{2}} \sum_{w \in H^2(S,\Z_r)} \epsilon_r^{w c_1} \sfZ_{S,H,w}^{(1^r)}(q).
\end{align*}
\end{proposition}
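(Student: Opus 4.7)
I will apply the modular $S$-transformation $\tau\mapsto -1/\tau$ directly to each side of the two identities, invoking Conjecture \ref{conjCD} to interchange $C_0, C_{ij}$ with $D_0, D_{ij}$. Fourier duality on the finite group $H^2(S,\Z_r)$, implemented via the flux sums of Proposition \ref{LLlattice}, is then used to convert the Kronecker constraint $\delta_{c_1,\sum_i i\beta_i}$ in the vertical formula (Theorem \ref{thm:Laarakker}) into the character factor $\prod_i \epsilon_r^{i\beta_i w}$ in the horizontal formula (Definition \ref{def:smallpsi}), and vice versa.

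For the first identity I would transform each building block in Theorem \ref{thm:Laarakker} separately: $\Delta(r\tau)\big|_S = (\tau/r)^{12}\Delta(\tau/r)$; $\eta(-1/\tau) = (\tau/i)^{1/2}\eta(\tau)$; by Poisson summation (Appendix \ref{sec:theta}),
\[
\Theta_{A_{r-1},0}(-1/\tau) = r^{-1/2}(\tau/i)^{(r-1)/2}\Theta_{A_{r-1}^\vee,0}(\tau);
\]
and by Conjecture \ref{conjCD}, $C_0\big|_S = D_0$ and $C_{ij}\big|_S = D_{ij}$. After collecting powers of $\tau$, $r$, and $i$, and using Noether's relation $K_S^2 + e(S) = 12\chi(\O_S)$ together with $b_2(S) = e(S)-2$, the combined automorphy factor collapses to the claimed $(-1)^{(r-1)\chi(\O_S)}(r\tau/i)^{-e(S)/2}$ up to a surplus $r^{b_2(S)}$ that will be consumed in the next step. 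At this stage I invoke the first flux sum of Proposition \ref{LLlattice} in the inverted form
\[
\delta_{c_1,\sum_i i\beta_i\bmod r} = r^{-b_2(S)}\sum_{w\in H^2(S,\Z_r)}\epsilon_r^{w(c_1-\sum_i i\beta_i)},
\]
which both absorbs the $r^{b_2(S)}$ and produces the required $\sum_w \epsilon_r^{wc_1}$, leaving behind a character $\prod_i \epsilon_r^{-iw\beta_i}$. Applying the bijection $\beta_i\mapsto\beta_{r-i}$ converts this character into $\prod_i\epsilon_r^{iw\beta_i}$; it preserves $\prod_i\SW(\beta_i)$ trivially, and preserves $\prod_{i\le j}D_{ij}^{\beta_i\beta_j}$ thanks to the symmetry $C_{ij}=C_{r-j,r-i}$ (which by Conjecture \ref{conjCD} propagates to $D_{ij}=D_{r-j,r-i}$). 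The result is exactly $\sum_w \epsilon_r^{wc_1}\psi_{S,w}(q^{1/(2r)})$, proving the first identity.

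The second identity is established by the same calculation run in reverse: starting from $\psi_{S,c_1}(q^{1/(2r)})$ and applying $\tau\mapsto -1/\tau$, the modular transformations above (read backwards) return $\Theta_{A_{r-1},0}$, and Conjecture \ref{conjCD} returns $C_0,C_{ij}$. The character $\prod_i\epsilon_r^{i\beta_i c_1} = \epsilon_r^{c_1\sum_i i\beta_i}$ is then collapsed, via the forward flux sum $\sum_c \epsilon_r^{cx}=r^{b_2(S)}\delta_{x,0\bmod r}$ and the same reindexing $\beta_i\mapsto\beta_{r-i}$, into the Kronecker constraint defining $\sfZ^{(1^r)}_{S,H,w}(q)$. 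The prefactor $r^{-2}q^{-\chi(\O_S)/(2r)+rK_S^2/24}$ on the left-hand side is precisely the one relating $\psi_{S,c_1}$ to the formula of Conjecture \ref{conjhor} (Definition \ref{def:smallpsi}), which accounts for the $r^2$-discrepancy and the $q$-shift between the normalized and unnormalized $\Delta$ and $\eta$.

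The main obstacle is the bookkeeping: one must verify that the four independent transformations (of $\Delta$, $\eta$, $\Theta_{A_{r-1},0}$, and of $C_0,C_{ij}$ via Conjecture \ref{conjCD}), together with the unnormalized-to-normalized $q$-shifts, collapse precisely to $(-1)^{(r-1)\chi(\O_S)}(r\tau/i)^{-e(S)/2}$ with the residual $r^{b_2(S)}$ matching the normalization of the flux sum. A secondary essential point is the reindexing $\beta_i\mapsto \beta_{r-i}$, which is where the symmetry hypothesis $C_{ij}=C_{r-j,r-i}$ is crucial; without it the $w$-dependence on the two sides would not align.
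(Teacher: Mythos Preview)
Your approach is essentially the same as the paper's: transform the modular building blocks $\eta$, $\Delta^{1/2}$, $\Theta_{A_{r-1},0}$ directly, invoke Conjecture \ref{conjCD} to swap $C_0,C_{ij}$ for $D_0,D_{ij}$, then use the first flux sum of Proposition \ref{LLlattice} together with the reindexing $\beta_i\mapsto\beta_{r-i}$ (justified by $C_{ij}=C_{r-j,r-i}$) to pass between the Kronecker delta and the character sum. The paper's proof is simply a terser version of the same computation; your identification of the $r^{b_2(S)}$ surplus being consumed by the flux sum and of the reindexing as the place where the symmetry hypothesis enters matches exactly what the paper leaves implicit.
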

\begin{proof}
We have the following transformations
\begin{align}
\begin{split} \label{eqn:requiredtrans}
&\eta(-1/\tau) = \Big( \frac{\tau}{i} \Big)^{\frac{1}{2}} \eta(\tau), \\
&\Delta(-r/\tau)^{\frac{1}{2}} = -\Big(\frac{\tau}{r}\Big)^{6} \Delta(\tau/r)^{\frac{1}{2}}, \\
&\Theta_{A_{r-1},0}(-1/\tau) = \frac{1}{\sqrt{r}} \Big( \frac{\tau}{i} \Big)^{\frac{r-1}{2}} \, \Theta_{A_{r-1}^{\vee},0}(\tau), \\
&C_0(-1/\tau) = D_0(\tau), \quad C_{ij}(\tau) = D_{ij}(-1/\tau),
\end{split}
\end{align}
where the last line is Conjecture \ref{conjCD}. The sign in the second formula comes from the transformation \cite[Thm.~3.4]{Apo}.
\begin{align} \label{etatransf}
\eta\Big( \frac{a \tau + b}{c \tau + d} \Big)^{12} =  \epsilon(a,b,c,d) (c\tau +d)^6 \eta(\tau)^{12}, \quad \Big( \begin{array}{cc} a & b \\ c & d \end{array}\Big) \in \mathrm{SL}(2,\Z)
\end{align}
where 
$$
\epsilon(a,b,c,d) := - e^{\pi i \Big( \frac{a+d}{c} + 12 s(-d,c) \Big)}.
$$
Here $s(h,k)$ denotes the Dedekind sum 
$$
s(h,k) = \sum_{r=1}^{k-1} \frac{r}{k} \Big(\frac{hr}{k} - \Big\lfloor \frac{hr}{k} \Big\rfloor - \frac{1}{2} \Big).
$$

Using \eqref{VWfull} and the transformations \eqref{eqn:requiredtrans}, we obtain
\begin{align*}
&\sfZ_{S,H,c_1}^{(1^r)}(q) \big|_{\tau \to -\frac{1}{\tau}} = (-1)^{(r-1)\chi(\O_S)}  \Big( \frac{ r \tau}{i} \Big)^{-\frac{e(S)}{2}}  r^{11 \chi(\O_S)} \Bigg( \frac{1}{\Delta(q^{\frac{1}{r}})^{\frac{1}{2}}} \Bigg)^{\chi(\O_S)} \times \\ 
&\Bigg( \frac{\Theta_{{A^\vee_{r-1}},0}(q)   }{\eta(q)^r} \Bigg)^{-K_S^2}   D_0(q)^{K_S^2} \sum_{\bfbeta \in H^2(S,\Z)^{r-1}}   \delta_{c_1,\sum_i i \beta_i} \prod_{i} \SW(\beta_i) \prod_{i \leq j} D_{ij}(q)^{\beta_i \beta_j}.
\end{align*}
On the other hand, using  the first equality of Proposition \ref{LLlattice}, we have 
\begin{align*}
 &(-1)^{(r-1)\chi(\O_S)}  \Big( \frac{ r \tau}{i} \Big)^{-\frac{e(S)}{2}}  q^{-\frac{\chi(\O_S)}{2r} + \frac{r K_S^2}{24}} \sum_{w \in H^2(S,\Z_r)} \epsilon_r^{w c_1} \psi_{S,w}(q^{\frac{1}{2r}}) = \\
&(-1)^{(r-1)\chi(\O_S)}  \Big( \frac{ r \tau}{i} \Big)^{-\frac{e(S)}{2}}  r^{11 \chi(\O_S)} \Bigg( \frac{1}{\Delta(q^{\frac{1}{r}})^{\frac{1}{2}}} \Bigg)^{\chi(\O_S)} \Bigg( \frac{\Theta_{{A^\vee_{r-1}},0}(q)   }{\eta(q)^r} \Bigg)^{-K_S^2}   \times \\
&D_0(q)^{K_S^2} \sum_{\bfbeta \in H^2(S,\Z)^{r-1}}   \delta_{c_1+\sum_i i \beta_i,0} \prod_{i} \SW(\beta_i) \prod_{i \leq j} D_{ij}(q)^{\beta_i \beta_j},
\end{align*}
which is \emph{almost} the same as $\sfZ_{S,H,c_1}^{(1^r)}(q) \big|_{\tau \to -\frac{1}{\tau}}$. We want the term $\delta_{c_1+\sum_i i \beta_i,0} = \delta_{c_1,\sum_i (r-i) \beta_i}$ to be replaced by  $\delta_{c_1,\sum_i i \beta_i}$. The relations $C_{ij}(q) = C_{r-j,r-i}(q)$ imply the relations $D_{ij}(q) = D_{r-j,r-i}(q)$ by Conjecture \ref{conjCD}. Therefore, replacing $\beta_i$ by $\beta_{r-i}$ and using the relations $D_{ij}(q) = D_{r-j,r-i}(q)$, we obtain
\begin{align*}
 &(-1)^{(r-1)\chi(\O_S)}  \Big( \frac{ r \tau}{i} \Big)^{-\frac{e(S)}{2}}  q^{-\frac{\chi(\O_S)}{2r} + \frac{r K_S^2}{24}} \sum_{w \in H^2(S,\Z_r)} \epsilon_r^{w c_1} \psi_{S,w}(q^{\frac{1}{2r}}) = \\
&(-1)^{(r-1)\chi(\O_S)}  \Big( \frac{ r \tau}{i} \Big)^{-\frac{e(S)}{2}}  r^{11 \chi(\O_S)} \Bigg( \frac{1}{\Delta(q^{\frac{1}{r}})^{\frac{1}{2}}} \Bigg)^{\chi(\O_S)} \Bigg( \frac{\Theta_{{A^\vee_{r-1}},0}(q)   }{\eta(q)^r} \Bigg)^{-K_S^2}   \times \\
&D_0(q)^{K_S^2} \sum_{\bfbeta \in H^2(S,\Z)^{r-1}}   \delta_{c_1,\sum_i i \beta_i} \prod_{i} \SW(\beta_i) \prod_{i \leq j} D_{ij}(q)^{\beta_i \beta_j},
\end{align*}
as desired. This proves the first equality of the proposition. The second equality of the proposition follows similarly (but is easier because it neither requires the relations $C_{ij}(q) = C_{r-j,r-i}(q)$ nor Proposition \ref{LLlattice}).
\end{proof}

Assuming Conjecture \ref{conjCD} holds, and using the previous proposition, the $S$-duality conjecture is equivalent to the following equality involving the horizontal generating series only 
\begin{align}
\begin{split} \label{Sdualremainder} 
&\frac{1}{r} q^{-\frac{\chi(\O_S)}{2r} + \frac{r K_S^2}{24}} \sum_{k=1}^{r-1} \epsilon_r^{k((r-1)c_1^2 + (r^2-1) \chi(\O_S))} \psi_{S,c_1}(\epsilon_r^k q^{\frac{1}{2r}}) \big|_{\tau \to -\frac{1}{\tau}} \\
&= (-1)^{(r-1)\chi(\O_S)} \Big( \frac{r \tau}{i} \Big)^{-\frac{e(S)}{2}} q^{-\frac{\chi(\O_S)}{2r} + \frac{r K_S^2}{24}} \sum_{w \in H^2(S,\Z_r)} \sum_{k=1}^{r-1} \epsilon_r^{w c_1}  \epsilon_r^{k((r-1)w^2 + (r^2-1) \chi(\O_S))} \psi_{S,w}(\epsilon_r^k q^{\frac{1}{2r}}),
\end{split}
\end{align}
where $q= \exp(2 \pi i \tau)$.
In the remainder of this section, we show that this equality holds for our conjectural formulae for $D_0, D_{ij}$ for $r=3,5$. 
This implies that our $\SU(3)$ and $\SU(5)$ Vafa-Witten partition functions indeed satisfy $S$-duality (Conjecture \ref{Sdualconj}).

By \eqref{etatransf}, we have
\begin{align*}
\Delta\Big( \frac{\tau + m}{r} \Big) \Big|_{\tau \mapsto - \frac{1}{\tau}} = \tau^{12} \Delta\Big( \frac{\tau + n}{r} \Big),
\end{align*}
for any $m,n \in \Z$ satisfying $mn \equiv -1 \mod r$ and 
\begin{align*}
\Delta\Big( \frac{\tau + m}{r} \Big)^{\frac{1}{2}} \Big|_{\tau \mapsto - \frac{1}{\tau}} = \epsilon(m,b,r,-n) \tau^{6} \Delta\Big( \frac{\tau + n}{r} \Big)^{\frac{1}{2}},
\end{align*}
where $b$ is defined by the equation $mn = -1 -br$. \\

\noindent $\boldsymbol{r=3}$. We have
\begin{align*}
\Delta\Big( \frac{\tau + 1}{3} \Big)^{\frac{1}{2}} \Big|_{\tau \mapsto - \frac{1}{\tau}}  &=\tau^{6} \Delta\Big( \frac{\tau + 2}{3} \Big)^{\frac{1}{2}}, \\
\Theta_{A_{2}^{\vee},0}(\tau+2) |_{\tau \mapsto - \frac{1}{\tau}} &= (-i) \Big( \frac{\tau}{i} \Big)   \, \Theta_{A_{2}^{\vee},0}(\tau+4), \\
\Theta_{A_{2}^{\vee},1}(\tau+2) |_{\tau \mapsto - \frac{1}{\tau}} &= (-i \epsilon_3) \Big( \frac{\tau}{i} \Big)   \, \Theta_{A_{2}^{\vee},1}(\tau+4).
\end{align*}
The last two identities follow from \eqref{thlk} and \eqref{maintrans} in Appendix \ref{sec:theta} (or see \cite{GK3}). In particular, we have
$t_{A_2^\vee,1}(\tau +2) |_{\tau \mapsto- \frac{1}{\tau}} = \epsilon_3^2 t_{A_2^\vee,1}(\tau + 4)$.
Under this transformation, the quadratic equation
$$
X^2 - 4  (t_{A_2^\vee,1}(\tau+2))^2 \, X + 4 t_{A_2^\vee,1}(\tau+2) = 0
$$
maps to
$$
(\epsilon_3^2 X)^2 - 4 (t_{A_2^\vee,1}(\tau+4))^2 \, (\epsilon_3^2 X) + 4 t_{A_2^\vee,1}(\tau+4) = 0
$$
and vice versa. In particular, $X_{\pm}(\tau+2)$ map to $\epsilon_3 X_{\pm}(\tau+4)$, and vice versa. Combining these transformations with the second equation of Proposition \ref{LLlattice}, we deduce \eqref{Sdualremainder} by a straightforward calculation (see also \cite{GK3}). Our conjectural formula for the $\SU(3)$ Vafa-Witten partition function satisfies $S$-duality. \\

\noindent $\boldsymbol{r=5}$. We have (Appendix \ref{sec:theta} and \eqref{etatransf})
\begin{align*}
\Delta\Big( \frac{\tau + 1}{5} \Big)^{\frac{1}{2}} \Big|_{\tau \mapsto - \frac{1}{\tau}} &= \tau^{6} \Delta\Big( \frac{\tau + 4}{5} \Big)^{\frac{1}{2}}, \\
\Delta\Big( \frac{\tau + 2}{5} \Big)^{\frac{1}{2}} \Big|_{\tau \mapsto - \frac{1}{\tau}}  &= - \tau^{6} \Delta\Big( \frac{\tau + 2}{5} \Big)^{\frac{1}{2}}, \\
\Delta\Big( \frac{\tau + 3}{5} \Big)^{\frac{1}{2}} \Big|_{\tau \mapsto - \frac{1}{\tau}}  &= - \tau^{6} \Delta\Big( \frac{\tau + 3}{5} \Big)^{\frac{1}{2}}, \\
\Theta_{A_{4}^\vee,0}(\tau+2) |_{\tau \mapsto - \frac{1}{\tau}} &=  \Big( \frac{\tau}{i} \Big)^2   \, \Theta_{A_{4}^{\vee},0}(\tau+2), \\
\Theta_{A_{4}^\vee,0}(\tau+4) |_{\tau \mapsto - \frac{1}{\tau}} &=  -\Big( \frac{\tau}{i} \Big)^2   \, \Theta_{A_{4}^{\vee},0}(\tau+6), \\
\Theta_{A_{4}^\vee,0}(\tau+8) |_{\tau \mapsto - \frac{1}{\tau}} &=  \Big( \frac{\tau}{i} \Big)^2   \, \Theta_{A_{4}^{\vee},0}(\tau+8), \\
\Theta_{A_{4}^{\vee},1}(\tau+2) |_{\tau \mapsto - \frac{1}{\tau}} &= \epsilon_5^4 \Big( \frac{\tau}{i} \Big)^2   \, \Theta_{A_{4}^{\vee},2}(\tau+2), \\
\Theta_{A_{4}^{\vee},1}(\tau+4) |_{\tau \mapsto - \frac{1}{\tau}} &= -\epsilon_5^2 \Big( \frac{\tau}{i} \Big)^2   \, \Theta_{A_{4}^{\vee},1}(\tau+6), \\
\Theta_{A_{4}^{\vee},2}(\tau+4) |_{\tau \mapsto - \frac{1}{\tau}} &= -\epsilon_5^3 \Big( \frac{\tau}{i} \Big)^2   \, \Theta_{A_{4}^{\vee},2}(\tau+6), \\
\Theta_{A_{4}^{\vee},1}(\tau+8) |_{\tau \mapsto - \frac{1}{\tau}} &= \epsilon_5 \Big( \frac{\tau}{i} \Big)^2   \, \Theta_{A_{4}^{\vee},2}(\tau+8).
\end{align*}
Hence $t_{A_4^\vee,1}(\tau+2) |_{\tau \mapsto - \frac{1}{\tau}} = \epsilon_5 t_{A_4^\vee,2}(\tau+2)$, $t_{A_4^\vee,1}(\tau+8) |_{\tau \mapsto  - \frac{1}{\tau}} = \epsilon_5^4 t_{A_4^\vee,2}(\tau+8)$, $t_{A_4^\vee,1}(\tau+4) |_{\tau \mapsto  - \frac{1}{\tau}} = \epsilon_5^3 t_{A_4^\vee,1}(\tau+6)$, and $t_{A_4^\vee,2}(\tau+4) |_{\tau \mapsto  - \frac{1}{\tau}} = \epsilon_5^2 t_{A_4^\vee,2}(\tau+6)$. Moreover, we have
\begin{align*}
s(\tau +2) |_{\tau \mapsto - \frac{1}{\tau}} &= - \epsilon_5^2 s(\tau +2)^{-1}, \\
s(\tau +4) |_{\tau \mapsto - \frac{1}{\tau}} &= \epsilon_5 s(\tau +6), \\
s(\tau +6) |_{\tau \mapsto - \frac{1}{\tau}} &= \epsilon_5^4 s(\tau +4), \\
s(\tau +8) |_{\tau \mapsto - \frac{1}{\tau}} &= -\epsilon_5^3 s(\tau +8)^{-1}.
\end{align*}
By \eqref{def:s} and the fact that $r(q) \in q^{1/5}(1+q\mathbb{Z}[[q]])$, the last four equations are immediate consequences of the following identities in the polynomial ring $\Q(\epsilon_5,\sqrt{5})[r]$ (where $r$ is a formal variable)
\begin{align*}
(\varphi(\varphi+r) + \epsilon_5^2(1-\varphi r))(-\varphi -\epsilon_5^2 r) - \epsilon_5^3 (\varphi + r - \varphi \epsilon_5^2(1-\varphi r))(1-\varphi \epsilon_5^2 r) &= 0, \\
(\varphi(\varphi+r) + \epsilon_5^4(1-\varphi r))(1-\varphi \epsilon_5 r) - \epsilon_5^4 (\varphi + r - \varphi \epsilon_5^4(1-\varphi r))(\varphi + \epsilon_5 r) &= 0, \\
(\varphi(\varphi+r) + \epsilon_5(1-\varphi r))(1-\varphi \epsilon_5^4 r) - \epsilon_5 (\varphi + r - \varphi \epsilon_5(1-\varphi r))(\varphi + \epsilon_5^4 r) &= 0, \\
(\varphi(\varphi+r) + \epsilon_5^3(1-\varphi r))(-\varphi -\epsilon_5^3 r) - \epsilon_5^2 (\varphi + r - \varphi \epsilon_5^3(1-\varphi r))(1-\varphi \epsilon_5^3 r) &= 0.
\end{align*}
Here $\varphi = (1 + \sqrt{5})/2$ denotes the golden ratio.
Altogether, we deduce that
\begin{align*}
X_{\pm}(\tau+2) |_{\tau \mapsto -\frac{1}{\tau}} &= \epsilon_5 Y_{\pm}(\tau+2), \quad Z(\tau+2) |_{\tau \mapsto -\frac{1}{\tau}} = Z(\tau+2),  \\
X_{\pm}(\tau+4) |_{\tau \mapsto -\frac{1}{\tau}} &= \epsilon_5^3 X_{\pm}(\tau+6), \quad Y_{\pm}(\tau+4) |_{\tau \mapsto -\frac{1}{\tau}} = \epsilon_5^2 Y_{\pm}(\tau+6), \\
Z(\tau+4) |_{\tau \mapsto -\frac{1}{\tau}} &= Z(\tau+6),  \\
X_{\pm}(\tau+8) |_{\tau \mapsto -\frac{1}{\tau}} &= \epsilon_5^4 Y_{\pm}(\tau+8), \quad Z(\tau+8) |_{\tau \mapsto -\frac{1}{\tau}} = Z(\tau+8).
\end{align*}
Combining these transformations with the second equation of Proposition \ref{LLlattice}, we deduce \eqref{Sdualremainder} from a straightforward calculation. This computation involves a relabeling of the $\beta_i$ and uses the identities $D_{ij} = D_{r-j,r-i}$. Our conjectural formulae for the $\SU(5)$ Vafa-Witten partition function satisfies $S$-duality. 

Since the case $r=2$ was already established in \cite[Sect.~5]{VW} (see also \cite[Sect.~4.7]{GK3}), we deduce:
\begin{proposition} \label{prop:Sdualcheck}
Assume Conjectures \ref{conjver:rk2}, \ref{conjver:rk3}, \ref{conjver:rk5}, \ref{conjhor}, \ref{conjCD} hold. Then $\mathsf{Z}_{S,H,c_1}^{\SU(r)}(q)$, for $r=2,3,5$, satisfies $S$-duality (Conjecture \ref{Sdualconj}). 
\end{proposition}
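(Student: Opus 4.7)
The plan is to follow the two-step reduction already set up in this section. First, I would invoke Proposition \ref{prop:Sdualpart} (which uses Conjecture \ref{conjCD} together with the symmetry $C_{ij} = C_{r-j,r-i}$ from Conjecture \ref{conj:symm}) to exchange the vertical generating series $\sfZ^{(1^r)}_{S,H,c_1}$ with the $k=0$ summand of the horizontal generating series in \eqref{Zandpsi}. This immediately cancels those two contributions in the conjectured $S$-duality identity of Conjecture \ref{Sdualconj}, reducing matters to the equality \eqref{Sdualremainder}, which involves only the $k = 1, \ldots, r-1$ summands of $\psi_{S,c_1}(\epsilon_r^k q^{\frac{1}{2r}})$.

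Next, for each fixed $k$, I would compute the behavior of $\psi_{S,c_1}(\epsilon_r^k q^{\frac{1}{2r}})$ under $\tau \mapsto -1/\tau$ by assembling the $S$-transformations of its three modular building blocks: the half-discriminant $\Delta((\tau+k)/r)^{1/2}$, the theta function $\Theta_{A_{r-1}^\vee,0}(\tau+k)$ (and its relatives $t_{A_{r-1}^\vee,\ell}$ arising from $D_0, D_{ij}$ through Conjecture \ref{conjCD}), and the continued-fraction factors ($u(q^2)$ for $r=4$, $r(q)$ for $r=5$). The first transformation follows from \eqref{etatransf} with a sign determined by an explicit Dedekind sum; the second from the standard modular behavior of lattice theta functions recalled in Appendix \ref{sec:theta}; the third from the identities for $s(q)$ recorded in \eqref{def:s}, which for $r=5$ reduce to a finite polynomial identity in $\Q(\epsilon_5,\sqrt{5})[r]$ already spelled out in the preceding text. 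These computations are all carried out explicitly for $r=3$ and $r=5$ in the paragraphs leading up to the proposition; I would simply assemble them.

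With these transformations in hand, the key step is to apply the Labastida--Lozano flux-sum identity (second formula of Proposition \ref{LLlattice}) with $m \equiv k \pmod r$ to re-sum the $\beta_i$-dependent exponentials $\prod_i \epsilon_r^{k(\cdots) \beta_i \beta_j}$ in $\psi_{S,c_1}$ into the sum $\sum_{w \in H^2(S,\Z_r)} \epsilon_r^{w c_1}$ that appears on the right-hand side of \eqref{Sdualremainder}, picking up the factors $\epsilon(m)^{b_2(S)} r^{b_2(S)/2} e^{-\pi i (r-1)^2 \sigma(S)/8}$ which must then be matched against the prefactor $(-1)^{(r-1)\chi(\O_S)}(r\tau/i)^{-e(S)/2}$ using $e(S) = 2 + b_2(S) - 2 b_1(S)$ and $\sigma(S) = b_2^+ - b_2^-$. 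The re-labelling of the $\beta_i$ indices induced by the $S$-transformation (e.g.\ the swaps $X_\pm(\tau+2) \leftrightarrow \epsilon_5 Y_\pm(\tau+2)$ in rank 5) is absorbed using the symmetry $D_{ij} = D_{r-j, r-i}$. Finally, the case $r = 2$ is already established by Vafa--Witten \cite[Sect.~5]{VW}, so it can simply be quoted.

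The main obstacle is purely bookkeeping: correctly tracking all the roots of unity, signs, and powers of $\tau/i$ that appear when combining the $S$-transformations of $\Delta$, $\Theta_{A_{r-1}^\vee, \ell}$, and the continued fractions with the Gaussian sum in Proposition \ref{LLlattice}, and ensuring the $k \leftrightarrow n$ pairing ($kn \equiv -1 \bmod r$) is consistent for each summand. Since all the nontrivial modular transformations have been assembled in the preceding discussion, the proof is then a direct computation, which I would perform separately for $r=3$ and $r=5$ and check that both sides of \eqref{Sdualremainder} agree.
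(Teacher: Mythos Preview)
Your proposal is correct and follows essentially the same route as the paper: reduce to \eqref{Sdualremainder} via Proposition \ref{prop:Sdualpart}, then verify that residual identity for each prime $r$ by combining the explicit $S$-transformations of $\Delta((\tau+m)/r)^{1/2}$, $\Theta_{A_{r-1}^\vee,\ell}$, and (for $r=5$) the Rogers--Ramanujan function $s$ with the second flux sum of Proposition \ref{LLlattice}, absorbing the induced index permutation via $D_{ij}=D_{r-j,r-i}$, and quoting \cite{VW} for $r=2$. One cosmetic point: your mention of $u(q^2)$ for $r=4$ is extraneous here, since the proposition only concerns the prime ranks $r=2,3,5$ (there is no continued-fraction input for $r=3$, only $t_{A_2^\vee,1}$), and the symmetry $C_{ij}=C_{r-j,r-i}$ you invoke is already contained in the explicit formulae of Conjectures \ref{conjver:rk2}, \ref{conjver:rk3}, \ref{conjver:rk5} rather than requiring Conjecture \ref{conj:symm} separately.
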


By \cite[Thm.~5.23]{Tho}, the generating series $\mathsf{Z}_{S,H,c_1}^{\SU(4)}(q)$ could receive contributions from the component 
$$
N_\mu^{\C^*}, \quad \mu = (2,2).
$$
Since we only have conjectural formulae for the vertical and horizontal contribution, indexed by $\mu=(1^4), (4)$, we cannot check whether it satisfies $S$-duality. Furthermore, Proposition \ref{LLlattice} requires adjustment in the non-prime case.

\subsection{Blow-up formula} \label{sec:blowup}

Denote the normalized Dedekind eta function by 
$$
\overline{\eta}(q) = \prod_{n=1}^{\infty} (1-q^n).
$$
We conjecture the following blow-up formula for $\psi_{S,c_1}$. Recall that $\psi_{S,c_1}$ was introduced in Definition \ref{def:smallpsi} (and depends on $r>1$, not necessarily prime).
\begin{conjecture} \label{conj:blowup}
Assume Conjecture \ref{conjhor} holds. Let $S$ be a smooth projective surface satisfying $b_1(S) = 0$, $p_g(S)>0$, and let $r>1$. Let $\pi : \widetilde{S} \to S$ be the blow-up of $S$ in a point. Let $c_1 \in H^2(S,\Z)$ and $\widetilde{c}_1 = \pi^* c_1 - \ell E$, where $E$ denotes the exceptional divisor and $\ell \in \Z$. Then
\begin{equation*} \label{eqn:blowup}
\psi_{\widetilde{S}, \widetilde{c}_1} = \Bigg( \frac{\Theta_{A_{r-1},\ell}(q)}{\overline{\eta}(q)^r} \Bigg) \psi_{S,c_1}.
\end{equation*}
\end{conjecture}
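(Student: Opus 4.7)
The plan is to reduce Conjecture~\ref{conj:blowup} to a purely modular identity by directly substituting the conjectural structure formula of Conjecture~\ref{conjhor}, together with the standard behaviour of Seiberg-Witten basic classes under blow-up. Under $\pi:\widetilde{S}\to S$ one has $\chi(\O_{\widetilde{S}})=\chi(\O_S)$, $K_{\widetilde{S}}^2=K_S^2-1$, and $K_{\widetilde{S}}=\pi^*K_S+E$ with $E^2=-1$. Expanding $\psi_{\widetilde{S},\widetilde{c}_1}$ and $\psi_{S,c_1}$ according to Conjecture~\ref{conjhor} and Definition~\ref{def:smallpsi}, the $\overline{\Delta}(q^{1/r})^{-\chi/2}$-factor is blow-up invariant, while $r^{2+K^2-\chi}\bigl(\Theta_{A_{r-1}^\vee,0}(q)/\overline{\eta}(q)^r\bigr)^{-K^2}$ contributes an overall blow-up ratio of $r^{-1}\,\Theta_{A_{r-1}^\vee,0}(q)/\overline{\eta}(q)^r$.

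Next I would parametrise the Seiberg-Witten basic classes of $\widetilde{S}$ as $\widetilde{\beta}_i=\pi^*\beta_i+a_iE$ with $\beta_i$ a Seiberg-Witten basic class of $S$ and $a_i\in\{0,1\}$, using the standard identification $\SW_{\widetilde{S}}(\pi^*\beta+aE)=\SW_S(\beta)$. Since $\widetilde{\beta}_i\widetilde{\beta}_j=\beta_i\beta_j-a_ia_j$ and $\widetilde{\beta}_i\widetilde{c}_1=\beta_ic_1+a_i\ell$, the $\widetilde{\beta}$-sum defining $\Psi_{\widetilde{S},\widetilde{c}_1}$ factorises as the $\beta$-sum defining $\Psi_{S,c_1}/D_0(q)^{K_S^2}$ multiplied by the finite combinatorial lattice sum
\[
X_\ell(q)\;:=\;\sum_{\mathbf{a}\in\{0,1\}^{r-1}}\prod_{i=1}^{r-1}\epsilon_r^{\,ia_i\ell}\prod_{1\leq i\leq j\leq r-1}D_{ij}(q)^{-a_ia_j}.
\]
Gathering all prefactor and sum contributions, Conjecture~\ref{conj:blowup} becomes equivalent to the purely modular identity
\[
D_0(q)^{-1}\,X_\ell(q)\;=\;r\,\frac{\Theta_{A_{r-1},\ell}(q)}{\Theta_{A_{r-1}^\vee,0}(q)}.
\]

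The final step is to establish this identity. Observe that it is exactly the $S$-dual of the second equation of Conjecture~\ref{conj:symm}: applying $\tau\mapsto-1/\tau$ to that equation and using Conjecture~\ref{conjCD} together with the modular transformations of $\Theta_{A_{r-1},\ell}$ and $\Theta_{A_{r-1}^\vee,\ell}$ collected in Appendix~\ref{sec:theta} converts one into the other, the factor $r$ coming precisely from the asymmetric Poisson normalisation $|\mathrm{disc}(A_{r-1})|^{1/2}=\sqrt{r}$. For $r=2,3,4,5$ I would then verify the identity directly by substituting the explicit expressions for $D_0,D_{ij}$ obtained from Conjecture~\ref{conjCD} applied to Conjectures~\ref{conjver:rk2}--\ref{conjver:rk5}; it reduces to a tractable identity among the $t_{A_{r-1}^\vee,\ell}(q)$ and the $S$-dual continued-fraction functions $v(q),s(q)$ of~\eqref{def:s}. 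The main obstacle is giving a self-contained argument for arbitrary $r$: the $S$-duality route is essentially circular, because the paper itself motivates Conjecture~\ref{conj:symm} as a consequence of Conjecture~\ref{conj:blowup}, so an independent attack would presumably need a Nakajima-Yoshioka style geometric blow-up correspondence between the moduli spaces $M^H_{\widetilde{S}}(r,\widetilde{c}_1,\widetilde{c}_2)$ and $M^H_S(r,c_1,c_2)$, identifying $X_\ell(q)$ intrinsically as the generating series of contributions from sheaves supported near the exceptional divisor.
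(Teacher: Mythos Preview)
Your reduction is correct and coincides with the paper's own treatment: the paper also substitutes the structure formula of Conjecture~\ref{conjhor}, uses $\SW_{\widetilde{S}}(\pi^*\beta)=\SW_{\widetilde{S}}(\pi^*\beta+E)=\SW_S(\beta)$, and arrives at exactly your identity, which in their notation reads
\[
\frac{1}{r}\sum_{I\subset[r-1]}\epsilon_r^{\,\ell\|I\|}D_I(q)^{-1}=\frac{\Theta_{A_{r-1},\ell}(q)}{\Theta_{A_{r-1}^\vee,0}(q)}
\]
(equation~\eqref{eqn:blowup2}). Your $D_0^{-1}X_\ell$ is precisely $\sum_I \epsilon_r^{\ell\|I\|}D_I^{-1}$ once one identifies $\mathbf{a}\in\{0,1\}^{r-1}$ with the indicator of $I$. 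The paper then verifies this identity for small $r$ by the same $S$-duality mechanism you describe: apply $\tau\mapsto -1/\tau$ to the second relation of Conjecture~\ref{conj:symm} via Conjecture~\ref{conjCD} and the theta identity $\tfrac{1}{r}\sum_k\epsilon_r^{k\ell}\Theta_{A_{r-1}^\vee,k}=\Theta_{A_{r-1},\ell}$ from Appendix~\ref{sec:theta}.

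One point you should be aware of: the paper finds that for $r=5$ the explicit formulae of Conjecture~\ref{conjver:rk5} are \emph{not} by themselves sufficient to deduce~\eqref{eqn:blowup2}; an additional algebraic relation
\[
Z^{1/2}\bigl((X_+Y_-)^{-1}+(X_-Y_+)^{-1}\bigr)+Z^{-1/2}\bigl((X_+Y_+)^{-1}+(X_-Y_-)^{-1}\bigr)=(\beta_1\beta_2)^{-1/2}
\]
must be imposed (and is checked numerically). So your claim that the $r=5$ case ``reduces to a tractable identity'' is slightly optimistic. Your closing remark is also well taken: since the statement is a conjecture and the paper explicitly motivates Conjecture~\ref{conj:symm} \emph{from} the blow-up formula, there is no non-circular proof for general $r$ here, only the reduction plus low-rank verification.
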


We recall that the Seiberg-Witten basic classes of $\widetilde{S}$ are $\pi^*\beta, \pi^*\beta+E$, where $\beta$ runs over the Seiberg-Witten basic classes of $S$, and \cite[Thm.~7.4.6]{Mor}
$$
\SW(\pi^* \beta) = \SW(\pi^* \beta + E) = \SW(\beta).
$$
We find that Conjecture \ref{conj:blowup} is equivalent to the following set of equations 
\begin{equation} \label{eqn:blowup2}
\frac{1}{r} \sum_{I \subset [r-1]} \epsilon_r^{\ell |\!| I |\!|} D_I(q)^{-1} = \frac{\Theta_{A_{r-1},\ell}(q)}{\Theta_{A_{r-1}^{\vee},0}(q)}, \quad \ell \in \Z.
\end{equation}

Assume Conjectures \ref{conjver:rk2}--\ref{conjver:rk5}, \ref{conjver:rk6}, \ref{conjver:rk7} for $C_0, C_{ij}$ hold. Consider the corresponding formulae for $D_0, D_{ij}$ (obtained via Conjecture \ref{conjCD}). By equations \eqref{TAdualasTA} and \eqref{maintrans} from Appendix \ref{sec:theta}
\begin{equation*} 
\frac{1}{r} \sum_{k=0}^{r-1} \epsilon_r^{k \ell} \Theta_{A_{r-1}^\vee,k}(q) = \Theta_{A_{r-1},\ell}(q),
\end{equation*}
from which we obtain \eqref{eqn:blowup2} for $r=2$--$7$ and $r \neq 5$. For $r=5$, \eqref{eqn:blowup2} is also true if we add the following additional conjectural relation to Conjecture \ref{conjver:rk5} 
$$
Z^{\frac{1}{2}}((X_+ Y_-)^{-1} + (X_- Y_+)^{-1}) + Z^{-\frac{1}{2}}((X_+ Y_+)^{-1} + (X_- Y_-)^{-1})  = (\beta_1\beta_2)^{-\frac{1}{2}}.
$$
This relation holds up to the order for which we determined the universal functions $C_0,C_{ij}$ in Appendix \ref{sec:data}.

Consider the setting of Conjecture \ref{conj:blowup} for any \emph{prime} rank $r>1$. Suppose that (for polarizations $H, \widetilde{H}$) there exist no rank $r$ strictly semistable sheaves on $S$, $\widetilde{S}$ with first Chern class $c_1$, $\widetilde{c}_1$ respectively. Then \eqref{Zandpsi} together with Conjecture \ref{conj:blowup} implies a blow-up formula for virtual Euler characteristics:
\begin{equation} \label{blowevir}
\sum_{c_2 \in \Z} e^{\vir}(M_{\widetilde{S}}^{\widetilde{H}}(r,\widetilde{c}_1,c_2)) \, q^{c_2 - \frac{(r-1)}{2r} \widetilde{c}_1^2} = \Bigg( \frac{\Theta_{A_{r-1},\ell}(q)}{\overline{\eta}(q)^r} \Bigg) \sum_{c_2 \in \Z} e^{\vir}(M_{S}^{H}(r,c_1,c_2)) \, q^{c_2 - \frac{(r-1)}{2r} c_1^2}.
\end{equation}
Here we used the identity
$$
\Theta_{A_{r-1},\ell}( \tau+2k) = \epsilon_r^{-k \ell^2} \Theta_{A_{r-1},\ell}(\tau), 
$$
which follows from \eqref{congruences} in Appendix \ref{sec:theta}.
Curiously, \eqref{blowevir} is \emph{identical} to blow-up formula for \emph{topological} Euler characteristics (\cite{Got3,Yos1} for arbitrary rank and \cite{LQ} for rank 2). We expect \eqref{blowevir} holds for non-prime $r>1$ as well.

Recently, the work of N.~Kuhn, O.~Leigh, and Y.~Tanaka \cite{KLT} (based on \cite{KT}) led to a proof of \eqref{blowevir} (under some slightly stronger assumptions on stability).

\appendix

\section{Multiplicative instanton invariants} \label{sec:mult}

Let $R$ be a commutative $\Q$-algebra (typically $\Q$ or a polynomial algebra over $\Q$). Fix $v \in \Z_{\geq 0}$, and $\boldsymbol{a} = (a_1, \ldots, a_N) \in \{1,2\}^N$ with $N \in \Z_{\geq 0}$. Let $(\underline{\alpha_0},\underline{\alpha_1},\ldots,\underline{\alpha_N})$ be a list of variables and let $\underline{r}$ also be a variable. Let  $\mathsf{P}_{v,\boldsymbol{a}}$ be a formal power series, with coefficients in $R(\underline{r})$, in the following formal symbols
\begin{align*}
&\pi_{\cM*}\Big( \pi_S^* \underline{\alpha_i} \cap \ch_{k}( \underline{\EE} \otimes \det(\underline{\EE})^{-1/\underline{r}})\Big), \quad \pi_{\cM*}\Big( \pi_S^* \underline{\alpha_i c_1(S)} \cap \ch_{k}( \underline{\EE} \otimes \det(\underline{\EE})^{-1 / \underline{r}})\Big), \\ 
&\pi_{\cM*}\Big( \pi_S^* \underline{\alpha_i c_2(S)} \cap \ch_{k}( \underline{\EE} \otimes \det(\underline{\EE})^{- 1 / \underline{r}})\Big), \quad \pi_{\cM*}\Big( \pi_S^* \underline{\alpha_i c_1(S)^2} \cap \ch_{k}( \underline{\EE} \otimes \det(\underline{\EE})^{-1 / \underline{r}})\Big), \\ 
&c_j(\underline{T}),
\end{align*}
where $i=0,\ldots, N$ and $k,j$ can be any non-negative integers, and $\underline{\alpha_i}, \underline{\alpha_i c_1(S)}, \ldots$, $\underline{\EE}$, $\underline{T}$ are all regarded as variables. We define degrees $\deg \, c_j(\cdot) = j$ and
\begin{align*}
\deg \,\pi_{\cM*}\Big( \pi_S^* \underline{\alpha_i} \cap \ch_{k}( \underline{\EE} \otimes \det(\underline{\EE})^{- 1 / \underline{r}})\Big) &= a_i + k-2, \\
\deg \,\pi_{\cM*}\Big( \pi_S^* \underline{\alpha_i c_1(S)} \cap \ch_{k}( \underline{\EE} \otimes \det(\underline{\EE})^{- 1  / \underline{r}})\Big) &= a_i + k - 1,  \\
\deg \,\pi_{\cM*}\Big( \pi_S^* \underline{\alpha_i c_2(S)} \cap \ch_{k}( \underline{\EE} \otimes \det(\underline{\EE})^{- 1 / \underline{r}})\Big) &= a_i + k, \\
\deg \,\pi_{\cM*}\Big( \pi_S^* \underline{\alpha_i c_1(S)^2} \cap \ch_{k}( \underline{\EE} \otimes \det(\underline{\EE})^{- 1  / \underline{r}})\Big) &= a_i + k, 
\end{align*}
where $i=0,\ldots, N$ and we set $a_0:=0$. We require that the formal power series  $\mathsf{P}_{v,\boldsymbol{a}}$ has only finitely many terms in each degree. 

Let $r >1$, let $S$ be a smooth projective surface, and $\boldsymbol{\alpha} = (\alpha_1, \ldots, \alpha_N) \in H^{*}(S,\Q)^N$ with $\deg(\alpha_i) = a_i$ for all $i=1, \ldots, N$. Furthermore, let $\cM$ be a proper Deligne-Mumford stack over $\C$, let $\EE$ be an $\cM$-flat coherent sheaf of rank $r$ on $S \times \cM$, and let $T$ be a perfect complex of constant rank on $\cM$ such that
$$
\rk(T) + \chi(\O_S) = v \geq 0.
$$
Then the evaluation $\mathsf{P}_{v,\boldsymbol{a}}(r,S,\boldsymbol{\alpha};\cM,\EE,T)$ is defined as the cohomology class on $\cM$ obtained from $\mathsf{P}_{v,\boldsymbol{a}}$ by substituting\footnote{There is some redundancy here. Obviously $\alpha_i c_2(S) = \alpha_i c_1(S)^2 = 0$ for all $i=1, \ldots, N$ for degree reasons. We nonetheless allow these classes for notational convenience.} 
\begin{align*}
&\underline{\alpha_0} = 1, \quad \underline{\alpha_0 c_1(S)} = c_1(S), \quad \underline{\alpha_0 c_2(S)} = c_2(S), \quad \underline{\alpha_0 c_1(S)^2} = c_1(S)^2,  \quad \underline{\EE} = \EE, \\ 
&\underline{r} = r, \quad \underline{\alpha_i} = \alpha_i, \quad \underline{\alpha_i c_1(S)} = \alpha_i c_1(S), \quad \underline{\alpha_i c_2(S)} = \alpha_i c_2(S), \quad \underline{\alpha_i c_1(S)^2} = \alpha_i c_1(S)^2
\end{align*}
for all $i=1, \ldots, N$, where $\pi_S : S \times \cM \to \cM$, $\pi_\cM : S \times \cM \to \cM$ now denote projections. 
When $\det(\EE)$ does not have an $r$th root, we simply define $\ch(\EE \otimes \det(\EE)^{-1 / r})$ by the right hand side of
$$
\ch(\EE \otimes \det(\EE)^{-1  / r}) = \ch(\EE) e^{- c_1(\EE) / r}.
$$
Note that this expression is invariant under replacing $\EE$ by $\EE \otimes L$ for any line bundle $L$ on $S \times \cM$. More generally, for any algebraic torus $\mathbb{T} = (\mathbb{C}^*)^{m}$, we can endow $S, \mathcal{M}$ with \emph{trivial} $\mathbb{T}$-action. Then for any choice of $\mathbb{T}$-equivariant structure on $\boldsymbol{\alpha}, \EE, T$, we obtain a $\mathbb{T}$-equivariant cohomology class on $\cM$ obtained from $\mathsf{P}_{v,\boldsymbol{a}}$ by the same substitutions as before \emph{and} viewing all Chern characters and Chern classes $T$-equivariantly. We denote this evaluation by $\mathsf{P}^{\mathbb{T}}_{v,\boldsymbol{a}}(r,S,\boldsymbol{\alpha};\cM,\EE,T)$.

Suppose we have a sequence $\{\mathsf{P}_{v,\boldsymbol{a}}\}_{v=0}^{\infty}$ of such formal power series. Let $S',S''$ be two smooth projective surfaces with $\boldsymbol{\alpha}', \boldsymbol{\alpha}''$ such that $\deg(\alpha_i') = \deg(\alpha_i'') = a_i$ for all $i=1, \ldots, N$. Let $\cM',\cM''$ be two proper Deligne-Mumford stacks over $\C$ with $\cM'$- resp.~$\cM''$-flat coherent sheaves $\EE', \EE''$ of rank $r$ on $S' \times \cM'$, $S'' \times \cM''$, and perfect complexes $T',T''$ of constant rank on $\cM',\cM''$ satisfying
$$
\rk(T') + \chi(\O_{S'}) = v' \geq 0, \quad \rk(T'') + \chi(\O_{S''})= v'' \geq 0.
$$
Then 
$$
\boldsymbol{\alpha}' \oplus \boldsymbol{\alpha}'' = (\alpha_1' \oplus \alpha_1'', \ldots, \alpha_N' \oplus \alpha_N'')
$$
are cohomology classes on the disjoint union $S' \sqcup S''$ with $\deg(\alpha_i' \oplus \alpha_i'') = a_i$ for all $i$. Denote by $j' : S' \times \cM' \times \cM'' \hookrightarrow (S' \sqcup S'') \times \cM' \times \cM''$ and $j'' : S'' \times \cM' \times \cM'' \hookrightarrow (S' \sqcup S'') \times \cM' \times \cM''$ (the base change of) the inclusions. Then we have
$$
j'_* \EE' \oplus j''_* \EE'', \quad \textrm{on \ } (S' \sqcup S'') \times \cM' \times \cM'', 
$$
where we suppressed the pull-back of $\EE'$ from $S' \times \cM'$ to $S' \times \cM' \times \cM''$ (and similarly for $\EE''$). Moreover $\rk(T' \boxplus T'') +\chi(\O_{S' \sqcup S''}) = \rk(T')+\rk(T'') + \chi(\O_{S'}) + \chi(\O_{S''}) = v'+v''$ and it makes sense to consider the evaluation
$$
\mathsf{P}_{v'+v'',\boldsymbol{a}}(r,S' \sqcup S'',\boldsymbol{\alpha}' \oplus \boldsymbol{\alpha}'';\cM' \times \cM'',j'_* \EE' \oplus j''_* \EE'', T' \boxplus T'').
$$
More generally, for a torus $\mathbb{T}$, acting trivially on $S',S'',\mathcal{M}',\mathcal{M}''$, and a choice of $\mathbb{T}$-equivariant structure on $\boldsymbol{\alpha}',\boldsymbol{\alpha}'',\EE',\EE'',T',T''$, we can consider the evaluation
$$
\mathsf{P}^{\mathbb{T}}_{v'+v'',\boldsymbol{a}}(r,S' \sqcup S'',\boldsymbol{\alpha}' \oplus \boldsymbol{\alpha}'';\cM' \times \cM'',j'_* \EE' \oplus j''_* \EE'', T' \boxplus T'').
$$

\begin{definition} \label{def:mult}
In the above setting, we say that $\{\mathsf{P}_{v,\boldsymbol{a}}\}_{v=0}^{\infty}$ is a \emph{multiplicative sequence of insertions} with coefficients in $R$ when
\begin{align*}
\mathsf{P}^{\mathbb{T}}_{v'+v'',\boldsymbol{a}}(r,S' \sqcup S'',\boldsymbol{\alpha}' \oplus \boldsymbol{\alpha}'';\cM' \times \cM'',j'_* \EE' \oplus j''_* \EE'', T' \boxplus T'') = \\
\mathsf{P}^{\mathbb{T}}_{v',\boldsymbol{a}}(r,S',\boldsymbol{\alpha}';\cM',\EE',T') \cdot \mathsf{P}^{\mathbb{T}}_{v'',\boldsymbol{a}}(r,S'',\boldsymbol{\alpha}'';\cM'',\EE'',T''),
\end{align*}
for all $(S',\boldsymbol{\alpha}',\cM',\EE',T')$, $(S'',\boldsymbol{\alpha}'',\cM'',\EE'',T'')$, $\mathbb{T}$ as above.
\end{definition}
In our applications, $T$ is always of the form $R^\mdot\hom_\pi(\EE,\EE)[1]$, where $\pi : S \times \cM \to \cM$ denotes the projection.

From now on, we restrict our attention to such multiplicative sequences when $(S,H)$ is a smooth polarized surface satisfying $b_1(S) = 0$, $p_g(S)>0$, and $\cM = \cM_S^H(r,c_1,c_2)$ is the Deligne-Mumford stack of rank $r$ oriented $H$-stable sheaves on $S$ with Chern classes $c_1,c_2$. We recall that an oriented sheaf $(\cE,\theta)$ with first Chern class $c_1$ consists of a coherent sheaf $\cE$ on $S$ together with an isomorphism $$\theta : \det(\cE) \cong \O_S(c_1).$$ This moduli space was used by Mochizuki in \cite{Moc}. Its advantage over the Gieseker-Maruyama moduli space $M := M_S^H(r,c_1,c_2)$ is that there always exists a \emph{unique} universal sheaf $\EE$ on $S \times \cM$ such that
$$
\det(\EE) \cong \pi_S^* \O_S(c_1).
$$
Note that there is a degree $\frac{1}{r} : 1$ morphism
$\cM \to M$.
We assume that there exist no rank $r$ strictly $H$-semistable sheaves on $S$ with Chern classes $c_1,c_2$ so that $\cM$ is proper and has a virtual class $[\cM]^{\vir}$. Indeed, similar to the Gieseker-Maruyama moduli space, $\cM$ has a perfect obstruction theory with virtual tangent bundle and virtual dimension given by \cite{Moc}
\begin{align*}
&T_{\cM}^{\vir}|_{[\cE]} \cong R^\mdot\Hom(\cE,\cE)_0[1], \\
&\vd := \vd(r,c_1,c_2) = 2rc_2 - (r-1)c_1^2 - (r^2-1) \chi(\O_S).
\end{align*}

\begin{conjecture} \label{conj:multinst}
Fix a multiplicative sequence of insertions $\{\mathsf{P}_{v,\boldsymbol{a}}\}_{v=0}^{\infty}$ with coefficients in a $\Q$-algebra $R$. 
For each $r > 1$, there exist formal power series
\begin{align*}
&A, \quad B, \quad \{V_i\}_{i \in \{1, \ldots, N\} \, \mathrm{s.t.} \, a_i=1}, \quad \{W_{ij}\}_{i \leq j \in \{1, \ldots, N\} \, \mathrm{s.t.} \, a_i=a_j=1}, \\ 
&\{X_i\}_{i \in \{1, \ldots, N\} \, \mathrm{s.t.} \, a_i=2}, \quad \{Y_{ij}\}_{i \in \{1, \ldots, N\} \, \mathrm{s.t.} \, a_i=1, j \in \{1, \ldots, r-1\}}, \quad \{Z_{ij}\}_{i \leq j \in \{1, \ldots, r-1\}}
\end{align*}
with $A, W_{ij}, X_i \in R \otimes_{\Q} \C [[q^2]]$, $B, V_{i}, Y_{ij}, Z_{ij} \in R \otimes_{\Q} \C [[q]]$, and only depending on $r$ and $\{\mathsf{P}_{v,\boldsymbol{a}}\}_{v=0}^{\infty}$, with the following property.
Let $(S,H)$ be a smooth polarized surface satisfying $b_1(S) = 0$ and $p_g(S)>0$. Let $c_1 \in H^2(S,\Z),c_2 \in H^4(S,\Z)$ be chosen such that there exist no rank $r$ strictly $H$-semistable sheaves on $S$ with Chern classes $c_1,c_2$. Let $\boldsymbol{\alpha} = (\alpha_1, \ldots, \alpha_N) \in H^*(S,\Q)^N$ with $\deg(\alpha_i) = a_i$ for all $i$. Then, for $ \cM:=\cM_S^H(r,c_1,c_2)$ and $\vd:=\vd(r,c_1,c_2)$, 
$$
\int_{[\cM]^{\vir}} \mathsf{P}_{\vd,\boldsymbol{a}}(r,S,\boldsymbol{\alpha};\cM,\EE,R^\mdot\hom_\pi(\EE,\EE)[1])
$$
equals the coefficient of $q^{\vd}$ of the following expression
\begin{align*}
&r \cdot  A^{\chi(\O_S)} B^{K_S^2} \prod_{i \in \{1, \ldots, N\} \atop \mathrm{s.t.} \, a_i=1} V_i^{\alpha_i K_S} \prod_{i \leq j \in \{1, \ldots, N\} \atop \mathrm{s.t.} \, a_i=a_j=1} W_{ij}^{\alpha_i \alpha_j} \prod_{i \in \{1, \ldots, N\} \atop \mathrm{s.t.} \, a_i=2} X_i^{\alpha_i} \\
&\times  \sum_{(\beta_1, \ldots, \beta_{r-1}) \in H^2(S,\Z)^{r-1}}   \prod_{i \in \{1, \ldots, r-1\}} \epsilon_r^{i \beta_i c_1}  \, \SW(\beta_i) \prod_{i \in \{1, \ldots, N\} \atop \mathrm{s.t.} \,a_i=1} \prod_{j \in \{1, \ldots, r-1\}} Y_{ij}^{\alpha_i \beta_j} \prod_{i \leq j \in \{1, \ldots, r-1\}} Z_{ij}^{\beta_i \beta_j}.
\end{align*}
\end{conjecture}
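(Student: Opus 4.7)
The plan is to combine three ingredients: Mochizuki's wall-crossing formula \cite{Moc}, the universality theorem of Ellingsrud-G\"ottsche-Str\o mme \cite[Thm.~4.1]{EGL}, and the multiplicativity axiom of Definition \ref{def:mult}.

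First I would use that the integrand of $\int_{[\cM]^{\vir}}\mathsf{P}_{\vd,\boldsymbol{a}}$ is built from descendents $\pi_{\cM*}(\pi_S^*\gamma \cap \ch_k(\EE \otimes \det(\EE)^{-1/r}))$ together with Chern classes of $T_{\cM}^{\vir} = -R^\mdot\hom_\pi(\EE,\EE)_0[1]$. Since $T_{\cM}^{\vir}$ is itself a universal expression in $\EE$, the whole integrand can be rewritten as a polynomial (with coefficients in $R$) in descendents of $\EE$ alone. Thus our integral is a (possibly infinite) $R$-linear combination of descendent Donaldson invariants of the type governed by Mochizuki.

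Second, an iterated application of Mochizuki's formula (peeling off rank-one sub- and quotient-sheaves) expresses each such descendent invariant as a sum over $(r-1)$-tuples of Seiberg-Witten basic classes $(\beta_1,\dots,\beta_{r-1})$, weighted by $\prod_i \SW(\beta_i) \epsilon_r^{i \beta_i c_1}$, of tautological integrals over products of Hilbert schemes $S^{[\bfn]}$. The integrands are polynomial in Chern classes of universal ideal sheaves $\mathcal{I}_i$ twisted by auxiliary line bundles $L_i$ satisfying $\sum_i c_1(L_i) = c_1$ and $c_1(L_{i-1})-c_1(L_i) = K_S - \beta_i$, mirroring the vertical setup of Section \ref{sec:Laar}. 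The EGL universality theorem then forces each such tautological integral on $S^{[\bfn]}$ to be a universal polynomial in the intersection numbers $\chi(\O_S), K_S^2, \alpha_i K_S, \alpha_i\alpha_j, \alpha_i\beta_j, \beta_i\beta_j$, together with top-degree evaluations $\int_S \alpha_i$.

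Third, the multiplicativity axiom, combined with $S^{[n]} = \bigsqcup_{n'+n''=n} S^{\prime[n']} \times S^{\prime\prime[n'']}$ for disconnected $S = S'\sqcup S''$, forces the $q$-generating series to be multiplicative in the surface. Specialising to toric surfaces with $\mathbb{T}$-equivariant classes and picking enough $\Q$-independent tuples of intersection numbers — exactly the strategy of Section \ref{sec:loc} that isolates $A$ and $C_{ij}$ in the vertical case — promotes the universal polynomial into an exponential product, extracting the series $A, B, V_i, W_{ij}, X_i, Y_{ij}, Z_{ij}$ and yielding the claimed factorisation.

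The main obstacle is the first and second steps: a higher-rank version of Mochizuki's formula valid for \emph{all} multiplicative insertions — in particular those involving $c_j(\underline{T})$ and coefficients in an arbitrary $\Q$-algebra $R$ — is not established in full generality, and the genericity assumptions on $(S,H,c_1,c_2)$ needed to avoid walls at each intermediate Chern class restrict the regime where the iteration applies without modification. It is this gap that forces one to settle for a weak version, Theorem \ref{thm:multinst}, in which the factorisation structure is proved under additional hypotheses that permit Mochizuki's method to run, leaving the independence of the universal series $A, B, V_i, \ldots, Z_{ij}$ from $(S,\boldsymbol{\alpha},c_1,c_2)$ in the general case as the remaining conjectural content.
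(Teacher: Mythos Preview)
The statement you are asked to prove is a \emph{conjecture}; the paper does not prove it and explicitly says so. What the paper does prove, following essentially the three-step strategy you describe, is the weaker Theorem \ref{thm:multinst}. So your proposal is not a proof of Conjecture \ref{conj:multinst} but a sketch of the proof of Theorem \ref{thm:multinst}, together with a mis-identification of what remains open.

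The key point you have wrong is the output of Mochizuki's formula. You write that the iterated wall-crossing produces a sum over $(r-1)$-tuples of Seiberg--Witten classes weighted by $\prod_i \SW(\beta_i)\,\epsilon_r^{i\beta_i c_1}$. It does not. Mochizuki's formula yields a sum over $r$-tuples $(\beta_1,\dots,\beta_r)$ with $\sum_i\beta_i=c_1$ and the inequality constraints $\beta_i H\le \tfrac{1}{r-i}\sum_{j>i}\beta_j H$, weighted by $\prod_{i=1}^{r-1}\SW(\beta_i)$, of integrals over $S^{[\bfn]}$ that depend on auxiliary equivariant parameters $t_1,\dots,t_{r-1}$, and the invariant is recovered by taking $\mathrm{Coeff}_{t_1^0}\cdots\mathrm{Coeff}_{t_{r-1}^0}$. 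The roots of unity $\epsilon_r^{i\beta_i c_1}$ appear nowhere in this process; they are part of the conjectured answer, not of the method. After applying EGL universality and multiplicativity exactly as you describe, one arrives at the residue expression of Theorem \ref{thm:multinst}, with universal series $\widetilde A,\widetilde B,\dots$ having coefficients in $R(\!(t_1,\dots,t_{r-1})\!)$ and an extra non-universal factor $\mathsf{C}(r,S,\boldsymbol\alpha,\boldsymbol\beta,\boldsymbol t)$.

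Consequently the genuine gap is not what you flag (validity of Mochizuki for general insertions, or wall-avoidance hypotheses --- those are handled, up to the technical conditions (i)--(iii), in the proof of Theorem \ref{thm:multinst}). The gap is the passage from the residue formula to the clean exponential form of the conjecture: one must evaluate the iterated $t_i$-residues, absorb the factor $\mathsf{C}$, drop the inequality constraints on the $\beta_i$, and see the $\epsilon_r$ weights emerge. As the paper states explicitly after Theorem \ref{thm:multinst}, ``the authors do not know how to evaluate the residues other than by computer calculation in examples.'' That is the missing idea, and your outline does not supply it.
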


In each of the cases listed at the start of Section \ref{sec:horconj}, the invariants can be expressed in terms of a multiplicative sequence $\{\mathsf{P}_{v,\boldsymbol{a}}\}_{v=0}^{\infty}$. This is proved in \cite{GK1}--\cite{GK4,GKW}. Therefore, Conjecture \ref{conj:multinst} implies each of the conjectures in loc.~cit.~(except of course for the explicit expressions given for the universal functions in each case).\footnote{In loc.~cit.~the invariants are defined on the Gieseker-Maruyama moduli space $M$, whereas in this appendix, we work with the Deligne-Mumford stack $\cM$. However, the virtual classes are easily related via the degree $\frac{1}{r} : 1$ morphism $\cM\to M$, and the invariants only differ by a factor $r$.} As we learn from the conjectures in loc.~cit., it is crucial that we require the universal functions to have coefficients in $R \otimes_{\Q} \C$ (as opposed to just $R$).

\begin{remark} \label{differentrks:inst}
Fix a multiplicative sequence of insertions $\{\mathsf{P}_{v,\boldsymbol{a}}\}_{v=0}^{\infty}$. Then for any $r>1$, we consider the universal functions $A, B, \ldots$ of Conjecture \ref{conj:multinst}. When we want to stress their dependence on $r$, we write $A^{(r)}, B^{(r)}, \ldots$ We expect that the universal functions $Y^{(r)}_{ij}$ only depend on the ratio $i/r$ and $Z^{(r)}_{ij}$, for $i<j$, only depend on the ratios $i/r, j/r$. More precisely, we conjecture that for all $r,s>1$ we have
\begin{align*}
Y^{(rs)}_{i,js} &= Y^{(r)}_{ij}, \quad \forall i \in \{1, \ldots, N\} \, \mathrm{s.t.} \, a_i = 1, \quad \forall j \in \{1, \ldots, r-1\} \\ 
Z^{(rs)}_{is,js} &= Z^{(r)}_{ij}, \quad \forall i < j \in \{1, \ldots, r-1\}.
\end{align*}
This property was first observed in the calculations in \cite{Got4} on blow-up formulae for virtual Segre and Verlinde numbers. In the setting of Vafa-Witten theory, this property is equivalent ---via Conjecture \ref{conjCD}--- to a similar property of the universal functions $C_{ij}^{(r)}$ (Remark \ref{differentrks:mon}).
\end{remark}

Let $\T := (\C^{*})^{r-1}$ be an algebraic torus, with characters $\t_1, \ldots, \t_{r-1} \in X(\mathbb{T})$, acting trivially on a point $\pt = \Spec \C$. Consider
$$
H_{\T}^*(\mathrm{pt},\Z) = \Z[t_1^{\pm 1}, \ldots, t_{r-1}^{\pm 1}],
$$
where $t_i = c^{\T}_1(\t_i)$ are the equivariant parameters. The following (rational) characters in $X(\mathbb{T}) \otimes_{\Z} \Q$ feature in the theorem below
\begin{align*}
\mathfrak{T}_i := \t_i^{-1} \otimes \bigotimes_{j<i} \t_j^{\frac{1}{r-j}}, \quad \mathfrak{T}_r := \bigotimes_{j<r} \t_j^{\frac{1}{r-j}}, \quad T_j := c_1^{\T}(\mathfrak{T}_j),
\end{align*}
for all $i=1, \ldots, r-1$ and $j=1, \ldots, r$. Let $(S,H)$ be a smooth polarized surface. For any $\ch \in H^*(S,\Q)$, we define
$$
\chi(\ch) := \int_S \ch \cdot \td(S).
$$
Furthermore, for any $c \in H^2(S,\Z)$, we define $\chi(c) := \chi(e^c)$ and we denote the corresponding Hilbert polynomial by $P_{\ch}(m) = \chi(\ch \cdot e^{mH})$. In what follows, we write $\mathrm{Coeff}_{t^0}(f)$ for the coefficient of $t^0$ of a formal Laurent series $f(t)$.
\begin{theorem} \label{thm:multinst}
Fix a multiplicative sequence of insertions $\{\mathsf{P}_{v,\boldsymbol{a}}\}_{v=0}^{\infty}$ with coefficients in a $\Q$-algebra $R$. For any $r> 1$, there exist formal power series\footnote{In Conjecture \ref{conj:multinst}, indices took values in $\{1, \ldots, r-1\}$, whereas now in $\{1, \ldots, r\}$. We also stress that the coefficients are in $R(\!(t_1,\ldots, t_{r-1})\!)$ (as opposed to $R \otimes_{\Q} \C$).}
\begin{align*}
&\widetilde{A}, \quad \widetilde{B}, \quad \{\widetilde{V}_i\}_{i \in \{1, \ldots, N\} \, \mathrm{s.t.} \, a_i=1}, \quad \{\widetilde{W}_{ij}\}_{i \leq j \in \{1, \ldots, N\} \, \mathrm{s.t.} \, a_i=a_j=1}, \\
&\{\widetilde{X}_i\}_{i \in \{1, \ldots, N\} \, \mathrm{s.t.} a_i=2}, \quad \{\widetilde{Y}_{ij}\}_{i \in \{1, \ldots, N\} \, \mathrm{s.t.} \, a_i=1, j \in \{1, \ldots, r\}}, \quad \{\widetilde{Z}_{ij}\}_{i \leq j \in \{1, \ldots, r\}}
\end{align*}
in $1+ q^{2r} \, R(\!(t_1,\ldots, t_{r-1})\!)[[q^{2r}]]$, only depending on $r$ and $\{\mathsf{P}_{v,\boldsymbol{a}}\}_{v=0}^{\infty}$, with the following property.
Let $(S,H)$ be a smooth polarized surface satisfying $b_1(S) = 0$ and $p_g(S)>0$. Let $c_1 \in H^2(S,\Z),c_2 \in H^4(S,\Z)$ be chosen such that there exist no rank $r$ strictly $H$-semistable sheaves on $S$ with Chern classes $c_1,c_2$. Let $\boldsymbol{\alpha} = (\alpha_1, \ldots, \alpha_N) \in H^*(S,\Q)^N$ with $\deg(\alpha_i) = a_i$ for all $i$. Assume the following:
\begin{enumerate}
\item[(i)] $P_{(r, c_1, \frac{1}{2} c_1^2-c_2)}(m) / r > P_{e^{K_S}}(m)$ for all $m \gg 0$,
\item[(ii)] $\chi(r, c_1, \frac{1}{2} c_1^2-c_2) > (r-2) \chi(\O_S)$,
\item[(iii)] for any $\beta_1, \ldots, \beta_r \in H^2(S,\Z)$ satisfying $\beta_1 + \cdots +\beta_r=c_1$, such that $\beta_1, \ldots, \beta_{r-1}$ are Seiberg-Witten basic classes and $\beta_i H \leq \frac{1}{r-i} \sum_{j>i} \beta_jH$ for all $i=1, \ldots, r-1$, we have strict inequalities, i.e., $\beta_i H < \frac{1}{r-i} \sum_{j>i} \beta_jH$ for all $i=1, \ldots, r-1$.
\end{enumerate}
For any $\boldsymbol{\beta}=(\beta_1, \ldots, \beta_r) \in H^2(S,\Z)^r$, define
\begin{align*}
&\mathsf{C}(r,S,\boldsymbol{\alpha},\boldsymbol{\beta},\boldsymbol{t}):= q^{-(r-1)\sum_{i=1}^{r} \beta_i^2 + 2\sum_{i<j} \beta_i\beta_j -(r^2-1) \chi(\O_S)} \prod_{i=1}^{r-1} t_i^{\sum_{j \geq i} ( \frac{1}{2} \beta_j(\beta_j-K_S) + \chi(\O_S)  ) } \\
&\times \prod_{i<j}  (T_j - T_i)^{\chi(\beta_j-\beta_i) - \chi(\beta_j)} (T_i-T_j)^{\chi(\beta_i-\beta_j)} \\
&\times \mathsf{P}^{\T}_{0,\boldsymbol{a}}\Bigg(r,S,\boldsymbol{\alpha}; \pt, \bigoplus_{i=1}^{r} \O_S(\beta_i) \otimes \mathfrak{T}_i,R^\mdot \Hom_S\Big(\bigoplus_{i=1}^{r} \O_S(\beta_i) \otimes \mathfrak{T}_i,\bigoplus_{i=1}^{r} \O_S(\beta_i) \otimes \mathfrak{T}_i)\Big)[1] \Bigg).
\end{align*}
Then, for $ \cM:=\cM_S^H(r,c_1,c_2)$ and $\vd:=\vd(r,c_1,c_2)$, we have that
$$
\int_{[\cM]^{\vir}} \mathsf{P}_{\vd,\boldsymbol{a}}(r,S,\boldsymbol{\alpha};\cM,\EE,R^\mdot\hom_\pi(\EE,\EE)[1])
$$
is given by $\mathrm{Coeff}_{t_{1}^0} \cdots \mathrm{Coeff}_{t_{r-1}^0}$ of the coefficient of $q^{\vd}$ of the following expression
\begin{align*}
&\widetilde{A}^{\chi(\O_S)} \widetilde{B}^{K_S^2} \prod_{i \in \{1, \ldots, N\} \atop \mathrm{s.t.} \, a_i=1} \widetilde{V}_i^{\alpha_i K_S} \prod_{i \leq j \in \{1, \ldots, N\} \atop \mathrm{s.t.} \, a_i=a_j=1} \widetilde{W}_{ij}^{\alpha_i \alpha_j} \prod_{i \in \{1, \ldots, N\} \atop \mathrm{s.t.} \, a_i=2} \widetilde{X}_i^{\alpha_i} \\
&\times \sum_{\boldsymbol{\beta}} (-1)^{r-1} \mathsf{C}(r,S,\boldsymbol{\alpha},\boldsymbol{\beta},\boldsymbol{t})  \prod_{i \in \{1, \ldots, r-1\}} \SW(\beta_i) \prod_{i \in \{1, \ldots, N\} \atop \mathrm{s.t.} \, a_i=1} \prod_{j \in \{1, \ldots, r\}} \widetilde{Y}_{ij}^{\alpha_i \beta_j} \prod_{i \leq j \in \{1, \ldots, r\}} \widetilde{Z}_{ij}^{\beta_i \beta_j},
\end{align*}
where the sum is over all $\boldsymbol{\beta} = (\beta_1, \ldots, \beta_r) \in H^2(S,\Z)^r$ satisfying $\beta_1 + \cdots +\beta_r=c_1$ and $\beta_i H \leq  \frac{1}{r-i} \sum_{j>i} \beta_jH$ for all $i=1, \ldots, r-1$.
\end{theorem}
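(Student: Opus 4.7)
The plan is to apply Mochizuki's formula \cite{Moc}, which under hypotheses (i)--(iii) reduces the integral over $[\cM]^{\vir}$ to a finite sum indexed by decompositions $\boldsymbol{\beta}=(\beta_1,\ldots,\beta_r)$ of $c_1$ of equivariant tautological integrals on products $S^{[n_1]} \times \cdots \times S^{[n_r]}$ of Hilbert schemes of points on $S$. Multiplicativity of $\{\mathsf{P}_{v,\boldsymbol{a}}\}$ together with the Ellingsrud-G\"ottsche-Str{\o}mme universality theorem \cite{EGL} will then force the claimed factorization.

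First, I would apply Mochizuki's formula to the multiplicative insertion $\mathsf{P}_{\vd,\boldsymbol{a}}(r,S,\boldsymbol{\alpha};\cM,\EE,R^{\mdot}\hom_\pi(\EE,\EE)[1])$. Hypothesis (i) provides the Castelnuovo-Mumford regularity bound needed to realize the moduli space as an appropriate quot-scheme quotient, (ii) is the numerical inequality $\chi>(r-2)\chi(\O_S)$ appearing as a hypothesis in \cite{Moc}, and (iii) rules out strictly semistable splittings so that every $\boldsymbol{\beta}$-contribution enters once. The output is the $\T$-equivariant residue $\Coeff_{t_1^0}\cdots\Coeff_{t_{r-1}^0}$ of a sum, over $\boldsymbol{\beta}$ with $\sum_i \beta_i=c_1$ and $\beta_i H \leq \frac{1}{r-i}\sum_{j>i}\beta_jH$, weighted by $(-1)^{r-1}\prod_{i=1}^{r-1}\SW(\beta_i)$, of an integral on $\prod_i S^{[n_i]}$ in which the universal sheaf $\EE$ is replaced by $\bigoplus_i \I_i \otimes \O_S(\beta_i) \otimes \mathfrak{T}_i$. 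The $n_1=\cdots=n_r=0$ contribution is precisely $\mathsf{C}(r,S,\boldsymbol{\alpha},\boldsymbol{\beta},\boldsymbol{t})$ by direct substitution into $\mathsf{P}_{0,\boldsymbol{a}}^{\T}$.

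Next, I would combine the disjoint-union isomorphism $(S'\sqcup S'')^{[n]}\cong\bigsqcup_{n'+n''=n}(S')^{[n']}\times(S'')^{[n'']}$ with Definition \ref{def:mult} to deduce that, once divided by $\mathsf{C}$, the generating series in $q$ (packaging all $(n_1,\ldots,n_r)$) is multiplicative under disjoint unions of surfaces carrying compatible data. The universality theorem \cite[Thm.~4.1]{EGL} says that each coefficient, as a tautological integral on $\prod_i S^{[n_i]}$, is a universal polynomial in the Chern/intersection numbers of $(S,\boldsymbol{\alpha},\boldsymbol{\beta})$. Multiplicativity then upgrades \emph{polynomial} to \emph{exponential} in the numerical invariants $\chi(\O_S)$, $K_S^2$, $\alpha_iK_S$, $\alpha_i\alpha_j$, $\alpha_i$ (for $\deg\alpha_i=2$), $\alpha_i\beta_j$, $\beta_i\beta_j$, giving the claimed factorization into universal series $\widetilde A,\widetilde B,\widetilde V_i,\widetilde W_{ij},\widetilde X_i,\widetilde Y_{ij},\widetilde Z_{ij}\in R(\!(t_1,\ldots,t_{r-1})\!)[\![q^{2r}]\!]$. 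The normalization $1+O(q^{2r})$ follows because the constant term has been absorbed into $\mathsf{C}$ and the virtual dimension shifts in multiples of $2r$.

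The hard part is the last step: extracting from the EGL universal-polynomial form the explicit exponential factorization with the correct exponent attached to each universal factor. Concretely, one must evaluate on carefully chosen test families (disjoint unions of blow-ups of toric surfaces, together with K3-like components carrying the appropriate $\boldsymbol{\alpha},\boldsymbol{\beta}$) so that each factor $\widetilde A,\widetilde B,\widetilde V_i,\ldots$ can be isolated and its exponent pinned down, and one must track the equivariant parameters $t_i$ and the twists $\mathfrak{T}_i$ cleanly through both Mochizuki's formula and the EGL argument. Once this bookkeeping is in place, the stability restriction on $\boldsymbol{\beta}$, the sign $(-1)^{r-1}$, and the precise $q$-degree shift all fall out of Mochizuki's residue.
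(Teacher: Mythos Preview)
Your proposal is correct and follows essentially the same approach as the paper: Mochizuki's formula, identification of the $\boldsymbol{n}=\boldsymbol{0}$ term as $\mathsf{C}$, multiplicativity under disjoint union, and the EGL cobordism argument. One technical step you glossed over is that Mochizuki's formula applies to polynomials in \emph{simple} slant products $\pi_{\cM*}(\pi_S^*\theta\cap\ch_\ell(\EE))$, so the paper first rewrites $\mathsf{P}_{\vd,\boldsymbol{a}}$ in this form via Grothendieck--Riemann--Roch and a K\"unneth decomposition of the small diagonal in $S^{\times L}$; conversely, what you flagged as the ``hard part'' (extracting the exponential factorization) the paper treats as routine, deferring the details to \cite{GNY1,GK1,GK5,GKW} as a standard cobordism argument.
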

\begin{proof}
The proof is similar to \cite[Thm.~2.2, 2.3]{GK4}, so we will be brief. Fix $\{\mathsf{P}_{v,\boldsymbol{a}}\}_{v=0}^{\infty}, r,S,H,c_1,c_2,\boldsymbol{\alpha}$ as in the statement of the theorem. We claim that
\begin{equation} \label{integral}
\mathsf{P}_{\vd,\boldsymbol{a}}(r,S,\boldsymbol{\alpha};\cM,\EE,R^\mdot \hom_\pi(\EE,\EE)[1])
\end{equation}
can be written as a formal power series, with coefficients in $R$, in expressions of the form
\begin{equation} \label{simpleslant}
\pi_{\cM*}\big( \pi_S^* \theta \cap \ch_{\ell}(\EE)\big)
\end{equation}
for certain $\theta \in H^*(S,\Q)$ and $\ell \in \Z_{\geq 0}$. By Grothendieck-Riemann-Roch
$$
\ch(R^\mdot \hom_\pi(\EE,\EE)) = \pi_{\mathcal{M}!}( \ch(\EE)^{\vee} \cdot \ch(\EE) \cdot  \td(S)).
$$
Therefore, in order to establish the claim, it suffices to show that any expression of the form
$$
\pi_{\cM*}\big( \pi_S^* \gamma \cap \prod_{j=1}^{L} \ch_{\ell_j}(\EE)\big)
$$
can be rewritten as a polynomial expression in ``simple slant products'' \eqref{simpleslant}. (In fact, we only require the case $L=2$.) To see this, we consider $S^{\times L} \times \mathcal{M}$ and we denote the projections to the various factors by $\pi_i, \pi_{ij}, \pi_{ijk}, \ldots$ Then
$$
\pi_{\cM*}\big( \pi_S^* \gamma \cap \prod_{j=1}^{L} \ch_{\ell_j}(\EE)\big) = \pi_{L+1*}\big( \pi_1^* \gamma \cdot  \prod_{j=1}^{L} \pi_{j L+1}^* \ch_{\ell_j}(\EE) \cdot \pi_{1\cdots L}^* \mathrm{PD}(\Delta) \big),
$$
where $\mathrm{PD}(\Delta)$ denotes the Poincar\'e dual of the small diagonal $\Delta \subset S^{\times L}$. Substituting the K\"unneth decomposition
$$
\mathrm{PD}(\Delta) = \sum_{i_1 + \cdots + i_L = 4L - 4} \theta_1^{(i_1)} \boxtimes \cdots \boxtimes \theta_L^{(i_L)} \in \bigoplus_{i_1 + \cdots + i_L = 4L - 4} H^{i_1}(S,\Q) \times \cdots \times H^{i_L}(S,\Q),
$$
the claim easily follows from the projection formula and flat base change.

Since \eqref{integral} is a polynomial expression in ``simple slant products'' \eqref{simpleslant}, we can apply Mochizuki's formula \cite[Thm.~7.5.2]{Moc}. In order to fulfill the conditions of \cite[Thm.~7.5.2]{Moc}, we must assume (i)--(iii). It implies that we can express \eqref{integral} as $\mathrm{Coeff}_{t_{1}^0} \cdots \mathrm{Coeff}_{t_{r-1}^0}$ of an expression of the form
$$
(-1)^{r-1} \sum_{\beta_1 + \cdots + \beta_{r}=c_1 \, \mathrm{s.t.} \atop \beta_i H \leq \frac{1}{r-i} \sum_{j>i} \beta_jH \ \forall i}  \sum_{n_1 + \cdots + n_r =c_2 - \sum_{i<j} \beta_i \beta_j} \prod_{i=1}^{r-1} \SW(\beta_i) \int_{S^{[\boldsymbol{n}]}} \widetilde{\Psi}(\boldsymbol{\alpha},\boldsymbol{\beta}, \boldsymbol{n},\boldsymbol{t}),
$$
where $S^{[\boldsymbol{n}]} = S^{[n_1]} \times \cdots \times S^{[n_r]}$ and $\widetilde{\Psi}(\boldsymbol{\alpha},\boldsymbol{\beta}, \boldsymbol{n},\boldsymbol{t})$ is an explicit expression depending on $\{\mathsf{P}_{v,\boldsymbol{a}}\}_{v=0}^{\infty}$, $r$, $\boldsymbol{\alpha} = (\alpha_1, \ldots, \alpha_N)$, $\boldsymbol{\beta} = (\beta_1, \ldots, \beta_r)$, $\boldsymbol{n} = (n_1, \ldots, n_r)$, and $\boldsymbol{t} = (t_1, \ldots, t_{r-1})$. The precise formula for $\widetilde{\Psi}(\boldsymbol{\alpha},\boldsymbol{\beta}, \boldsymbol{n},\boldsymbol{t})$ is reviewed in \cite[Sect.~6.1, p.~105]{GK5}. Consider
$$
\sum_{\boldsymbol{n} \in \Z_{\geq 0}^r} q^{2r |\boldsymbol{n}| -(r-1)\sum_i \beta_i^2 + 2\sum_{i<j} \beta_i\beta_j -(r^2-1) \chi(\O_S)} \int_{S^{[\boldsymbol{n}]}} \widetilde{\Psi}(\boldsymbol{\alpha},\boldsymbol{\beta}, \boldsymbol{n}, \boldsymbol{t}),
$$
where the power is the virtual dimension. The coefficient of the lowest order term of this expression, obtained by setting $n_1 = \cdots = n_r = 0$, yields the expression for $\mathsf{C}(r,S,\boldsymbol{\alpha},\boldsymbol{\beta},\boldsymbol{t})$ defined in the statement of the theorem. We now normalize this generating series and study it separately.

Let $S$ be any \emph{possibly disconnected} smooth projective surface. Take \emph{any} $\boldsymbol{\alpha} = (\alpha_1, \ldots, \alpha_N) \in H^*(S,\Q)^N$ with $\deg(\alpha_i) = a_i$ and algebraic classes $\boldsymbol{\beta} = (\beta_1, \ldots, \beta_r) \in H^2(S,\Q)^r$. Consider the generating function
\begin{equation*} 
\mathsf{Z}_S(\boldsymbol{\alpha},\boldsymbol{\beta}, \boldsymbol{t},q) = \frac{1}{\mathsf{C}(r,S,\boldsymbol{\alpha},\boldsymbol{\beta},\boldsymbol{t})} \sum_{\boldsymbol{n} \in \Z_{\geq 0}^{r}} q^{2r |\boldsymbol{n}| -(r-1)\sum_i \beta_i^2 + 2\sum_{i<j} \beta_i\beta_j -(r^2-1) \chi(\O_S)} \int_{S^{[\boldsymbol{n}]}} \widetilde{\Psi}(\boldsymbol{\alpha}, \boldsymbol{\beta}, \boldsymbol{n}, \boldsymbol{t}).
\end{equation*}
This series has constant term 1 and is defined for any (possibly disconnected) $S$ (not necessarily satisfying $b_1(S) = 0$ or $p_g(S)>0$) and any $\boldsymbol{\alpha}, \boldsymbol{\beta}$ satisfying $\rk(\alpha_i) = a_i$ for all $i$. We claim that
\begin{equation} \label{multZinst}
\mathsf{Z}_{S' \sqcup S''}(\boldsymbol{\alpha}' \oplus \boldsymbol{\alpha}'',\boldsymbol{\beta}' \oplus \boldsymbol{\beta}'', \boldsymbol{t},q) = \mathsf{Z}_{S'}(\boldsymbol{\alpha}',\boldsymbol{\beta}', \boldsymbol{t},q) \mathsf{Z}_{S''}(\boldsymbol{\alpha}'',\boldsymbol{\beta}'', \boldsymbol{t},q),
\end{equation}
for any $S',\boldsymbol{\alpha}',\boldsymbol{\beta}'$ and $S'',\boldsymbol{\alpha}'',\boldsymbol{\beta}''$ such that $\rk(\alpha_i') = \rk(\alpha_i'') = a_i$ for all $i$. 
This follows from the fact that
$$
S^{[\boldsymbol{n}]} \cong \bigsqcup_{\boldsymbol{n} = \boldsymbol{n}' + \boldsymbol{n}''} S^{\prime [\boldsymbol{n}']} \times S^{\prime \prime [\boldsymbol{n}'']},
$$
for all $\boldsymbol{n} = (n_1, \ldots, n_r) \in \Z_{\geq 0}^r$, combined with the multiplicative property (Definition \ref{def:mult}). In particular, we use that the normalization term is multiplicative
$$
\mathsf{C}(r,S' \sqcup S'',\boldsymbol{\alpha}' \oplus \boldsymbol{\alpha}'',\boldsymbol{\beta}' \oplus \boldsymbol{\beta}'',\boldsymbol{t}) = \mathsf{C}(r,S',\boldsymbol{\alpha}',\boldsymbol{\beta}',\boldsymbol{t}) \mathsf{C}(r,S'',\boldsymbol{\alpha}'',\boldsymbol{\beta}'',\boldsymbol{t}).
$$

Combining \eqref{multZinst} with (a suitable upgrade of) the universality theorem \cite[Thm.~4.1]{EGL} yields the result. This last step, often called ``cobordism argument'', can be found in detail in \cite{GNY1, GK1,GK5, GKW} (among many other places).
\end{proof}

The proof of this theorem is amenable to computer implementation. Indeed, the universal function $\mathsf{Z}_S(\boldsymbol{\alpha},\boldsymbol{\beta}, \boldsymbol{t},q)$ is defined for any smooth projective surface $S$ and any $\boldsymbol{\alpha}, \boldsymbol{\beta}$ satisfying $\rk(\alpha_i) = a_i$ for all $i$. Therefore, it is determined on any collection of $S,\boldsymbol{\alpha},\boldsymbol{\beta}$ for which the corresponding vectors of Chern numbers $$(\chi(\O_S), K_S^2, \ldots)$$ are $\Q$-independent. As in Section \ref{sec:loc}, on a basis of toric surfaces, $\mathsf{Z}_S(\boldsymbol{\alpha},\boldsymbol{\beta}, \boldsymbol{t},q)$ can (in principle) be determined up to given order in $q$ by applying the Atiyah-Bott localization formula, which then determines the universal functions $\widetilde{A}, \widetilde{B}, \ldots$ up to some order in $q$. See \cite{GK1}--\cite{GK5,GKW} for such calculations.

\begin{remark} \label{strongmoc}
Conjecturally,  Conditions (i) and (iii) can be dropped from Theorem \ref{thm:multinst} and the sum in the formula can be replaced by the sum over \emph{all} classes $\boldsymbol{\beta} \in H^2(S,\Z)^r$ satisfying $\beta_1 + \cdots +\beta_r=c_1$. See also \cite{GNY1} and \cite{GK1}--\cite{GK5,GKW}. Furthermore, we expect that Conjecture \ref{conj:multinst} and Theorem \ref{thm:multinst} can be extended to the semistable case by using Mochizuki-Joyce-Song pairs \cite{Moc, GJT}. 
\end{remark}

\begin{remark}
Condition (ii) is essential and can be equivalently stated as 
$$
c_2 < \tfrac{1}{2}c_1(c_1-K_S) + 2\chi(\O_S).
$$ 
At first glance, this appears to be a strong restriction. However, applying $- \otimes \O_S(\ell H)$ induces an isomorphism of moduli spaces 
$$
\cM_{S}^{H}(r,c_1,c_2)\cong \cM_S^H(r,c_1+ r \ell H,c_2+(r-1)\ell Hc_1+\tfrac{1}{2}r(r-1)\ell^2 H^2).
$$ 
Note that this isomorphism leaves the invariant of Theorem \ref{thm:multinst} unchanged. This crucially depends on the fact that $\mathsf{P}_{\vd,\boldsymbol{a}}$ only involves  
$$
\ch(\EE \otimes \det(\EE)^{-1 / r}) = \ch(\EE(\ell H) \otimes \det(\EE(\ell H))^{- 1 / r}).
$$
Under the above isomorphism, Condition (ii) becomes 
$$
c_2 < \tfrac{1}{2}c_1(c_1-K_S) + 2\chi(\O_S) + (c_1 - \tfrac{r}{2} K_S) \ell H + \tfrac{r}{2} \ell^2 H^2,
$$ 
so the upper bound on $c_2$ can be made \emph{arbitrarily large} by taking $\ell \gg 0$.
\end{remark}

We state three interesting consequences of Conjecture \ref{conj:multinst}: 
\begin{itemize}
\item The invariant $\int_{[\cM]^{\vir}} \mathsf{P}_{\vd,\boldsymbol{a}}$ is independent of $H$. This would also follow from Thm.~\ref{thm:multinst} if Conditions (i) and (iii) can be dropped (Remark \ref{strongmoc}).
\item The invariant $\int_{[\cM]^{\vir}} \mathsf{P}_{\vd,\boldsymbol{a}}$ is unchanged upon replacing $c_1$ by $c_1 + r \gamma$ for any algebraic $\gamma \in H^2(S,\Z)$. This is obvious when Gieseker stability coincides with $\mu$-stability, and there are no strictly semistables (simply because $- \otimes \O_S(\gamma)$ is then an isomorphism on moduli spaces preserving the invariants).
\item Restricting to smooth projective surfaces $S$ with $K_S$ very ample and $b_1(S) = 0$, we have that $0,K_S$ are the only Seiberg-Witten basic classes, and the corresponding Seiberg-Witten invariants are $1, (-1)^{\chi(\O_S)}$ \cite[Thm.~7.4.1]{Mor}. Recall that $\chi(\O_S)$ and $K_S^2$ can be expressed in terms of topological Euler characteristic $e(S)$ and signature $\sigma(S)$. If we take $\alpha_1, \ldots, \alpha_N$ to be fixed polynomials expressions in $[K_S] \in H^*(S,\Q)$, $H = K_S$, and $c_1 = \ell K_S$ such that $\gcd(r,Hc_1)=1$, then it follows from Theorem \ref{thm:multinst} that $\int_{[\cM]^{\vir}} \mathsf{P}_{\vd,\boldsymbol{a}}$ only depends on $e(S)$ and $\sigma(S)$, and is therefore a \emph{topological} invariant.\footnote{See the proof of \cite[Thm.~1.8]{GK5} for more details, e.g.~how the assumptions of Theorem \ref{thm:multinst} can be made to hold.}  
\end{itemize}

Although Theorem \ref{thm:multinst} goes some way towards proving Conjecture \ref{conj:multinst}, the authors do not know how to evaluate the residues other than by computer calculation in examples. 
It appears that the new wall-crossing framework for virtual classes via vertex algebras developed by D.~Joyce \cite{Joy} (see also \cite{GJT}) will lead to a proof of Conjecture \ref{conj:multinst}. 

\section{$K$-theoretic refinement} \label{sec:ref}

Let $(S,H)$ be a smooth polarized surface with $H_1(S,\Z) = 0$, and let $r>0$, $c_1 \in H^2(S,\Z)$, $c_2 \in H^4(S,\Z)$.
Since $N:=N_S^H(r,c_1,c_2)$ has a $\C^*$-equivariant perfect obstruction theory, we also have a virtual structure ``sheaf''
$$
\O^{\vir}_N \in K_0^{\C^*}(N).
$$
As we learn from the work of N.~Nekrasov and A.~Okounkov \cite{NO}, one should rather consider the twisted virtual structure sheaf
$$
\widehat{\O}^{\vir}_N = \O^{\vir}_N \otimes (K_{N}^{\vir})^{\frac{1}{2}},
$$
where $(K_{N}^{\vir})^{\frac{1}{2}}$ is a square root of the virtual canonical bundle $K_N^{\vir} = \det (T_N^{\vir})^{\vee}$. Assuming, for the moment, that there are no rank $r$ strictly $H$-semistable Higgs pairs on $S$ with Chern classes $c_1, c_2$, symmetry of the perfect obstruction theory implies that there exists a canonical square root of $K_N^{\vir}$ on the fixed locus $N^{\C^*}$ \cite[Prop.~2.6]{Tho}. Then Thomas proposes to study $K$-theoretic Vafa-Witten invariants (defined by the $K$-theoretic virtual localization formula)
\begin{equation} \label{defKinv}
\chi(N, \widehat{\O}^{\vir}_N) = \chi\Big(N^{\C^*}, \frac{ \O_{N^{\C^*}}^{\vir} \otimes (K_{N}^{\vir})^{\frac{1}{2}} |_{N^{\C^*}}  } {\Lambda_{-1} (\nu^{\vir})^{\vee}   }\Big)  \in \Q(y^{\frac{1}{2}}).
\end{equation}
Here $\nu^{\vir}$ is the virtual normal bundle and $y := e^t$ with $t = c_1^{\C^*}(\mathfrak{t})$ the equivariant parameter of the $\C^*$ scaling action. 
Furthermore, for a vector bundle $V$, we denote $\Lambda_{-1} V := \sum_i (-1)^i \Lambda^i V$, and this definition extends to arbitrary $K$-theory classes as in \cite[Sect.~4]{FG}.
This expression is invariant under $y \rightarrow y^{-1}$ and reduces to $\int_{[N]^{\vir}} 1$ for $y=1$ (see \cite{Tho} for details). The contribution from the Gieseker-Maruyama moduli space $M:=M_S^H(r,c_1,c_2)$ to \eqref{defKinv} is the (signed and symmetrized) \emph{virtual Hirzebruch $\chi_{-y}$ genus} (defined in general in \cite{FG})
\begin{equation*} 
(-1)^{\vd} y^{-\frac{\vd}{2}} \chi_{-y}^{\vir}(M), \quad \vd = \vd(M).
\end{equation*}

Still assuming there are no strictly $H$-semistable objects, the $K$-theoretic $\SU(r)$ Vafa-Witten partition function is defined by 
$$
\sfZ_{S,H,c_1}^{\SU(r)}(q,y) = r^{-1} q^{-\frac{\chi(\O_S)}{2r} + \frac{r K_S^2}{24}} \sum_{c_2 \in \Z} q^{\frac{1}{2r} \vd(r,c_1,c_2)} (-1)^{\vd(r,c_1,c_2)}  \chi(N_S^H(r,c_1,c_2), \widehat{\O}^{\vir}_{N_S^H(r,c_1,c_2)}).
$$
As in the unrefined case discussed in the introduction, the generating function can be decomposed according to contributions from different components of the fixed loci indexed by sequences $\mu$
$$
\sfZ_{S,H,c_1}^{\SU(r)}(q,y) = r^{-1} \sum_{\mu} \sfZ_{S,H,c_1}^{\mu}(q,y).
$$
We make two comments:
\begin{itemize}
\item For arbitrary $S,H,r,c_1$, there may be strictly $H$-semistable objects. Then $K$-theoretic Vafa-Witten invariants, and the generating series $\sfZ_{S,H,c_1}^{\SU(r)}(q,y)$, can still be defined using moduli spaces of Joyce-Song Higgs pairs \cite{Tho}.
\item When $p_g(S) > 0$, ``Thomas vanishing'' applies just like in the unrefined case \cite{Tho}. In particular, for $r$ \emph{prime}, only horizontal and vertical components contribute 
$$
\sfZ_{S,H,c_1}^{\SU(r)}(q,y) = r^{-1} \sfZ_{S,H,c_1}^{(r)}(q,y) + r^{-1} \sfZ_{S,H,c_1}^{(1^r)}(q,y).
$$
\end{itemize}

Next, we consider the following refined lattice theta functions
\begin{align*}
\Theta_{A_{r},\ell}(q,y) &= \sum_{v \in \mathbb{Z}^r} q^{\frac{1}{2} \langle v - \ell \lambda, v - \ell \lambda \rangle} y^{\langle v, M_{A_r}^{-1} (1, \ldots, 1) \rangle}, \quad \lambda = \frac{1}{r+1}(r,r-1, \ldots, 1) \\
t_{A_{r},\ell}(q,y) &= \frac{\Theta_{A_{r},0}(q,y)}{\Theta_{A_{r},\ell}(q,y)}, \quad \ell \in \Z,
\end{align*}
where $\langle \cdot, \cdot \rangle$ is the symmetric bilinear form of the $A_r$ lattice and $M_{A_r}$ is the corresponding symmetric matrix. See Appendix \ref{sec:theta} for details on lattice theta functions. We also consider 
$$
\phi_{-2,1}(q,y) = (y^{\frac{1}{2}} - y^{-\frac{1}{2}})^2 \prod_{n=1}^{\infty} \frac{(1- y q^n)^2 (1-y^{-1} q^n)^2}{(1-q^n)^4},
$$
which is the Fourier expansion of a \emph{weak Jacobi form} of weight $-2$ and index 1 and 
$$
q = e^{2 \pi i \tau}, \quad y = e^{2 \pi i z}, \quad (\tau,z) \in \mathfrak{H} \times \C.
$$
Using the same methods as described in Section \ref{sec:GT}, the third-named author established the following universality result for the vertical contribution \cite{Laa1, Laa2}:
\begin{theorem}[Laarakker] \label{thm2:Laarakker}
For any $r>1$, there exist $C_0$, $\{C_{ij}\}_{1 \leq i \leq j \leq r-1} \in \Q(y^{\frac{1}{2}})(\!(q^{\frac{1}{2r}})\!)$ with the following property. For any smooth polarized surface $(S,H)$ satisfying $H_1(S,\Z) = 0$, $p_g(S)>0$, and $c_1 \in H^2(S,\Z)$, we have
\begin{align*}
\frac{\sfZ_{S,H,c_1}^{(1^r)}(q,y)}{(y^{\frac{1}{2}} - y^{-\frac{1}{2}})^{\chi(\O_S)}} = &\Bigg( \frac{(-1)^{r-1}}{ \phi_{-2,1}(q^r, y^r)^{\frac{1}{2}} \Delta(q^r)^{\frac{1}{2}}} \Bigg)^{\chi(\O_S)} \Bigg( \frac{\Theta_{A_{r-1},0}(q,y)}{\eta(q)^r} \Bigg)^{-K_S^2} \\
&\times  C_0(q,y)^{K_S^2} \sum_{\bfbeta \in H^2(S,\Z)^{r-1}}  \delta_{c_1,\sum_i i \beta_i} \prod_{i} \SW(\beta_i) \prod_{i \leq j} C_{ij}(q,y)^{\beta_i \beta_j}.
\end{align*}
\end{theorem}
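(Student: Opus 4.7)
The plan is to adapt the strategy used for the unrefined Theorem \ref{thm:Laarakker} (developed in \cite{Laa1,Laa2} and reviewed in Sections \ref{sec:Laar}--\ref{sec:cob}) to the $K$-theoretic setting, replacing equivariant Euler classes by $\Lambda_{-1}(\cdot)^{\vee}$ throughout. The starting point is Gholampour--Thomas's identification of the $(1^r)$-fixed locus with a union of incidence loci $S_{\bfbeta}^{[\bfn]} \subset S^{[\bfn]} \times |\bfbeta|$, together with the expression for $T_N^{\vir}|_{S_{\bfbeta}^{[\bfn]}}$ in terms of $R^\mdot\hom_\pi(\mathbb{E},\mathbb{E})$. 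Using the $K$-theoretic virtual localization formula, the vertical contribution to the invariant of \eqref{defKinv} becomes
\[
\chi\Big(S_{\bfbeta}^{[\bfn]}, \frac{\widehat{\O}^{\vir}_{S_{\bfbeta}^{[\bfn]}}}{\Lambda_{-1}(\nu^{\vir})^{\vee}}\Big),
\]
and by Gholampour--Thomas this can be pushed forward to $S^{[\bfn]} \times |\bfbeta|$ as a $K$-theoretic Euler class of the virtual tangent complex, localized to the degeneracy locus. This produces a $K$-theoretic analogue $\Upsilon^K(\boldsymbol{a},\bfn,t,y)$ of $\Upsilon(\boldsymbol{a},\bfn,t)$, built from $\Lambda_{-1}$ applied to the moving part of $R^\mdot \hom_\pi(\mathbb{E},\mathbb{E}\otimes K_S\otimes \mathfrak{t})_0 - R^\mdot\hom_\pi(\mathbb{E},\mathbb{E})_0$, and with $y = e^t$.

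The next step is the cobordism argument. Define, in parallel with \eqref{eqn:defGS},
\[
\mathsf{G}^K_{S,\boldsymbol{a}}(q,y) = \frac{1}{\Upsilon^K(\boldsymbol{a},y)}\sum_{\bfn} q^{|\bfn|}\int_{S^{[\bfn]}} \Upsilon^K(\boldsymbol{a},\bfn,t,y),
\]
where $\Upsilon^K(\boldsymbol{a},y)$ is the constant term at $\bfn=0$; by the same argument as in \cite[Prop.~6.3]{Laa1} (extended to the $K$-theoretic Euler class) this is independent of $t$. Disjoint-union multiplicativity of $\mathsf{G}^K_{S,\boldsymbol{a}\oplus \boldsymbol{a}'}$ follows from $S^{[n]}\cong \bigsqcup S'^{[n']}\times S''^{[n'']}$ together with the multiplicative behaviour of $\Lambda_{-1}$ under direct sums. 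Combining this with the universality theorem \cite[Thm.~4.1]{EGL}, upgraded to allow coefficients in $\Q(y^{\frac{1}{2}})(\!(q^{\frac{1}{2r}})\!)$, one obtains universal power series $A^K,B^K,E_i^K,E_{ij}^K$, depending only on $r$ (and $y$), such that the analogue of \eqref{eqn2} holds. One then sets $C_{ii}=E_i^KE_{ii}^K$, $C_{ij}=E_{ij}^K$, and absorbs the $\Theta_{A_{r-1},0}/\eta^r$ factor into $C_0$ exactly as in \eqref{def:C0Cij}, together with the Seiberg-Witten sum factorization $\SW(a_i)=(-1)^{\chi(\O_S)}\SW(\beta_i)$ used in \eqref{eqn1}.

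The final step, and the main obstacle, is to pin down the two universal factors that depend only on $\chi(\O_S)$, namely $A^K(q,y)$ and the prefactor $(y^{\frac12}-y^{-\frac12})^{\chi(\O_S)}$. Following the blueprint of \cite{TT2} and \cite[Thm.~C]{Laa1}, one evaluates $\mathsf{G}^K_{S,\boldsymbol{a}}$ on a K3 surface, where the moduli spaces of Higgs pairs are governed by Hilbert schemes of points via O'Grady/Yoshioka (so only the $\boldsymbol{a}=0$ term survives after a cobordism substitution to $K_S=0$). The $K$-theoretic generating series over $(K3)^{[n]}$ is well-known to involve the elliptic-genus-type product
\[
\prod_{n\geq 1}\frac{(1-q^n)^2}{(1-yq^n)(1-y^{-1}q^n)},
\]
which is precisely (up to a $\Delta^{1/2}$ and a $(y^{1/2}-y^{-1/2})$ normalization) $\phi_{-2,1}(q,y)^{-1/2}\Delta(q)^{-1/2}$. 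After substituting $q\mapsto q^r$, $y\mapsto y^r$ coming from the rank-$r$ twist in the Higgs construction, this identifies $A^K(q,y)=(-1)^{r-1}/(\phi_{-2,1}(q^r,y^r)\Delta(q^r))^{1/2}$ and produces the overall $(y^{1/2}-y^{-1/2})^{\chi(\O_S)}$. The delicate point is tracking signs and the normalization of the square root of $K^{\vir}$ on the K3 fixed locus, for which one uses Thomas's canonical orientation \cite[Prop.~2.6]{Tho} and the fact that the resulting $\chi(\O_S)$-universal series is a perfect square in $\Q(y^{\frac12})(\!(q^{\frac{1}{2r}})\!)$; this consistency is what makes the square roots in the statement well-defined.
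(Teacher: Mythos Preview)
Your proposal is correct and follows essentially the same route as the paper, which simply records that ``the approach described in Section \ref{sec:GT} also works in the $K$-theoretic case'' and defers to \cite{Laa1,Laa2} for details: Gholampour--Thomas degeneracy loci, the cobordism/EGL universality argument applied to the $K$-theoretic $\Upsilon^K$, and the K3 specialization to pin down the $\chi(\O_S)$-factor. One small slip: the O'Grady/Yoshioka deformation-equivalence results concern the \emph{horizontal} Gieseker--Maruyama moduli spaces, not the vertical component; for the vertical series on a K3 one has $K_S=0$, hence $\boldsymbol{a}=0$ is the only Seiberg--Witten basic class and $\mathsf{G}^K_{K3,0}$ is computed directly on products of Hilbert schemes (this is the content of \cite{TT2} and \cite[Thm.~C]{Laa1} in the refined setting), yielding the $\phi_{-2,1}(q^r,y^r)^{1/2}\Delta(q^r)^{1/2}$ factor without invoking O'Grady/Yoshioka.
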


The approach described in Section \ref{sec:GT} also works in the $K$-theoretic case. The calculations are worked out in detail in \cite{Laa1, Laa2}, where the normalized universal functions $\overline{C}_0(q,y), \overline{C}_{ij}(q,y)$ were calculated modulo $q^{15}$ for $r=2$ and modulo $q^{11}$ for $r=3$.
For this paper we also determined $\overline C_0(q,y), \overline C_{ij}(q,y)$
for $r=4,5$ modulo $q^{11}$ and for $r=6$ modulo $q^{10}$.
In this appendix, when we write $C_0, C_{ij}, \Theta_{A_r,\ell}$ (and later $D_0, D_{ij}, \Theta_{A_r^\vee,\ell}$), we always mean their refined versions depending on $(q,y)$. We also use the notation $C_I$, $I \subset [r-1]$ introduced at the beginning of Section \ref{sec:verconj}.
The following three conjectures are always verified up to the orders for which we determined the universal functions $C_0, C_{ij}$.\footnote{The first two conjectures previously appeared in \cite{GK1} ($r=2$) and \cite{GK3} ($r=3$) and were also verified, up to some order in $q$, in \cite{Laa1}.}

For $r=2$, the refinement simply consists of replacing  $t_{A_1,1}(q)$ by $t_{A_1,1}(q,y)$:
\begin{conjecture}[G\"ottsche-Kool] \label{conjverref:rk2}
For $r=2$, we have
$$
C_{\varnothing} = 1, \quad C_{\{1\}} = t_{A_1,1}.
$$
\end{conjecture}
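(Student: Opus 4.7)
The plan is to verify the two identities $C_{\varnothing}=1$ and $C_{\{1\}}=t_{A_1,1}(q,y)$ by the same universality-plus-localization strategy used in the unrefined case (Section \ref{sec:loc}), adapted to $K$-theory. The upshot of the Gholampour--Thomas theory is that the $K$-theoretic analogue of \eqref{eqn:GSafterAB} reduces $\mathsf{G}_{S,\boldsymbol{a}}(q,y)$ on a smooth projective toric surface $S$ to a sum over torus fixed points of $S^{[\bfn]}$ --- tuples of $2$-dimensional partitions --- where the integrand is obtained from the moving part of $T^{[\bfn]}$ by replacing equivariant Euler classes by $K$-theoretic Euler characters $\Lambda_{-1}(\cdot)^\vee$ twisted by the $(K^{\vir})^{1/2}$ correction. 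By the $K$-theoretic refinement of the cobordism argument of Section \ref{sec:cob}, the universal series $A(q,y), B(q,y), E_i(q,y), E_{ij}(q,y)$, and hence $C_0$ and $C_{11}$, are completely determined by evaluating $\mathsf{G}_{S,\boldsymbol{a}}(q,y)$ on any four pairs $\{(S^{(\alpha)},\boldsymbol{a}^{(\alpha)})\}_{\alpha=1}^{4}$ whose Chern vectors $(\chi(\O),K^2,Ka,a^2)$ are $\Q$-independent.

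The first step is therefore to pick such a spanning set --- for instance $\PP^2$ and a couple of Hirzebruch surfaces $\mathbb{F}_n$ with distinct $T$-equivariant divisors --- and run the localization computation to a high order in $q$ using the explicit formulas for the tangent weights of $S^{[n]}$ at monomial ideals. Next, one extracts $C_{\varnothing}=C_0$ and $C_{\{1\}}=C_0 C_{11}$ via \eqref{def:C0Cij} and compares to the proposed $1$ and $t_{A_1,1}(q,y)=\Theta_{A_1,0}(q,y)/\Theta_{A_1,1}(q,y)$ respectively. The fact that the refinement of Theorem \ref{thm:Laarakker} already isolates the factor $(y^{1/2}-y^{-1/2})^{\chi(\O_S)} \phi_{-2,1}(q^r,y^r)^{\chi(\O_S)/2}$ from $C_0^{K_S^2}$ strongly suggests that $C_\varnothing$ really should be trivial, and this is a useful sanity check on the extraction.

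To upgrade the resulting order-by-order verification to an all-orders proof, the natural route is to pin down the universal series on two classes of surfaces where the refined rank $2$ partition function is independently known. On a K3 surface the rank $2$ vertical moduli spaces are (deformations of) Hilbert schemes of points, for which the refined Euler characteristics admit a product formula in terms of $\phi_{-2,1}$; matching the $\chi(\O_S)=2$ coefficient against Theorem \ref{thm2:Laarakker} fixes $C_\varnothing$. Then on a minimal surface of general type with smooth canonical divisor --- the Dijkgraaf--Park--Schroers setting \cite{DPS} --- the unrefined identity $C_{\{1\}}(q,1)=t_{A_1,1}(q)$ is already established by \cite{Laa1}, and the $y$-dependence can be analyzed using the Jacobi form structure: both $C_{\{1\}}(q,y)$ and $t_{A_1,1}(q,y)$ must transform as Jacobi forms of weight $0$ and the same index under the $K$-theoretic $S$-duality of \cite{GK3} combined with the $y\to y^{-1}$ symmetry of \eqref{defKinv}, so weight/index bookkeeping plus matching finitely many $q$-coefficients forces equality.

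The hard part will be executing the last step rigorously. Unlike the unrefined case, the $K$-theoretic $S$-duality transformation is itself only conjectural, and the Jacobi-type modular properties of $C_{\{1\}}(q,y)$ are not yet intrinsically established from the Hilbert-scheme definition. Absent a vertex-algebra wall-crossing derivation \`a la \cite{Joy}, the cleanest fallback is a direct rank $2$ computation on a K3 surface with primitive Mukai vector: there the full refined generating series is accessible through the Huybrechts--Yoshioka identification with Hilbert schemes, which should determine $C_{\{1\}}(q,y)$ simultaneously with $C_\varnothing=1$ and thus close the argument.
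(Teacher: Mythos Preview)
The statement you are trying to prove is labelled a \emph{Conjecture} in the paper, and the paper does not prove it. What the paper actually does is exactly the first step of your plan: it computes the universal series $C_0(q,y),C_{11}(q,y)$ by toric localization on a spanning set of $(S,\boldsymbol{a})$ and checks that the identities $C_\varnothing=1$, $C_{\{1\}}=t_{A_1,1}(q,y)$ hold modulo $q^{15}$. There is no all-orders argument in the paper.

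Your attempt to upgrade this to a proof has a concrete gap. You propose to fix $C_\varnothing$ by specializing to a K3 surface, but on a K3 one has $K_S^2=0$ and the only Seiberg--Witten basic class is $0$, so in Theorem \ref{thm2:Laarakker} the factor $C_0^{K_S^2}\prod C_{ij}^{\beta_i\beta_j}$ collapses to $1$ and the K3 specialization determines only the $\chi(\O_S)$-series (this is exactly how the paper pins down $A$ in \eqref{eqn:A}); it gives no information about $C_\varnothing$ or $C_{\{1\}}$. Similarly, your appeal to the unrefined identity ``established by \cite{Laa1}'' on minimal general type surfaces is circular: \cite{Laa1} proves the structure formula (Theorem \ref{thm:Laarakker}), not the closed form of $C_{11}$, which is Conjecture \ref{conjver:rk2} and remains open even at $y=1$. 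Finally, as you yourself note, the Jacobi-form/$S$-duality step rests on conjectural modular properties. So none of the three ingredients you list for the ``upgrade'' is currently available, and the proposal does not go beyond the finite-order verification the paper already performs.
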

For $r=3$, the refinement is unexpected and does not simply consist of replacing $t_{A_2,1}(q)$ by $t_{A_2,1}(q,y)$:
\begin{conjecture}[G\"ottsche-Kool] 
For $r=3$, we have
\begin{align*}
C_{\varnothing} = t_{A_2,1} X_-, \quad C_{\{1\}} = C_{\{2\}} = t_{A_2,1}, \quad C_{\{1,2\}} = t_{A_2,1} X_+,
\end{align*}
where $X_{\pm}$ are the solutions of
$$
X^2 - t_{A_2,1}(t_{A_2,1} + 3t_{A_2,1}(q,1))  X + t_{A_2,1} + 3t_{A_2,1}(q,1) = 0.
$$
\end{conjecture}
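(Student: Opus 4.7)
The plan is to verify this conjecture by the same strategy used for the unrefined Conjecture \ref{conjver:rk3}: direct comparison of the proposed closed-form against $q$-expansions of $C_0, C_{ij}$ produced by localization. The structure of the conjecture is the standard one, with the involution $C_I \leftrightarrow C_{\{1,2\}\setminus I}$ swapping $X_+$ with $X_-$ and fixing the middle, so the content reduces to (i) the identifications $C_{\{1\}} = C_{\{2\}} = t_{A_2,1}(q,y)$, and (ii) the two elementary-symmetric identities
\[
\frac{C_\varnothing + C_{\{1,2\}}}{t_{A_2,1}} \;=\; t_{A_2,1}\bigl(t_{A_2,1} + 3 t_{A_2,1}(q,1)\bigr), \qquad \frac{C_\varnothing C_{\{1,2\}}}{t_{A_2,1}^{2}} \;=\; t_{A_2,1} + 3 t_{A_2,1}(q,1)
\]
in $\Q(y^{\frac{1}{2}})(\!(q^{\frac{1}{6}})\!)$, where $X_\pm := C_\varnothing/t_{A_2,1},\, C_{\{1,2\}}/t_{A_2,1}$. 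Both right-hand sides are manifestly single-valued (no square roots), so these are plain equalities of formal Laurent series.

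First, I would invoke Theorem \ref{thm2:Laarakker} together with the cobordism/EGL-universality argument of Section \ref{sec:cob}, suitably adapted to the $K$-theoretic setting, to express each $C_{ij}(q,y)$ as a universal series determined by finitely many pairs $(S^{(\alpha)}, \boldsymbol{a}^{(\alpha)})$ whose Chern data span $\Q^{\frac{1}{2}r(r+1)+1}$. Taking all $S^{(\alpha)}$ toric and $\boldsymbol{a}^{(\alpha)}$ given by $T$-equivariant divisors, the $K$-theoretic refinement of \eqref{eqn:GSafterAB} reduces each $\mathsf{G}_{S,\boldsymbol{a}}$ to a combinatorial sum over tuples of Young diagrams, now tracking the $\C^*$-scaling weight $y = e^t$ appearing in the refined virtual localization formula \eqref{defKinv}. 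I would extend the existing SAGE/Pari-GP implementation behind the data of Appendix \ref{sec:data} so that $y$ is carried through every character computation; this should produce normalized $\overline{C}_0(q,y), \overline{C}_{ij}(q,y)$ to the same order in $q$ as the unrefined calculation at $r=3$ (namely $q^{11}$).

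With these expansions in hand, assertion (i) is verified by a single series comparison. For (ii), I would compute $C_\varnothing + C_{\{1,2\}}$ and $C_\varnothing C_{\{1,2\}}$ directly from the data, divide by $t_{A_2,1}$ and $t_{A_2,1}^{2}$ respectively, and check equality with the right-hand sides above to the full order available. As a consistency check, the $y \to 1$ specialization must reproduce $X_+ + X_- = 4 t_{A_2,1}^{2}$ and $X_+ X_- = 4 t_{A_2,1}$ of Conjecture \ref{conjver:rk3}, which follows from $t_{A_2,1}(q,1) = t_{A_2,1}(q)$ and serves as a useful validation of the implementation.

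The main obstacle is that this strategy delivers only a finite-order verification, not a proof. The truly surprising feature is the appearance of $t_{A_2,1}(q,1)$ --- the \emph{unrefined} theta quotient --- inside the coefficients of a quadratic over $\Q(y^{\frac{1}{2}})[\![q^{\frac{1}{6}}]\!]$; a genuine proof would require a conceptual explanation of this mixed specialization, presumably coming from a refined $S$-duality identity relating the $K$-theoretic vertical and horizontal generating series (in the spirit of Proposition \ref{prop:Sdualpart}) whose $(\tau,z)$-modular behavior forces precisely this combination to arise. Without such an input, I see no route beyond the computer-assisted order-by-order check, which is the same state of affairs as for Conjecture \ref{conjver:rk3} itself.
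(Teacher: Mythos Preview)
Your proposal is correct and matches the paper's approach: since this is stated as a conjecture, the paper does not prove it but only verifies it against the $q$-expansions of $\overline{C}_0(q,y), \overline{C}_{ij}(q,y)$ computed by toric localization (for $r=3$, modulo $q^{11}$). Your reduction to the two elementary-symmetric identities in $X_\pm$ is a clean way to organize the check and your explicit acknowledgment that this yields only finite-order verification, not a proof, is exactly the status claimed in the paper.
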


We now present a rather surprising conjectural refinement for $r=4$. Recall Ramanujan's octic continued fraction $u(q)$ in \eqref{defu}, and define
$$
J(q,y) = \frac{u(q^2)^4}{4} \frac{\phi_{-2,1}(q^2,y^2)\phi_{-2,1}(q^8,y^4)^2}{ \phi_{-2,1}(q^4,y^4)\phi_{-2,1}(q^4,y^2)^2}.
$$
Then we have
\begin{equation} \label{specJ}
J(q,1) = u(q^2)^4. 
\end{equation}

\begin{conjecture} \label{conjverref:rk4}
For $r=4$, we have
\begin{align*}
&C_{\varnothing} = \frac{Z - Z^{-1}}{t_{A_3,2}^{-1} J^{-1} - Z^{-1}}, \quad C_{\{1,3\}} = \frac{Z^{-1} - Z}{t_{A_3,2}^{-1} J^{-1} - Z}, \\
&C_{\{1\}} = C_{\{3\}} = ( J + 1)t_{A_3,1}, \quad C_{\{1,2\}} = C_{\{2,3\}} = ( 1+J^{-1} )t_{A_3,1}, \\
&C_{\{2\}} = \frac{Z - Z^{-1}}{t_{A_3,2}^{-1} Z - J}, \quad C_{\{1,2,3\}} = \frac{Z^{-1} - Z}{t_{A_3,2}^{-1} Z^{-1} - J},
\end{align*}
where $Z$ is a root of
\begin{align*}
Z - (J^{\frac{1}{2}} + J^{-\frac{1}{2}} + 2J(q,1)^{\frac{1}{2}} + 2J(q,1)^{-\frac{1}{2}}) + Z^{-1} = 0.
\end{align*}
\end{conjecture}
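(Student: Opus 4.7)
The plan is to approach Conjecture \ref{conjverref:rk4} in exactly the same spirit as its unrefined counterpart Conjecture \ref{conjver:rk4}: Theorem \ref{thm2:Laarakker} guarantees that universal functions
$$C_0(q,y),\ \{C_{ij}(q,y)\}_{1 \le i \le j \le 3}\ \in\ \Q(y^{\tfrac12})(\!(q^{\tfrac18})\!)$$
exist for $r=4$, and the strategy is to verify the proposed closed-form expressions by computing these series to sufficiently high order in $q$ and checking term-by-term agreement, precisely as was done in \cite{Laa1,Laa2} in lower rank. The machinery is the Gholampour-Thomas realization of the vertical fixed locus as a degeneracy locus on $S^{[\bfn]} \times |\bfbeta|$ (Section \ref{sec:Laar}), combined with the cobordism/universality argument of Section \ref{sec:cob}, which reduces each $C_I$ to tautological $K$-theoretic integrals over products of Hilbert schemes evaluated on a $\Q$-independent basis of toric surfaces.

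First I would carry out the $K$-theoretic analog of the computation in Section \ref{sec:loc}: choose toric surfaces $S^{(\alpha)}$ with $T$-equivariant divisor classes $\boldsymbol{a}^{(\alpha)}$ whose Chern data are $\Q$-independent, and apply Atiyah-Bott localization on $(S^{[\bfn]})^T$ with the appropriate $\widehat{\O}^{\mathrm{vir}}$ integrand, thus producing $\overline{C}_0(q,y)$ and $\overline{C}_{ij}(q,y)$ for $r=4$ to order $q^{11}$ (the order already available in the paper). Next I would expand the right-hand side of the conjecture: fix a branch of $Z$ from the quadratic $Z + Z^{-1} = J^{\tfrac12} + J^{-\tfrac12} + 2J(q,1)^{\tfrac12} + 2J(q,1)^{-\tfrac12}$, expand $J(q,y)$ using the infinite product for $\phi_{-2,1}$, and substitute into the six rational expressions for $C_{\varnothing}, C_{\{1,3\}}, C_{\{1\}}, C_{\{1,2\}}, C_{\{2\}}, C_{\{1,2,3\}}$; the remaining $C_I$ are then determined by the symmetry $C_{ij} = C_{4-j,4-i}$ of Conjecture \ref{conj:symm}. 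Two consistency checks come for free: setting $y=1$ must recover Conjecture \ref{conjver:rk4} via \eqref{specJ}, and the leading terms in $q$ must match those prescribed by the explicit formulas of Section \ref{sec:cob}.

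The hard part will be upgrading this finite-order verification to an all-orders proof. Two structural routes seem plausible. The first is to use a $K$-theoretic refinement of $S$-duality: a refined analog of Conjecture \ref{conjCD}, combined with the (refined) blow-up formula of Section \ref{sec:blowup} and the flux sum identities of Proposition \ref{LLlattice}, would pin down the $C_I(q,y)$ up to finitely many ambiguities that could be fixed by matching a bounded number of $q$-coefficients. The second is representation theoretic: the emergence of weak Jacobi forms and of Ramanujan's octic continued fraction suggests that $J(q,y)$ should arise as a character attached to the $A_3$ root system entering the obstruction theory on $S^{[\bfn]} \times |\bfbeta|$, and the quadratic equation for $Z$ should be a Macdonald-type denominator identity. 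I expect the main obstacle to be precisely the lack of such an intrinsic interpretation of $J(q,y)$: the formula was found by guess-and-check on the computed series, and without a geometric or modular-theoretic origin for $J$ any rigorous proof will remain, at present, a finite-order numerical match combined with a rigidity argument for the ring of weak Jacobi forms of weight zero and appropriate level that we are presently unable to formulate sharply enough to conclude.
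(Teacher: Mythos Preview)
Your proposal is aligned with the paper's treatment, but note that the statement is a \emph{conjecture}: the paper offers no proof. What the paper does is precisely the finite-order verification you describe in your first two paragraphs---compute $\overline{C}_0(q,y), \overline{C}_{ij}(q,y)$ for $r=4$ modulo $q^{11}$ via toric localization on $(S^{[\bfn]})^T$ and check agreement with the proposed closed forms, together with the specialization $J(q,1)=u(q^2)^4$ recovering Conjecture \ref{conjver:rk4}. Your third paragraph, outlining possible structural routes (refined $S$-duality rigidity, a representation-theoretic origin for $J$), goes beyond anything the paper attempts; the authors leave the all-orders statement open and do not claim any rigidity or Jacobi-form argument that would close the gap.
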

By \eqref{specJ}, specializing $y=1$ recovers Conjecture \ref{conjver:rk4}. 

\begin{remark}
For any $r$, $C_0(q,y), C_{ij}(q,y)$ satisfy the relations 
\begin{equation} \label{eqn:yinv}
C_0(q,y) = C_0(q,1/y), \quad C_{ij}(q,y) = C_{ij}(q,1/y).
\end{equation}
Indeed $\sfZ_{S,H,c_1}^{\SU(r)}(q,y) = \sfZ_{S,H,c_1}^{\SU(r)}(q,1/y)$ by \cite[Prop.~2.27]{Tho}. Moreover, for $S$ a smooth projective toric surface and $T$-equivariant classes $\boldsymbol{a}$, applying Atiyah-Bott localization to $\mathsf{G}_{S,\boldsymbol{a}}(q,y)$  (i.e.~the $K$-theoretic analogue of \eqref{eqn:defGS}, \eqref{eqn:GSafterAB}), one easily finds that $\mathsf{G}_{S,\boldsymbol{a}}(q,y) = \mathsf{G}_{S,\boldsymbol{a}}(q,1/y)$ from which \eqref{eqn:yinv} follows.
\end{remark}

On the horizontal branch, we conjecture the following.
\begin{conjecture}  
For any $r>1$, there exist $D_0$, $\{D_{ij}\}_{1 \leq i \leq j \leq r-1} \in \C(y^{\frac{1}{2}})[\![q^{\frac{1}{2r}}]\!]$ with the following property. For any smooth polarized surface $(S,H)$ satisfying $H_1(S,\Z) = 0$, $p_g(S)>0$, $c_1 \in  H^2(S,\Z)$, and $c_2 \in H^4(S,\Z)$ such that there are no rank $r$ strictly $H$-semistable sheaves on $S$ with Chern classes $c_1,c_2$, we have
$$
y^{-\frac{\vd}{2}}\chi_{-y}^{\vir}(M_S^H(r,c_1,c_2))
$$ 
equals the coefficient of $q^{c_2 - \frac{r-1}{2r} c_1^2 - \frac{r}{2} \chi(\O_S) + \frac{r}{24} K_S^2}$ of
\begin{align*}
&r^{2+K_S^2 - \chi(\O_S)} \Bigg( \frac{(y^{\frac{1}{2}} - y^{-\frac{1}{2}})}{\phi_{-2,1}(q^{\frac{1}{r}},y)^{\frac{1}{2}} \Delta(q^{\frac{1}{r}})^{\frac{1}{2}}} \Bigg)^{\chi(\O_S)} \Bigg( \frac{\Theta_{{A^\vee_{r-1}},0}(q,y)   }{\eta(q)^r} \Bigg)^{-K_S^2}  \\
&\quad\quad\quad\quad \times D_0(q,y)^{K_S^2} \sum_{\bfbeta \in H^2(S,\Z)^{r-1}}   \prod_{i} \epsilon_r^{i \beta_i c_1} \, \SW(\beta_i) \prod_{i \leq j} D_{ij}(q,y)^{\beta_i \beta_j}.
\end{align*}
\end{conjecture}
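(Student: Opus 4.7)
The plan is to reduce the statement to the general multiplicative framework of Appendix \ref{sec:mult}, and then to pin down the universal factor corresponding to $\chi(\O_S)$ by specializing to K3 surfaces, in direct analogy with how Conjecture \ref{conjhor} is argued to follow from Conjecture \ref{conj:multinst}. First, I would rewrite $y^{-\vd/2}\chi_{-y}^{\vir}(M_S^H(r,c_1,c_2))$ as a virtual integral. By the virtual Hirzebruch--Riemann--Roch theorem of Fantechi--G\"ottsche and Ciocan-Fontanine--Kapranov,
$$
\chi_{-y}^{\vir}(M) = \int_{[M]^{\vir}} \mathrm{ch}\bigl(\Lambda_{-y}(T_M^{\vir})^\vee\bigr)\,\td(T_M^{\vir}),
$$
which is a universal polynomial expression in the Chern characters of $T_M^{\vir}=R^{\mdot}\hom_{\pi}(\EE,\EE)_0[1]$, with coefficients in $\Q[y^{\frac{1}{2}}]$. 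The extra factor $y^{-\vd/2}$ is absorbed as an overall power of $y^{\frac12}$ on the universal functions. Passing from $M$ to $\cM$ using the degree $1/r:1$ \'etale map only introduces the prefactor $r$, exactly as in Section \ref{sec:evir}.

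Second, I would observe that the resulting integrand defines a multiplicative sequence of insertions $\{\mathsf{P}_{v,\boldsymbol{a}}\}$ with coefficients in $R=\Q[y^{\frac{1}{2}}]$ and with $\boldsymbol{a}=\varnothing$ (there are no $\boldsymbol{\alpha}$-insertions): multiplicativity under $S=S'\sqcup S''$ follows from the multiplicativity of $\ch$, $\td$, and $\Lambda_{-y}$ on the virtual tangent bundle of a disjoint union of moduli spaces, which is precisely the content of Definition \ref{def:mult}. Therefore Conjecture \ref{conj:multinst} applies and gives universal power series $A, B, Z_{ij}\in\C(y^{\frac12})[\![q]\!]$ (all $V_i,W_{ij},X_i,Y_{ij}$ drop out since $N=0$) such that
$$
r\cdot y^{-\frac{\vd}{2}}\chi^{\vir}_{-y}(M)=\bigl[q^{\vd}\bigr]\; r\cdot A^{\chi(\O_S)}B^{K_S^2}\sum_{\boldsymbol{\beta}}\prod_i\epsilon_r^{i\beta_i c_1}\SW(\beta_i)\prod_{i\leq j}Z_{ij}^{\beta_i\beta_j}.
$$
Renaming $D_0:=B\,(\Theta_{A_{r-1}^\vee,0}/\eta^r)$ and $D_{ij}:=Z_{ij}$ and shifting the exponent of $q$ matches the form in the statement up to the $\chi(\O_S)$-factor.

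Third, I would identify the universal function $A(q,y)$. Specializing to a K3 surface $S$ (where $\chi(\O_S)=2$, $K_S=0$, and the only Seiberg--Witten basic class is $0$ with $\SW(0)=1$), Mukai--Yoshioka--Huybrechts--O'Grady imply that $M_S^H(r,c_1,c_2)$ is smooth, projective, of expected dimension $\vd$, and deformation equivalent to $S^{[\vd/2]}$. Hence $\chi^{\vir}_{-y}(M)=\chi_{-y}(S^{[\vd/2]})$, and G\"ottsche's formula for the Hirzebruch genera of Hilbert schemes of K3 surfaces gives
$$
\sum_n\chi_{-y}(S^{[n]})\,Q^n=\frac{Q}{(y^{\frac12}-y^{-\frac12})^2\,\phi_{-2,1}(Q,y)\,\Delta(Q)}.
$$
Setting $Q=q^{1/r}$, comparing coefficients, and symmetrizing in $y\leftrightarrow y^{-1}$ (which holds by \cite[Prop.~2.27]{Tho}), one recovers the claimed normalization
$$
A(q,y)=\frac{y^{\frac12}-y^{-\frac12}}{\phi_{-2,1}(q^{\frac1r},y)^{\frac12}\Delta(q^{\frac1r})^{\frac12}},
$$
as in the analogous unrefined step in Section \ref{sec:evir}.

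The main obstacle is the same as in the unrefined case: Conjecture \ref{conj:multinst} itself is only established in a \emph{weak} form by Theorem \ref{thm:multinst}, which proves universality via Mochizuki's formula under Conditions (i)--(iii). Dropping these conditions (cf.~Remark \ref{strongmoc}) and extending to the strictly semistable case via Mochizuki--Joyce--Song pairs are precisely the two points where the argument is currently incomplete. Any full proof will likely require the wall-crossing machinery of Joyce for virtual classes in vertex algebras, and the $K$-theoretic refinement should follow along the same lines once those foundational tools are in place, since the cobordism-plus-localization template of Section \ref{sec:loc} applies verbatim to the $K$-theoretic integrand $\mathrm{ch}(\Lambda_{-y}(T^{\vir})^\vee)\td(T^{\vir})$.
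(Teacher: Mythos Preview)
Your proposal is correct and follows essentially the same route as the paper: the paper states that this conjecture is implied by Conjecture \ref{conj:multinst} ``by the same reasoning as in Section \ref{sec:evir}'', with the only modification that the Euler characteristic formula for Hilbert schemes of K3 surfaces is replaced by the G\"ottsche--Soergel formula for their $\chi_y$-genera. Your identification of the obstacle---that the reduction rests on the unproved Conjecture \ref{conj:multinst}---is also exactly what the paper acknowledges.
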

By the same reasoning as in Section \ref{sec:evir}, this conjecture is implied by Conjecture \ref{conj:multinst}. The only modification needed is that the formula for Euler characteristics of Hilbert schemes of points of a K3 surface \cite{Got1} should be replaced by the formula for their $\chi_y$-genera \cite{GS}.

We present the following refined analogue of Conjecture \ref{conjCD}:
\begin{conjecture} \label{conjCDref}
For any $r>1$, $C_0(q,y)$, $C_{ij}(q,y)$, $D_0(q,y)$, $D_{ij}(q,y)$ are Fourier expansions in $q = e^{2 \pi i \tau}$, $y=e^{2 \pi i z}$ of meromorphic functions $C_0(\tau,z)$, $C_{ij}(\tau,z)$, $D_0(\tau,z)$, $D_{ij}(\tau,z)$ on $\mathfrak{H} \times \C$ satisfying
$$
D_0(\tau,z) = C_0(- 1/\tau,z/\tau), \quad D_{ij}(\tau,z) = C_{ij}(- 1/\tau,z/\tau).
$$
\end{conjecture}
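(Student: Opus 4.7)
The plan is to mimic the derivation of the unrefined case (Conjecture \ref{conjCD}, treated via Proposition \ref{prop:Sdualpart}) and extract the transformation of $C_0, C_{ij}, D_0, D_{ij}$ from a $K$-theoretic refinement of the $S$-duality conjecture. First, I would formulate a refined $S$-duality statement: for $(S,H)$ polarized with $H_1(S,\Z)=0$, $p_g(S)>0$, $r$ prime, and $c_1$ algebraic, the refined partition functions $\sfZ_{S,H,c_1}^{\SU(r)}(q,y)$ and $\sfZ_{S,H,c_1}^{{}^L\!\SU(r)}(q,y)$ should be Fourier expansions of meromorphic functions on $\mathfrak{H}\times\C$ transforming as a weak Jacobi form of weight $-e(S)/2$ and some index depending on $r$ and the Chern numbers of $S$ under $(\tau,z)\mapsto(-1/\tau,z/\tau)$. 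This refined $S$-duality is precisely the framework proposed in \cite{GK3} (see also \cite{AMP}), and for $r=2,3$ it has been verified modulo universal functions.

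Next, I would import the known Jacobi-form modular transformations for the building blocks: $\eta$ and $\Delta$ transform under $\mathrm{SL}(2,\Z)$ as before, while
\[
\phi_{-2,1}\Big(\tfrac{-1}{\tau},\tfrac{z}{\tau}\Big) = \tau^{-2} e^{2\pi i z^2/\tau}\,\phi_{-2,1}(\tau,z),
\]
and the refined lattice theta functions satisfy
\[
\Theta_{A_{r-1},\ell}\Big(\tfrac{-1}{\tau},\tfrac{z}{\tau}\Big) = \tfrac{1}{\sqrt{r}}\Big(\tfrac{\tau}{i}\Big)^{(r-1)/2} e^{\pi i (r-1) z^2/(r\tau)}\,\Theta_{A_{r-1}^\vee,\ell}(\tau,z),
\]
(and its inverse with $A_{r-1}^\vee$), which collects into $t_{A_{r-1},\ell}(-1/\tau,z/\tau) = t_{A_{r-1}^\vee,\ell}(\tau,z)$. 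Combined with a refined version of the Labastida-Lozano flux-sum identities of Proposition \ref{LLlattice}, carrying through $y$-dependent phase factors, these transformations should turn $\sfZ^{(1^r)}$ evaluated at $-1/\tau$ into the $k=0$ piece of the horizontal generating series built from the \emph{hypothetical} $D_0, D_{ij}$ defined by $D_\bullet(\tau,z):=C_\bullet(-1/\tau,z/\tau)$.

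Then the key step is to compare this prediction with Theorem \ref{thm2:Laarakker} applied to $\sfZ^{(1^r)}$ on one side and the refined universal horizontal structure formula on the other side, using (as in Section \ref{sec:S-dual}) that on the vertical side one already has the explicit prefactor $(\phi_{-2,1}(q^r,y^r)\Delta(q^r))^{-\chi(\O_S)/2}$ while on the horizontal side one expects $(\phi_{-2,1}(q^{1/r},y)\Delta(q^{1/r}))^{-\chi(\O_S)/2}$. Matching the prefactors under $\tau\mapsto-1/\tau$ fixes the $\chi(\O_S)$-dependent normalization, and matching the $\bfbeta$-dependent part after flux summation isolates the identities $D_0(\tau,z)=C_0(-1/\tau,z/\tau)$ and $D_{ij}(\tau,z)=C_{ij}(-1/\tau,z/\tau)$, since varying $(S,H,c_1,c_2)$ makes the exponents $\chi(\O_S), K_S^2, \beta_i\beta_j$ attain $\Q$-independent values (as in the cobordism argument of Section \ref{sec:cob}).

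The hard part will be two-fold. First, the refined $S$-duality conjecture itself is not proved in general; at best, it would be reasonable to assume it in prime rank $r$, restrict to $r=2,3,4$ (where we have explicit $C_0, C_{ij}$) and verify the output against the known conjectural $D_0, D_{ij}$ of Appendices and of \cite{GK1,GK3}. Second, controlling the Jacobi multiplier system and the $z$-dependent exponential factors through the flux sum requires a refined Labastida-Lozano identity of the form
\[
\sum_{x\in H^2(S,\Z_r)} e^{2\pi i(xc)/r}\,e^{\pi i(r-1)mx^2/r}\,\mathcal{E}(x,z) \;=\; (\text{multiplier})\cdot e^{\pi i(r-1)nc^2/r}\,\mathcal{E}'(c,z),
\]
for suitable elliptic factors $\mathcal{E},\mathcal{E}'$; deriving such an identity rigorously, and cleanly cancelling the $e^{2\pi i z^2/\tau}$ anomalies across the refined prefactors and $\Theta_{A_{r-1},\ell}$, is the main technical obstacle. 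As consistency checks, I would then verify: (i) specializing $y=1$ recovers Conjecture \ref{conjCD}; (ii) for $r=4$, using the conjectural formula involving $J(q,y)$ of Conjecture \ref{conjverref:rk4}, numerically compare $C_0(-1/\tau,z/\tau)$ with the expansions for $D_0$ one obtains from Mochizuki's formula in the refined setting; (iii) Galois-type symmetry statements analogous to Proposition \ref{prop:Gal} must continue to hold after the transformation, which gives a strong sanity check on the proposed $D_0, D_{ij}$.
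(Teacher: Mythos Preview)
The statement you are attempting to prove is a \emph{conjecture} in the paper, not a theorem: the paper offers no proof of Conjecture~\ref{conjCDref}, just as it offers no proof of its unrefined analog Conjecture~\ref{conjCD}. There is therefore nothing to compare your argument against. What the paper does is state Conjecture~\ref{conjCDref} as a refined analog of Conjecture~\ref{conjCD}, then use it (together with Conjectures~\ref{conjverref:rk2}--\ref{conjverref:rk4}) to \emph{produce} conjectural formulae for $D_0, D_{ij}$ via the replacements $t_{A_{r-1},\ell}\mapsto t_{A_{r-1}^\vee,\ell}$ and $J\mapsto J|_{(\tau,z)\mapsto(-1/\tau,z/\tau)}$, and finally remarks that the $K$-theoretic analog of Proposition~\ref{prop:Sdualpart} has been checked.

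Your proposal has the logical direction reversed. In the paper, Conjecture~\ref{conjCD} (and by analogy~\ref{conjCDref}) is an \emph{input} that, via Proposition~\ref{prop:Sdualpart}, implies part of the $S$-duality transformation; it is not derived from $S$-duality. You propose to assume a refined $S$-duality conjecture and extract $D_\bullet(\tau,z)=C_\bullet(-1/\tau,z/\tau)$ from it. Even granting refined $S$-duality, this extraction does not work cleanly: $S$-duality relates the \emph{full} partition functions $\sfZ^{\SU(r)}$ and $\sfZ^{{}^L\!\SU(r)}$, each of which mixes vertical and horizontal pieces (and for $r=4$ also the $(2,2)$ component), so matching exponents of $\chi(\O_S), K_S^2, \beta_i\beta_j$ across the transformation does not isolate the $C\leftrightarrow D$ relation without first knowing which terms on each side correspond. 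Proposition~\ref{prop:Sdualpart} only establishes that \emph{if} Conjecture~\ref{conjCD} holds then the vertical piece transforms into the $k=0$ horizontal piece; it does not provide a mechanism to recover Conjecture~\ref{conjCD} from $S$-duality. Your consistency checks (i)--(iii) are reasonable as evidence, but the core argument would at best reduce one conjecture to another rather than prove anything.
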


Then Conjectures \ref{conjverref:rk2}--\ref{conjverref:rk4}, \ref{conjCDref} imply formulae for $D_0, D_{ij}$ for $r=2$--$4$, which are obtained by making the following replacements (Appendix \ref{sec:theta}):
\begin{align*}
t_{A_{r-1},\ell} \big|_{(\tau,z) \mapsto (- 1/\tau,z/\tau)} &= t_{A_{r-1}^\vee,\ell}, \\
J \big|_{(\tau,z) \mapsto (- 1/\tau,z/\tau)} &=  \frac{\eta(q^{\frac{1}{2}})^4 \eta(q^\frac{1}{8})^8 }{\eta(q^\frac{1}{4})^{12}} \frac{\phi_{-2,1}(q^{\frac{1}{2}},y)\phi_{-2,1}(q^{\frac{1}{8}},y^{\frac{1}{2}})^2}{\phi_{-2,1}(q^{\frac{1}{4}},y)\phi_{-2,1}(q^{\frac{1}{4}},y^{\frac{1}{2}})^2 }.
\end{align*}

In \cite{GK3}, the first- and second-named authors introduced a \emph{$K$-theoretic $S$-duality} conjecture for $\mathsf{Z}_{S,H,c_1}^{\SU(r)}(q,y)$, and proved that, for $r=2,3$, the expressions in this section satisfy this conjecture. Just like in Section \ref{sec:S-dual}, we will not discuss $K$-theoretic $S$-duality for $\mathsf{Z}_{S,H,c_1}^{\SU(4)}(q,y)$, because we do not know the contribution of fixed loci indexed by the partition $(2,2) \vdash 4$. We did, however, check that the $K$-theoretic analogue of Proposition \ref{prop:Sdualpart} holds.

\section{Lattice theta functions} \label{sec:theta}

We review some facts about lattice theta functions  (e.g.~see \cite[Sect. 3.3]{GZ}). Let $\Gamma$ be a positive definite lattice of rank $r$.  We write 
$V=\Gamma\otimes_{\Z} \C$ and $V_\Q=\Gamma\otimes_{\Z}\Q$.  
We denote the bilinear form by $\langle \cdot ,\cdot \rangle$.
Denote by $X_\Gamma$ the set of meromorphic functions $f:{\mathfrak H}\times V\to \C$. 
For $(\lambda,\mu)\in V \times V$, define
\begin{align*}
f|(\lambda,\mu)(\tau,x) = q^{\frac{1}{2} \langle \lambda,\lambda\rangle} e^{2\pi i \langle \lambda,x+\mu/2\rangle} \,  f(\tau,x+\lambda\tau+\mu),
\end{align*}
where $q = \exp(2 \pi i \tau)$. Next, define
$$
f|_{r/2}S(\tau,x) = \Big( \frac{\tau}{i} \Big)^{-\frac{r}{2}} \, e^{-\pi i \langle x,x\rangle/\tau}f(-1/\tau,x/\tau).
$$
It is easy to see that 
\begin{align}\label{Sv} 
f|(\lambda,\mu)|_{r/2}S(\tau,x)= f|_{r/2}S |(\mu,-\lambda)(\tau,x).
\end{align}
The \emph{theta function of $\Gamma$} is defined by 
$$
\Theta_\Gamma(\tau,x) = \sum_{v\in \Gamma} q^{\frac{1}{2}\langle v, v\rangle} e^{2 \pi i \langle v ,x\rangle}.
$$ 
Then $\Theta_\Gamma \in X_{\Gamma}$. If $\Gamma$ has rank $r$ it is well-known that
\begin{align} \label{STh}
\sqrt{N} \, \Theta_{\Gamma}|_{r/2}S(\tau,x) =\Theta_{\Gamma^\vee}(\tau,x)=\sum_{v\in P} \Theta_{\Gamma}|(v,0)(\tau,x).
\end{align}
Here is $N$ the determinant of the matrix of the bilinear form on $\Gamma$ and
$$
\Gamma^\vee=\big\{v\in \Gamma\otimes_{\Z} \Q\bigm| \langle v,w\rangle\in \Z,  \forall \, w\in \Gamma \big\}
$$
is the dual lattice to $\Gamma$ (with bilinear form obtained by extending $\langle \cdot, \cdot \rangle$ from $\Gamma$ to $\Gamma^\vee$). For the second equality of \eqref{STh}, we assume that $\Gamma$ is integral and $P$ is a system of representatives of $\Gamma^\vee/\Gamma$.

We are interested in the lattice $A_{r}$. This is $\Z^{r}$ and, denoting the standard basis of $\Z^{r}$ by $(e_1, \ldots, e_{r})$, the symmetric bilinear form is determined by
$$
\langle e_i, e_j \rangle = \left\{ \begin{array}{cc} 2 & \textrm{if  } i=j \\ -1 & \textrm{if } |i-j|=1 \\ 0 & \textrm{otherwise.} \end{array}\right.
$$
We denote the corresponding symmetric matrix by $M_{A_{r}}$, then $\det(M_{A_{r}}) = r+1$. The dual lattice is given by 
$$
A_{r}^\vee = M_{A_{r}}^{-1}(\Z^{r}) \subset \Q^{r}
$$
with the same bilinear form. Taking the columns of $M_{A_{r}}^{-1}$ as a basis, the $A_{r}^\vee$ lattice is $\Z^{r}$ with bilinear form $\langle \cdot, \cdot \rangle^{\vee}$ determined by the matrix $M_{A_{r}^\vee} = M_{A_{r}}^{-1}$.

We define
$$
\lambda := M_{A_{r}}^{-1} (1,0, \ldots, 0) = \frac{1}{r+1}(r,r-1, \ldots,1).
$$
Then
$$
\big\{\ell\lambda\bigm| \ell=0,1,\ldots,r\big\}
$$
is a system of representatives of  $A_{r}^\vee/A_{r}$. 
With respect to the basis of column vectors of $M_{A_{r}}^{-1}$, the vector $\lambda$ corresponds to $(1,0, \ldots, 0)$.
For any $\ell \in \Z$, we define 
 \begin{align}
 \begin{split} \label{defTl}
\Theta_{A_{r},\ell}(\tau,x)&=\sum_{v\in \Z^{r}} q^{\frac{1}{2} \langle v-\ell\lambda,v-\ell\lambda \rangle} e^{2 \pi i \langle v - \ell \lambda, x \rangle}=\Theta_{A_r}|(-\ell \lambda,0)(\tau,x), \\
\Theta_{A_{r}^\vee,\ell}(\tau,x)&=\sum_{v\in \Z^{r}}q^{\frac{1}{2} \langle v,v \rangle^{\vee} }e^{2\pi i \langle v, x + \ell (1,0, \ldots, 0) \rangle^{\vee} }=\Theta_{A_r^\vee}|(0,\ell \lambda)(\tau,x). 
 \end{split}
 \end{align}
We define the following specialization
$$
\Theta_{A_{r}^\vee,\ell}(\tau,z) =\sum_{v\in \Z^{r}}q^{\frac{1}{2} \langle v,v \rangle^\vee }e^{2\pi i \langle v, (z,\ldots, z) + \ell (1,0,\ldots, 0) \rangle^\vee}.
$$
Here the vector $(1, \ldots, 1)$ is with respect to the basis of column vectors of $M_{A_{r-1}}^{-1}$, and we (accordingly) define
\begin{align*}
\Theta_{A_{r},\ell}(\tau,z) =\sum_{v\in \Z^r} q^{\frac{1}{2} \langle v-\ell\lambda,v-\ell\lambda \rangle} e^{2 \pi i \langle v - \ell \lambda, M_{A_{r}}^{-1} (z,\ldots, z) \rangle}.
\end{align*}
We denote the corresponding Nullwerte by $\Theta_{A_r,\ell}(\tau) = \Theta_{A_r,\ell}(\tau,0)$, $\Theta_{A_r^\vee,\ell}(\tau) = \Theta_{A_r^\vee,\ell}(\tau,0)$. Using the fact that 
$$
\langle (1, \ldots, 1), (1, \ldots, 1) \rangle^{\vee} = \frac{1}{12}r(r+1)(r+2),
$$ 
equations \eqref{defTl}, \eqref{STh}, \eqref{Sv} imply
\begin{align} \label{maintrans}
\Theta_{A_{r},\ell}(-1/\tau,z/\tau) = \frac{1}{\sqrt{r+1}} \Big( \frac{\tau}{i} \Big)^{\frac{r}{2}} e^{\frac{\pi i z^2}{\tau} \big( \frac{1}{12}r(r+1)(r+2) \big)} \Theta_{A_{r}^{\vee},\ell}(\tau,z).
\end{align}
One also easily verifies
\begin{equation} \label{eqn:A:ltominl}
\Theta_{A_r,\ell}(\tau,z)=\Theta_{A_r,-\ell}(\tau,-z) = \Theta_{A_r,-\ell}(\tau,z).
\end{equation}

Moreover, by the second equality of \eqref{STh}, we have 
\begin{equation} \label{keysum}
\Theta_{A_r^\vee,0}(\tau,z)=\sum_{\ell=0}^r \Theta_{A_{r},\ell}(\tau,z).
\end{equation}
As in the introduction, we write $\epsilon_{r+1} = \exp(2 \pi i/(r+1))$. Note that 
\begin{equation} \label{congruences}
\langle \lambda, \lambda \rangle \equiv - \frac{1}{r+1} \mod 1, \quad \langle \lambda,v \rangle \equiv 0 \mod 1,
\end{equation}
for all $v \in \Z^r$. Together with \eqref{defTl} and \eqref{keysum}, we deduce
\begin{equation} \label{TAdualasTA}
\Theta_{A_r^\vee,\ell}(\tau,z) =
\sum_{m=0}^r \epsilon_{r+1}^{\ell m} \, \Theta_{A_r,m}(\tau,z),
\end{equation}
which together with \eqref{eqn:A:ltominl} implies
\begin{equation} \label{eqn:Adual:ltominl}
\Theta_{A_r^\vee,\ell}(\tau,z)= \Theta_{A_r^\vee,-\ell}(\tau,z).
\end{equation}

Using \eqref{maintrans}, we also obtain 
$$
\Theta_{A_r,\ell}(\tau,z) =
\frac{1}{r+1}\sum_{m=0}^r \epsilon_{r+1}^{\ell m} \, \Theta_{A_r^\vee,m}(\tau,z).
$$
More generally, for any $k \in \Z$ we have
\begin{equation} \label{thlk}
\Theta_{A_r^\vee,\ell}(\tau+2k,z) 
=\sum_{m=0}^r \epsilon_{r+1}^{m(\ell-mk)} \, \Theta_{A_r,m} (\tau,z).
\end{equation}
It then follows from \eqref{thlk} and \eqref{maintrans} that, for any $m \in \Z$, we have
\begin{equation}
\begin{split}\label{thlkS}
\Theta_{A_r^\vee,\ell}(\tau + 2k,x)|_{r/2}S(\tau,(z, \ldots, z)) = \frac{1}{\sqrt{r+1}} \sum_{m=0}^r \Big(\sum_{n=0}^r \epsilon_{r+1}^{n(\ell+m-nk)} \Big) \Theta_{A_r,m}(\tau,z).
\end{split}
\end{equation}
We obtain identities between $\Theta_{A_r^\vee,\ell}(\tau+2k,z)$ and $\Theta_{A_r^\vee,\ell}(\tau + 2k,x)|_{r/2}S(\tau,(z,\ldots,z))$ for various values of $\ell$ and $k$ by comparing the right hand sides of equations \eqref{thlk} and \eqref{thlkS}. These identities yield the transformation properties of $\Theta_{A_{r-1}^\vee,\ell}(\tau + 2k)$, for $r=2,3,5$ used in Section \ref{sec:S-dual}.

\section{Universal series} \label{sec:data}

The normalized universal series $\overline{A}, \overline{B}, \overline{C}_{ij}$ were determined for $r=2$ modulo $q^{15}$ and $r=3$ modulo $q^{11}$ in \cite{Laa3}. 
We determined $\overline{A}, \overline{B}, \overline{C}_{ij}$ for $r=4,5,6,7$ modulo $q^{13}$. Recall that the normalization terms are given in Section \ref{sec:cob} and that $C_0,B$ are related by \eqref{def:C0Cij}. 
Below we only reproduce our data for $r \leq 5$. \\

\noindent {\bf Rank $2$.}
{\small\begin{align*}
\overline{A} =&1 + 12q^2  + 90q^4 + 520q^6 + 2535q^8  + 10908q^{10} +  42614q^{12} + 153960 q^{14} + O(q^{15}), \\ 
\overline{B} =& 1 - 4q + 7q^2 - 12q^3 + 23q^4 - 36q^5 + 56q^6 - 88q^7 + 128q^8 - 188q^9 + 273q^{10} \\
&- 384q^{11} + 536q^{12}  - 740 q^{13} + 1009 q^{14} +  O(q^{15}), \\            
\overline{C}_{11} =& 1 + 2q - q^2 - 2q^3 + 3q^4 + 2q^5 - 4q^6  - 4q^7 + 5q^8  + 8q^9 - 8q^{10} - 10q^{11} + 11q^{12} \\
&+ 12 q^{13} -15 q^{14}  + O(q^{15}).
\end{align*}}

\noindent {\bf Rank $3$.} 
{\small\begin{align*}
\overline{A} =&1 + 12q^3  + 90q^6 + 520q^9  + O(q^{11}), \\ 
\overline{B} =& 1 - \frac{9}{4}q - \frac{135}{8}q^2 + \frac{5687}{64}q^3 - \frac{21357}{128}q^4 + \frac{91395}{512}q^5 - \frac{976831}{1024}q^6 + \frac{127326087}{16384}q^7\\& - \frac{1460793393}{32768}q^8 + \frac{29764293777}{131072}q^9 - \frac{305367716529}{262144}q^{10} +O(q^{11}) , \\            
\overline{C}_{11} =&\overline{C}_{22} = 1 - \frac{7}{4}q + \frac{239}{16}q^2 - \frac{2361}{32}q^3 + \frac{100203}{256}q^4 - \frac{1106023}{512}q^5 + \frac{24745059}{2048}q^6\\& - \frac{280615001}{4096}q^7 + \frac{25731005619}{65536}q^8 - \frac{297290563675}{131072}x^9 +\frac{6913461089017
}{524288}q^{10} + O(q^{11}), \\
\overline{C}_{12}=& 1 + 5q - 7q^2 + 3q^3 + 15q^4 - 32q^5 + 9q^6 + 58q^7 - 96q^8 + 22q^9 + 149q^{10} + O(q^{11}).
\end{align*}}

\noindent {\bf Rank $4$.}
{\small\begin{align*}
\overline{A} =&1+12q^4+90q^8+520q^{12}+O(q^{13}), \\
\overline{B}= &1 - \frac{16}{9}q - \frac{644}{81}q^2 - \frac{18560}{729}q^3 + \frac{1384039}{6561}q^4 - \frac{169424}{6561}q^5 - \frac{221992196}{177147}q^6\\& + \frac{786352768}{1594323}q^7 + \frac{260413701079}{43046721}q^8 - \frac{3655109767904}{387420489}q^9 + \frac{68835523340380}{3486784401}q^{10} \\&- \frac{2818123363388416}{31381059609}q^{11} - \frac{24806213541619144}{282429536481}q^{12} + O(q^{13}),\\
\overline{C}_{11}=&\overline{C}_{33}= 1 - \frac{11}{9}q- \frac{20}{9}q^2 + \frac{29737}{729}q^3 - \frac{1005253}{6561}q^4 + \frac{2696689}{19683}q^5 + \frac{289688671}{177147}q^6\\& - \frac{16079563135}{1594323}q^7 + \frac{115191654533}{4782969}q^8 + \frac{15075699699385}{387420489}q^9 - \frac{1982510147928208}{3486784401}q^{10}\\& + \frac{2532736814456179}{1162261467}q^{11} - \frac{474057101483673346}{282429536481}q^{12} + O(q^{13}), \\
\overline{C}_{22}=& 1 - \frac{32}{9}q+ \frac{1058}{81}q^2 - \frac{15808}{729}q^3 - \frac{83617}{6561}q^4 + \frac{4834592}{19683}q^5 - \frac{150128758}{177147}q^6 \\&+ \frac{1226683328}{1594323}q^7 + \frac{283859838659}{43046721}q^8 - \frac{14627747102272}{387420489}q^9 + \frac{285981994115234}{3486784401}q^{10} \\&+ \frac{4003447693683712}{31381059609}q^{11} - \frac{471239600612088452}{282429536481}q^{12} + O(q^{13}), \\
\overline{C}_{12}=&\overline{C}_{23}= 1 + \frac{32}{9}q+ \frac{290}{81}q^2 - \frac{8768}{729}q^3 + \frac{15199}{6561}q^4 + \frac{2519200}{59049}q^5 - \frac{22602614}{177147}q^6\\& + \frac{399387712}{1594323}q^7 + \frac{2430989377}{14348907}q^8 - \frac{1587952009664}{387420489}q^9 + \frac{57170732310946}{3486784401}q^{10} \\&- \frac{636801299852288}{31381059609}q^{11} - \frac{34921739613648004}{282429536481}q^{12} + O(q^{13}),\\
\overline{C}_{13}=& 1 + \frac{2}{9}q + \frac{29}{3}q^2 - 50q^3 + \frac{1595}{9}q^4 - \frac{1642}{3}q^5 + 1564q^6 - \frac{37316}{9}q^7 + \frac{30863}{3}q^8 \\&- 24248q^9 + \frac{493144}{9}q^{10} - \frac{357934}{3}q^{11} + 251467q^{12} + O(q^{13}).
\end{align*}}

\noindent {\bf Rank $5$}
\small{\begin{align*}
\overline{A}=& 1 + 12q^5 + 90q^{10} + O(q^{13}),\\
\overline{B}=& 1 - \frac{25}{16}q - \frac{6425}{1152}q^2 - \frac{328325}{36864}q^3 - \frac{107959975}{2654208}q^4 + \frac{19714416547}{42467328}q^5 - \frac{682323694025}{3057647616}q^6 \\&- \frac{157438303448125}{195689447424}q^7 - \frac{42125816659848475}{14089640214528}q^8 - \frac{1222748753871559175}{225434243432448}q^9\\& + \frac{91589140507399236041}{1803473947459584}q^{10} - \frac{1482872905939874912425}{57711166318706688}q^{11}\\& - \frac{999082311255603435795425}{12465611924840644608}q^{12} + O(q^{13}),\\
\overline{C}_{11}=&\overline{C}_{44}= 1 - \frac{53}{48}q - \frac{455}{768}q^2 - \frac{214105}{55296}q^3 + \frac{137066833}{1769472}q^4 - \frac{32262250883}{127401984}q^5+ \frac{504641046613}{2038431744}q^6 \\& - \frac{35290739977297}{146767085568}q^7 + \frac{55324953373355585}{9393093476352}q^8 - \frac{21106479712125095327}{676302730297344}q^9 \\&+ \frac{747314378419516014391}{10820843684757504}q^{10} - \frac{21332084310732182836789}{259700248434180096}q^{11}\\& + \frac{3766630856661994639083391}{8310407949893763072}q^{12} + O(q^{13}),\\
\overline{C}_{22}=&\overline{C}_{33}= 1 - \frac{45}{16}q + \frac{1223}{1152}q^2 + \frac{92759}{4096}q^3 - \frac{4997165}{98304}q^4 - \frac{342060809}{4718592}q^5+ \frac{1507052138695}{3057647616}q^6\\&  - \frac{2357002948433}{21743271936}q^7 - \frac{64722230546795459}{14089640214528}q^8 + \frac{106028028804927959}{8349416423424}q^9\\& - \frac{4606809888515467877}{5410421842378752}q^{10} - \frac{2800069757219943079717}{57711166318706688}q^{11}\\& - \frac{638603945799703782159017}{12465611924840644608}q^{12} + O(q^{13}),\\
\overline{C}_{12}=&\overline{C}_ {34}=1 + \frac{155}{48}q + \frac{3631}{2304}q^2 + \frac{205649}{55296}q^3 - \frac{28529015}{1769472}q^4 - \frac{440005171}{42467328}q^5 + \frac{576927703355}{6115295232}q^6\\& - \frac{6282551916719}{146767085568}q^7 - \frac{17020188466524013}{28179280429056}q^8 +  \frac{1064059615856431819}{676302730297344}q^9 \\&+  \frac{1284173036440886659}{1202315964973056}q^{10} -  \frac{990167550880548509473}{86566749478060032}q^{11} \\&+  \frac{97471092940503634774661}{24931223849681289216}q^{12} + O(q^{13}),\\
\overline{C}_{13}=&\overline{C}_{24}= 1 +  \frac{7}{48}q +  \frac{15395}{2304}q^2 -  \frac{89765}{6144}q^3 +  \frac{114167459}{5308416}q^4 -  \frac{780769727}{14155776}q^5 +  \frac{1132411382279}{6115295232}q^6\\& -  \frac{9159845550413}{16307453952}q^7 +  \frac{49255281600761635}{28179280429056}q^8 -  \frac{454493672610745723}{75144747810816}q^9 \\&+  \frac{705748856184446680013}{32462531054272512}q^{10} -  \frac{6408400678095411411923}{86566749478060032}q^{11}\\& +  \frac{5938469859966951248118893}{24931223849681289216}q^{12} + O(q^{13}),\\
\overline{C}_{14}=& 1 +  \frac{1}{12}q +  \frac{7}{36}q^2 +  \frac{1777}{108}q^3 -  \frac{26873}{324}q^4 +  \frac{147607}{972}q^5 +  \frac{397291}{2916}q^6 -  \frac{11472803}{8748}q^7 \\&+  \frac{52508239}{26244}q^8 +  \frac{97770619}{19683}q^9 -  \frac{6130025513}{236196}q^{10} +  \frac{7730522729}{354294}q^{11}\\& +  \frac{309256092337}{2125764}q^{12} + O(q^{13}),\\
\overline{C}_{23}=& 1 +  \frac{9}{4}q +  \frac{1241}{144}q^2 -  \frac{487}{64}q^3 +  \frac{3809}{256}q^4 -  \frac{928975}{9216}q^5 +  \frac{1403185}{4096}q^6 -  \frac{53708245}{49152}q^7\\& +  \frac{768441731}{196608}q^8 -  \frac{3604410743}{262144}q^9 +  \frac{149919390323}{3145728}q^{10} -  \frac{6272767971295}{37748736}q^{11} \\&+  \frac{9726080833057}{16777216}q^{12} + O(q^{13}).
\end{align*}

\end{document}